\newcommand*\circled[1]{\tikz[baseline=(char.base)]{
		\node[shape=circle,draw,inner sep=2pt] (char) {#1};}}
\newcommand\pushright[1]{\noindent\makebox[0.97\textwidth]{\hfill$\displaystyle#1$}\vspace{2ex}}
\newcommand\pushrightn[1]{\noindent\makebox[0.84\textwidth]{\hfill$\displaystyle#1$}\vspace{2ex}}
\def\N{\mathbb{N}}
\def\R{\mathbb{R}}
\def\C{\mathbb{C}}
\def\S{\mathbb{S}}
\numberwithin{equation}{section}
\newtheorem{theorem}{Theorem}
\newtheorem{corollary}{Corollary}
\newtheorem{lemma}{Lemma}
\newtheorem{proposition}{Proposition}
\theoremstyle{remark}
\theoremstyle{definition}
\newtheorem{definition}{Definition}
\newcommand{\1}[1]{\mathbbm{1}\!\left[#1\right]}
\newcommand{\E}[1]{\operatorname{E}\!\left[#1\right]}
\newcommand{\Var}[1]{\operatorname{Var}\!\left[#1\right]}
\newcommand{\tr}[1]{\operatorname{tr}\!\left(#1\right)}
\newcommand{\Tr}{\operatorname{tr}}
\newcommand{\Etr}{\operatorname{etr}}
\newcommand{\Log}{\operatorname{Log}}
\newcommand{\Kodd}{\1{K\,\mathrm{odd}} }
\newcommand{\Keven}{\1{K\,\mathrm{even}} }
\newcommand{\Lodd}{\1{L\,\mathrm{odd}} }
\newcommand{\kodd}{\1{k\,\mathrm{odd}} }
\newcommand{\keven}{\1{k\,\mathrm{even}} }
\begin{document}

\begin{frontmatter}
\title{The middle-scale asymptotics of Wishart matrices}
\runtitle{Mid-scale Wishart asymptotics}
\maketitle

\begin{aug}
  \author{\fnms{Didier}  \snm{Ch\'etelat}
  \ead[label=e1]{didier.chetelat@polymtl.ca}}
  \and
  \author{\fnms{Martin T.} \snm{Wells}
  \corref{}\ead[label=e2]{mtw1@cornell.edu}}

  \runauthor{Ch\'etelat and Wells}

  \affiliation{\'Ecole Polytechnique de Montr\'eal and Cornell University}

  \address{Department of Applied Mathematics\\
		   and Industrial Engineering \\
		  \'Ecole Polytechnique \\
		  Montr\'eal, Qu\'ebec H3T 1J4 \\
		  Canada \\
          \printead{e1}}

  \address{Department of Statistical Science\\
		  Cornell University\\
		  1198 Comstock Hall\\
		  Ithaca, NY 14853-3801\\
		  USA\\
          \printead{e2}}
\end{aug}

\begin{abstract}
We study the behavior of a real $p$-dimensional Wishart random matrix with $n$ degrees of freedom when $n,p\rightarrow\infty$ but $p/n\rightarrow0$. We establish the existence of phase transitions when $p$ grows at the order $n^{(K+1)/(K+3)}$ for every $k\in\N$, and derive expressions for approximating densities between every two phase transitions. To do this, we make use of a novel tool we call the G-transform of a distribution, which is closely related to the characteristic function. We also derive an extension of the $t$-distribution to the real symmetric matrices, which naturally appears as the conjugate distribution to the Wishart under a G-transformation, and show its empirical spectral distribution obeys a semicircle law when $p/n\rightarrow0$. Finally, we discuss how the phase transitions of the Wishart distribution might originate from changes in rates of convergence of symmetric $t$ statistics.
\end{abstract}

\begin{keyword}[class=MSC]
\kwd[Primary ]{60B20}
\kwd{60B10}
\kwd[; secondary ]{60E10}
\end{keyword}

\end{frontmatter}


\section{Introduction}\label{sec:introduction}

The roots of random matrix theory lies in statistics, with the work of \citet{wishart28} and \cite{bartlett33}, and in numerical analysis, with the work of \citet{vonneumann47}. In this early period, many well-known matrix distributions were introduced. This includes the real Gaussian matrix ensemble $\text{G}(p,q)$, a $p\times q$ matrix with independent standard Gaussian entries, the Gaussian orthogonal ensemble $\text{GOE}(p)$, the distribution of a symmetric matrix $(X+X^t)/\sqrt{2}$ with $X\sim\text{G}(p,p)$, and the Wishart (also known as Laguerre) distribution $\text{W}_p(n, I_p/n)$, the distribution of a symmetric matrix $XX^t/n$ with $X\sim\text{G}(p, n)$. During that time, the main concern was to derive properties of these distributions for a fixed dimension. Some asymptotics of the Wishart distribution were considered, but only as $n\rightarrow\infty$ for fixed $p$.

Starting with the pioneering work of \citet{wigner51, wigner55, wigner57}, \citet{porter60}, \citet{gaudin61} and \citet{mehta1960a, mehta1960b}, researchers began investigating the asymptotics of Gaussian ensembles as their dimension grew to infinity. As a result of decades of work, the behavior of a $\text{GOE}(p)$ matrix is now well understood both in the classical setting where $p$ is fixed, and in the setting where $p\rightarrow\infty$.

However, the situation asymptotics of the Wishart distribution is more complicated, as it depends on two parameters, $n$ and $p$, and initial progress was slow. The work of \citet{marchenko67} clearly established that the analogue of a Gaussian orthogonal ensemble matrix whose dimension $p$ grows to infinity is a Wishart matrix whose degrees of freedom $n$ and dimension $p$ jointly grow to infinity in such a way that $p/n\rightarrow c\in(0,1)$. Since then, we gained a very good understanding of the behavior of Wishart matrices in this regime.

But this body of work left open the question as to what happens to a Wishart matrix when $n,p\rightarrow\infty$ with $p/n\rightarrow0$. Since such asymptotics are middle-scale between the \textit{classical} regime where $p$ is fixed as $n\rightarrow\infty$ and the \textit{high-dimensional} regime where $p/n\rightarrow c\in(0,1)$, we might refer to them as \textit{middle-scale} regimes. Hence, we might ask: what is the asymptotic behavior of a Wishart matrix $\text{W}_p(n, I_p/n)$ in the middle-scale regimes? This question is addressed this article.

To gain some intuition, it is instructive to look at the eigenvalues $\lambda_1>\dots>\lambda_p>0$ of a $\text{W}_p(n, I_p/n)$ Wishart matrix. In the classical regime where $p$ is fixed as $n\rightarrow\infty$, the eigenvalues must all almost surely tend to $1$ by the strong law of large numbers. In constrast, in the high-dimensional regime where both $n,p\rightarrow\infty$ with $p/n\rightarrow c\in(0,1)$, the Marchenko-Pastur law states that for any bounded, continuous $f$,
\begin{ceqn}
{\setlength{\abovedisplayskip}{7pt}	
\setlength{\belowdisplayskip}{7pt}\begin{align*}
\frac1n\sum_{i=1}^n f(\lambda_i) \;\rightarrow\; \int_{c_-}^{c_+} f(l)\frac{\sqrt{(c_+-l)(c_--l)}}{2\pi cl}dl
\qquad\text{a.s.},
\end{align*}}%
\end{ceqn}
where $c_\pm = (1\pm\sqrt{c})^2$. Thus the eigenvalues do not all tend to $1$, but rather distribute themselves in the shape of a Marchenko-Pastur law with parameter $c$.

What happens between these two extremes? When $c\rightarrow0$, the Marchenko-Pastur law converges weakly to a Dirac measure with mass at $1$. This suggests that whenever $n,p\rightarrow\infty$ with $p/n\rightarrow0$ 
\begin{ceqn}
{\setlength{\abovedisplayskip}{5pt}	
\setlength{\belowdisplayskip}{5pt}\begin{align*}
\frac1n\sum_{i=1}^n f(\lambda_i) \;\rightarrow\; f(1)
\qquad\text{a.s.},
\end{align*}}%
\end{ceqn}
or in other words that the eigenvalues converge almost surely to $1$, as in the classical case.

This motivates a binary view of Wishart asymptotics. It appears that the behavior of a Wishart matrix in the middle-scale regimes is the same as in the classical regime, and therefore that there really are only two regimes: low-dimensional where $p/n\rightarrow0$, and high-dimensional where $p/n\rightarrow c\in(0,1)$.

This binary view has very concrete repercussions. For example, in statistics, many covariance matrix estimators have been developed that leverage high-dimensional Wishart asymptotics (see \citet{pourahmadi13} for a review). When faced with a problem where $p$ is large with respect to $n$, it has been argued that the high-dimensional asymptotics, rather than the classical, constitute the correct model. The binary view provides a useful rule of thumb: small $p$'s call for classical covariance estimators, while large $p$'s call for high-dimensional covariance estimators.

Unfortunately, recent results establish that this binary view is incorrect. In the classical regime where $p$ is fixed, the central limit theorem implies that
\begin{ceqn}
{\setlength{\abovedisplayskip}{7pt}	
\setlength{\belowdisplayskip}{7pt}\begin{align*}
\sqrt{n}\Big[W_p(n, I_p/n) - I_p\Big] \;\Rightarrow\;  \text{GOE}(p),
\end{align*}}%
\end{ceqn}
as $n\rightarrow\infty$, where the arrow stands for weak convergence. In fact, something better is known: recent work has extended this result to the case where $p$ tends to infinity. Recall that the total variation distance between two absolutely continuous distributions $F_1$ and $F_2$ with densities $f$ and $g$ is given by $\mathrm{d}_\text{TV}(F_1, F_2)=\mathrm{d}_\text{TV}(f_1, f_2)=\int|f_1(x)-f_2(x)|dx$. With different approaches, \cite{jiang15} and \citet{bubeck16a} independently established that
\begin{ceqn}
{\setlength{\abovedisplayskip}{7pt}	
\setlength{\belowdisplayskip}{7pt}\begin{align*}
\mathrm{d}_\text{TV}\bigg(\sqrt{n}\Big[W_p(n, I_p/n) - I_p\Big],  \,\text{GOE}(p)\bigg) 
\rightarrow 0
\end{align*}}%
\end{ceqn}
whenever $p^3/n\rightarrow 0$. Thus, when $p^3/n\rightarrow 0$, the same asymptotics hold as in the $p$ fixed case, and we might regard these regimes as rightfully belonging to the classical setting.

The surprising part is that the converse is true! When $p^3/n\nrightarrow0$, results of \citet{bubeck16a} and \citet{racz16} show that
\begin{ceqn}
{\setlength{\abovedisplayskip}{7pt}	
\setlength{\belowdisplayskip}{7pt}\begin{align*}
\mathrm{d}_\text{TV}\bigg(\sqrt{n}\Big[W_p(n, I_p/n) - I_p\Big],  \,\text{GOE}(p)\bigg) 
\nrightarrow 0.
\end{align*}}%
\end{ceqn}
Thus a phase transition occurs when $p$ is of the order $\sqrt[3]{n}$. This begs the question: if a normal approximation fails to hold when $p$ grows faster than $\sqrt[3]{n}$, what asymptotics hold? Is there a uniform asymptotic behavior that holds whenever $p/n\rightarrow0$ with $p^3/n\nrightarrow0$, or are there further phase transitions as the growth rate of $p$ is increased?

The results of this paper offers a mostly complete answer to this question. Namely, we establish that when $p^3/n\nrightarrow0$ but $p^2/n\rightarrow0$, 
\begin{ceqn}
{\setlength{\abovedisplayskip}{7pt}	
\setlength{\belowdisplayskip}{7pt}\begin{align*}
\mathrm{d}_\text{TV}\bigg(\sqrt{n}\Big[W_p(n, I_p/n) - I_p\Big], \,F_1\bigg) \rightarrow 0,
\end{align*}}%
\end{ceqn}
where $F_1$ is a continuous distribution on the space of real symmetric matrices whose density is given when $n\geq 3p-3$ by
{\setlength{\mathindent}{10pt}
\setlength{\abovedisplayskip}{5pt}	
\setlength{\belowdisplayskip}{7pt}\begin{align}&
f_1(X)  
\;\;\propto\;\; \bigg|\text{E}\bigg[\exp\bigg\{
\frac{i}{\sqrt{8}}\Tr XZ
- \frac{i}{3\sqrt{2n}}\Tr Z^3
+ \frac{1}{4n}\Tr Z^4 
+ \frac{\sqrt{2}i}{5n^{3/2}}\Tr Z^5
\notag\\&\hspace{0pt}
- \frac{1}{3n^2}\Tr Z^6
+ \frac{i(p+1)}{2\sqrt{2n}}\Tr Z
- \frac{p+1}{4n}\Tr Z^2
- \frac{4\sqrt{2}i(p+1)}{3n^{3/2}}\Tr Z^3
\bigg\}\bigg]\bigg|^2,
\label{eqn:f1-as-expectation}
\end{align}}%
for a $Z\sim \text{GOE}(p)$. When $p$ grows like $\sqrt{n}$, another phase transition occurs.  Namely, we establish that when $p^2/n\nrightarrow0$ but $p^{5/3}/n\rightarrow0$,
\begin{ceqn}
{\setlength{\abovedisplayskip}{7pt}	
\setlength{\belowdisplayskip}{7pt}\begin{align*}
\mathrm{d}_\text{TV}\bigg(\sqrt{n}\Big[W_p(n, I_p/n) - I_p\Big], \,F_2\bigg) \rightarrow 0,
\end{align*}}%
\end{ceqn}
where $F_2$ is a continuous distribution on the space of real symmetric matrices whose density is given when $n\geq 3p-3$ by
{\setlength{\mathindent}{10pt}
\setlength{\abovedisplayskip}{5pt}	
\setlength{\belowdisplayskip}{5pt}\begin{align}&
f_2(X)
\;\;\propto\;\; \bigg|\text{E}\bigg[\exp\bigg\{
\frac{i}{\sqrt{8}}\Tr XZ
-\frac{i}{3\sqrt{2n}}\!\Tr Z^3 
+\frac{1}{4n}\!\Tr Z^4
+\frac{\sqrt{2}i}{5n^{3/2}}\!\Tr Z^5
\notag\\&\pushright{
-\frac{1}{3n^2}\!\Tr Z^6
-\frac{2\sqrt{2}i}{7n^{5/2}}\!\Tr Z^7
+\frac{i(p\!+\!1)}{2\sqrt{2n}}\!\Tr Z 
-\frac{p\!+\!1}{4n}\!\Tr Z^2
-\frac{4\sqrt{2}i(p\!+\!1)}{3n^{3/2}}\!\Tr Z^3
}\notag\\&\hspace{50pt}
+\frac{p\!+\!1}{4n^2}\!\Tr Z^4
+i\frac{512(p\!+\!1)}{5n^{5/2}}\!\Tr Z^5
-\frac{1024(p\!+\!1)}{3n^3}\!\Tr Z^6
\bigg\}\bigg]\bigg|^2,
\label{eqn:f2-as-expectation}
\end{align}}%
again for a $Z\sim\text{GOE}(p)$.

\begin{figure}
\includegraphics[scale=0.25, trim={0 40pt 0 10pt}, clip]{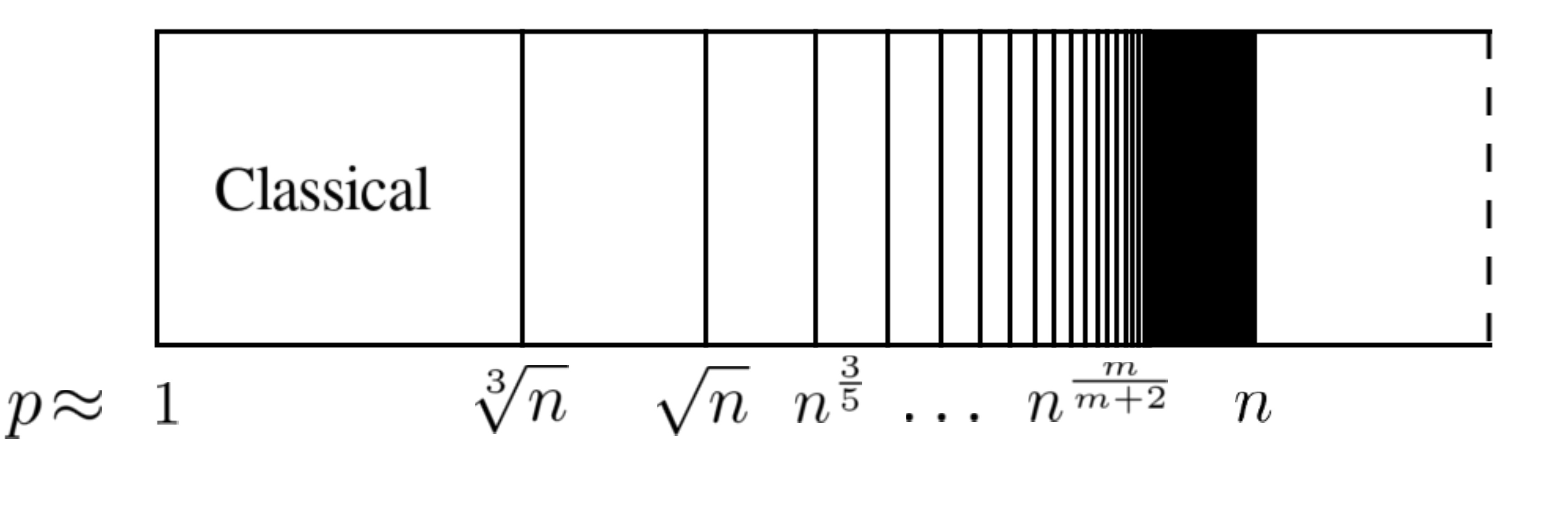}
\caption{Correct picture of Wishart asymptotics. This contrasts with the binary view, where no phase transitions occur between $p$ held constant and $p$ growing like $n$.}
\label{fig:regimes-full}
\end{figure}

In general, for every $K\in\N$ we find a continuous distribution $F_K$ on the space of real symmetric matrices, with density given when $n\geq 3p-3$ by
{\setlength{\mathindent}{20pt}
\setlength{\abovedisplayskip}{0pt}	
\setlength{\belowdisplayskip}{3pt}\begin{align}&
f_K(X)
\;\;\propto\;\; \Bigg|
\text{E}\Bigg[
\exp\Bigg\{
	\frac{i}{\sqrt{8}}\Tr (XZ)
	+ \frac{n}4
	\hspace{-2pt}\mathlarger{\mathlarger{\sum}}_{k=3}^{\substack{2K+3+\\ \Kodd}}\hspace{-4pt}
	i^k\Big(\frac{2}{n}\Big)^{\frac{k}2}
	\frac{\Tr Z^k}{k}
\notag\\[-5pt]&\pushrightn{
	+ \frac{p+\!1}4\hspace{-2pt}\mathlarger{\mathlarger{\sum}}_{k=1}^{\substack{2K+2-\\ \Kodd}}\hspace{-4pt}
	i^k\Big(\frac{2}{n}\Big)^{\frac{k}2}
	\frac{\Tr Z^k}{k}
	\Bigg\}
\Bigg]\Bigg|^2
}\label{eqn:fK-as-expectation}
\end{align}}%
for a $Z\sim\text{GOE}(p)$, which approximates the normalized Wishart distribution in some (but not all) middle-scale regimes. Namely, we prove the following, which can be regarded as the main result of this paper.

\begin{theorem}\label{thm:existence-densities}\hspace{-10pt}
For any $K\in\N$, the total variation distance between the the normalized Wishart distribution $\sqrt{n}[\text{W}_p(n,I_p/n)-I_p]$ and the $K^\text{th}$ degree density $f_K$ satisfies
\begin{ceqn}
{\setlength{\abovedisplayskip}{3pt}
\setlength{\belowdisplayskip}{3pt}\begin{align*}
\mathrm{d}_\text{TV}\bigg(\sqrt{n}\Big[W_p(n, I_p/n) - I_p\Big], \,F_K\bigg) \rightarrow 0
\end{align*}}%
\end{ceqn}
as $n\rightarrow\infty$ with $p^{K+3}/n^{K+1}\rightarrow0$.
\end{theorem}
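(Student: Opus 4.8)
The plan is to work through the characteristic function (equivalently, the G-transform) of the normalized Wishart distribution and match it, up to a vanishing error in total variation, to that of $F_K$. First I would write down the density of $W_p(n, I_p/n)$ explicitly as a Laguerre/Wishart density, perform the affine change of variables $W \mapsto \sqrt{n}(W - I_p)$, and examine the resulting density on the cone-shifted domain $\{X : \sqrt{n} I_p + X \succ 0\}$. The key algebraic step is to expand the log-density: the term $\frac{n-p-1}{2}\log\det(I_p + X/\sqrt{n})$ expands as a power series in $1/\sqrt{n}$ with coefficients that are traces of powers of $X$, namely $\sum_{k\geq 1} \frac{(-1)^{k+1}}{k} n^{(2-k)/2}\,\tfrac{n-p-1}{2}\,\Tr X^k$. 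Truncating this expansion after the terms that contribute non-negligibly when $p^{K+3}/n^{K+1}\to 0$ — which, after counting powers of $p$ and $n$, means keeping $\Tr X^k$ up to $k = 2K+3+\Kodd$ in the ``$n$-block'' and up to $k = 2K+2-\Kodd$ in the ``$(p+1)$-block'' — produces a finite-order polynomial correction to the Gaussian exponent $-\tfrac14\Tr X^2$. The distribution whose log-density is exactly the Gaussian plus this polynomial correction, after a Fourier/G-transform, is precisely $F_K$ as written in \eqref{eqn:fK-as-expectation}: the $|\E[\cdots]|^2$ over $Z\sim\text{GOE}(p)$ is the G-transform representation of ``$\exp\{$polynomial in traces of $Z\}$'' evaluated against the Gaussian, which is the device introduced earlier in the paper for exactly this purpose.

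The second ingredient is to control the error. I would split $\mathrm{d}_\text{TV}$ into two pieces: (i) the difference between the true normalized Wishart density and the ``truncated'' density on the region where $\|X\|$ is not too large (say $\|X\| \lesssim n^{1/2 - \delta}$ or a slowly growing multiple of $\sqrt{\log}$ factors), where the tail of the log-expansion is genuinely $o(1)$; and (ii) the mass that either distribution places outside that region, which must be shown to be negligible. For (i), the remainder of the logarithmic series is bounded by a geometric-type tail $\sum_{k > 2K+3} n^{(2-k)/2}(\text{poly in } p)\,\|X\|^k$, and the trick is that on $\{ \|X\| \le r_n\}$ with $r_n$ chosen so that $p^{K+3} r_n^{?}/n^{K+1}\to 0$ this is uniformly small; integrating $e^{\text{error}} - 1$ against the Gaussian-like bulk then gives the $o(1)$ bound, using the inequality $\int |e^{g} - 1| f \le \int |g| e^{|g|} f$ and Gaussian moment bounds on $\Tr X^k = O(p \cdot r_n^k)$ in expectation. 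For (ii), the exponential decay of both the Wishart and $F_K$ densities away from $I_p$ (the Wishart via its known subexponential concentration, $F_K$ via the dominant $-\tfrac14\Tr X^2$ term controlling the finitely many lower-order polynomial corrections) gives a tail bound decaying faster than any power of $n$. One also has to verify that $F_K$ is a bona fide probability distribution — i.e. that the polynomial exponent does not blow up the integral — which again follows because the quartic-or-higher even terms entering with the right sign, or being dominated by $-\tfrac14\Tr X^2$ on the relevant range, keep $f_K$ integrable; this is where the condition $n \ge 3p - 3$ in the statement of $f_K$ is used.

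A cleaner route, and probably the one actually taken, is to avoid densities and instead compare G-transforms (characteristic functions) directly, since the G-transform of the Wishart is available in closed form — it involves $\det(I_p - \tfrac{2i}{\sqrt n}\,\text{something})^{-(n-p-1)/2}$ type expressions — and its logarithm expands in exactly the same series, now in the dual variable $Z$ rather than $X$. Matching the first $2K+3$ terms of this expansion against the exponent in \eqref{eqn:fK-as-expectation}, bounding the tail of the series on the region $\|Z\| \lesssim \sqrt n$ (with the complementary region contributing negligibly because the Wishart G-transform decays there), and then invoking a quantitative Fourier-inversion / smoothing inequality to pass from closeness of characteristic functions to closeness in total variation, completes the argument. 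The main obstacle, and the part requiring the most care, is precisely this last transfer: going from ``characteristic functions are uniformly close on a large ball and both small outside it'' to a genuine $\mathrm{d}_\text{TV} \to 0$ bound in growing dimension $p$, since naive inversion loses factors that are exponential in $p$. I expect this is handled by exploiting the product/tensor structure of the GOE reference measure and the explicit Gaussian-type decay to absorb the dimensional factors — essentially the same mechanism that powers the $p^3/n \to 0$ results of \citet{jiang15} and \citet{bubeck16a}, pushed to higher order — together with careful bookkeeping of how many powers of $p$ each trace term $\Tr Z^k$ contributes. Getting the truncation order and the exact constants in \eqref{eqn:fK-as-expectation} to line up with the scaling $p^{K+3}/n^{K+1}\to 0$ is then a matter of disciplined power-counting rather than a conceptual difficulty.
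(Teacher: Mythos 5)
There is a real gap in your proposal, and it concerns exactly the step you flag as ``the main obstacle.'' You worry that passing from closeness of transforms to closeness in total variation ``loses factors exponential in $p$,'' and you conjecture that this is rescued by the product structure of the GOE and the machinery of \citet{jiang15} and \citet{bubeck16a}. But the paper does not fight that battle at all; the whole point of defining the G-transform as $\mathcal{G}\{f\}=\mathcal{F}\{f^{1/2}\}^2$ (rather than as the ordinary characteristic function $\mathcal{F}\{f\}$) is that Plancherel applied to the \emph{square root} of the density gives the exact identity
\[
\mathrm{H}^2(f_1,f_2)
=\int\big|f_1^{1/2}-f_2^{1/2}\big|^2
=\int\big|\psi_1^{1/2}-\psi_2^{1/2}\big|^2
=\mathrm{H}^2(\psi_1,\psi_2),
\]
Equation \eqref{eqn:hellinger-equivalence}. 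The transfer from the transform side to the density side is therefore a Hellinger isometry, not an inversion with dimensional losses. Combined with the two-sided comparison between Hellinger and total variation, this means that once you control $\mathrm{H}(\psi_{\text{NW}},\psi_K)$ you are done; there is no smoothing inequality to prove. Your sketch never invokes this identity, and without it the argument you describe would indeed suffer the very losses you anticipate.

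The second, equally essential, ingredient you do not mention is how $\mathrm{H}(\psi_{\text{NW}},\psi_K)$ is actually controlled. The paper uses the Kullback--Leibler--type inequality for G-transforms (Proposition \ref{prop:generalizedkl}), which reduces the bound to expectations of $\Re\Log(\psi_{\text{NW}}/\psi_K)$ and $|\Im\Log(\psi_{\text{NW}}/\psi_K)|$ taken \emph{under the G-conjugate distribution} $T\sim|\psi_{\text{NW}}|$, i.e.\ under the symmetric matrix-variate $t$ distribution $T_{n/2}(I_p/8)$. After truncating the log-determinant series, these expectations are governed by $\E{\Tr T^{2k}}$ for the $t$ distribution, and the nontrivial content is Theorem \ref{thm:moments}: $\E{\Tr T^{2k}}=O(p^{k+1})$, proved via the representation of $t$-moments as inverse-Wishart power-sum expectations (Lemma \ref{lem:moments-as-expectation}), zonal polynomials, and a polynomial-rigidity argument interpolating between the $p$-fixed and $p\propto n$ regimes. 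Your sketch replaces all of this with Gaussian moment bounds $\Tr X^k = O(p\cdot r_n^k)$ on a truncated ball and a tail estimate, which is a different (and, in growing dimension, far less controlled) calculation. In particular, the integrability of $\psi_K$ is obtained in the paper from the pointwise domination $|\psi_K|\le (C^{(K)}_{n,p}/C_{n,p})|\psi_{\text{NW}}|$, not from a bulk/tail split. So while your power-counting heuristic correctly predicts the truncation orders $2K+3+\Kodd$ and $2K+2-\Kodd$ and the threshold $p^{K+3}/n^{K+1}\to0$, the quantitative skeleton that would turn the heuristic into a proof is missing.

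Finally, the last reduction from Theorem \ref{thm:existence-gtransform} to Theorem \ref{thm:existence-densities} is itself short but not the one you describe: it is a triangle-inequality argument splitting $\mathrm{H}(f_{\text{NW}},f_K)$ through $|\tilde f_K|=|\mathcal{G}^{-1}\{\psi_K\}|$ and using that $\int|\tilde f_K|\to 1$ (Equation \eqref{eqn:fK-is-asymptotic-density}), which again relies on the Hellinger/Plancherel isometry, not on any direct density estimate.
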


The definition of $f_K$ and proof of Theorem \ref{thm:existence-densities} are found in Section \ref{sec:wishart-density}, and follow from definitions and results from Sections \ref{sec:gtransforms}, \ref{sec:symmetrict} and \ref{sec:wishart-G-transform} that constitute the bulk of this paper.

The main consequence of this theorem is the existence of an infinite countable number of phase transitions, occurring when $p$ grows like $n^{(K+1)/(K+3)}$ for $K\in\N$. A diagram is provided at Figure \ref{fig:regimes-full}. This naturally groups the middle-scale regimes satisfying  $\lim\limits_{n\rightarrow\infty}\frac{\log p}{\log n}<1$ by which semi-open interval $\big[\frac{K}{K+2}, \frac{K+1}{K+3}\big)$ their limit $\lim\limits_{n\rightarrow\infty}\frac{\log p}{\log n}$ belongs to. We might refer to this grouping as the \textit{degree} of the regime. In other words, we will say an middle-scale regime satisfying $\lim\limits_{n\rightarrow\infty}\frac{\log p}{\log n}<1$ has degree $K$ when $\lim\limits_{n\rightarrow\infty}\frac{\log p}{\log n}\in \big[\frac{K}{K+2}, \frac{K+1}{K+3}\big)$. 

The main result of this paper, Theorem \ref{thm:existence-densities}, may then be summarized as saying that the normalized Wishart distribution can be approximated by the distribution with density $f_K$ in every middle-scale regime of degree $K$ or less. The $0^\text{th}$ degree case corresponds to the classical setting, while the higher degrees correspond to previously unknown behavior. In fact, we show that our $0^\text{th}$ degree approximation $F_0$ is asymptotically equivalent to the Gaussian orthogonal ensemble. The results of this paper can therefore be regarded as a wide generalization of the Wishart asymptotics results of \cite{jiang15}, \cite{bubeck16a, bubeck16b} and \cite{racz16}.

Our approach relies on a novel technical tool we call the G-transform. It turns out that to understand middle-scale regime behavior of Wishart matrices, densities are less clear than characteristic functions (that is, Fourier transforms of densities). Unfortunately, characteristic functions are difficult to relate to metrics like the total variation distance. To remedy this problem, we develop the G-transform and some associated theory in Section \ref{sec:gtransforms}. An interesting aspect of G-transform theory is that to every distribution we can associate a closely related distribution called its G-conjugate. In fact, the G-conjugate of a Wishart matrix is essentially a generalization of the $t$ distribution to real symmetric matrices. In Section \ref{sec:symmetrict}, we define and derive several results concerning this new distribution, including a semicircle law. From these results, we derive in Section \ref{sec:wishart-G-transform} approximations to the Wishart distribution for middle-scale regimes of every degree. Since these approximations are given using the language of G-transforms, we derive in Section \ref{sec:wishart-density} density approximations, from which Theorem \ref{thm:existence-densities} follows. We briefly discuss what concrete effects the phase transitions might have on Wishart asymptotics in Section \ref{sec:explanation}. Finally, we compile auxiliary results in Section \ref{sec:auxiliary}, while we discuss in Section \ref{sec:conclusion} open questions that arise from these results.

Although the results of this paper explain a large part of the behavior of Wishart matrices when $p/n\rightarrow0$, there exists regimes for which $p/n\rightarrow0$ yet $p\not\in O(n^{(K+1)/(K+3)})$ for all $K\in\N$, or in other words for which $\lim\limits_{n\rightarrow\infty}\frac{\log p}{\log n}=1$. An example is when $p$ grows at the order $n^{1-1/\sqrt{\log n}}$. Although the results of our paper characterize almost all middle-scale regimes in the sense that among those regimes satisfying $\lim\limits_{n\rightarrow\infty}\frac{\log p}{\log n}\leq1$, those such that $\lim\limits_{n\rightarrow\infty}\frac{\log p}{\log n}=1$ represent a negligible set, they nonetheless exist. One might regard regimes such as those as having infinite degree. Beyond this, however, it is difficult to say anything about the behavior of Wishart matrices in these regimes. More work in that direction is clearly needed.
\section{Notation and definitions}\label{sec:notation}

The transpose of a matrix is denoted ${}^t$, and the identity matrix of dimension $p$ is $I_p$. As is standard, we take the trace operator to have lower priority than the power operator: thus for a matrix $X$, $\Tr X^k$ means the trace of $X^k$. We will write $\Tr^kX$ when we mean the $k^\text{th}$ power of the trace of $X$. The Kronecker delta is the symbol $\delta_{kl}=\1{k=l}$.

The space of all real-valued symmetric matrices is denoted $\S_p(\R) = \{X\in\mathbb{M}_p(\R) \vert X=X^t\}$. For a symmetric matrix $X$, we define the symmetric differentiation operator $\partial_\text{s}/\partial_\text{s}X_{kl}$ by
\begin{align*}
\frac{\partial_\text{s}}{\partial_\text{s}X_{kl}} = \frac{1+\delta_{kl}}2\frac{\partial}{\partial X_{kl}}
= \begin{cases}
\frac12\frac{\partial}{\partial X_{kl}} & \text{ for }k\not=l\\
\frac{\partial}{\partial X_{kk}} & \text{ for }k=l.
\end{cases}
\end{align*}
This operator has the elegant property that $\frac{\partial_\text{S}}{\partial_\text{S}X_{kl}}\Tr (XY) = Y_{kl}$ for any two symmetric matrices $X$, $Y$.

The space of symmetric matrices $\S_p(\R)$ can be assimilated to $\R^{p(p+1)/2}$ by mapping a symmetric matrix to its upper triangle. By integration over $\S_p(\R)$, we mean integration with respect to the pullback Lebesgue measure under this isomorphism, that is,
{\setlength{\mathindent}{10pt}
\setlength{\abovedisplayskip}{3pt}	
\setlength{\belowdisplayskip}{3pt}\begin{align*}&\qquad
\bigintsss_{\S_p(\R)} \hspace{-15pt}f(X) \,dX 
= \bigintsss_{\R^{p(p+1)/2}}\hspace{-25pt} 
	f\big(X\big)\prod_{i\leq j}^p \,dX_{ij}.
\end{align*}}%

We say a real symmetric matrix follows the Gaussian orthogonal ensemble $\text{GOE}(p)$ distribution if $X_{kl}$, $k\leq l$ are all independent, with diagonal elements $X_{kk}\sim\text{N}(0, 2)$ and off-diagonal elements $X_{kl}\sim\text{N}(0, 1)$.

Let $X$ be a $n\times p$ matrix of i.i.d. $\text{N}(0,1)$ random variables, and let $\Sigma$ be a $p\times p$ positive-definite matrix. The Wishart distribution $\text{W}_p(n, \Sigma)$ is the distribution of the random matrix $\Sigma^{\frac12}X^tX\Sigma^{\frac12}$. This is a special case of the matrix gamma distribution. Following \citet[Section 3.6]{gupta99}, we say a positive-definite matrix $X$ has a matrix gamma distribution $\text{G}_p(\alpha, \Sigma)$ with shape parameter $\alpha>(p-1)/2$ and scale parameter $\Sigma$  if it has density over $\S_p(\R)$ given by
{\setlength{\abovedisplayskip}{5pt}	
\setlength{\belowdisplayskip}{5pt}\begin{align*}
f(X) = \frac1{|\Sigma|^{\alpha}\Gamma_p\left(\alpha\right)}\big|X\big|^{\alpha-\frac{p+1}2}\exp\Big\{-\Tr (\Sigma^{-1}X)\Big\}\1{X>0},
\end{align*}}%
where $\Gamma_p$ is the multivariate gamma function. With this definition, the Wishart distribution $W_p(n, \Sigma)$ is a matrix gamma with shape $\frac{n}2$ and scale $2\Sigma$.

While studying the Wishart distribution, the expression $n-p-1$ comes up so often that it makes sense to give it its own symbol. We will therefore write $m=n-p-1$.

The Hellinger distance is metric between absolutely continuous probability measures. For two distributions $F_1$ and $F_2$ with densities $f_1$ and $f_2$, their Hellinger distance is defined as
{\setlength{\abovedisplayskip}{3pt}	
\setlength{\belowdisplayskip}{3pt}\begin{align*}
\mathrm{H}(F_1, F_2) = \mathrm{H}(f_1, f_2) = \Big(\int \Big|f_1^{1/2}(x)-f_2^{1/2}(x)\Big|^2dx\Big)^{\frac12}.
\end{align*}}%
The Hellinger distance is closely related to the total variation distance by the inequalities
{\setlength{\abovedisplayskip}{5pt}	
\setlength{\belowdisplayskip}{5pt}\begin{align}
\frac12 \mathrm{d}_\text{TV}(f_1,f_2) \;\leq\; \mathrm{H}(f_1, f_2) \;\leq\; \mathrm{d}^{1/2}_\text{TV}(f_1,f_2).
\label{eqn:HvsTV-densities}
\end{align}}%
In particular, $\mathrm{H}(f_1, f_2)\rightarrow0$ if and only if $\mathrm{d}_\text{TV}(f_1,f_2) \rightarrow0$. Thus they can be seen as inducing the same topology on absolutely continuous probability measures, called the \textit{strong topology}, in contrast to the topology induced by weak convergence of measures called the \textit{weak topology}. One can show that if a sequence of measures converges in the strong sense (i.e. in the $\mathrm{d}_\text{TV}$ or $\mathrm{H}$ metrics), then it converges weakly.

\section{G-transforms}\label{sec:gtransforms}

Our analysis of Wishart matrices relies heavily on a tool we call the G-transform of a probability measure. To do so, we first need to define the Fourier transform over symmetric matrices.

In Section \ref{sec:notation}, we clarified what we meant by integration over $\S_p(\R)$. For a function $f: \S_p(\R)\rightarrow\C$ in $L^1(\S_p(\R))$, we define its Fourier transform to be
{\setlength{\mathindent}{10pt}
\setlength{\abovedisplayskip}{3pt}	
\setlength{\belowdisplayskip}{3pt}\begin{align}&\qquad
\mathcal{F}\{f\}(T) 
= \frac1{2^{\frac{p}2}\pi^{\frac{p(p+1)}4}} \bigintsss_{\S_p(\R)}\hspace{-15pt}
	e^{-i\Tr (TX)}f(X) \,dX.
\label{def:fourier-transform}
\end{align}}%
It is more common to define the Fourier transform on symmetric matrices with the integrand $\exp\big\{-i\sum_{k\leq l}T_{kl}X_{kl}\big\}$, but choosing $\exp\big\{i\Tr (TX)\big\}$ considerably simplifies our computations.

We extend this definition to $f\in L^r(\S_p(\R))$, $1<r\leq2$ in the usual manner. Because of the specific normalization chosen, this definition obeys a simple version of Plancherel's theorem, namely
{\setlength{\mathindent}{10pt}
\setlength{\abovedisplayskip}{3pt}	
\setlength{\belowdisplayskip}{3pt}\begin{align*}&\qquad
\bigintsss_{\S_p(\R)}\hspace{-15pt} f(X)\widebar{g}(X) \,dX 
= \bigintsss_{\S_p(\R)}\hspace{-15pt} \mathcal{F}\{f\}(T)\widebar{\mathcal{F}\{g\}}(T) \,dT.
\end{align*}}%

We now define the G-transform. In itself, the definition has nothing to do with symmetric matrices and could have been perfectly well defined on any other space endowed with a Fourier transform.

\begin{definition}[G-transform of a density]\label{def:gtransform}
Let $f$ be an integrable function $\S_p(\R)\rightarrow\C$. Its G-transform is the complex-valued function $\mathcal{G}\{f\}: \S_p(\R)\rightarrow\C$ defined by
{\setlength{\mathindent}{10pt}
\setlength{\abovedisplayskip}{5pt}	
\setlength{\belowdisplayskip}{5pt}\begin{align}\label{eqn:gtransform}&\qquad
\mathcal{G}\{f\} = \mathcal{F}\{f^{1/2}\}^2,
\end{align}}%
where $z^{1/2}$ stands for the principal branch of the complex logarithm.
\end{definition}

In the same way that the Fourier transform maps $L^2(\S_p(\R))$ to itself, the G-transform maps $L^1(\S_p(\R))$ to itself. 

By extension, for an absolutely continuous distribution on $\S_p(\R)$ with density $f$, we will define its G-transform to be the G-transform of its density. (This usage mirrors other transforms, such as the Stietjes transform.) We will usually denote the G-transform of $f$ by $\psi$. Since a density is integrable, this is always well-defined. Moreover, $f = \mathcal{F}^{-1}\{\psi^{1/2}\}^2$, so the density can be recovered from the G-transform, and therefore to understand a distribution it is equivalent to study its density or its G-transform.

Two comments are in order. First, for many densities, $f^{1/2}\in L^1(\S_p(\R))$. In this case, the G-transform can be written explicitly as
{\setlength{\mathindent}{10pt}
\setlength{\abovedisplayskip}{3pt}	
\setlength{\belowdisplayskip}{3pt}\begin{align}\label{eqn:gtransform-explicit}&\qquad
\psi(T) = \mathcal{G}\{f\}(T) 
= \frac1{2^p\pi^{\frac{p(p+1)}2}}
	\bigg(\bigintsss_{\S_p(\R)}\hspace{-15pt} e^{-i\Tr(TX)} f^{1/2}(X) \,dX\bigg)^2.
\end{align}}%
Second, throughout this article we will often talk about ``the'' square root of a G-transform. To be clear, by $\psi^{1/2}$ we will always mean $\mathcal{F}\{f^{1/2}\}$.

Now, in many ways, the G-transform behaves similarly to the characteristic function (Fourier transform of a density), but it has unique features. First, Plancherel's theorem yields that
{\setlength{\mathindent}{10pt}
\setlength{\abovedisplayskip}{3pt}	
\setlength{\belowdisplayskip}{3pt}\begin{align}\label{eqn:gconjugate} \qquad
\bigintsss_{\S_p(\R)}\hspace{-15pt} |\psi(T)| \,dT
&= \bigintsss_{\S_p(\R)}\hspace{-15pt} |\psi^{1/2}(T)|^2 \,dT
\notag\\&
= \bigintsss_{\S_p(\R)}\hspace{-15pt} |f^{1/2}(X)|^2 \,dX
= \bigintsss_{\S_p(\R)}\hspace{-15pt} |f(X)| \,dX
= 1.
\end{align}}%
Thus $|\psi|$ is itself a density, which we will call the \textit{G-conjugate} of $f$. (In particular, $\psi^{1/2}$ is much like a quantum-mechanical wavefunction.) We will also use an asterisk notation, so that the G-conjugate of a $\text{N}(0,1)$ distribution will be denoted $\text{N}(0,1)^*$. For example, straightforward computations yield that $\text{N}(0,1)^*= \text{N}(0, 1/8)$, $\chi^{2*}_n=\frac1{\sqrt{8n}}t_{n/2}$ (where $\chi^2_\nu$ and $t_\nu$ are the univariate $\chi^2$ and $t$ distributions with $\nu$ degrees of freedom, respectively)  and $(aF+b)^* = a^{-1}F^*$ for any distribution $F$ and scalars $a\not=0$, $b\in\R$. Studying the G-conjugate of the Wishart distribution will play a key part in deriving results about the Wishart distribution itself. We should note that, in general, the double G-conjugate $F^{**}$ is not the same as $F$. For example, $\chi^{2**}_n$ is a density involving modified Bessel functions of the first kind, not a $\chi^2_n$.

A second feature that distinguishes G-transforms from characteristic functions is that they are easy to relate to the Hellinger distance between probability measures. Consider two densities $f_1, f_2$ with G-transforms $\psi_1, \psi_2$. By analogy, we could define the ``total variation'' and ``Hellinger'' distances of $\psi_1$ and $\psi_2$ by
{\setlength{\mathindent}{0pt}
\setlength{\abovedisplayskip}{3pt}	
\setlength{\belowdisplayskip}{3pt}\begin{align}\qquad
\mathrm{d}_\text{TV}(\psi_1, \psi_2) &= \bigintsss_{\S_p(\R)}\hspace{-15pt} |\psi_1(T) - \psi_2(T)| \,dT
\label{eqn:tv-Gtransforms},
\\
\text{and}\qquad
\mathrm{H}(\psi_1, \psi_2) &= \Big(\bigintsss_{\S_p(\R)}\hspace{-15pt} |\psi_1^{1/2}(T) - \psi_2^{1/2}(T)|^2 \,dT\Big)^{\frac12}.
\label{eqn:hellinger-Gtransforms}
\end{align}}%
Since the modulus of the G-transforms integrate to one, their total variation and Hellinger distances are related to each other in the same way as in Equation \ref{eqn:HvsTV-densities} for densities, namely
{\setlength{\abovedisplayskip}{5pt}	
\setlength{\belowdisplayskip}{5pt}\begin{align}
\frac12 \mathrm{d}_\text{TV}(\psi_1, \psi_2) \;\leq\; \mathrm{H}(\psi_1, \psi_2) \;\leq\; \mathrm{d}^{1/2}_\text{TV}(\psi_1, \psi_2).
\label{eqn:HvsTV-Gtransforms}
\end{align}}%
Thus $\mathrm{d}_\text{TV}(\psi_1, \psi_2)\rightarrow0$ if and only if $\mathrm{H}(\psi_1, \psi_2)\rightarrow0$. 
But the Hellinger distance between G-transforms is much more useful. Indeed, by the Plancherel theorem, for any two densities $f_1, f_2$ with G-transforms $\psi_1, \psi_2$, their Hellinger distance satisfies
{\setlength{\mathindent}{10pt}
\setlength{\abovedisplayskip}{3pt}	
\setlength{\belowdisplayskip}{3pt}\begin{align} \qquad
\mathrm{H}^2(f_1, f_2)
&= \bigintsss_{\S_p(\R)}\hspace{-15pt} |f_1^{1/2}(X) - f_2^{1/2}(X)|^2 \,dX
\notag\\&
= \bigintsss_{\S_p(\R)}\hspace{-15pt} |\psi_1^{1/2}(T) - \psi_2^{1/2}(T)|^2 \,dT
\hspace{5pt}
= \mathrm{H}^2(\psi_1, \psi_2).
\label{eqn:hellinger-equivalence}
\end{align}}%
Thus to compute the Hellinger distance $\mathrm{H}^2(f_1, f_2)$ between two densities, we can instead compute the Hellinger distance $\mathrm{H}^2(\psi_1, \psi_2)$ of their G-transforms. In contrast, there is no explicit way to express the Hellinger distance in terms of characteristic functions. And no such connection exists between the total variation distances of densities and G-transforms.

The G-transform does have some disadvantages compared to the Fourier transform. It is a non-linear transformation (and therefore not a true transform), and it does not behave well with respect to convolution. For our purposes, however, the advantages listed above outweigh these problems.

In practice, it is not aways easy to control the Hellinger distance directly, and one often focuses on the Kullback-Leibler divergence instead. The two quantities are related through the well known inequality
{\setlength{\mathindent}{10pt}
\setlength{\abovedisplayskip}{5pt}	
\setlength{\belowdisplayskip}{5pt}\begin{align*}\;\;
\mathrm{H}^2(f_1, f_2)
\;\;\leq\;\;
\E{\log\frac{f_1(X)}{f_2(X)}}\; \; \; \text{for }X\sim f_1.
\end{align*}}%
For G-transforms, the following analog holds, which clarifies our interest in G-conjugates:
{\setlength{\mathindent}{10pt}
\setlength{\abovedisplayskip}{3pt}	
\setlength{\belowdisplayskip}{5pt}\begin{align*}
\mathrm{H}^2(\psi_1, \psi_2) \;\;\leq\;\; \E{\Re\Log\frac{\psi_1(T)}{\psi_2(T)}} + 2\!\E{\left|\Im\Log\frac{\psi_1(T)}{\psi_2(T)}\right|}^2
\; \; \; \text{ for }T \sim F_1^*,
\end{align*}}%
where $\Log$ stands for the principal branch of the complex logarithm. In fact, in this article we will need a further generalization, where $\psi_2$ does not need to be a G-transform of a density.

\begin{proposition}[Kullback-Leibler inequality for G-transforms]\label{prop:generalizedkl}
Let $\psi_1$ be the G-transform of an absolutely continuous distribution $F_1$ on $\S_p(\R)$, and let $\psi_2$ be an integrable function $\S_p(\R)\rightarrow\C$. Then
{\setlength{\mathindent}{10pt}
\setlength{\abovedisplayskip}{3pt}	
\setlength{\belowdisplayskip}{3pt}\begin{align*}&
\mathrm{H}^2(\psi_1, \psi_2) 
\;\;\leq\;\; 
	\bigg[\!\! \bigintsss_{\S_p(\R)}\hspace{-15pt}   |\psi_2|(T)\,dT - 1\bigg]
	+ \E{\Re\Log\frac{\psi_1(T)}{\psi_2(T)}} 
\\[-4pt]&\hspace{140pt}
	+ 2\bigintsss_{\S_p(\R)}\hspace{-15pt}   |\psi_2|(T)\,dT^{\frac12} \cdot
		\E{\left|\Im\Log\frac{\psi_1(T)}{\psi_2(T)}\right|}^{\frac12}
\end{align*}}%
for $T\sim F_1^*$, where $\Log$ stands for the principal branch of the complex logarithm.
\end{proposition}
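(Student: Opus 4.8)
The plan is to expand the squared Hellinger distance, pass to the probability measure $F_1^*$ (whose density is $|\psi_1|$), and bound the resulting cross term by elementary one-dimensional inequalities together with Cauchy--Schwarz. Recalling the convention for square roots of G-transforms, I would start from
\[
\mathrm{H}^2(\psi_1,\psi_2) = \int_{\S_p(\R)}\Big(|\psi_1^{1/2}|^2 + |\psi_2^{1/2}|^2 - 2\Re\big(\psi_1^{1/2}\overline{\psi_2^{1/2}}\big)\Big)\,dT .
\]
Since $\psi_1 = \mathcal{F}\{f_1^{1/2}\}^2$ for a density $f_1$, Plancherel's theorem gives $\int|\psi_1^{1/2}|^2 = \int f_1 = 1$, while $\int|\psi_2^{1/2}|^2 = \int|\psi_2|$ holds trivially; hence $\mathrm{H}^2(\psi_1,\psi_2) = 1 + \int|\psi_2|\,dT - 2\Re\int\psi_1^{1/2}\overline{\psi_2^{1/2}}\,dT$. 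Subtracting this from the asserted bound, it then remains to prove that $1 - \Re\int\psi_1^{1/2}\overline{\psi_2^{1/2}}\,dT$ is at most $\tfrac12\E{\Re\Log(\psi_1/\psi_2)} + \big(\int|\psi_2|\,dT\big)^{1/2}\,\E{|\Im\Log(\psi_1/\psi_2)|}^{1/2}$, the expectations being over $T\sim F_1^*$.

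For this I would change measure. Put $\rho := \psi_2^{1/2}/\psi_1^{1/2}$, which is defined $F_1^*$-a.e.\ since $\psi_1^{1/2}$ vanishes only on a $|\psi_1|$-null set. Then $\psi_1^{1/2}\overline{\psi_2^{1/2}} = |\psi_1|\,\overline{\rho}$, so $\Re\int\psi_1^{1/2}\overline{\psi_2^{1/2}}\,dT = \E{\Re\rho(T)}$, and from $\rho^2 = \psi_2/\psi_1$ one reads off $\Re\Log(\psi_1/\psi_2) = -2\log|\rho|$ and $\Im\Log(\psi_1/\psi_2) = -2\theta$, where $\rho = |\rho|e^{i\theta}$ and the branch of $\psi_2^{1/2}$ is the one entering the definition of $\mathrm{H}(\psi_1,\psi_2)$ — equivalently, the one making $\Re(\psi_1^{1/2}\overline{\psi_2^{1/2}}) \ge 0$ pointwise, so that $\theta \in [-\tfrac\pi2,\tfrac\pi2]$. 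Writing $r = |\rho| > 0$, the scalar inequality $1 - r \le -\log r$ gives the pointwise estimate $1 - \Re\rho = (1-r) + r(1-\cos\theta) \le -\log r + r(1-\cos\theta)$, and taking $F_1^*$-expectations yields $1 - \E{\Re\rho} \le \tfrac12\E{\Re\Log(\psi_1/\psi_2)} + \E{r(1-\cos\theta)}$.

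It then remains to control $\E{r(1-\cos\theta)}$. By Cauchy--Schwarz it is at most $\E{r^2}^{1/2}\E{(1-\cos\theta)^2}^{1/2}$, where $\E{r^2} = \int\frac{|\psi_2|}{|\psi_1|}|\psi_1|\,dT = \int|\psi_2|\,dT$, and the elementary bounds $1-\cos x\le|x|$ and $1-\cos x\le 2$ give $(1-\cos\theta)^2 \le 2|\theta| = |\Im\Log(\psi_1/\psi_2)|$, hence $\E{(1-\cos\theta)^2}^{1/2} \le \E{|\Im\Log(\psi_1/\psi_2)|}^{1/2}$. Combining the last three displays gives exactly the required inequality. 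The only integrability subtlety is that $\E{\Re\Log(\psi_1/\psi_2)}$ could be $+\infty$, in which case there is nothing to prove; it is never $-\infty$, because Jensen's inequality gives $\E{\log|\rho|^2} \le \log\E{|\rho|^2} = \log\int|\psi_2|\,dT < \infty$, so all terms are finite in the remaining case (the remaining expectations are bounded since $|\theta|\le\pi/2$).

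I expect the main obstacle to be the bookkeeping around the complex phase: identifying the correct branch of $\psi_2^{1/2}$ so that $\theta$ really equals $-\tfrac12\Im\Log(\psi_1/\psi_2)$ rather than that quantity shifted by $\pm\pi$ (this is precisely the branch for which $\mathrm{H}(\psi_1,\psi_2)$ is defined), and the passage from the quadratic phase discrepancy $(1-\cos\theta)^2$ to the linear quantity $|\Im\Log(\psi_1/\psi_2)|$ that appears in the statement. The remaining ingredients — the Plancherel identity, the change of variables to $F_1^*$, and the scalar inequalities $1-r\le-\log r$ and $(1-\cos x)^2\le 2|x|$ — are routine.
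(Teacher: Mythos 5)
Your proof is correct and follows essentially the same route as the paper: expand $\mathrm{H}^2(\psi_1,\psi_2)$, recognize the cross term as a $F_1^*$-expectation, then bound it with elementary scalar inequalities plus Cauchy--Schwarz. The paper reaches the same three-term bound by using $1-\cos x\le\sqrt{2|x|}$ pointwise \emph{before} Cauchy--Schwarz (rather than $(1-\cos x)^2\le 2|x|$ \emph{inside} it) and by applying $1-x\le-\log x$ at the integral level followed by Jensen (rather than pointwise as you do); these are equivalent reorganizations yielding the identical final inequality. Your treatment of the branch of $\psi_2^{1/2}$ is actually made more explicit than in the paper, which tacitly assumes $\psi_2^{1/2}/\psi_1^{1/2}$ is the principal square root of $\psi_2/\psi_1$ when it writes $\Re\{\psi_2^{1/2}/\psi_1^{1/2}\}=\exp\{-\tfrac12\Re\Log(\psi_1/\psi_2)\}\cos(\tfrac12\Im\Log(\psi_1/\psi_2))$ --- precisely the same assumption you state.
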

\begin{proof}
We can write
{\setlength{\mathindent}{10pt}
\setlength{\abovedisplayskip}{5pt}	
\setlength{\belowdisplayskip}{5pt}\begin{align*}&
\mathrm{H}^2(\psi_1, \psi_2) 
= \bigintss_{\S_p(\R)}\hspace{-20pt} 
	|\psi_1|(T) 
	+ |\psi_2|(T)|
	- \widebar{\psi}_1^{1/2} \psi_2^{1/2} 
	- \psi_1^{1/2}\widebar{\psi}_2^{1/2} 
	\,dT
\\[-5pt]&\hspace{10pt}
=\;\; \bigg[\!\bigintss_{\S_p(\R)}\hspace{-20pt} |\psi_2|(T)| - 1\bigg]
	+ 2
	- \hspace{-5pt}\bigint_{\S_p(\R)}\hspace{-20pt}
		\left[\frac{\psi_2^{1/2}(T)}{\psi_1^{1/2}(T)}
		+ \frac{\bar{\psi}_2^{1/2}(T)}{\bar{\psi}_1^{1/2}(T)}\right]
		\!|\psi_1|(T) \,dT
\\&\hspace{10pt}
=\;\; \bigg[\!\bigintss_{\S_p(\R)}\hspace{-20pt} |\psi_2|(T)| - 1\bigg]
	+ 2\bigg[1
	- \bigintss_{\S_p(\R)}\hspace{-15pt}
	\Re\left\{\frac{\psi_2^{1/2}(T)}{\psi_1^{1/2}(T)}\right\}
	\!|\psi_1|(T) \,dT\bigg]
\\&\hspace{10pt}
=\;\; \bigg[\!\bigintss_{\S_p(\R)}\hspace{-20pt} |\psi_2|(T)| - 1\bigg]
	+ 2\bigg[ 1
		- \bigintss_{\S_p(\R)}\hspace{-20pt}
		\exp\bigg\{\!\!
			-\!\frac12\Re\Log\frac{\psi_1(T)}{\psi_2(T)}
		\bigg\}
\\&\hspace{140pt}\cdot
		\cos\bigg(
			\frac12\Im\Log\frac{\psi_1(T)}{\psi_2(T)}
		\bigg)
		|\psi_1|(T) \,dT
	\bigg].
\end{align*}}%
Now using the inequality $-\cos(x)\leq -1 + \sqrt{2|x|}$ that holds for any $x\in\R$. The last quantity is bounded as
{\setlength{\mathindent}{10pt}
\setlength{\abovedisplayskip}{5pt}	
\setlength{\belowdisplayskip}{5pt}\begin{align*}&\hspace{10pt}
\leq\;\; \bigg[\!\bigintss_{\S_p(\R)}\hspace{-20pt} |\psi_2|(T)| - 1\bigg]
	+ 2\bigg[ 1
	- \bigintss_{\S_p(\R)}\hspace{-20pt}
	\exp\bigg\{\!\!
	-\!\frac12\Re\Log\frac{\psi_1(T)}{\psi_2(T)}
	\bigg\}
	|\psi_1|(T) \,dT\bigg]
\\&\hspace{40pt}
 + 2\bigintss_{\S_p(\R)}\hspace{-20pt}
	 \exp\bigg\{\!\!
		 -\!\frac12\Re\Log\frac{\psi_1(T)}{\psi_2(T)}
	\bigg\}
	\sqrt{\bigg|
		\Im\Log\frac{\psi_1(T)}{\psi_2(T)}
	\bigg|}
	|\psi_1|(T) \,dT.
\end{align*}}%
In the second term, use $1-x\leq -\log(x)$ for $x\geq0$, while in the third term, use the Cauchy-Schwarz inequality to obtain
{\setlength{\mathindent}{10pt}
\setlength{\abovedisplayskip}{5pt}	
\setlength{\belowdisplayskip}{5pt}\begin{align*}&\hspace{10pt}
\leq\;\; \bigg[\!\bigintss_{\S_p(\R)}\hspace{-20pt} |\psi_2|(T)| - 1\bigg]
	- 2\log \bigintss_{\S_p(\R)}\hspace{-20pt}
	\exp\bigg\{\!\!
	-\!\frac12\Re\Log\frac{\psi_1(T)}{\psi_2(T)}
	\bigg\}
	|\psi_1|(T) \,dT
\\&\hspace{20pt}
	+ 2
	\bigintss_{\S_p(\R)}\hspace{-20pt}
	\exp\bigg\{\!\!
	-\!\Re\Log\frac{\psi_1(T)}{\psi_2(T)}
	\bigg\}
	\!|\psi_1|(T) \,dT^{\frac12}
	\bigintss_{\S_p(\R)}\hspace{-20pt}
	\Big|
		\Im\Log\frac{\psi_1(T)}{\psi_2(T)}
	\Big|
	|\psi_1|(T) \,dT^{\frac12}.
\end{align*}}%
Now use Jensen's inequality in the second term and the algebraic identity $\exp\{-\Re\Log\psi_1(T)/\psi_2(T)\}\allowbreak=|\psi_2|(T)/|\psi_1|(T)$ in the third term to obtain
{\setlength{\mathindent}{10pt}
\setlength{\abovedisplayskip}{5pt}	
\setlength{\belowdisplayskip}{5pt}\begin{align*}&\hspace{10pt}
\leq\;\; \bigg[\!\bigintss_{\S_p(\R)}\hspace{-20pt} |\psi_2|(T)| - 1\bigg]
	+\bigintss_{\S_p(\R)}\hspace{-20pt}
		\Re\Log\frac{\psi_1(T)}{\psi_2(T)}
		|\psi_1|(T) \,dT
\\&\hspace{110pt}
	+ 2 \bigintss_{\S_p(\R)}\hspace{-20pt}
		|\psi_2|(T) \,dT^{\frac12}
		\bigintss_{\S_p(\R)}\hspace{-20pt}
		\Big|
			\Im\Log\frac{\psi_1(T)}{\psi_2(T)}
		\Big|
		|\psi_1|(T) \,dT^{\frac12},
\end{align*}}%
as desired.
\end{proof}

Let us now compute the G-transform of the Gaussian Orthogonal Ensemble and the normalized Wishart distribution, which will be needed in our proofs. The density of a $\text{GOE}(p)$ matrix over $\S_p(\R)$ is
{\setlength{\mathindent}{10pt}
\setlength{\abovedisplayskip}{5pt}	
\setlength{\belowdisplayskip}{5pt}\begin{align}\label{eqn:goe-density} &\qquad
f_{\text{GOE}}(X) = \frac1{2^{\frac{p(p+3)}4}\pi^{\frac{p(p+1)}4}} \exp\Big\{-\frac14\Tr X^2\Big\}.
\end{align}}%
To compute its G-transform, we will make use of the fact that the elements of a $\text{GOE}(p)$ matrix are independent to reduce the expression to a product of characteristic functions.

\begin{proposition}\label{prop:gtransform-goe} The G-transform of the Gaussian Orthogonal Ensemble density on $\S_p(\R)$ is
{\setlength{\mathindent}{5pt}
\setlength{\abovedisplayskip}{3pt}	
\setlength{\belowdisplayskip}{3pt}\begin{align*}&\qquad
\psi_{\text{GOE}}(T) 
= \frac{2^{\frac{p(3p+1)}4}}{\pi^{\frac{p(p+1)}4}} 
\exp\Big\{-4\Tr T^2\Big\}.
\end{align*}}%
\end{proposition}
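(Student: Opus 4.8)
The plan is to apply the explicit formula \eqref{eqn:gtransform-explicit}, which is available here because $f_{\text{GOE}}^{1/2}$ is, up to a constant, a centered Gaussian density on $\R^{p(p+1)/2}$ and hence lies in $L^1(\S_p(\R))$. The crucial structural observation is that everything factors over the $p(p+1)/2$ independent coordinates $X_{kl}$, $k\le l$: writing the traces in coordinates, $\Tr X^2 = \sum_k X_{kk}^2 + 2\sum_{k<l}X_{kl}^2$ and $\Tr(TX) = \sum_k T_{kk}X_{kk} + 2\sum_{k<l}T_{kl}X_{kl}$, so that the integrand $e^{-i\Tr(TX)}f_{\text{GOE}}^{1/2}(X)$ splits as a product of one-variable Gaussian factors. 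Concretely, from \eqref{eqn:goe-density} one has $f_{\text{GOE}}^{1/2}(X) \propto \exp\{-\tfrac18\sum_k X_{kk}^2 - \tfrac14\sum_{k<l}X_{kl}^2\}$, so $\mathcal F\{f_{\text{GOE}}^{1/2}\}(T)$ reduces, after pulling out the normalizing constants, to a product of $p$ integrals $\int_\R e^{-iT_{kk}x}e^{-x^2/8}\,dx$ over the diagonal entries and $\binom p2$ integrals $\int_\R e^{-2iT_{kl}x}e^{-x^2/4}\,dx$ over the off-diagonal entries.

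Next I would evaluate each of these using the standard identity $\int_\R e^{-iax-bx^2}\,dx = \sqrt{\pi/b}\,e^{-a^2/(4b)}$. The diagonal integrals give $\sqrt{8\pi}\,e^{-2T_{kk}^2}$ each, and the off-diagonal integrals give $2\sqrt\pi\,e^{-4T_{kl}^2}$ each; the one point requiring care is the factor $2$ in the frequency $2T_{kl}$, reflecting that off-diagonal entries appear twice in $\Tr(TX)$ but only once in the measure $dX$. Collecting the exponentials yields $\mathcal F\{f_{\text{GOE}}^{1/2}\}(T)\propto \exp\{-2\sum_k T_{kk}^2 - 4\sum_{k<l}T_{kl}^2\} = \exp\{-2\Tr T^2\}$, and then squaring according to \eqref{eqn:gtransform} produces the advertised $\exp\{-4\Tr T^2\}$.

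Finally I would reconcile the multiplicative constants. Combining the prefactor $\bigl(2^{p(p+3)/4}\pi^{p(p+1)/4}\bigr)^{-1/2}$ of $f_{\text{GOE}}^{1/2}$, the Fourier normalization $\bigl(2^{p/2}\pi^{p(p+1)/4}\bigr)^{-1}$ from \eqref{def:fourier-transform}, and the factors $(8\pi)^{p/2}$ and $(4\pi)^{p(p-1)/4}$ from the Gaussian integrals, a short count of exponents shows the powers of $2$ and of $\pi$ collapse to $2^{p(3p+1)/8}\pi^{-p(p+1)/8}$ in $\mathcal F\{f_{\text{GOE}}^{1/2}\}(T)$; squaring then gives the stated prefactor $2^{p(3p+1)/4}\pi^{-p(p+1)/4}$. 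The only obstacle anywhere in this argument is the bookkeeping of these exponents and the off-diagonal factor of $2$; once the integral is factorized there is no conceptual difficulty, and the computation can also be phrased as a product of (Gaussian) characteristic functions of the individual entries.
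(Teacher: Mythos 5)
Your proof is correct and follows essentially the same route as the paper's: both exploit the independence of the GOE entries to factor $\mathcal{F}\{f_{\text{GOE}}^{1/2}\}$ into one-dimensional Gaussian integrals, evaluate them (you via the identity $\int e^{-iax-bx^2}dx=\sqrt{\pi/b}\,e^{-a^2/4b}$, the paper via the $\text{N}(0,1)$ characteristic function — a cosmetic difference), and square. The constant bookkeeping also checks out: the powers of $2$ and $\pi$ collapse to $2^{p(3p+1)/8}\pi^{-p(p+1)/8}$ before squaring, exactly as in the paper.
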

\begin{proof}
From Equation \eqref{eqn:goe-density}, $f_{\text{GOE}}^{1/2}$ is proportional to the density of the $\sqrt{2}\,\text{GOE}(p)$ distribution, so it is integrable. Therefore, we can apply Equation \eqref{eqn:gtransform-explicit} to find that
{\setlength{\mathindent}{5pt}
\setlength{\abovedisplayskip}{3pt}	
\setlength{\belowdisplayskip}{5pt}\begin{align*}&\qquad
\psi_{\text{GOE}}^{1/2}(T) 
= \frac1{2^{\frac{p}2}\pi^{\frac{p(p+1)}4}}\bigintss_{\S_p(\R)}\hspace{-15pt}
\exp\Big\{-i\Tr (TX) \Big\} f_{\text{GOE}}^{1/2}(X) \,dX
\\ &\qquad\qquad
= \frac1{2^{\frac{p(p+7)}8}\pi^{\frac{3p(p+1)}8}} \bigintss_{\S_p(\R)}\hspace{-15pt}
\exp\Big\{-i\Tr (TX) -\frac18\Tr X^2\Big\} \,dX
\\ &\qquad\qquad
= \frac1{2^{\frac{p(p+7)}8}\pi^{\frac{3p(p+1)}8}} \bigintss_{\R^{p(p+1)/2}}\hspace{-25pt}
\exp\Big\{-2i\sum_{k< l}^p T_{kl}X_{kl} -\frac14\sum_{k< l}^p X^2_{kl}
\\&\hspace{175pt}
-i\sum_{k=1}^p T_{kk}X_{kk} -\frac18\sum_{k=1}^p X^2_{kk}
\Big\} 
\,\prod_{k\leq l}^{p}dX_{kl}
\\ &\qquad\qquad
= \frac{2^{\frac{p(3p+1)}8}}{\pi^{\frac{p(p+1)}8}} 
\prod_{k<l}^{p}\bigintss_{\R}
\exp\Big\{-2iT_{kl}X_{kl}\Big\} \frac{\exp\left\{-\frac14X^2_{kl}\right\}}{\sqrt{4\pi}} 
\,dX_{kl}
\\&\hspace{100pt}
\cdot\prod_{k=1}^{p}\bigintss_{\R}
\exp\Big\{-iT_{kk}X_{kk}\Big\} \frac{\exp\left\{-\frac18X^2_{kk}\right\}}{\sqrt{8\pi}} 
\,dX_{kk}
\\ &\qquad\qquad
= \frac{2^{\frac{p(3p+1)}8}}{\pi^{\frac{p(p+1)}8}} 
\prod_{k<l}^{p} \E{\exp\Big\{-\sqrt{8}iT_{kl}Z\Big\}}
\prod_{k=1}^{p} \E{\exp\Big\{-2iT_{kk}Z\Big\}}
\end{align*}}%
for $Z\sim \text{N}(0, 1)$. The characteristic function of a $\text{N}(0, 1)$ is $\exp(-t^2/2)$, so
{\setlength{\mathindent}{5pt}
\setlength{\abovedisplayskip}{3pt}	
\setlength{\belowdisplayskip}{3pt}\begin{align*}&\qquad\qquad
	= \frac{2^{\frac{p(3p+1)}8}}{\pi^{\frac{p(p+1)}8}} 
\prod_{k < l}^{p} \exp\Big\{-4T^2_{kl}\Big\}
\prod_{k = 1}^{p} \exp\Big\{-2T^2_{kk}\Big\}
\\&\qquad\qquad
= \frac{2^{\frac{p(3p+1)}8}}{\pi^{\frac{p(p+1)}8}} 
\exp\Big\{-2\Tr T^2\Big\}.
\end{align*}}%
Squaring this result yields the desired expression for $\psi_\text{GOE}$.
\end{proof}

In particular, we see that $|\psi_{\text{GOE}}|$ is the density of a $\text{GOE}(p)/4$ distribution, or in other words that $\text{GOE}(p)^*=\text{GOE}(p)/4$. In particular the Gaussian orthogonal ensemble is its own G-conjugate, up to a constant factor.

Let us now compute the G-transform of the normalized Wishart distribution. Unlike the $\text{GOE}(p)$ case, the elements of the matrix are not independent, but the elements of its Cholesky decomposition are. By being careful about complex changes of variables, we can reduce the computation of the G-transform to the computation of characteristic functions of the Cholesky elements.

\begin{proposition}\label{prop:gtransform-nw} Let $n\geq p-2$. Then the G-transform of the normalized Wishart distribution $\sqrt{n}[\text{W}_p(n,I_p/n)-I_p]$ density on $\S_p(\R)$ is given by
{\setlength{\mathindent}{10pt}
\setlength{\abovedisplayskip}{3pt}
\setlength{\belowdisplayskip}{0pt}\begin{align*}&\quad
\psi_{\text{NW}}(T)
= C_{n, p}
\exp\bigg\{2i\sqrt{n}\Tr  T\bigg\}
\bigg|I_p +i\frac{4 T}{\sqrt{n}}\bigg|^{-\frac{n+p+1}2}\!\!
\\[-3pt]&\hspace{-10pt}\text{with}
\\[-7pt]&\quad
C_{n, p}
=\frac{2^{\frac{p(n+2p)}2}}{\pi^{\frac{p(p+1)}2}n^{\frac{p(p+1)}4}}
\frac{\Gamma^2_p\left(\frac{n+p+1}4\right)}{\Gamma_p\left(\frac{n}2\right)}.
\end{align*}}%
\end{proposition}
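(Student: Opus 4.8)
The plan is to turn the G-transform into a product of elementary one-dimensional integrals by passing through the Cholesky decomposition. The first step is bookkeeping: since $\mathrm{W}_p(n, I_p/n)$ is the matrix gamma $\mathrm{G}_p(n/2, (2/n)I_p)$, applying the affine change of variables $X = \sqrt{n}(Y - I_p)$ to its density gives
\[
f_{\text{NW}}(X) \;=\; A_{n,p}\,\big|I_p + \tfrac{X}{\sqrt n}\big|^{m/2}\exp\Big\{-\tfrac{\sqrt n}{2}\Tr X\Big\}\1{I_p + \tfrac{X}{\sqrt n} > 0}
\]
for the explicit positive constant $A_{n,p} = n^{-p(p+1)/4}e^{-np/2}(2/n)^{-pn/2}/\Gamma_p(n/2)$. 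Taking the (nonnegative, hence principal) square root, one sees that $f_{\text{NW}}^{1/2}$ is, up to a constant factor, the density of $\mathrm{G}_p\big(\tfrac{n+p+1}{4}, \tfrac4n I_p\big)$ pulled back through $Y = I_p + X/\sqrt n$ — the shape exponent matches because $m/4 = \tfrac{n+p+1}{4} - \tfrac{p+1}{2}$. This matrix gamma is a bona fide probability distribution exactly when $\tfrac{n+p+1}{4} > \tfrac{p-1}{2}$, i.e. $n \geq p - 2$, which is precisely the hypothesis; in particular $f_{\text{NW}}^{1/2} \in L^1(\S_p(\R))$, so the explicit formula \eqref{eqn:gtransform-explicit} for the G-transform applies.

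Next I would compute $\mathcal{F}\{f_{\text{NW}}^{1/2}\}(T)$. Because $f_{\text{NW}}$ is invariant under $X \mapsto OXO^t$ for orthogonal $O$, so is its Fourier transform as a function of $T$, hence we may assume $T$ diagonal. Substituting $Y = I_p + X/\sqrt n$ into \eqref{eqn:gtransform-explicit}, the kernel $e^{-i\Tr(TX)}$ contributes a factor $e^{i\sqrt n \Tr T}$, the term $e^{-\frac{\sqrt n}{4}\Tr X}$ contributes $e^{np/4}e^{-\frac n4 \Tr Y}$, and the integral reduces to $\int_{Y > 0}|Y|^{m/4}\exp\{-\Tr(AY)\}\,dY$ with $A = \tfrac n4 I_p + i\sqrt n\,T$, a diagonal matrix whose real part $\tfrac n4 I_p$ is positive definite. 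Now substitute the Cholesky factorization $Y = LL^t$, $L$ lower triangular with positive diagonal, whose Jacobian is $2^p\prod_k L_{kk}^{p+1-k}$; since $A$ is diagonal, $\Tr(AY) = \sum_{j \leq k} A_{kk}L_{kj}^2$ and the integral splits into a product of one-dimensional integrals — a half-line integral $\int_0^\infty L_{kk}^{\,2\alpha - k}e^{-A_{kk}L_{kk}^2}\,dL_{kk}$ for each diagonal entry (with $\alpha = \tfrac{n+p+1}{4}$) and a Gaussian integral $\int_{\R}e^{-A_{kk}L_{kj}^2}\,dL_{kj}$ for each $j < k$. Each converges absolutely because $\Re A_{kk} = n/4 > 0$, and the half-line integrals converge at the origin precisely under $n \geq p - 2$. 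Evaluating them in terms of $\Gamma$ and $\sqrt\pi$, the powers of $A_{kk}$ telescope across the product to $\prod_k A_{kk}^{-\alpha} = |A|^{-\alpha}$ and the $\Gamma$'s assemble into $\Gamma_p(\alpha)$; in effect this step is a hands-on proof of the matrix gamma integral $\int_{Y>0}|Y|^{\alpha - (p+1)/2}e^{-\Tr(AY)}\,dY = \Gamma_p(\alpha)|A|^{-\alpha}$ for complex $A$ with positive-definite real part. (Alternatively one can invoke this identity and justify it by analytic continuation in $A$ from the real cone.)

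Finally, writing $|A|^{-\alpha} = (n/4)^{-p\alpha}\big|I_p + i\tfrac{4}{\sqrt n}T\big|^{-\alpha}$ (with the non-integer power of the determinant read as the product of the principal branches of the eigenvalue factors $1 + i\tfrac{4}{\sqrt n}t_j$, which is exactly what the Cholesky computation produces), one obtains $\mathcal{F}\{f_{\text{NW}}^{1/2}\}(T) = \widetilde c\, e^{i\sqrt n\Tr T}\big|I_p + i\tfrac{4}{\sqrt n}T\big|^{-\alpha}$; squaring yields the claimed shape of $\psi_{\text{NW}}$ with exponent $\tfrac{n+p+1}{2}$ and $C_{n,p} = \widetilde c^{\,2}$. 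The remaining work is to verify that $\widetilde c^{\,2}$ really equals the stated constant by collecting all the scalar factors — the Fourier normalization $2^{-p/2}\pi^{-p(p+1)/4}$ from \eqref{eqn:gtransform-explicit}, the square root $A_{n,p}^{1/2}$, the change-of-variables Jacobian $n^{p(p+1)/4}$, the factor $e^{np/4}$ (which cancels the $e^{-np/4}$ inside $A_{n,p}^{1/2}$), and $\Gamma_p(\tfrac{n+p+1}{4})(n/4)^{-p(n+p+1)/4}$ — and checking that the powers of $2$, $\pi$, $n$ and the $\Gamma_p$ factors collapse to $C_{n,p} = \tfrac{2^{p(n+2p)/2}}{\pi^{p(p+1)/2}n^{p(p+1)/4}}\cdot\tfrac{\Gamma_p^2((n+p+1)/4)}{\Gamma_p(n/2)}$. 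I expect the only genuine obstacle to be this constant-chasing together with the careful justification of the complex one-dimensional integrals (in particular the behavior at $L_{kk} = 0$), which is exactly where the hypothesis $n \geq p - 2$ is consumed; the structure of the argument itself is routine once the reduction to $T$ diagonal and to a matrix gamma integral is in place.
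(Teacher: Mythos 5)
Your proof follows essentially the same route as the paper: reduce to diagonal $T$ by orthogonal invariance, factor the resulting complex matrix gamma integral through the Cholesky decomposition into a product of one-dimensional Gamma and Gaussian integrals with complex scale parameter, and evaluate those carefully, noting where $n\ge p-2$ is consumed at the $L_{kk}=0$ endpoint. The only cosmetic difference is that the paper packages the Cholesky step probabilistically via Bartlett's theorem and then evaluates the resulting ``expectations'' by explicit contour deformations, whereas you substitute the Cholesky factorization directly into the Lebesgue integral with its Jacobian and compute the same one-dimensional integrals; your parenthetical observation that the entire reduction is really a proof of the complex-parameter identity $\int_{Y>0}|Y|^{\alpha-(p+1)/2}e^{-\Tr(AY)}\,dY=\Gamma_p(\alpha)|A|^{-\alpha}$ for $\Re A\succ0$, which could alternatively be obtained by analytic continuation in $A$ from the real cone, is a legitimate shortcut that would bypass the explicit contour work.
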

\begin{proof}
Recall the notation $m=n-p-1$ used throughout this article. The density of a $Y\sim\text{W}_p(n,I_p/n)$ distribution is
{\setlength{\mathindent}{10pt}
	\setlength{\abovedisplayskip}{3pt}	
	\setlength{\belowdisplayskip}{3pt}\begin{align*}&\qquad
f_{\text{W}}(Y)=\frac{n^{\frac{np}2}\1{Y>0}}{2^{\frac{np}2}\Gamma_p\big(\frac{n}2\big)}
\exp\Big\{-\frac{n}2\Tr Y\Big\}|Y|^{\frac{m}2}.
\end{align*}}%
If we do a change of variables $X=\sqrt{n}(Y-I_p)$, so that $Y=I_p+X/\sqrt{n}$ and 
{\setlength{\mathindent}{10pt}
	\setlength{\abovedisplayskip}{3pt}	
	\setlength{\belowdisplayskip}{3pt}\begin{align*}&\qquad
\prod_{i\leq j}^pdY_{ij}=\frac1{n^{\frac{p(p+1)}4}}\prod_{i\leq j}^pdX_{ij},
\end{align*}}%
we see that the normalized Wishart distribution $\sqrt{n}[\text{W}_p(n,I_p/n)-I_p]$ has density
{\setlength{\mathindent}{10pt}
\setlength{\abovedisplayskip}{3pt}	
\setlength{\belowdisplayskip}{3pt}\begin{align}&
f_{\text{NW}}(X)
=\frac{n^{\frac{p(n+m)}4}}{2^{\frac{np}2}\Gamma_p\big(\frac{n}2\big)}
\1{I_p\!+\!\!\frac{X}{\sqrt{n}}\!>\!0}
\notag\\&\hspace{140pt}\cdot
\exp\Big\{-\frac{n}2\Tr\Big[I_p\!+\!\!\frac{X}{\sqrt{n}}\Big]\Big\}
\Big|I_p\!+\!\!\frac{X}{\sqrt{n}}\Big|^{\frac{m}2}.
\label{def:fnw}
\end{align}}%
Notice that $f_{\text{W}}^{1/2}$ is proportional to $\exp\{-\tr{[\frac4nI_p]^{-1}Y}\}|Y|^{\frac{n+p+1}4-\frac{p+1}2}$, so it must be proportional to the density of a matrix gamma distribution $\text{G}_p\left(\frac{n+p+1}4, \frac4nI_p\right)$ when $\frac{n+p+1}4>\frac{p-1}2$, i.e. $n\geq p-2$. In particular, it must be integrable.
As $f_{\text{NW}}$ was obtained by a linear change of variables from $f_{\text{W}}$, $f_{\text{NW}}^{1/2}$ must be integrable too, that is
{\setlength{\mathindent}{20pt}
\setlength{\abovedisplayskip}{3pt}	
\setlength{\belowdisplayskip}{3pt}\begin{align}&
\bigintss_{\S_p(\R)}\hspace{-15pt}
	f_{\text{NW}}^{1/2}(X)dX <\infty.
\label{eqn:fnw12-integrable}
\end{align}}%
 Therefore, we can apply Equation \eqref{eqn:gtransform-explicit} to obtain
{\setlength{\mathindent}{5pt}
\setlength{\abovedisplayskip}{3pt}	
\setlength{\belowdisplayskip}{3pt}\begin{align*}&
\psi^{1/2}_{NW}(T)
= \frac1{2^{\frac{p}2}\pi^{\frac{p(p+1)}4}}
\bigintss_{\S_p(\R)}\hspace{-15pt}
\exp\Big\{\!\!
-i\tr{TX}
\Big\}
f^{1/2}_{\text{NW}}(X) \,dX
\\&
= \frac1{2^{\frac{p}2}\pi^{\frac{p(p+1)}4}}
\E{
\exp\Big\{\!\!
-i\tr{TX}
\Big\}
f^{-1/2}_{\text{NW}}(X)}
\\&
= \frac{2^{\frac{p(n-2)}4}\Gamma^{\frac12}_p\left(\frac{n}2\right)}{\pi^{\frac{p(p+1)}4}n^{\frac{p(n+m)}8}}
\E{
\exp\bigg\{\!\!
-i\tr{TX}
+ \frac{n}4\Tr\Big[I_p\!+\!\!\frac{X}{\sqrt{n}}\Big]\bigg\}
\Big|I_p\!+\!\!\frac{X}{\sqrt{n}}\Big|^{-\frac{m}4}
}.
\end{align*}}%
If we rewrite the expectation in terms of  $Y=I_p+X/\sqrt{n}$, this last expression equals
{\setlength{\mathindent}{5pt}
\setlength{\abovedisplayskip}{3pt}	
\setlength{\belowdisplayskip}{3pt}\begin{align}&
= \frac{2^{\frac{p(n-2)}4}\Gamma^{\frac12}_p\left(\frac{n}2\right)}{\pi^{\frac{p(p+1)}4}n^{\frac{p(n+m)}8}}
\exp\big(i\sqrt{n}\Tr T\big)
\E{
\Etr\!\bigg\{\!\!
\left(\!-i\sqrt{n}T +\!\frac{n}4I_p\right)
\!Y\bigg\}
|Y|^{-\frac{m}4}
\!}.
\label{eq:GNW-sq1}
\end{align}}%
Since $T$ is real symmetric, there must be a spectral decomposition $T=ODO^t$ with $O$ real orthogonal and $D$ real diagonal. As $O^tYO$ has the same distribution as $Y$, namely $\text{W}_p(n,I_p/n)$, we can rewrite Equation \eqref{eq:GNW-sq1} as
{\setlength{\mathindent}{5pt}
\setlength{\abovedisplayskip}{3pt}	
\setlength{\belowdisplayskip}{3pt}\begin{align}&
= \frac{2^{\frac{p(n-2)}4}\Gamma^{\frac12}_p\left(\frac{n}2\right)}{\pi^{\frac{p(p+1)}4}n^{\frac{p(n+m)}8}}
\exp\big(i\sqrt{n}\Tr T\big)
\E{
	\Etr\!\bigg\{\!\!
	\left(\!-i\sqrt{n}D +\!\frac{n}4I_p\right)
	\!Y\bigg\}
	|Y|^{-\frac{m}4}
\!}.
\label{eq:GNW-sq2}
\end{align}}%
Now, since $Y$ is positive-definite it has a Cholesky decomposition $Y=U^tU$ with $U$ upper-triangular. According to Bartlett's theorem \citep[see][Theorem 3.2.14]{muirhead82}, all the elements of $U$ are independent, the diagonal elements have the distribution $U_{kk}^2\sim\chi_{n-k+1}^2/n$ and the upper diagonal elements have $U_{kl}\sim\text{N}(0,1/n)$ for $k<l$. Since
{\setlength{\mathindent}{5pt}
\setlength{\abovedisplayskip}{3pt}	
\setlength{\belowdisplayskip}{3pt}\begin{align*}
\Tr\bigg[\Big(-i\sqrt{n}D+\frac{I_p}4\Big)Y\bigg]
&=\sum_{j,k,l}^p\Big(-i\sqrt{n}D+\frac{n}4I_p\Big)_{jk}U'_{kl}U_{lj}
\\&
=\sum_{l\leq k}^p\Big(-i\sqrt{n}D_{kk}+\frac{n}4\Big)U^2_{lk}
\end{align*}}%
and $|Y|=\prod_{k=1}^p U^2_{kk}$, we have by independence and Equation \eqref{eq:GNW-sq2} that
{\setlength{\mathindent}{5pt}
\setlength{\abovedisplayskip}{3pt}	
\setlength{\belowdisplayskip}{3pt}\begin{align}&
	= \frac{2^{\frac{p(n-2)}4}\Gamma^{\frac12}_p\left(\frac{n}2\right)}{\pi^{\frac{p(p+1)}4}n^{\frac{p(n+m)}8}}
\exp\Bigg\{i\sqrt{n}\Tr T\Bigg\}
\prod_{l< k}^p
\E{ \exp\bigg\{\Big(-i\sqrt{n}D_{kk}+\frac{n}4\Big)U^2_{lk}\bigg\} }
\notag\\&\hspace{90pt}\cdot
\prod_{k=1}^p\E{
	\exp\bigg\{\Big(-i\sqrt{n}D_{kk}+\frac{n}4\Big)U^2_{kk}\bigg\}
	(U_{kk}^2)^{-\frac{m}4}
}.
\label{eq:GNW-sq3}
\end{align}}%
We will now compute these expected values in several steps. For a given $1\leq k\leq p$, let
{\setlength{\mathindent}{5pt}
	\setlength{\abovedisplayskip}{3pt}	
	\setlength{\belowdisplayskip}{3pt}\begin{align}&\qquad
\circled{A}=\E{\exp\bigg\{\Big(-i\sqrt{n}D_{kk}+\frac{n}4\Big)T^2_{kk}\bigg\} T_{kk}^{-\frac{m}2}}.
\label{eq:GNW-defA}
\end{align}}%
Since $T^2_{kk}\sim\chi^2_{n-k+1}/n$ and $m=n-p-1$, we have
{\setlength{\mathindent}{5pt}
\setlength{\abovedisplayskip}{3pt}	
\setlength{\belowdisplayskip}{3pt}\begin{align}&
	\circled{A}
	=\frac{n^{\frac{n-k+1}2}}{2^{\frac{n-k+1}2}\Gamma\Big(\frac{n-k+1}2\Big)}\bigintsss_{\,0}^\infty\hspace{-7pt}
	\exp\bigg\{\Big(-i\sqrt{n}D_{kk}+\frac{n}4\Big)x\bigg\}
	x^{-\frac{m}4}
	\notag\\&\hspace{200pt}
	\cdot
	x^{\frac{n-k+1}2-1}\exp\Big\{-\frac{n}2x\Big\}
	dx
	\notag\\&\hspace{17pt}
	=\frac{n^{\frac{n-k+1}2}}{2^{\frac{n-k+1}2}\Gamma\Big(\frac{n-k+1}2\Big)}\bigintsss_{\,0}^\infty\hspace{-7pt}
	\exp\bigg\{\!\!\!-\!\Big(\frac{n}4+\!\sqrt{n}D_{kk}i\Big)x\bigg\}
	x^{\frac{n-2k+p+3}4-1}
	dx
	\label{eq:GNW-A1}
	\end{align}}%
Consider the truncated integrands
{\setlength{\mathindent}{5pt}
\setlength{\abovedisplayskip}{3pt}	
\setlength{\belowdisplayskip}{3pt}\begin{align*}&\qquad
h_M(x)
=
\exp\bigg\{\!\!\!-\!\Big(\frac{n}4+\sqrt{n}D_{kk}i\Big)x\bigg\}
x^{\frac{n-2k+p+3}4-1}
\1{0<x<M}.
\end{align*}}%
Clearly this sequence is dominated by the integrable positive function $h$,
{\setlength{\mathindent}{5pt}
\setlength{\abovedisplayskip}{3pt}	
\setlength{\belowdisplayskip}{3pt}\begin{align*}&\qquad
|h_M(x)|\leq h(x)=\exp\bigg\{-\frac{n}4x\bigg\}
x^{\frac{n-2k+p+3}4-1},
\\&
\int_0^\infty h(x)dx=\left(\frac{4}{n}\right)^{\frac{n-2k+p+3}4}
\Gamma\left(\frac{n-2k+p+3}4\right)<\infty.
\end{align*}}%
Therefore, by the Dominated Convergence Theorem and Equation \eqref{eq:GNW-A1},
{\setlength{\mathindent}{5pt}
\setlength{\abovedisplayskip}{3pt}	
\setlength{\belowdisplayskip}{3pt}\begin{align*}&
\circled{A}=\frac{n^{\frac{n-k+1}2}}{2^{\frac{n-k+1}2}\Gamma\Big(\frac{n-k+1}2\Big)}
\lim_{M\rightarrow\infty}
\bigintsss_{\,0}^M\hspace{-7pt}
\exp\bigg\{\!\!\!-\!\Big(\frac{n}4+\sqrt{n}D_{kk}i\Big)x\bigg\}
x^{\frac{n-2k+p+3}4-1}
\,dx.
\end{align*}}%
By the change of variables $z=\big(\frac{n}4+\sqrt{n}D_{kk}i\big)x$, this can be rewritten
{\setlength{\mathindent}{5pt}
\setlength{\abovedisplayskip}{3pt}	
\setlength{\belowdisplayskip}{3pt}\begin{align}&\quad
=\frac{n^{\frac{n-k+1}2}\big(\frac{n}4+\sqrt{n}D_{kk}i\big)^{-\frac{n-2k+p+3}4}}{2^{\frac{n-k+1}2}\Gamma\Big(\frac{n-k+1}2\Big)}
\lim_{M\rightarrow\infty}
\bigintss_{\,0}^{\frac{nM}4+\sqrt{n}D_{kk}Mi}
\hspace{-60pt}
e^{-z}
z^{\frac{n-2k+p+3}4-1}
\,dz.
\label{eq:GNW-A2}
\end{align}}%
To compute this integral, we use a contour argument. Consider the closed path $C=C_1+C_2+C_3$ given by $C_1$ a path from $0$ to $\frac{nM}4$, $C_2$ a path from $\frac{nM}4$ to $\frac{nM}4+\sqrt{n}D_{kk}Mi$ and finally $C_3$ a path from $\frac{nM}4+\sqrt{n}D_{kk}Mi$ to 0. A diagram is provided as Figure \ref{fig:GNW-cont1}.

\begin{figure}[t]
\centering
\begin{tikzpicture}

\draw [help lines,->] (-1, 0) -- (4,0);
\node at (4,-0.33){$x$};
\draw [help lines,->] (0, -1) -- (0, 3.5);
\node at (-0.33,3.5) {$y$};

\draw[line width=1pt,   
decoration={markings,
	mark = at position 0.1465 with \arrow{>},
	mark = at position 0.4393 with \arrow{>},
	mark = at position 0.7989 with \arrow{>}
},
postaction={decorate}]
(0,0) -- (3,0) -- (3,3) -- (0,0);

\node at (-0.2,-0.2) {$\scriptstyle 0$};
\node at (1.5,0.4) {$C_1$};
\node at (3,-0.3) {$\scriptstyle \frac{nM}4$};
\node at (3.4,1.5) {$C_2$};
\node at (3,3.3) {$\scriptstyle \frac{nM}4 +\sqrt{n}D_{kk}Mi$};
\node at (1.2172,1.7828) {$C_3$};
\end{tikzpicture}
\caption{Contour $C=C_1+C_2+C_3$ when $D_{kk}\geq0$. The diagram is mirrored around the $x$ axis when $D_{kk}<0$.}
\label{fig:GNW-cont1}
\end{figure}
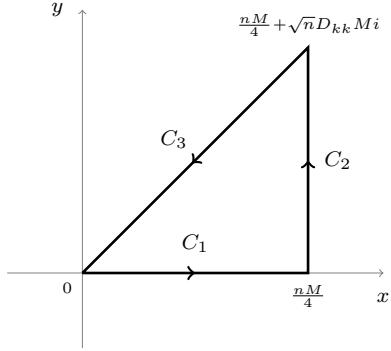

As $k\leq p$, $z\mapsto e^{-z}z^{\frac{n-2k+p+3}4-1}$ is entire and its integral over $C$ must be zero. Therefore
{\setlength{\mathindent}{5pt}
\setlength{\abovedisplayskip}{3pt}	
\setlength{\belowdisplayskip}{3pt}\begin{align*}&
\left|
\lim_{M\rightarrow\infty}\bigints_0^{\frac{nM}4+\sqrt{n}D_{kk}Mi}
\hspace{-60pt}
e^{-z}
z^{\frac{n-2k+p+3}4-1}
dz
-
\Gamma\left(\frac{n-2k+p+3}4\right)
\right|
\\&\qquad
=\lim_{M\rightarrow\infty}\left|
\bigints_{\,0}^{\frac{nM}4+\sqrt{n}D_{kk}Mi}
\hspace{-60pt}
e^{-z}
z^{\frac{n-2k+p+3}4-1}
dz
-
\bigints_{\,0}^{\frac{nM}4}
\hspace{-10pt}
e^{-x}
x^{\frac{n-2k+p+3}4-1}
dx
\right|
\\&\qquad
=\lim_{M\rightarrow\infty}\left|
\bigints_{C_2}
\hspace{-5pt}
e^{-z}
z^{\frac{n-2k+p+3}4-1}
dz
\right|
=\lim_{M\rightarrow\infty}\left|
\bigints_{\,\frac{nM}4}^{\frac{nM}4+\sqrt{n}D_{kk}Mi}
\hspace{-60pt}
e^{-z}
z^{\frac{n-2k+p+3}4-1}
dz
\right|.
\end{align*}}%
Do a change of variables $z=\frac{nM}4+y\sqrt{n}D_{kk}Mi$, so that $y=\frac{z-\frac{nM}4}{\sqrt{n}D_{kk}Mi}$ is real on the path. It yields
{\setlength{\mathindent}{5pt}
\setlength{\abovedisplayskip}{3pt}	
\setlength{\belowdisplayskip}{3pt}\begin{align*}&\quad
=\sqrt{n}|D_{kk}|\lim_{M\rightarrow\infty}
\frac{M}{e^{\frac{nM}4}}
\left|
\bigints_{\;0}^1
\hspace{-7pt}
e^{-y\sqrt{n}D_{kk}Mi}
\bigg[\!\frac{nM}4 +\!y\sqrt{n}D_{kk}Mi\bigg]^{\frac{n-2k+p+3}4-1}
\!\!dy
\right|
\\&\quad
\leq
\sqrt{n}|D_{kk}|
\bigints_{\;0}^1
\hspace{-7pt}
\bigg|\frac{n}4+y\sqrt{n}D_{kk}i\bigg|^{\frac{n-2k+p+3}4-1}
dy
\cdot
\lim_{M\rightarrow\infty}
\frac{M^{\frac{n-2k+p+3}4}}{e^{\frac{nM}4}}.
\end{align*}}%
This last integral is finite, since it is continuous on a bounded interval. Therefore the limit is zero and by Equation \eqref{eq:GNW-A2} and the previous expression,
{\setlength{\mathindent}{5pt}
\setlength{\abovedisplayskip}{3pt}	
\setlength{\belowdisplayskip}{3pt}\begin{align}&\qquad
\circled{A}=\frac{n^{\frac{n-k+1}2}\Gamma\Big(\frac{n-2k+p+3}4\Big)}{2^{\frac{n-k+1}2}\Gamma\Big(\frac{n-k+1}2\Big)}
\Big(\frac{n}4+\sqrt{n}D_{kk}i\Big)^{-\frac{n-2k+p+3}4}.
\label{eq:GNW-A3}
\end{align}}%

Going back to \eqref{eq:GNW-sq3}, let us now consider the expectations in the second products. For fixed $1\leq l<k\leq p$, let 
{\setlength{\mathindent}{5pt}
\setlength{\abovedisplayskip}{3pt}	
\setlength{\belowdisplayskip}{3pt}\begin{align}&\qquad
\circled{B}=\E{\exp\bigg\{\Big(-i\sqrt{n}D_{kk}+\frac{n}4\Big)T^2_{lk}\bigg\}}.
\label{eq:GNW-defB}
\end{align}}%
Since $T_{lk}^2\sim\chi^2_1/n,$
{\setlength{\mathindent}{5pt}
\setlength{\abovedisplayskip}{3pt}	
\setlength{\belowdisplayskip}{3pt}\begin{align}&\qquad
\circled{B}
=\sqrt{\frac{n}{2\pi}}
\bigintss_{\,0}^\infty\hspace{-7pt}
\exp\bigg\{\Big(-i\sqrt{n}D_{kk}+\frac{n}4\Big)x\bigg\}
\frac{e^{-\frac{n}2x}}{\sqrt{x}}dx
\notag\\&\qquad
=\sqrt{\frac{n}{2\pi}}
\bigintss_{\,0}^\infty\hspace{-7pt}
\exp\bigg\{\!\!-\!\Big(\frac{n}4 +i\sqrt{n}D_{kk}\Big)x\bigg\}\frac1{\sqrt{x}}dx.
\label{eq:GNW-B1}
\end{align}}%
Consider the truncated integrands
{\setlength{\mathindent}{5pt}
	\setlength{\abovedisplayskip}{3pt}	
	\setlength{\belowdisplayskip}{3pt}\begin{align}&\qquad
h_M(x)=\exp\bigg\{\!\!-\!\Big(\frac{n}4 +i\sqrt{n}D_{kk}\Big)x\bigg\}
\frac1{\sqrt{x}}\1{\frac1M<x<M}.
\end{align}}%
We see that they are dominated by a positive, integrable function $h(x)$,
{\setlength{\mathindent}{5pt}
	\setlength{\abovedisplayskip}{3pt}	
	\setlength{\belowdisplayskip}{3pt}\begin{align*}&\qquad
|h_M(x)|\leq h(x)=\frac{e^{-\frac{n}4x}}{\sqrt{x}},
\qquad \int_0^\infty h(x)dx=2\sqrt{\frac{\pi}n}<\infty.
\end{align*}}%
Therefore, by the Dominated Convergence Theorem and Equation \eqref{eq:GNW-B1}, we conclude that
{\setlength{\mathindent}{5pt}
\setlength{\abovedisplayskip}{3pt}	
\setlength{\belowdisplayskip}{3pt}\begin{align*}&
\circled{B}=\sqrt{\frac{n}{2\pi}}
\lim_{M\rightarrow\infty}\bigintss_{\,1/M}^M\hspace{-10pt}
\exp\bigg\{\!\!-\!\Big(\frac{n}4+i\sqrt{n}D_{kk}\Big)x\bigg\}\frac1{\sqrt{x}} \,dx.
\end{align*}}%
A complex change of variables $z=\big(\frac{n}4+i\sqrt{n}D_{kk}\big)x$ yields 
{\setlength{\mathindent}{5pt}
\setlength{\abovedisplayskip}{3pt}	
\setlength{\belowdisplayskip}{3pt}\begin{align}&\qquad
=\sqrt{\frac{n}{2\pi}}\Big(\frac{n}4+i\sqrt{n}D_{kk}\Big)^{-\frac12}
\lim_{M\rightarrow\infty}
\bigints_{\,\frac{n}{4M}+\frac{\sqrt{n}D_{kk}}{M}i}^{\frac{nM}4+\sqrt{n}D_{kk}Mi}\hspace{-50pt}
e^{-z}/\sqrt{z}
\,dz.
\label{eq:GNW-B2}
\end{align}}%
Let's compute this integral again using a contour integration argument. Consider the contour $C=C_1+C_2+C_3+C_4$ given by $C_1$ a line from $\frac{n}{4M}$ to $\frac{nM}{4}$, $C_2$ a line from $\frac{nM}{4}$ to $\frac{nM}4+\sqrt{n}D_{kk}Mi$, $C_3$ a line from $\frac{nM}4+\sqrt{n}D_{kk}Mi$ to $\frac{n}{4M}+\frac{\sqrt{n}D_{kk}}{M}i$ and $C_4$ a line from $\frac{n}{4M}+\frac{\sqrt{n}D_{kk}}{M}i$ to $\frac{n}{4M}$. A diagram is provided as Figure \ref{fig:GNW-cont2}.

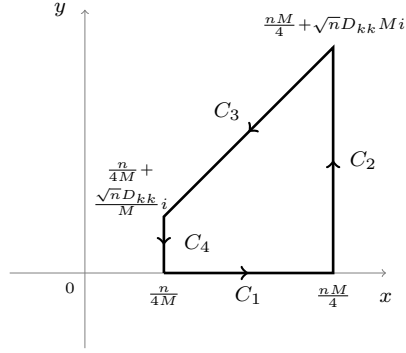
\begin{figure}[t]
\centering
\begin{tikzpicture}

\draw [help lines,->] (-1, 0) -- (4,0);
\node at (4,-0.33){$x$};
\draw [help lines,->] (-0, -1) -- (-0, 3.5);
\node at (-0.33,3.5) {$y$};

\draw[line width=1pt,   
decoration={markings,
	mark = at position 0.1225 with \arrow{>},
	mark = at position 0.4084 with \arrow{>},
	mark = at position 0.7450 with \arrow{>},
	mark = at position 0.9592 with \arrow{>}
},
postaction={decorate}]
(1.05,0) -- (3.3,0) -- (3.3,3) -- (1.05,0.75) -- (1.05,0);

\node at (-0.2,-0.2) 		{$\scriptstyle 0$};
\node at (1.05,-0.3) 		{$\scriptstyle \frac{n}{4M}$};
\node at (2.175,-0.3) 		{$C_1$};
\node at (3.3,-0.3) 			{$\scriptstyle \frac{nM}4$};
\node at (3.7,1.5) 		{$C_2$};
\node at (3.3,3.3) 			{$\scriptstyle \frac{nM}4 +\sqrt{n}D_{kk}Mi$};
\node at (1.8922,2.1578) 	{$C_3$};
\node at (0.6,1.35) 		{$\scriptstyle \frac{n}{4M}+$};
\node at (0.6,0.95) 		{$\scriptstyle \frac{\sqrt{n}D_{kk}}{M}i$};
\node at (1.5,0.4) 	{$C_4$};
\end{tikzpicture}
\caption{Contour $C=C_1+C_2+C_3+C_4$ when $D_{kk}\geq0$. The diagram is mirrored around the $x$ axis when $D_{kk}<0$.}
\label{fig:GNW-cont2}
\end{figure}

Since $z\mapsto e^{-z}/\sqrt{z}$ is holomorphic away from zero,
{\setlength{\mathindent}{5pt}
\setlength{\abovedisplayskip}{3pt}	
\setlength{\belowdisplayskip}{3pt}\begin{align*}&
\left|\lim_{M\rightarrow\infty}
\bigints_{\frac{n}{4M}+\frac{\sqrt{n}D_{kk}}{M}i}^{\frac{nM}4+\sqrt{n}D_{kk}Mi}
\hspace{-60pt}
e^{-z}/\sqrt{z}\,dz
\;-\;
\sqrt{\pi}\;
\right|
=\lim_{M\rightarrow\infty}\left|
\bigints_{\frac{n}{4M}+\frac{\sqrt{n}D_{kk}}{M}i}^{\frac{nM}4+\sqrt{n}D_{kk}Mi}
\hspace{-60pt}
e^{-z}/\sqrt{z}\,dz
-
\bigints_{\frac{n}{4M}}^{\frac{nM}4}
\hspace{-15pt}
e^{-x}/\sqrt{x}\,dx
\right|
\\&\qquad
=\lim_{M\rightarrow\infty}\left|
\bigintss_{C_2}\hspace{-7pt}
e^{-z}/\sqrt{z}\,dz
+\bigintss_{C_4}\hspace{-7pt}
e^{-z}/\sqrt{z}\,dz
\right|
\\&\qquad
\leq\lim_{M\rightarrow\infty}
\left|\bigints_{\,\frac{nM}4}^{\frac{nM}4+\sqrt{n}D_{kk}Mi}
\hspace{-60pt}
e^{-z}/\sqrt{z}\,dz
\quad
\right|
+\left|\bigints_{\frac{n}{4M}+\frac{\sqrt{n}D_{kk}}{M}i}^{\frac{n}{4M}}
\hspace{-40pt}
e^{-z}/\sqrt{z}\,dz
\;\;
\right|.
\end{align*}}%
By changes of variables $z=\frac{nM}4 +y\sqrt{n}D_{kk}Mi$ and $z=\frac{n}{4M}+y\frac{\sqrt{n}D_{kk}}{M}i$ on the two respective integrals, we get
{\setlength{\mathindent}{5pt}
\setlength{\abovedisplayskip}{3pt}	
\setlength{\belowdisplayskip}{3pt}\begin{align*}&\qquad
=\sqrt{n}|D_{kk}|\lim_{M\rightarrow\infty}
\frac{M}{e^{\frac{nM}4}}
\left|\bigintss_{\,0}^1
\frac{e^{y\sqrt{n}D_{kk}Mi}}{\Big[\frac{nM}4 +y\sqrt{n}D_{kk}Mi\Big]^{\frac12}}
dy\;
\right|
\\&\hspace{40pt}
+\sqrt{n}|D_{kk}|\lim_{M\rightarrow\infty}
\frac1{Me^{\frac{n}{4M}}}
\left|\bigintss_{\,0}^1
\frac{e^{y\frac{\sqrt{n}D_{kk}}{M}i}}{\Big[\frac{n}{4M} +y\frac{\sqrt{n}D_{kk}}{M}i\Big]^{\frac12}}
dy\;
\right|
\\&\qquad
\leq\sqrt{n}|D_{kk}|
\bigintss_{\,0}^1\frac1{\Big|\frac{n}4 +y\sqrt{n}D_{kk}i\Big|^{\frac12}}dy\;
\cdot \lim_{M\rightarrow\infty}
\frac{\sqrt{M}}{e^{\frac{nM}4}}
\\&\hspace{40pt}
+\sqrt{n}|D_{kk}|
\bigintss_{\,0}^1\frac1{\Big|\frac{n}{4} +y\sqrt{n}D_{kk}i\Big|^{\frac12}}dy\;\cdot
\lim_{M\rightarrow\infty}
\frac1{\sqrt{M}e^{\frac{n}{4M}}}.
\end{align*}}%
Since $\Big|\frac{n}{4} +y\sqrt{n}D_{kk}i\Big|^{-\frac12}=\Big(\frac{n^2}{16}+y^2nD^2_{kk}\Big)^{-\frac14}$ is continuous on $[0,1]$, a bounded interval, we conclude that the integrals are finite and that the limits are zero. Therefore, by Equation \eqref{eq:GNW-B2},
{\setlength{\mathindent}{5pt}
\setlength{\abovedisplayskip}{3pt}	
\setlength{\belowdisplayskip}{3pt}\begin{align}&\qquad
\circled{B}=\sqrt{\frac{n}{2}}\Big(\frac{n}4 +i\sqrt{n}D_{kk}\Big)^{-\frac12}.
\label{eq:GNW-B3}
\end{align}}%
Recall the definitions of $\circled{A}$ and $\circled{B}$ at Equations \eqref{eq:GNW-defA} and \eqref{eq:GNW-defB}. Combining both Equations \eqref{eq:GNW-A3} and \eqref{eq:GNW-B3} into the expression for $\psi^{1/2}_{\text{NW}}$ at Equation \eqref{eq:GNW-sq3} provides
{\setlength{\mathindent}{5pt}
\setlength{\abovedisplayskip}{3pt}	
\setlength{\belowdisplayskip}{3pt}\begin{align*}&
\psi^{1/2}_\text{NW}(T)
= \frac{2^{\frac{p(n-2)}4}\Gamma^{\frac12}_p\left(\frac{n}2\right)}{\pi^{\frac{p(p+1)}4}n^{\frac{p(n+m)}8}}
\exp\bigg\{i\sqrt{n}\Tr T\bigg\}
\prod_{l< k}^p
\sqrt{\frac{n}{2}}\Big(\frac{n}4 +i\sqrt{n}D_{kk}\Big)^{-\frac12}
\\&\hspace{100pt}\cdot
\prod_{k=1}^p
\frac{n^{\frac{n-k+1}2}\Gamma\Big(\frac{n-2k+p+3}4\Big)}{2^{\frac{n-k+1}2}\Gamma\Big(\frac{n-k+1}2\Big)}
\Big(\frac{n}4 +\sqrt{n}D_{kk}i\Big)^{-\frac{n-2k+p+3}4}
\\&\quad
= \frac{2^{\frac{p(n-2)}4}\Gamma^{\frac12}_p\left(\frac{n}2\right)}{\pi^{\frac{p(p+1)}4}n^{\frac{p(n+m)}8}}
\prod_{k=1}^p
\frac{n^{\frac{n-k+1}2+\frac{k-1}2}}{2^{\frac{n-k+1}2+\frac{k-1}2}}
\prod_{k=1}^p
\frac{\Gamma\left(\frac{n-2k+p+3}4\right)}{\Gamma\left(\frac{n-k+1}2\right)}
\exp\bigg\{i\sqrt{n}\Tr T\bigg\}
\\&\hspace{100pt}\cdot
\prod_{k=1}^p
\Big(\frac{n}4-\sqrt{n}D_{kk}i\Big)^{-\frac{n-2k+p+3}4-\frac{k-1}2}
\\&\quad
= \frac{n^{\frac{p(2n+p+1)}8}}{2^{\frac{p(n+2)}4}\pi^{\frac{p(p+1)}4}}
\prod_{k=1}^p \frac{\Gamma\left(\frac{n-2k+p+3}4\right)}{\Gamma\left(\frac{n-k+1}2\right)}
\exp\bigg\{i\sqrt{n}\Tr T\bigg\}
\bigg|\frac{n}4I_p+i\sqrt{n}T\bigg|^{-\frac{n+p+1}4}\!\!.
\end{align*}}%
But by \citet[Theorem 2.1.12]{muirhead82},
{\setlength{\mathindent}{5pt}
\setlength{\abovedisplayskip}{3pt}	
\setlength{\belowdisplayskip}{3pt}\begin{align*}&
\prod_{k=1}^p\frac{\Gamma\left(\frac{n-2k+p+3}4\right)}{\Gamma\left(\frac{n-k+1}2\right)}
=\frac{\pi^{\frac{p(p-1)}4}\prod\limits_{k=1}^p\Gamma\left(\frac{n+p+1}4+\frac{1-k}2\right)}
{\pi^{\frac{p(p-1)}4}\prod\limits_{k=1}^p\Gamma\left(\frac{n}2+\frac{1-k}2\right)}
=\frac{\Gamma_p\left(\frac{n+p+1}4\right)}{\Gamma_p\left(\frac{n}2\right)},
\end{align*}}%
so by taking a $n/4$ factor out of the determinant, we find that
{\setlength{\mathindent}{5pt}
\setlength{\abovedisplayskip}{3pt}	
\setlength{\belowdisplayskip}{3pt}\begin{align*}&\quad
\psi^{1/2}_\text{NW}(T)=\frac{2^{\frac{p(n+2p)}4}}{\pi^{\frac{p(p+1)}4}n^{\frac{p(p+1)}8}}
\frac{\Gamma_p\left(\frac{n+p+1}4\right)}{\Gamma^{1/2}_p\left(\frac{n}2\right)}
\exp\bigg\{i\sqrt{n}\Tr T\bigg\}
\bigg|I_p +i\frac{4 T}{\sqrt{n}}\bigg|^{-\frac{n+p+1}4}\!\!.
\end{align*}}%
Squaring this result yields the desired expression for $\psi_{\text{NW}}$.
\end{proof}

By Proposition \ref{prop:gtransform-nw}, when $n\geq p-2$ the G-conjugate of a normalized Wishart distribution must have a density on $\S_p(\R)$ given by
{\setlength{\mathindent}{5pt}
\setlength{\abovedisplayskip}{3pt}	
\setlength{\belowdisplayskip}{3pt}\begin{align}\label{eqn:gconjugate-nw} &\quad
|\psi_{\text{NW}}|(T)
=\frac{2^{\frac{p(n+2p)}2}}{\pi^{\frac{p(p+1)}2}n^{\frac{p(p+1)}4}}
\frac{\Gamma^2_p\left(\frac{n+p+1}4\right)}{\Gamma_p\left(\frac{n}2\right)}
\bigg|I_p +\frac{16T^2}{n}\bigg|^{-\frac{n+p+1}4}\!\!.
\end{align}}
As mentioned in the paragraph following Equation \eqref{eqn:gconjugate}, the G-conjugate of a $\chi^2_n/n$ distribution is a scaled $t_{n/2}$. Thus, by analogy, Equation \eqref{eqn:gconjugate-nw} should be represent some kind of generalization of the $t$ distribution to the real symmetric matrices. Matrix-variate generalizations of the $t$ distribution have been investigated in the past, but not for symmetric matrices. Hence it appears the concept is new.

This motivates us to propose in Section \ref{sec:symmetrict} a candidate for a symmetric matrix variate $t$ distribution. Using that definition, the G-conjugate to the normalized Wishart could then be regarded as the $t$ distribution with $n/2$ degrees of freedom and scale matrix $I_p/8$, which we denote $T_{n/2}(I_p/8)$. But regardless of its name, this distribution will play a key role in our results about the middle-scale regime asymptotics of Wishart matrices, and will be investigated in depth in Section \ref{sec:symmetrict}.

\section{The symmetric matrix variate $t$ distribution}\label{sec:symmetrict}

In Section \ref{sec:gtransforms}, Equation \eqref{eqn:gconjugate-nw}, we proved that when $n\geq p-2$, the G-conjugate of the normalized Wishart distribution $\sqrt{n}[\text{W}_p(n,I_p/n)-I_p]$ has density on $\S_p(\R)$ given by
{\setlength{\mathindent}{10pt}
\setlength{\abovedisplayskip}{3pt}	
\setlength{\belowdisplayskip}{5pt}\begin{align}\label{eqn:gconjugate-nw2} &\qquad
|\psi_{\text{NW}}|(T)
=\frac{2^{\frac{p(n+2p)}2}}{\pi^{\frac{p(p+1)}2}n^{\frac{p(p+1)}4}}
\frac{\Gamma^2_p\left(\frac{n+p+1}4\right)}{\Gamma_p\left(\frac{n}2\right)}
\bigg|I_p +\frac{16T^2}{n}\bigg|^{-\frac{n+p+1}4}\!\!.
\end{align}}%
Two remarks are in order. First, we are unaware of any matrix calculus tools that could let us integrate this expression directly. Thus, the mere fact that this expression integrates to unity, a consequence of being the G-conjugate of another distribution, seems remarkable. 

Second, when $p=1$, this is the $t_{n/2}/\sqrt{8}$ distribution. Thus, as we mentioned while discussing Equation \eqref{eqn:gconjugate-nw}, it is natural to interpret this distribution as the parametrization of some generalization of the $t$ distribution to $\S_p(\R)$, the space of real-valued symmetric matrices. The purpose of this section is to propose a candidate definition for such generalization, as well as prove several results concerning the normalized Wishart G-conjugate.

To the best of our knowledge, no extension of the $t$ distribution to symmetric matrices has ever been proposed. However, a non-symmetric matrix variate $t$ distribution has been thoroughly investigated in the literature -- see \citet[Chapter 4]{gupta99} for a thorough summary. Several definitions exist. For our purposes, we say that a $p\times q$ real-valued random matrix $T$ has the matrix variate $t$ distribution with $\nu$ degrees of freedom and $q\times q$ positive-definite scale matrix $\Omega$ if it has density
{\setlength{\mathindent}{10pt}
\setlength{\abovedisplayskip}{3pt}	
\setlength{\belowdisplayskip}{3pt}\begin{align*} &\hspace{45pt}
\frac{\Gamma_p\Big(\frac{\nu+p+q-1}2\Big)}
	{\nu^{\frac{pq}2}\pi^{\frac{pq}2} \Gamma_p\Big(\frac{\nu+p-1}2\Big)}
|\Omega|^{-\frac{p}2}
\bigg| I_p + \frac{T\Omega^{-1}T^t}{\nu}\bigg|^{-\frac{\nu+p+q-1}{2}}.
\end{align*}}%
It is not exactly clear what should be the proper analog of this distribution for symmetric matrices. But it would be elegant if the degrees of freedom of Equation \eqref{eqn:gconjugate-nw2} were to be exactly $n/2$, as in the univariate case. Thus, the following definition seems natural.

\begin{definition}[Symmetric matrix variate $t$ distribution]\label{def:tdistribution} We say a real symmetric $p\times p$ matrix $T$ has the symmetric matrix variate $t$ distribution with $\nu\geq p/2 -1$ degrees of freedom and $p\times p$ positive-definite scale matrix $\Omega$, denoted $T_\nu(\Omega)$, if it has density
{\setlength{\mathindent}{10pt}
\setlength{\abovedisplayskip}{3pt}	
\setlength{\belowdisplayskip}{3pt}\begin{align*} &\hspace{45pt}
f_{T_{n}(\Omega)}(T) \quad\propto\quad
\bigg|I_p +\frac{T\Omega^{-1}T}{\nu}\bigg|^{-\frac{\nu+(p+1)/2}2}\!\!.
\end{align*}}%
\end{definition}
With this definition, the G-conjugate to the normalized Wishart distribution, whose density is given by Equation \eqref{eqn:gconjugate-nw2}, is the $T_{n/2}(I_p/8)$ distribution on $\S_p(\R)$. 

In fact, since Equation \eqref{eqn:gconjugate-nw2} integrates to one, we can deduce the normalization constant of Definition \ref{def:tdistribution}. For an arbitrary degrees of freedom parameter $\nu$, imagine the density $|\psi_{\text{NW}}|$ of the G-conjugate of a normalized Wishart distribution with $n=2\nu\geq p-2$. By virtue of being a G-conjugate, it must integrate to unity. Then from the change of variables $T=\Omega^{-\frac14}S\Omega^{-\frac14}/\sqrt{8}$ which has Jacobian $dT=8^{-\frac{p(p+1)}4}|\Omega|^{-\frac{p+1}4}dS$, we see that
{\setlength{\mathindent}{10pt}
\setlength{\abovedisplayskip}{3pt}	
\setlength{\belowdisplayskip}{5pt}\begin{align*}\qquad
1 &= \bigintss_{\S_p(\R)}\hspace{-20pt}
|\psi_{\text{NW}}|(T)
\,dT
= \frac1{2^{\frac{3p(p+1)}4}|\Omega|^{\frac{p+1}4}}
\bigintss_{\S_p(\R)}\hspace{-20pt}
|\psi_{\text{NW}}| \bigg(\frac{\Omega^{-\frac14}S\Omega^{-\frac14}}{\sqrt{8}}\bigg)
dS
\\&
=
\frac{2^{p(\nu-1)} \Gamma^2_p\left(\frac{\nu+(p+1)/2}2\right)}
{\pi^{\frac{p(p+1)}2} \nu^{\frac{p(p+1)}4} \Gamma_p\left(\nu\right)}
|\Omega|^{-\frac{p+1}4}
\hspace{-5pt}\bigints_{\S_p(\R)}\hspace{-20pt}
\bigg|I_p +\frac{S\Omega^{-1}S}{\nu}\bigg|^{-\frac{\nu+(p+1)/2}2}
\!\!dS.
\end{align*}}%
Thus we must have
{\setlength{\mathindent}{10pt}
\setlength{\abovedisplayskip}{3pt}	
\setlength{\belowdisplayskip}{3pt}\begin{align}&\label{eqn:tdistribution-density}\quad
f_{T_{n}(\Omega)}(T) 
=
\frac{2^{p(\nu-1)} \Gamma^2_p\left(\frac{\nu+(p+1)/2}2\right)}
	{\pi^{\frac{p(p+1)}2} \nu^{\frac{p(p+1)}4} \Gamma_p\left(\nu\right)}
|\Omega|^{-\frac{p+1}4}
\bigg|I_p +\frac{T\Omega^{-1}T}{\nu}\bigg|^{-\frac{\nu+(p+1)/2}2}\!\!\!.
\end{align}}%
It would be interesting to see if this distribution satisfies the properties we would expect of a $t$ distribution, to ensure our guess is the ``correct'' one. However, this would take us too far away from the topic of this article.
Instead, we will focus in the rest of this section on proving results about $T_{n/2}(I_p/8)$, the G-conjugate to the normalized Wishart distribution.

Our first result will concern the asymptotic expansion of its normalization constant. We mention that this constant is the same as the $C_{n,p}$ term appearing in the expression of the G-transform of the normalized Wishart in Proposition \ref{prop:gtransform-nw}.

\begin{lemma}\label{lem:cnp}
The normalization constant of the $T_{n/2}(I_p/8)$ distribution
{\setlength{\mathindent}{10pt}
	\setlength{\abovedisplayskip}{0pt}
	\setlength{\belowdisplayskip}{3pt}\begin{align}&\quad
	C_{n, p}
	=\frac{2^{\frac{p(n+2p)}2}}{\pi^{\frac{p(p+1)}2}n^{\frac{p(p+1)}4}}
	\frac{\Gamma^2_p\left(\frac{n+p+1}4\right)}{\Gamma_p\left(\frac{n}2\right)}
	\label{eq:lem-cnp-defcnp}
	\end{align}}%
has, for every $K\in\N$, the asymptotic expansion
{\setlength{\mathindent}{10pt}
	\setlength{\abovedisplayskip}{3pt}
	\setlength{\belowdisplayskip}{0pt}\begin{align*}&\quad
	C_{n, p}
	=
	\frac{2^{\frac{p(3p+1)}4}}{\pi^{\frac{p(p+1)}4}}
	\exp\bigg\{\!\!
	-\!\frac12\!\sum_{k=1}^{K+1}\!\frac{\keven}{k(k\!+\!1)(k\!+\!2)}\frac{p^{k+2}}{n^k}
	\notag\\&\hspace{150pt}
	-\!\frac14\!\sum_{k=1}^{K+1}\!\frac{1\!+\!2\keven}{k(k\!+\!1)}\frac{p^{k+1}}{n^k}
	+ o\Big(\frac{p^{K+3}}{n^{K+1}}\Big)
	\bigg\}.
	\end{align*}}%
as $n\rightarrow\infty$ with $p/n\rightarrow0$.
\end{lemma}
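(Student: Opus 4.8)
The plan is to take logarithms, unfold the multivariate gamma functions into ordinary ones, and apply Stirling's series term by term, exploiting that $p/n\to0$ forces every argument to be of order $n$.

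First I would use $\Gamma_p(a)=\pi^{p(p-1)/4}\prod_{j=1}^{p}\Gamma\big(a-\tfrac{j-1}{2}\big)$ to write
\[
\log C_{n,p}=\frac{p(n+2p)}{2}\log 2-\frac{p(p+3)}{4}\log\pi-\frac{p(p+1)}{4}\log n+S,
\]
where $S=\sum_{j=1}^{p}\big[2\log\Gamma\big(\tfrac{n+p+3-2j}{4}\big)-\log\Gamma\big(\tfrac{n-j+1}{2}\big)\big]$. I then write the numerator argument as $\tfrac n4(1+\varepsilon_j)$ with $\varepsilon_j=\tfrac{p+3-2j}{n}$ and the denominator argument as $\tfrac n2(1+\delta_j)$ with $\delta_j=\tfrac{1-j}{n}$, so that $\varepsilon_j,\delta_j=O(p/n)=o(1)$ \emph{uniformly} in $j\le p$, and insert the Stirling--Binet expansion $\log\Gamma(z)=(z-\tfrac12)\log z-z+\tfrac12\log 2\pi+\sum_{m=1}^{M}\tfrac{B_{2m}}{2m(2m-1)z^{2m-1}}+R_M(z)$, with $R_M(z)=O(z^{-2M-1})$ uniformly for $z\gtrsim n$, after splitting $\log z=\log\tfrac n4+\log(1+\varepsilon_j)$ (resp. $\log\tfrac n2+\log(1+\delta_j)$).

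The $\log\tfrac n4$, $\log\tfrac n2$ pieces, together with $\sum_{j\le p}(p+2-j)=\tfrac{p(p+3)}2$, combine with the elementary part so that every power of $\log n$ cancels; the $\tfrac12\log 2\pi$ constants contribute the extra $\tfrac p2\log 2\pi$, leaving the prefactor $2^{p(3p+1)/4}\pi^{-p(p+1)/4}$. Inside the exponential there remains $\tfrac n2\sum_{j}\sum_{m\ge2}\tfrac{(-1)^m}{m(m-1)}(\varepsilon_j^m-\delta_j^m)$, the expansion of $-\tfrac12\big[2\log(1+\varepsilon_j)-\log(1+\delta_j)\big]$, and the expansion of the Binet corrections (the linear-in-$t$ parts of $(1+t)\log(1+t)$ cancelling the $-z$ terms). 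Taylor-expanding each $\log(1+\cdot)$, expanding $(1+\cdot)^{-(2m-1)}$, and evaluating the elementary power sums $\sum_{j=1}^p(p+3-2j)^a$ and $\sum_{j=1}^p(j-1)^a$ by Faulhaber's formula, I would collect terms by order $p^b/n^c$. The key structural point is that the leading power $p^{m+1}$ of $\sum_j(\varepsilon_j^m-\delta_j^m)$ \emph{cancels between the two sums precisely when $m$ is even}; after the substitution $k=m-1$ this is the origin of the indicator $\keven$ in the coefficient $-\tfrac12\,\keven\,[k(k+1)(k+2)]^{-1}$ of $p^{k+2}/n^k$, while the subleading power $p^{m}$ together with the contributions of $-\tfrac12\log(1+\cdot)$ and of the $\tfrac1{12z}$ Binet term assembles the coefficient $\tfrac{1+2\keven}{k(k+1)}$ of $p^{k+1}/n^k$.

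I expect the only real difficulty to be bookkeeping: one must retain enough terms of each expansion ($M$ and the Taylor orders growing with $K$) and verify that every discarded contribution is genuinely $o(p^{K+3}/n^{K+1})$. This is exactly where the uniform-in-$j$ control is needed: $\sum_{j\le p}|\varepsilon_j|^m\lesssim p^{m+1}/n^m$, a remainder $R_M$ contributes $O(p\,n^{-2M-1})$, and since $p/n\to0$ the family $\{p^{k+2}/n^k,\ p^{k+1}/n^k\}_{k\ge1}$ forms a genuine asymptotic scale with each term dominating the next, so the termwise summation and truncation are legitimate. Several of the requisite uniform Stirling-type estimates are presumably collected among the auxiliary results of Section~\ref{sec:auxiliary}.
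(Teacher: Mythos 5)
Your plan is essentially the paper's: unfold $\Gamma_p$ into ordinary gammas via Muirhead's identity, apply Stirling with arguments re-centered at $\Theta(n)$, Taylor-expand the logarithms, sum by Faulhaber. Your direct parity observation — that the leading power $p^{m+1}$ of $\sum_j(\varepsilon_j^m-\delta_j^m)$ cancels exactly when $m$ is even, so that after $k=m-1$ the surviving coefficient carries the indicator $\keven$ — is correct and in fact tidier than the paper's separate bookkeeping for $p$ even and $p$ odd.

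There is, however, one concrete error and one structural obstruction you should be aware of. The error: the $\tfrac{1}{12z}$ Binet correction does \emph{not} contribute to the coefficient $-\tfrac14\tfrac{1+2\keven}{k(k+1)}$ of $p^{k+1}/n^k$. Expanding $\sum_{j\le p}\tfrac1{12z_j}$ geometrically in $\varepsilon_j,\delta_j$ and evaluating the Faulhaber sums places the Binet contribution on the scales $p^{l+1}/n^{l+1}$ (for $l$ even) and $p^l/n^{l+1}$ (for $l$ odd), which never coincide with $p^{k+1}/n^k$ or $p^{k+2}/n^k$ for $k\ge1$; that coefficient comes entirely from the elementary $(z-\tfrac12)\log z - z$ block, i.e.\ from the $-\tfrac12\log(1+\cdot)$ factor and the subleading power in Faulhaber. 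The obstruction: the Binet corrections are therefore pure remainder, and their leading contribution to $\log C_{n,p}$ is $\Theta(p/n)$ (the two gamma ratios do not cancel at this order). Since $\frac{p/n}{p^{K+3}/n^{K+1}}=n^K/p^{K+2}$ does not tend to $0$ unless $p$ grows at least like $n^{K/(K+2)}$, you cannot drive the error below $\Theta(p/n)$ no matter how many Binet terms you keep, and the stated $o(p^{K+3}/n^{K+1})$ will not materialize from your (or indeed the paper's) bookkeeping when $p$ grows slowly, e.g.\ $p$ fixed and $K\ge1$. The paper's own proof quietly treats the $O(p/n)$ Stirling remainder as if it were $o(p^{K+3}/n^{K+1})$; its downstream use in Theorem~\ref{thm:existence-gtransform} only requires $\log C_{n,p}-\log C_{n,p}^{(K)}\to0$ when $p^{K+3}/n^{K+1}\to0$, which $O(p/n)=o(1)$ does deliver, so the main results survive — but you should not expect to close the proposal at the precision the lemma advertises.
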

\begin{proof}
	By Stirling's approximation applied to $\log\Gamma$, as well as \citet[Theorem 2.1.12]{muirhead82}, we find that
	{\setlength{\mathindent}{10pt}
		\setlength{\abovedisplayskip}{0pt}
		\setlength{\belowdisplayskip}{0pt}\begin{align*}\quad
		\log\Gamma_p(x) 
		&= \frac{p(p-1)}4\log\pi + \sum_{i=1}^p\log\Gamma\!\left(\!x-\frac{i\!-\!1}2\right)
		\\&
		= \frac{p(p-1)}4\log\pi 
		+ \sum_{i=1}^p\left[
		\left(\!x-\frac{i\!-\!1}2 - \frac12\right)\log\left(\!x-\frac{i\!-\!1}2\right)
		\right.\\&\hspace{110pt}\left.
		-\left(\!x-\frac{i\!-\!1}2\right)
		+\frac12\log{2\pi}
		+O\left(\!\frac1{x}\right)
		\right]
		\\&
		= \frac{p(p+1)}4\log\pi 
		+ \frac{p}2\log 2
		- px
		+ \frac{p(p-1)}4
		\\&\hspace{40pt}
		+ \sum_{i=1}^p \left(\!x-\frac{i}2\right)\log\left(\!x-\frac{i\!-\!1}2\right)
		+O\left(\frac{p}{x}\right)
		\end{align*}}%
	as $x\rightarrow\infty$. Thus
	{\setlength{\mathindent}{10pt}
		\setlength{\abovedisplayskip}{3pt}
		\setlength{\belowdisplayskip}{3pt}\begin{align*}&\quad
		2\log\Gamma_p\left(\!\frac{n\!+\!p\!+\!1}4\right)
		-\log\Gamma_p\left(\frac{n}2\right)
		= \frac{p(p+1)}4\log\pi 
		+ \frac{p}2\log 2
		\\&\hspace{50pt}
		- \frac{p(p+3)}4
		+ \sum_{i=1}^p \left(\!\frac{n\!+\!p\!+\!1}2-i\right)\log\left(\!\frac{n\!+\!p\!+\!1}4-\frac{i\!-\!1}2\right)
		\\&\hspace{140pt}
		- \sum_{i=1}^p \left(\frac{n}2-\frac{i}2\right)\log\left(\frac{n}2-\frac{i\!-\!1}2\right)
		+ o(1)
		\\&\hspace{20pt}
		= \frac{p(p+1)}4\log\pi 
		- \frac{p(2n+p-1)}4\log 2
		+\frac{p(p+1)}4\log n
		- \frac{p(p+3)}4
		\\&\hspace{90pt}
		+ \frac12\sum_{i=1}^p \Big( n-[2i\!-\!p\!-\!1]\Big)\log\left(1-\frac{2i\!-\!p\!-\!3}{n}\right)
		\\&\hspace{140pt}
		- \frac12\sum_{i=1}^p \left( n-i \right)\log\left(1-\frac{i\!-\!1}{n}\right)
		+ o(1),
		\end{align*}}%
	as $n\rightarrow\infty$ with $p/n\rightarrow0$, and so by Equation \eqref{eq:lem-cnp-defcnp},
	{\setlength{\mathindent}{5pt}
		\setlength{\abovedisplayskip}{3pt}
		\setlength{\belowdisplayskip}{0pt}\begin{align}&
		\log C_{n, p} = \frac{p(3p+1)}4\log 2 
		- \frac{p(p+1)}4\log \pi 
		-\frac{p(p+3)}4
		\notag\\&\hspace{110pt}
		+ \frac12\sum_{i=1}^p\!\Big(n-[2i\!-\!p\!-\!1]\Big) \log\!\left(\!1-\frac{2i\!-\!p\!-\!3}{n}\right)
		\notag\\&\hspace{110pt}
		- \frac12\sum_{i=1}^p\!\left(n- i\right) \log\!\left(\!1-\frac{i\!-\!1}{n}\right)
		+ o(1).
		\label{eq:lem-cnp-logcnp}
		\end{align}}%
	Let us now focus on the two sums in this expression. Recall that for any $k\geq1$,
	{\setlength{\mathindent}{10pt}
		\setlength{\abovedisplayskip}{0pt}
		\setlength{\belowdisplayskip}{3pt}\begin{align}&\quad
		-\log(1-x) = x + \frac{x^2}2+\frac{x^3}3+\cdots+\frac{x^k}{k}+O(x^{k+1})
		\hspace{10pt}
		\text{ as }x\rightarrow0,
		\label{eq:lem-cnp-taylorlog}
		\end{align}}%
	even for negative $x$. Therefore,
	{\setlength{\mathindent}{5pt}
		\setlength{\abovedisplayskip}{3pt}
		\setlength{\belowdisplayskip}{3pt}\begin{align*}&
		- \frac12\sum_{i=1}^p \left(n - i\right)\log\left(\!1-\frac{i\!-\!1}{n}\right)
		\\&\hspace{10pt}
		=\; \frac12\sum_{i=1}^p n \sum_{k=1}^{K+2}\frac{(i\!-\!1)^k}{k\,n^k} 
		- \frac12\sum_{i=1}^p i \sum_{k=1}^{K+1}\frac{(i\!-\!1)^k}{k\,n^k}
		+ O\left(\frac{p^{K+4}}{n^{K+2}}\right)
		\\&\hspace{10pt}
		=\; \frac12\sum_{i=1}^p(i\!-\!1)
		+ \frac12\sum_{i=1}^p \sum_{k=1}^{K+1}\left(\frac{i\!-\!1}{k\!-\!1}-\frac{i}{k}\right)\frac{(i\!-\!1)^k}{n^k}
		+ O\left(\frac{p^{K+3}}{n^{K+1}}\frac{p}{n}\right)
		\\&\hspace{10pt}
		=\; \frac{p(p-1)}4
		- \frac12\sum_{i=1}^p \sum_{k=1}^{K+1}\left(\frac{i\!-\!1}{k(k\!+\!1)}+\frac{1}{k}\right)\frac{(i\!-\!1)^k}{n^k}
		+ o\left(\frac{p^{K+3}}{n^{K+1}}\right)
		\\&\hspace{10pt}
		=\; \frac{p(p-1)}4
		- \frac12 \sum_{k=1}^{K+1}\!\frac1{k(k\!+\!1)n^k} \!\sum_{i=1}^{p-1} i^{k+1}
		- \frac12 \sum_{k=1}^{K+1}\!\frac1{k\,n^k} \!\sum_{i=1}^{p-1} i^k
		+ o\left(\frac{p^{K+3}}{n^{K+1}}\right)\!.
		\end{align*}}%
	Now let $B_k$ denote the Bernoulli numbers, with the convention $B_1=\frac12$. Faulhaber's formula provides
	{\setlength{\mathindent}{5pt}
		\setlength{\abovedisplayskip}{3pt}
		\setlength{\belowdisplayskip}{3pt}\begin{align*}&\hspace{10pt}
		= \frac{p(p-1)}4
		- \frac12 \sum_{k=1}^{K+1}\!\frac1{k(k\!+\!1)n^k} \cdot\frac1{k\!+\!2}\sum_{l=1}^{k+2} B_{k+2-l}\,(p\!-\!1)^l
		\\&\hspace{100pt}
		- \frac12 \sum_{k=1}^{K+1}\!\frac1{k\,n^k} \cdot\frac1{k\!+\!1}\sum_{l=1}^{k+1}B_{k+1-l}\,(p\!-\!1)^l
		+ o\left(\frac{p^{K+3}}{n^{K+1}}\right)\!.
		\end{align*}}%
	But by the binomial theorem, $\frac{(p-1)^{k+2}}{n^k} = \frac{p^{k+2}}{n^k}-(k+2)\frac{p^{k+1}}{n^k}+o(1)$, $\frac{(p-1)^{k+1}}{n^k}=\frac{p^{k+1}}{n} + o(1)$ and $\frac{(p-1)^l}{n^k} = o(1)$ for any $1\leq l\leq k$. Thus
	{\setlength{\mathindent}{5pt}
		\setlength{\abovedisplayskip}{3pt}
		\setlength{\belowdisplayskip}{3pt}\begin{align*}&\hspace{10pt}
		= \frac{p(p-1)}4
		- \frac12 \sum_{k=1}^{K+1}\bigg[
		\binom{k\!+\!2}{k\!+\!2}\frac{B_0}{k(k\!+\!1)(k\!+\!2)}\frac{p^{k+2}-(k\!+\!2)p^{k+1}}{n^k}
		\\&\hspace{140pt}
		+ \binom{k\!+\!2}{k\!+\!1}\frac{B_1}{k(k\!+\!1)(k\!+\!2)}\frac{p^{k+1}}{n^k}
		+ o(1)
		\bigg]
		\\&\hspace{20pt}
		- \frac12 \sum_{k=1}^{K+1} \bigg[
		\binom{k\!+\!1}{k\!+\!1}\frac{B_0}{k(k\!+\!1)}\frac{p^{k+1}}{n^k}
		+ o(1)
		\bigg]
		+ o\left(\frac{p^{K+3}}{n^{K+1}}\right)\!.
		\end{align*}}%
	Using that $B_0=1$ and $B_1=\frac12$, we obtain
	{\setlength{\mathindent}{5pt}
		\setlength{\abovedisplayskip}{3pt}
		\setlength{\belowdisplayskip}{3pt}\begin{align}&\hspace{10pt}
		= \frac{p(p-1)}4
		- \frac12 \sum_{k=1}^{K+1} \frac{1}{k(k\!+\!1)(k\!+\!2)}\frac{p^{k+2}}{n^k}
		\notag\\&\hspace{140pt}
		- \frac14 \sum_{k=1}^{K+1} \frac{1}{k(k\!+\!1)}\frac{p^{k+1}}{n^k}
		+ o\left(\frac{p^{K+3}}{n^{K+1}}\right)\!.
		\label{eq:lem-cnp-firstsum}
		\end{align}}%
	The analysis of the other sum of Equation \eqref{eq:lem-cnp-logcnp} is similar but more involved, as we must distinguish the cases where $p$ is even and where $p$ is odd. We find, from Equation \eqref{eq:lem-cnp-taylorlog} again, that
	{\setlength{\mathindent}{5pt}
		\setlength{\abovedisplayskip}{3pt}
		\setlength{\belowdisplayskip}{3pt}\begin{align}&
		\frac12\sum_{i=1}^p\!\Big(n-[2i\!-\!p\!-\!1]\Big) \log\!\left(\!1-\frac{2i\!-\!p\!-\!3}{n}\right)
		\notag\\&\hspace{10pt}
		= -\frac12\sum_{i=1}^p (2i\!-\!p\!-\!1)\log\!\left(\!1-\frac{2i\!-\!p\!-\!3}{n}\right)
		+\frac12\sum_{i=1}^p n\log\!\left(\!1-\frac{2i\!-\!p\!-\!3}{n}\right)
		\notag\\&\hspace{10pt}
		= \frac12\sum_{i=1}^p (2i\!-\!p\!-\!1)\!\!\sum_{k=1}^{K+1}\!\frac{(2i\!-\!p\!-\!3)^k}{k\,n^k}
		- \frac12\sum_{i=1}^p n\!\!\sum_{k=1}^{K+2}\!\frac{(2i\!-\!p\!-\!3)^k}{k\,n^k}
		+ O\left(\frac{p^{K+4}}{n^{K+2}}\right)
		\notag\\&\hspace{10pt}
		= \frac12\sum_{i=1}^p (2i\!-\!p\!-\!3)
		+ \frac12\sum_{k=1}^p\sum_{k=1}^{K+1}
		\!\left(\!\frac{2i\!-\!p\!-\!1}{k} - \frac{2i\!-\!p\!-\!3}{k\!+\!1}\right)\!\frac{(2i\!-\!p\!-\!3)^k}{n^k}
		\notag\\&\hspace{270pt}
		+ O\left(\frac{p^{K+3}}{n^{K+1}}\frac{p}{n}\right)
		\notag\\&\hspace{10pt}
		= p + \frac12\sum_{i=1}^p\sum_{k=1}^{K+1}
		\!\left(\!\frac{2i\!-\!p\!-\!3}{k(k\!+\!1)} + \frac{2}{k}\right)\!\frac{(2i\!-\!p\!-\!3)^k}{n^k}
		+ o\left(\frac{p^{K+3}}{n^{K+1}}\right)
		\notag\\&\hspace{10pt}
		= p + \frac12\sum_{k=1}^{K+1} \frac{1}{k(k\!+\!1)n^k} \sum_{i=1}^p (2i\!-\!p\!-\!3)^{k+1}
		\notag\\&\hspace{150pt}
		+ \sum_{k=1}^{K+1} \frac{1}{kn^k}\sum_{i=1}^p (2i\!-\!p\!-\!3)^k
		+ o\left(\frac{p^{K+3}}{n^{K+1}}\right)
		\notag\\&\hspace{10pt}
		= p + \frac12\sum_{k=1}^{K+1} \!\frac{1}{k(k\!+\!1)n^k} \bigg[\!(-p\!-\!1)^{k+1} + (-p\!+\!1)^{k+1} +\!\sum_{i=3}^p (2i\!-\!p\!-\!3)^{k+1}\bigg]
		\notag\\&\hspace{20pt}
		+ \sum_{k=1}^{K+1} \!\frac{1}{kn^k}\bigg[\!(-p\!-\!1)^{k} + (-p\!+\!1)^{k} +\!\sum_{i=3}^p (2i\!-\!p\!-\!3)^{k}\bigg]
		+ o\left(\frac{p^{K+3}}{n^{K+1}}\right)\!\!.
		\label{eq:lem-cnp-halfway-secondsum}
		\end{align}}%
	At this point, it is simpler to analyze the cases where $p$ is even and odd separately. If $p$ is odd, define $q=(p-3)/2$ and observe that by Faulhaber's formula,
	{\setlength{\mathindent}{10pt}
		\setlength{\abovedisplayskip}{3pt}
		\setlength{\belowdisplayskip}{3pt}\begin{align*}\quad
		\sum_{i=3}^p(2i\!-\!p\!-\!3)^l 
		&= \Big[1\!+\!(-1)^l\Big]\sum_{i=1}^q (2i)^l 
		\\&
		= \frac{\1{l\,\mathrm{even}}}{l\!+\!1} \sum_{s=1}^{l+1}\binom{l\!+\!1}{s} B_{l+1-s}\,2^{l+1-s}\,(2q)^s.
		\end{align*}}%
	By the binomial theorem, $\frac{(2q)^{k+2}}{n^k} = \frac{p^{k+2}}{n^k}-3(k\!+\!2)\frac{p^{k+1}}{n^k}+o(1)$,  $\frac{(2q)^{k+1}}{n^k} = \frac{p^{k+1}}{n^k} + o(1)$ and $\frac{(2q)^l}{n^k} = o(1)$ for $1\leq l\leq k$. Moreover,
	{\setlength{\mathindent}{5pt}
		\setlength{\abovedisplayskip}{3pt}
		\setlength{\belowdisplayskip}{3pt}\begin{align}&\quad
		\frac{(-p\!-\!1)^{k+1}}{n^k} = \frac{(-p\!+\!1)^{k+1}}{n^k} = \frac{(-p)^{k+1}}{n^k} + o(1)
		\label{eq:lem-cnp-power1}
		\end{align}}%
	and
	{\setlength{\mathindent}{5pt}
		\setlength{\abovedisplayskip}{3pt}
		\setlength{\belowdisplayskip}{3pt}\begin{align}&\quad
		\frac{(-p\!-\!1)^{k}}{n^k} = \frac{(-p\!+\!1)^{k}}{n^k} = o(1).
		\label{eq:lem-cnp-power2}
		\end{align}}%
	Thus, for odd $p$, Equation \ref{eq:lem-cnp-halfway-secondsum} equals
	{\setlength{\mathindent}{5pt}
		\setlength{\abovedisplayskip}{3pt}
		\setlength{\belowdisplayskip}{3pt}\begin{align*}&\hspace{10pt}
		= p + \frac12\sum_{k=1}^{K+1} \!\frac{1}{k(k\!+\!1)} 
		\bigg[
		+ \kodd\binom{k\!+\!2}{k\!+\!2}\frac{B_0}{k\!+\!2} \frac{p^{k+2}-3(k\!+\!2)p^{k+1}}{n^k}
		\\&\hspace{100pt}
		+ \kodd\binom{k\!+\!2}{k\!+\!1}\frac{2B_1}{k\!+\!2} \frac{p^{k+1}}{n^k}
		+ 2\frac{(-p)^{k+1}}{n^k}
		+ o(1)
		\bigg]
		\\&\hspace{55pt}
		+ \frac12\sum_{k=1}^{K+1} \!\frac{1}{k}\bigg[\keven\binom{k\!+\!1}{k\!+\!1}\frac{B_0}{k\!+\!1} \frac{p^{k+1}}{n^k} + o(1)\bigg]
		+ o\left(\frac{p^{K+3}}{n^{K+1}}\right)\!\!.
		\end{align*}}%
	Moreover,
	{\setlength{\mathindent}{20pt}
		\setlength{\abovedisplayskip}{3pt}
		\setlength{\belowdisplayskip}{3pt}\begin{align*}&
		2(-p)^{k+1} - 3\kodd p^{k+1} + \kodd p^{k+1} + \keven p^{k+1} 
		\\&\hspace{240pt}
		= -\keven p^{k+1}.
		\end{align*}}%
	Thus,
	{\setlength{\mathindent}{5pt}
		\setlength{\abovedisplayskip}{3pt}
		\setlength{\belowdisplayskip}{3pt}\begin{align}&\hspace{10pt}
		= p +\!\frac12\!\sum_{k=1}^{K+1} \!\!\frac{\kodd}{k(k\!+\!1)(k\!+\!2)} \frac{p^{k+2}}{n^k}
		-\!\frac12\!\sum_{k=1}^{K+1} \!\frac{\keven}{k(k\!+\!1)} \frac{p^{k+1}}{n^k}
		+\!o\!\left(\!\frac{p^{K+3}}{n^{K+1}}\right)\!\!.
		\label{eq:lem-cnp-secondsum-odd}
		\end{align}}%
	When $p$ is even, let $q=(p-2)/2$ and observe that by Faulhaber's formula,
	{\setlength{\mathindent}{10pt}
		\setlength{\abovedisplayskip}{3pt}
		\setlength{\belowdisplayskip}{3pt}\begin{align*}\quad
		\sum_{i=3}^p(2i\!-\!p\!-\!3)^l 
		&= \Big[1\!+\!(-1)^l\Big]\sum_{i=1}^q (2i-1)^l 
		= 2\1{l\,\mathrm{even}}\bigg(\sum_{i=1}^{2q}i^l - \sum_{i=1}^{q}(2i)^l\bigg)
		\\&
		= \frac{\1{l\,\mathrm{even}}}{l\!+\!1} \sum_{s=1}^{l+1}\binom{l\!+\!1}{s} B_{l+1-s}\,(2-2^{l+1-s})\,(2q)^s.
		\end{align*}}%
	But by the binomial theorem, $\frac{(2q)^{k+2}}{n^k} = \frac{p^{k+2}}{n^k}-2(k\!+\!2)\frac{p^{k+1}}{n^k}+o(1)$,  $\frac{(2q)^{k+1}}{n^k} = \frac{p^{k+1}}{n^k} + o(1)$ and $\frac{(2q)^l}{n^k} = o(1)$ for $1\leq l\leq k$. If we apply Equations \eqref{eq:lem-cnp-power1}--\eqref{eq:lem-cnp-power2}, then Equation \eqref{eq:lem-cnp-halfway-secondsum} becomes
	{\setlength{\mathindent}{5pt}
		\setlength{\abovedisplayskip}{3pt}
		\setlength{\belowdisplayskip}{3pt}\begin{align*}&\hspace{10pt}
		= p + \frac12\sum_{k=1}^{K+1} \frac{1}{k(k\!+\!1)}\bigg[
		\kodd\binom{k\!+\!2}{k\!+\!2}\frac{(2-1)B_0}{k\!+\!2}\frac{p^{k+2}-2(k+2)p^{k+1}}{n^k}
		\\&\hspace{140pt}
		+ \kodd\binom{k\!+\!2}{k\!+\!1}\frac{(2-2)B_1}{k\!+\!2}\frac{p^{k+1}}{n^k} 
		+ o(1)
		\bigg]
		\\&\hspace{40pt}
		+\frac12\sum_{k=1}^K\frac1{k}\bigg[
		\keven\binom{k\!+\!1}{k\!+\!1}\frac{(2-1)B_0}{k\!+\!1}\frac{p^{k+1}}{n^k}
		+ o(1)
		\bigg]
		+ \!o\!\left(\!\frac{p^{K+3}}{n^{K+1}}\right)\!\!.
		\end{align*}}%
	Moreover,
	{\setlength{\mathindent}{20pt}
		\setlength{\abovedisplayskip}{3pt}
		\setlength{\belowdisplayskip}{3pt}\begin{align*}&
		2(-p)^{k+1} - 2\kodd p^{k+1} + \keven p^{k+1} = -\keven p^{k+1}.
		\end{align*}}%
	Thus again,
	{\setlength{\mathindent}{5pt}
		\setlength{\abovedisplayskip}{3pt}
		\setlength{\belowdisplayskip}{3pt}\begin{align}&\hspace{10pt}
		= p +\!\frac12\!\sum_{k=1}^{K+1} \!\!\frac{\kodd}{k(k\!+\!1)(k\!+\!2)} \frac{p^{k+2}}{n^k}
		-\!\frac12\!\sum_{k=1}^{K+1} \!\frac{\keven}{k(k\!+\!1)} \frac{p^{k+1}}{n^k}
		+\!o\!\left(\!\frac{p^{K+3}}{n^{K+1}}\right)\!\!.
		\label{eq:lem-cnp-secondsum-even}
		\end{align}}%
	which is the exact same result as in the odd $p$ case (see Equation \ref{eq:lem-cnp-secondsum-odd}). Plugging Equations \eqref{eq:lem-cnp-firstsum} and \eqref{eq:lem-cnp-secondsum-odd}--\eqref{eq:lem-cnp-secondsum-even} in Equation \eqref{eq:lem-cnp-logcnp}, we obtain
	{\setlength{\mathindent}{10pt}
		\setlength{\abovedisplayskip}{3pt}
		\setlength{\belowdisplayskip}{3pt}\begin{align*}&
		\log C_{n,p} = \frac{p(3p+1)}4\log 2 - \frac{p(p+1)}2\log\pi
		- \frac12\sum_{k=1}^{K+1}\frac{\keven}{k(k\!+\!1)(k\!+\!2)}\frac{p^{k+2}}{n^k}
		\\&\hspace{140pt}
		- \frac14\sum_{k=1}^{K+1}\frac{1+2\keven}{k(k\!+\!1)}\frac{p^{k+1}}{n^k}
		+ \!o\!\left(\!\frac{p^{K+3}}{n^{K+1}}\right)\!\!,
		\end{align*}}%
	as desired.
\end{proof}

Thus the constant $C_{n,p}$ is closely related to the normalization constant of the $\text{GOE}(p)$ distribution $2^{p(3p+1)/4}/\pi^{p(p+1)/2}$.

We now turn our attention to the study of the asymptotic moments of a $T_{n/2}(I_p/8)$ distribution. We first remind the reader of some classic results. For a Gaussian Orthogonal Ensemble matrix $Z\sim\text{GOE}(p)$, a moment-based approach to Wigner's theorem states that for any $k\in\N$, its $k^{\text{th}}$ moment satisfy
{\setlength{\mathindent}{10pt}
\setlength{\abovedisplayskip}{3pt}
\setlength{\belowdisplayskip}{3pt}\begin{align*}&\hspace{50pt}
\lim_{p\rightarrow\infty} \text{E}\bigg[\frac1p\Tr\Big(\frac{Z}{\sqrt{p}}\Big)^{k}\bigg] = C_{k/2}\keven,
\end{align*}}%
where $C_k=\frac1{k+1}\binom{2k}{k}$ is the $k^\text{th}$ Catalan number. In fact, \citet[section 2.1.4 on p.17]{anderson10} show that the variance of the $k^{\text{th}}$ moment satisfies  $\lim\limits_{p\rightarrow\infty}\text{Var}\big[\frac1p\Tr (Z/\sqrt{p})^{k}\big]=0$, so we really have
{\setlength{\mathindent}{10pt}
\setlength{\abovedisplayskip}{3pt}
\setlength{\belowdisplayskip}{3pt}\begin{align*}&\hspace{50pt}
\frac1p\Tr\Big(\frac{Z}{\sqrt{p}}\Big)^{k} \overset{\text{L}^2}{\longrightarrow} C_{k/2}\keven 
\end{align*}}%
as $p\rightarrow\infty$.

Now, what do we know about the moments of $T_{n/2}(I_p/8)$? By symmetry, $\E{\Tr T^k}=0$ for odd $k$, but it is much less clear what happens for even $k$. It turns out that in many ways, if $T\sim T_{n/2}(I_p/8)$ then $4T\sim T_{n/2}(2I_p)$ mimics the Gaussian Orthogonal Ensemble results outlined above, especially when $p/n\rightarrow0$ as $n\rightarrow\infty$. We have the following result.

\begin{theorem}\label{thm:moments} Let $k\in\N$ and $T\sim T_{n/2}(I_p/8)$. If $p/n\rightarrow c\in[0,1)$, the moments of $T$ satisfy the asymptotic bounds $\E{\Tr T^{2k}} = O(p^{k+1})$ and $\E{\Tr^2 T^{k}} = O(p^{k+2})$ as $n\rightarrow\infty$. In fact, for any $k\in\N$, 
{\setlength{\mathindent}{10pt}
\setlength{\abovedisplayskip}{5pt}
\setlength{\belowdisplayskip}{5pt}\begin{align*}&\hspace{50pt}
\frac1p\Tr\Big(\frac{4T}{\sqrt{p}}\Big)^{k} \overset{\text{L}^2}{\longrightarrow} C_{k/2}\keven
\end{align*}}%
as $n, p\rightarrow\infty$ with $p/n\rightarrow0$, where $C_k=\frac1{k+1}\binom{2k}{k}$ is the $k^{\text{th}}$ Catalan number.
\end{theorem}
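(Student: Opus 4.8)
\emph{Setup.} Equation~\eqref{eqn:gconjugate-nw2} shows the $T_{n/2}(I_p/8)$ density is proportional to $\bigl|I_p+16T^2/n\bigr|^{-(n+p+1)/4}$, so the rescaled matrix $S:=4T$ has density proportional to $\bigl|I_p+S^2/n\bigr|^{-(n+p+1)/4}$, i.e. $S\sim T_{n/2}(2I_p)$, and the claim concerns $\tfrac1p\Tr(S/\sqrt p)^k$. This density depends on $S$ only through $S^2$, hence is invariant under $S\mapsto-S$; so $\E{\Tr S^k}=0$ and $\operatorname{E}[\Tr^2 S^k]=\Var{\Tr S^k}$ for odd $k$, and only even powers $k=2j$ matter for the mean. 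The plan is to show $p^{-(j+1)}\E{\Tr S^{2j}}\to C_j$ and $p^{-(k+2)}\Var{\Tr S^{k}}\to0$, together with the stated $O$-bounds, all through a Gaussian scale-mixture. With $\alpha=(n+p+1)/4$ (so $\alpha>(p-1)/2$ eventually, since $p/n\to c<1$), the matrix-gamma integral $|M|^{-\alpha}=\Gamma_p(\alpha)^{-1}\bigintsss_{\Xi>0}|\Xi|^{\alpha-(p+1)/2}e^{-\tr{\Xi M}}\,d\Xi$ with $M=I_p+S^2/n$ exhibits the law of $S$ as a mixture: conditionally on a positive-definite $\Xi$, $S$ is centered Gaussian on $\S_p(\R)$ with density proportional to $\exp\{-\tfrac1n\tr{\Xi S^2}\}$, while $\Xi$ has marginal density proportional to $|\Xi|^{\alpha-(p+1)/2}e^{-\tr{\Xi}}$ times the Gaussian normalizing factor $\prod_{i\le j}(\lambda_i+\lambda_j)^{-1/2}$, where $\lambda_1,\dots,\lambda_p$ are the eigenvalues of $\Xi$. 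Diagonalizing $\Xi$ and rotating $S$ into its eigenbasis (which leaves trace moments unchanged), the rotated entries are, given $\Xi$, independent centered Gaussians with variances $\tfrac{n}{2(\lambda_i+\lambda_j)}$ off the diagonal and $\tfrac{n}{2\lambda_i}$ on it; that is, $S\mid\Xi$ is a Wigner-type matrix whose entrywise variances reduce, when all $\lambda_i$ are close to $n/4$, to those of $\text{GOE}(p)$.

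\emph{Concentration of $\Xi$.} The unweighted part of the $\Xi$-marginal is the matrix gamma $\mathrm{G}_p(\alpha,I_p)$, equivalently $\tfrac12\,\text{W}_p(2\alpha,I_p)$ with $2\alpha\asymp n/2$; when $p/n\to0$ its eigenvalues concentrate on $\tfrac n4\bigl(1+O(\sqrt{p/n})\bigr)$ with probability $1-o(1)$. The extra factor $\prod_{i\le j}(\lambda_i+\lambda_j)^{-1/2}$ changes the log-density by a lower-order amount (its gradient in each $\lambda_i$ is $O(p/n)$) and does not disturb this concentration. Hence when $p/n\to0$ there is an event of probability $1-o(1)$ on which every $\lambda_i=\tfrac n4(1+o(1))$, so the variance profile of $S\mid\Xi$ is $1+o(1)$ off the diagonal and $2+o(1)$ on it, uniformly; and when $p/n\to c\in(0,1)$ there is an event of probability $1-o(1)$ on which $\lambda_{\min}(\Xi)=\Omega(n)$ (the smallest eigenvalue of a Wishart with aspect ratio $p/(2\alpha)\to 2c/(1+c)<1$ stays bounded away from $0$ after rescaling), so the variance profile is $O(1)$. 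Making these eigenvalue estimates rigorous — in particular the uniform $o(1)$ bound on the variance profile in the presence of the reweighting — is the step I expect to be the main obstacle.

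\emph{Conditional moments.} On the good event, $S\mid\Xi$ is a generalized Wigner matrix with a uniformly convergent variance profile, so the moment method applies as for $\text{GOE}(p)$: expanding $\operatorname{E}[\Tr S^{2j}\mid\Xi]$ over closed length-$2j$ walks and applying Wick's theorem, the $C_j$ non-crossing pairings each contribute $(1+o(1))p^{j+1}$, while crossing pairings and walks using diagonal entries contribute $O(p^{j})$; thus $p^{-(j+1)}\operatorname{E}[\Tr S^{2j}\mid\Xi]=C_j+o(1)$ uniformly (and $\operatorname{E}[\Tr S^{k}\mid\Xi]=0$ for odd $k$, by the conditional $S\mapsto-S$ symmetry), while the connected (``genus-one'') part of the analogous second-moment expansion, exactly as in \citet[section~2.1.4]{anderson10}, gives $\Var{\Tr S^{k}\mid\Xi}=O(p^{k})$. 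Using only the weaker $O(1)$ bound on the variance profile, the same Gaussian bookkeeping yields $\operatorname{E}[\Tr S^{2k}\mid\Xi]=O(p^{k+1})$ and $\operatorname{E}[\Tr^2 S^{k}\mid\Xi]=O(p^{k+2})$ on the corresponding good event. Taking expectation over $\Xi$ and bounding the $o(1)$-probability complementary event by Cauchy--Schwarz against the unconditional $2k$-th moments (finite once $n$ is large relative to $k$, since they decay like a univariate $t_{n/2}$'s) gives the asserted bounds $\E{\Tr T^{2k}}=O(p^{k+1})$ and $\E{\Tr^2 T^{k}}=O(p^{k+2})$, and also $p^{-(j+1)}\E{\Tr S^{2j}}\to C_j$.

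\emph{Conclusion.} For the variance, decompose $\Var{\Tr S^{k}}=\operatorname{E}_{\Xi}\!\bigl[\Var{\Tr S^{k}\mid\Xi}\bigr]+\operatorname{Var}_{\Xi}\!\bigl(\operatorname{E}[\Tr S^{k}\mid\Xi]\bigr)$. The first term is $O(p^{k})=o(p^{k+2})$. The second is $o(p^{k+2})$ as well: on the good event $\operatorname{E}[\Tr S^{k}\mid\Xi]$ equals $0$ for odd $k$ and $(C_{k/2}+o(1))p^{k/2+1}$ for even $k$, uniformly, so its variance is $o(p^{k+2})$, while the complementary event contributes negligibly by Cauchy--Schwarz and the $O(p^{k+2})$ bound. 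Dividing by $p^{k+2}$ gives $\Var{\tfrac1p\Tr(4T/\sqrt p)^{k}}\to0$, which with the convergence of the mean yields $\tfrac1p\Tr(4T/\sqrt p)^{k}\overset{\text{L}^2}{\longrightarrow}C_{k/2}\keven$. An alternative route to the mean (not the variance) is a loop equation: integrating $\sum_{k,l}\tfrac{\partial_{\mathrm s}}{\partial_{\mathrm s}S_{kl}}(S^{2j-1})_{lk}$ by parts against the density $\propto|I_p+S^2/n|^{-(n+p+1)/4}$ produces an identity tying $\E{\Tr S^{2j}}$ to two-trace moments and lower-order terms; discarding the contributions suppressed by a factor $p/n$ or by a lower power of $p$, and using the leading-order factorization of two-trace moments, collapses it to the Catalan recursion $M_{2j}=\sum_{s=0}^{j-1}M_{2s}M_{2(j-1-s)}$ with $M_0=1$, hence $M_{2j}=C_j$.
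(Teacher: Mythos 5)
Your proposal takes a genuinely different route from the paper. You represent $4T\sim T_{n/2}(2I_p)$ as a Gaussian scale mixture via the matrix-gamma integral $|I_p+S^2/n|^{-\alpha}=\Gamma_p(\alpha)^{-1}\int_{\Xi>0}|\Xi|^{\alpha-(p+1)/2}e^{-\tr{\Xi}}e^{-\tr{\Xi S^2}/n}\,d\Xi$, so that $S\mid\Xi$ is a generalized Wigner ensemble whose variance profile is governed by the eigenvalues of $\Xi$; you then argue that $\Xi$ concentrates near $(n/4)I_p$ when $p/n\to0$ and import Wick-calculus moment asymptotics conditionally. The paper instead proceeds algebraically: Lemma~\ref{lem:moments-as-expectation} represents $\E{\Tr T^{2k}}$ and $\E{\Tr^2T^{k}}$ as exact polynomial combinations (in $n,m,p$) of expected power-sum polynomials of $Y^{-1}\sim\text{W}^{-1}_p(n,I_p/n)$, which via the zonal-polynomial formulas of \citet{letac04} and \citet{muirhead82} become explicit polynomials in $p$, $p/m$ and $1/m$. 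Polynomial rigidity — using the classical fixed-$p$ regime (bounded by GOE moments, Equations~\eqref{eqn:moments-trT2-finite-classical}--\eqref{eqn:moments-tr2T2-finite-classical}) to force many coefficients to vanish — then shows the middle-scale limit depends only on $\alpha=\lim p/n$, and the constant is identified by specializing to $p^3/n\to0$ where the Jiang/Bubeck total-variation convergence to GOE pins it to a Catalan number. The two approaches thus trade hard concentration estimates for hard moment algebra. Your route is arguably more transparent probabilistically, but the step you flag — uniform $o(1)$ control of the variance profile in the presence of the reweighting $\prod_{i\le j}(\lambda_i+\lambda_j)^{-1/2}$, together with bounding the contribution of the complementary event — is exactly where the real work lives, and it is nontrivial.

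On that gap, two concrete points deserve attention. First, the reweighting factor contributes $\Theta(p^2\log n)$ to the log-density, and while this is indeed lower order than the $\Theta(pn\log n)$ unweighted Gibbs energy when $p/n\to0$, you need more than magnitude comparison: you need the reweighting's \emph{gradient} (order $p/n$ per eigenvalue) not to displace the mode by more than the $O(\sqrt{p/n})$-scale fluctuations of the extreme eigenvalues, and you need a quantitative large-deviation bound on $\max_i|\lambda_i/(n/4)-1|$ for the reweighted ensemble, not just the unweighted Wishart. Bai--Yin-type edge rigidity gives this for the Wishart part, but carrying the reweighting through is not automatic. Second, your Cauchy--Schwarz bound on the bad-event contribution requires knowing $\E{\Tr^2 S^{2j}}<\infty$ unconditionally before the concentration is established, which risks circularity unless you first prove finiteness by some other means (the paper's Lemma~\ref{lem:moments-as-expectation} does this by bounding against a lower-degree symmetric $t$ density, valid for $n\ge p+16k+6$); you should include such a preliminary integrability estimate so the conditioning decomposition is legitimate. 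The loop-equation alternative you sketch for the mean is a sound second route and mirrors standard Schwinger--Dyson derivations of the Catalan recursion, but the same integrability and error-term-control issues arise there.
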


Although our proof will rely on the close relationship between the Wishart and the $t$ distribution, it is worthwhile to step back and think why a $T_{n/2}(2I_p)$ should behave like a $\text{GOE}(p)$ when $p/n\rightarrow0$. One good reason might be the classic result that as $n\rightarrow\infty$, the density of a $t$ distribution converges pointwise to a standard normal density. Thus, we might think that as long as $p$ does not grow too fast, in some aspects the symmetric $t$ distribution should behave like a $\text{GOE}(p)$.

In the context of the proof, it will prove useful to use the notion of power sum symmetric polynomials. For any integer partition $\kappa=(\kappa_1,\dots,\kappa_q)$ in decreasing order $\kappa_1\geq\dots\geq\kappa_q>0$, define its associated power sum polynomial to be
{\setlength{\mathindent}{10pt}
\setlength{\abovedisplayskip}{3pt}
\setlength{\belowdisplayskip}{3pt}\begin{align}&\hspace{50pt}
	r_\kappa(Z) = \prod_{i=1}^q\Tr Z^{\kappa_i}.
	\label{def:powersum-polynomial}
\end{align}}%
The norm of the partition $\kappa$ is $|\kappa|=\kappa_1+\dots+\kappa_q>0$, which should not be confused with its length $q(\kappa)=q$ (number of elements).

By convention, we will assume there also exists an empty partition $\varnothing=()$ with length $q(\varnothing)=0$, norm $|\varnothing|=0$ and power sum polynomial $r_\varnothing(Z)=1$.

Let's now turn to the proof of the theorem. The odd moments of the $T_{n/2}(I_p/8)$ moments are zero by symmetry, so it makes sense to focus on the even moments $\E{\Tr T^{2k}}$ and the square moments $\E{\Tr^2 T^{k}}$. Our first step in the proof is to express these in terms of expectations of power sum polynomials of an inverse Wishart $Y^{-1}\sim\text{W}^{-1}_p(n, I_p/n)$, where by power sum polynomials we mean expressions like at Equation \eqref{def:powersum-polynomial}. Recall the useful shorthand $m=n-p-1$. 

\begin{lemma}\label{lem:moments-as-expectation} Let $T\sim\text{T}_{n/2}(I_p/8)$. Then for any $k\in\N$, whenever $n$ is large enough so that $n\geq p+16k+6$, we can compute the $2k^\text{th}$ moment of $T$ by
{\setlength{\mathindent}{5pt}
\setlength{\abovedisplayskip}{0pt}
\setlength{\belowdisplayskip}{3pt}\begin{align}
\E{\Tr T^{2k}} &= 
	\frac{(-1)^k }{n^k}
	\bigint_{\substack{Y>0}} \hspace{-13pt}
		\frac{n^{\frac{np}2}}{2^{\frac{np}2} \Gamma_p\big(\frac{n}2\big)}
		\exp\Big\{\!\!-\!\frac{n}4\Tr Y \Big\}
		\big|Y\big|^{\frac{m}4}
\notag\\[-5pt]&\hspace{-35pt}
	\cdot\raisebox{2pt}{$\mathlarger{\mathlarger\sum_{i_1,\dots,i_{2k}}^p}$}
	\frac{\partial_\text{s}}{\partial_\text{s} X_{i_1 i_{2k}}}
	\dots \frac{\partial_\text{s}}{\partial_\text{s} X_{i_3i_2}}
	\frac{\partial_\text{s}}{\partial_\text{s} X_{i_2i_1}}
	\exp\Big\{\!\!-\!\frac{n}4\Tr(Y\!+\!X) \Big\}
	\big|Y\!+\!X\big|^{\frac{m}4}
	\bigg\vert_{X=0}
	dY
\notag\\&
= \frac{(-1)^k}{n^k}\!\!\sum_{|\kappa|\leq 2k} \hspace{-3pt}
	b^{(1)}_\kappa(n,m,p)
	\E{r_\kappa(Y^{-1})}
\label{eqn:moments-trT2k-as-polynomial}
\end{align}}%
and its squared $k^\text{th}$ moment by
{\setlength{\mathindent}{5pt}
\setlength{\abovedisplayskip}{3pt}
\setlength{\belowdisplayskip}{3pt}\begin{align}
\E{\Tr^2 T^k} &= 
	\frac{(-1)^k}{n^{k}}
	\bigint_{\substack{Y>0}} \hspace{-13pt}
		\frac{n^{\frac{np}2}}{2^{\frac{np}2} \Gamma_p\big(\frac{n}2\big)}
		\exp\Big\{\!\!-\!\frac{n}4\Tr Y \Big\}
		\big|Y\big|^{\frac{m}4}
\notag\\[-2pt]&\hspace{20pt}
	\cdot	\raisebox{2pt}{$\mathlarger{\mathlarger
	\sum_{\substack{i_1,\dots,i_{k}\\j_1,\dots,j_{k}}}^p}$}
	\frac{\partial_\text{s}}{\partial_\text{s} X_{j_1 j_{k}}}
	\dots \frac{\partial_\text{s}}{\partial_\text{s} X_{j_3j_2}}
	\frac{\partial_\text{s}}{\partial_\text{s} X_{j_2j_1}}
	\frac{\partial_\text{s}}{\partial_\text{s} X_{i_1 i_{k}}}
	\dots \frac{\partial_\text{s}}{\partial_\text{s} X_{i_3i_2}}
	\frac{\partial_\text{s}}{\partial_\text{s} X_{i_2i_1}}
\notag\\[-5pt]&\pushright{
	\exp\Big\{\!\!-\!\frac{n}4\Tr(Y\!+\!X) \Big\}
	\big|Y\!+\!X\big|^{\frac{m}4}
	\bigg\vert_{X=0}
	dY
\hspace{45pt}}\notag\\[-5pt]&
= \frac{(-1)^k}{n^{k}}\!\!\sum_{|\kappa|\leq 2k+1} \hspace{-8pt}
	b^{(2)}_\kappa(n,m,p)
	\E{r_\kappa(Y^{-1})},
\label{eqn:moments-tr2Tk-as-polynomial}
\end{align}}%
for $Y^{-1}\sim\text{W}^{-1}_p(n,I_p/n)$ and some $b^{(1)}_\kappa$, $b^{(2)}_\kappa$. These $b^{(1)}_\kappa$, $b^{(2)}_\kappa$ are polynomials in $n,m,p$, indexed by integer partitions $\kappa$, whose degrees satisfy $\mathrm{deg}\,b^{(1)}_\kappa\leq 2k+1-q(\kappa)$ and $\mathrm{deg}\,b^{(2)}_\kappa\leq 2k+2-q(\kappa)$. The sums are taken over all partitions of the integers $\kappa$ satisfying $|\kappa|\leq 2k$ and $|\kappa|\leq 2k+1$ respectively, including the empty partition. 
\end{lemma}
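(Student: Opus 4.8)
The plan is to produce the two identities by a double manipulation: first rewrite the $t$-moments as derivatives of a Wishart-type density evaluated at a shifted argument, and then expand those derivatives using the Leibniz rule together with the fact that derivatives of $|Y+X|^{m/4}$ produce traces of powers of $(Y+X)^{-1}$. Concretely, recall from Definition \ref{def:tdistribution} and Equation \eqref{eqn:tdistribution-density} that the density of $T\sim T_{n/2}(I_p/8)$ is $C_{n,p}|I_p+16T^2/n|^{-(n+p+1)/4}$, and that this density equals $|\psi_{\text{NW}}|(T)$ from Proposition \ref{prop:gtransform-nw}. The first step is to observe that, by the formula for the characteristic function of the $t$ distribution (or directly by integrating against the Wishart density), for any real symmetric $X$ with small enough norm one has an identity of the shape
\begin{align*}
C_{n,p}\bigintsss_{\S_p(\R)}\hspace{-10pt} e^{i\Tr(TX)}|I_p+\tfrac{16T^2}{n}|^{-\frac{n+p+1}4}\,dT
= \text{const}\cdot\bigintsss_{Y>0}\hspace{-8pt}\frac{n^{np/2}}{2^{np/2}\Gamma_p(\tfrac n2)}e^{-\frac n4\Tr(Y+X)}|Y+X|^{\frac m4}\,dY,
\end{align*}
so that moments $\E{\Tr T^{2k}}$ and $\E{\Tr^2 T^k}$ are obtained by applying the appropriate product of symmetric derivatives $\partial_\text{s}/\partial_\text{s}X_{i_ai_b}$ to both sides and setting $X=0$; this is exactly the first equality claimed in each of \eqref{eqn:moments-trT2k-as-polynomial} and \eqref{eqn:moments-tr2Tk-as-polynomial}, with the prefactor $(-1)^k/n^k$ coming from $i^{2k}$ and the rescaling in the characteristic function. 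The condition $n\geq p+16k+6$ is there to guarantee that enough moments exist and that the relevant integrals converge after differentiating $2k$ (or $2k$-ish) times; I would track this bound by checking that $m/4 - (\text{number of inverse powers produced})$ stays above $(p-1)/2$.

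The second and main step is the combinatorial reduction of the differentiated integrand to a linear combination of power sum polynomials $r_\kappa(Y^{-1})$. Here one uses the elementary facts that $\frac{\partial_\text{s}}{\partial_\text{s}X_{kl}}\Tr((Y+X)^j) = j\,((Y+X)^{j-1})_{kl}$ for $j\geq 1$ and, crucially, $\frac{\partial_\text{s}}{\partial_\text{s}X_{kl}}\log|Y+X| = ((Y+X)^{-1})_{kl}$, hence $\frac{\partial_\text{s}}{\partial_\text{s}X_{kl}}|Y+X|^{m/4} = \frac m4 ((Y+X)^{-1})_{kl}|Y+X|^{m/4}$, and that differentiating $((Y+X)^{-1})_{kl}$ again brings down another inverse power with a sign. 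Applying the chain of $2k$ derivatives indexed so that the column indices cyclically match ($i_2i_1, i_3i_2,\dots,i_1i_{2k}$) and then summing over all indices $i_1,\dots,i_{2k}$ converts each resulting product of matrix entries into a product of traces of powers of $Y^{-1}$ evaluated at $X=0$ — i.e. into $r_\kappa(Y^{-1})$ for various partitions $\kappa$ with $|\kappa|\le 2k$ (respectively $\le 2k+1$ in the squared-trace case, because the extra factor $\Tr$ links the two cyclic chains and can merge them into a longer one or leave a separate block). Each such term is multiplied by a polynomial in $n,m,p$: the factors of $n/4$ from the exponential $e^{-\frac n4\Tr(Y+X)}$ contribute powers of $n$, the repeated differentiation of $|Y+X|^{m/4}$ contributes falling-factorial-type polynomials in $m$, and the combinatorics of which derivatives hit which factor contributes integer coefficients depending on $p$ only through index sums; collecting these gives the coefficients $b^{(1)}_\kappa(n,m,p)$ and $b^{(2)}_\kappa(n,m,p)$. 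I would set up a bookkeeping lemma (perhaps by induction on the number of derivatives applied) asserting that after $j$ symmetric derivatives the integrand is a finite sum $\sum_\kappa c_\kappa^{(j)}(n,m,p)\, r_{\kappa}\big((Y+X)^{-1}\big)\,e^{-\frac n4\Tr(Y+X)}|Y+X|^{m/4}$ with $\deg c^{(j)}_\kappa \le j - q(\kappa)$ (an extra $+1$ being gained in passing to the final count because one of the $j$ factors was "spent" producing the power-sum structure rather than a degree in $n$ or $m$), and where each new derivative either lowers $q(\kappa)$ by merging two cycles, raises $|\kappa|$ and lowers $q$, or multiplies by $n$; this inductive invariant simultaneously yields the partition-support constraints and the degree bounds $\deg b^{(1)}_\kappa\le 2k+1-q(\kappa)$, $\deg b^{(2)}_\kappa\le 2k+2-q(\kappa)$.

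The last step is the normalization check: after performing the differentiation one integrates in $Y$ against the Wishart density $\frac{n^{np/2}}{2^{np/2}\Gamma_p(n/2)}e^{-\frac n4\Tr Y}|Y|^{m/4}$ — but note the exponent there is $-\frac n4$, not $-\frac n2$, and the power is $\frac m4$, not $\frac m2$, so strictly speaking $Y$ has a rescaled matrix-gamma law; a change of variables $Y\mapsto 2Y$ (say) identifies its inverse with $\text{W}_p^{-1}(n,I_p/n)$ up to an explicit constant, and that constant is precisely what makes the overall prefactor collapse to $(-1)^k/n^k$ as stated. I expect the genuinely delicate part to be the combinatorial invariant in the second step: one has to argue carefully that no term of too-high degree in $n,m,p$ survives, in particular that each application of a derivative to $e^{-\frac n4\Tr(Y+X)}$ (which produces a factor linear in $n$) is always "paid for" by the eventual trace-summation, and that merging of cyclic index chains behaves as claimed with respect to both $|\kappa|$ and $q(\kappa)$. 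Getting the degree bookkeeping exactly right — rather than off by one — is where the real work lies; the rest is routine matrix calculus and a change of variables.
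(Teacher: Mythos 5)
The overall plan — express the $t$-moments as derivatives of a convolution, differentiate under the integral sign, and reduce the differentiated integrand to power-sum polynomials via a combinatorial bookkeeping lemma — is in the same spirit as the paper's proof. Your inductive invariant in the second step (after $j$ symmetric derivatives the integrand is $\sum_\kappa c^{(j)}_\kappa(n,m,p)\,r_\kappa((Y+X)^{-1})\,e^{-\frac n4 \Tr(Y+X)}|Y+X|^{m/4}$ with $\deg c^{(j)}_\kappa\le j - q(\kappa)$) is essentially the content of the paper's auxiliary Lemma \ref{lem:second-derivatives-lemma}, which establishes it via a spectral operator $D_L$ rather than componentwise induction; that portion of your plan is sound.

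However, there are two genuine gaps. First, your written identity for the characteristic function is wrong: the characteristic function of $T_{n/2}(I_p/8)$ is the convolution $f^{1/2}_{\text{NW}}\star(f^{1/2}_{\text{NW}}\circ R)$, which when written as an integral over $Y$ carries \emph{two} Wishart-square-root factors, one in $Y+X$ and one in $Y$ alone:
\begin{align*}
\bigintsss_{\S_p(\R)}\hspace{-10pt} e^{i\Tr(TX)}|\psi_{\text{NW}}|(T)\,dT
= \frac{n^{np/2}}{2^{np/2}\Gamma_p(\tfrac n2)}
\bigintsss_{Y+X>0,\,Y>0}\hspace{-30pt}
e^{-\frac n4\Tr(Y+X)}|Y+X|^{\frac m4}\,
e^{-\frac n4\Tr Y}|Y|^{\frac m4}\,dY,
\end{align*}
whereas your right-hand side drops the factor $e^{-\frac n4\Tr Y}|Y|^{m/4}$. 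Without that second factor the identity is false, and the "constant" cannot fix it. Incidentally this also means your closing remark about needing a change of variables $Y\mapsto 2Y$ to recover $\mathrm{W}_p(n,I_p/n)$ is misplaced: after differentiation and evaluation at $X=0$ the two factors combine to $e^{-\frac n2\Tr Y}|Y|^{m/2}$, which together with the displayed constant is already exactly the Wishart density — no rescaling is needed.

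Second, and more seriously, the domain of the $Y$-integral is $\{Y+X>0,\ Y>0\}$, which depends on $X$; you cannot simply "apply the derivatives and set $X=0$," because differentiating under the integral sign at $X=0$ would require the domain to be locally constant there, and it is not (it changes as soon as $X$ acquires a negative eigenvalue). The paper resolves this by restricting $X$ to the open set $\{0<X<I_p\}$, on which $\{Y+X>0,\ Y>0\}=\{Y>0\}$ is fixed and the differentiation under the integral can be justified by dominated convergence (which is where the hypothesis $n\ge p+16k+6$ really enters), and then taking the limit $X\to 0$ through positive-definite matrices on \emph{both} representations of the characteristic function. Your proposal omits this entire mechanism, which is the central analytic point of the proof; as written the plan does not go through.
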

\begin{proof}
Let $f_{\text{NW}}$ and $\psi_{\text{NW}}$ stand for the density and the G-transform of a normalized Wishart matrix $\sqrt{n}[\text{W}_p(n,I_p/n)-I_p]$. In the proof of Proposition \ref{prop:gtransform-nw}, we concluded at Equation \eqref{eqn:fnw12-integrable} that $f^{1/2}_{\text{NW}}$ had to be integrable when $n\geq p-2$, as its integral was proportional to a multivariate gamma function. Let $R(X)=-X$ be the flip operator. Since $f^{1/2}_{\text{NW}}$ is integrable, $f^{1/2}_{\text{NW}}\circ R$ must be integrable as well, and so their convolution $f^{1/2}_{\text{NW}}\star \big(f^{1/2}_{\text{NW}}\circ R\big)$ is well-defined and integrable.

At Equation \eqref{def:fourier-transform} at the start of Section \ref{sec:gtransforms}, we defined our notion of Fourier transform for integrable functions on $\S_p(\R)$. Define the map $\iota:S_p(\R)\rightarrow \R^{p(p+1)/2}$ that maps a symmetric matrix to its vectorized upper triangle, and let $\tau:\S_p(\R)\rightarrow\S_p(\R)$ be the map
\begin{align*}
\tau(X)_{ij} = \begin{cases}
2X_{ij} & \text{ if }i\neq j \\
X_{jj} & \text{ if }i=1.
\end{cases}
\end{align*}
Then in terms of the usual Fourier transform on $\R^{p(p+1)/2}$,
{\setlength{\mathindent}{10pt}
\setlength{\abovedisplayskip}{3pt}
\setlength{\belowdisplayskip}{3pt}
\begin{align*}
\mathcal{F}\{f\}(T) = 2^{\frac{p(p-1)}4}\mathcal{F}\big\{f\circ\iota^{-1}\big\}\big(\iota\circ\tau(T)\big).
\end{align*}}%
This close relationship transfer properties to our Fourier transform on $\S_p(\R)$. We will need three.
\begin{enumerate}
\item For any integrable function $f$, we have $\mathcal{F}\big\{\overline{f\circ R}\big\}=\overline{\mathcal{F}\{f\}}$.
\item (\textit{Convolution}) For any two integrable functions $f_1$ and $f_2$, we have $\mathcal{F}\{f_1\star f_2\}=2^{\frac{p}2}\pi^{\frac{p(p+1)}4}\mathcal{F}\{f_1\}\mathcal{F}\{f_2\}$.
\item (\textit{Fourier inversion}) For any continuous integrable $f$ with integrable Fourier transform $\phi$, we have
{\setlength{\mathindent}{10pt}
\setlength{\abovedisplayskip}{3pt}
\setlength{\belowdisplayskip}{3pt}\begin{align*}
f(X) = \frac1{2^{\frac{p}2}\pi^{\frac{p(p+1)}4}}\bigintss_{\S_p(\R)}\hspace{-15pt}
e^{i\Tr (TX)}\phi(T)\,dT= \mathcal{F}\{\phi\}(-X),
\end{align*}}%
for all $X\in\S_p(\R)$.
\end{enumerate}
These properties are important for the following. Since $f_{\text{NW}}$ is real-valued, properties 1 and 2 provide
{\setlength{\mathindent}{10pt}
\setlength{\abovedisplayskip}{3pt}
\setlength{\belowdisplayskip}{3pt}\begin{align*}&
2^{-\frac{p}2}\pi^{-\frac{p(p+1)}4}\mathcal{F}\Big\{
	f^{1/2}_{\text{NW}}\star \big(f^{1/2}_{\text{NW}}\circ R\big) \Big\}
	= \mathcal{F}\big\{ f^{1/2}_{\text{NW}}\Big\} \mathcal{F}\big\{ f^{1/2}_{\text{NW}}\circ R \big\}
\\&\pushright{
	= \mathcal{F}\big\{ f^{1/2}_{\text{NW}}\big\} \overline{\mathcal{F}\big\{ f^{1/2}_{\text{NW}}\big\}}
	= \psi_{\text{NW}}^{1/2} \overline{\psi_{\text{NW}}^{1/2}}
	= \big|\psi_{\text{NW}}^{1/2}\big|.
}\end{align*}}%
But then, since $|\psi_{\text{NW}}|$ is integrable (in fact, to unity), the Fourier inversion formula yields that
{\setlength{\mathindent}{20pt}
\setlength{\abovedisplayskip}{0pt}
\setlength{\belowdisplayskip}{3pt}\begin{align}
f^{1/2}_{\text{NW}}\star \big(f^{1/2}_{\text{NW}}\circ R\big)(X)
 = \bigintss_{\S_p(\R)}\hspace{-15pt}
 e^{i\Tr (TX)}\big|\psi_{\text{NW}}(T)\big|\,dT.
\label{def:characteristic-function-t}
\end{align}}%
Thus we might say the characteristic function of the $T_{n/2}(I_p/8)$ distribution is given by $f^{1/2}_{\text{NW}}\star \big(f^{1/2}_{\text{NW}}\circ R\big)$. It is well known that the derivatives of the characteristic function of a distribution evaluated at zero provide its moments, up to a constant. This suggests we should try to repeatedly differentiate $f^{1/2}_{\text{NW}}\star \big(f^{1/2}_{\text{NW}}\circ R\big)$ at zero to compute $\E{\Tr T^{2k}}$ and $\E{\Tr^2 T^{k}}$, our ultimate goal.

Unfortunately, the convolution is given by an integral whose domain makes it difficult to directly interchange the differentiation and integration symbols. Because the integrand is orthogonally invariant, we found it easier to compute the derivatives at zero by taking a limit over a sequence of decreasing positive-definite matrices at both sides instead. In this spirit, define on the open set $\{0<X<I_p\}\subset\S_p(\R)$ the real-valued functions
{\setlength{\mathindent}{10pt}
\setlength{\abovedisplayskip}{0pt}
\setlength{\belowdisplayskip}{3pt}\begin{align*}&
H_1(X) = 
	\frac{(-1)^k}{n^k}
	\hspace{-5pt}{\mathlarger{\mathlarger\sum_{i_1,\dots,i_{2k}}^p}}\hspace{-5pt}
	\frac{\partial_\text{s}}{\partial_\text{s} X_{i_1 i_{2k}}}
	\dots \frac{\partial_\text{s}}{\partial_\text{s} X_{i_3i_2}}
	\frac{\partial_\text{s}}{\partial_\text{s} X_{i_2i_1}}
	f^{1/2}_{\text{NW}}\!\star\!\big(f^{1/2}_{\text{NW}}\!\circ\!R\big)(\sqrt{n}X)
\end{align*}}%
and
{\setlength{\mathindent}{10pt}
\setlength{\abovedisplayskip}{0pt}
\setlength{\belowdisplayskip}{3pt}\begin{align*}&
H_2(X) = 
	\frac{(-1)^k}{n^{k}}
	\hspace{-5pt}{\mathlarger{\mathlarger\sum_{\substack{i_1,\dots,i_{k}\\j_1,\dots,j_{k}}}^p}}\hspace{-2pt}
	\frac{\partial_\text{s}}{\partial_\text{s} X_{j_1 j_{k}}}
	\dots \frac{\partial_\text{s}}{\partial_\text{s} X_{j_3j_2}}
	\frac{\partial_\text{s}}{\partial_\text{s} X_{j_2j_1}}
	\frac{\partial_\text{s}}{\partial_\text{s} X_{i_1 i_{k}}}
	\dots \frac{\partial_\text{s}}{\partial_\text{s} X_{i_3i_2}}
	\frac{\partial_\text{s}}{\partial_\text{s} X_{i_2i_1}}
\\[-5pt]&\pushright{
	f^{1/2}_{\text{NW}}\!\star\!\big(f^{1/2}_{\text{NW}}\!\circ\!R\big)(\sqrt{n}X)
}\end{align*}}%
for fixed $k$, $p$ and $n$. Here $\frac{\partial_\text{s}}{\partial_\text{s}X_{ij}}$ stands for the symmetric differentiation operator $\frac{\partial_\text{s}}{\partial_\text{s}X_{ij}} = \frac{1+\delta_{ij}}2\frac{\partial}{\partial X_{ij}}$, as defined in Section \ref{sec:notation}. The $\sqrt{n}$ scaling in the argument helps link the convolution to an expectation with respect to an inverse Wishart distribution.

Let's first relate these functions to the moments of the $T_{n/2}(I_p/8)$ distribution. The symmetric differentation operator has the pleasant property that  $\frac{\partial_\text{s}}{\partial_\text{s}X_{ij}}\Tr (XT) = T_{ij}$ for any two symmetric matrices $X$, $T$. Thus, for any $1\leq l\leq 2k$ and indices $1\leq i_1, \dots, i_{2l}\leq p$, we find that
{\setlength{\mathindent}{20pt}
\setlength{\abovedisplayskip}{3pt}
\setlength{\belowdisplayskip}{3pt}\begin{align}&
\bigg|\frac{\partial_\text{s}}{\partial_\text{s} X_{i_{2l} i_{2l-1}}}
	\dots \frac{\partial_\text{s}}{\partial_\text{s} X_{i_4i_3}}\frac{\partial_\text{s}}{\partial_\text{s} X_{i_2i_1}}
	e^{i\sqrt{n}\Tr (TX)}\big|\psi_\text{NW}\big|(T)\bigg|
\notag\\&\pushrightn{
=  n^l \Big|T_{i_{2l}i_{2l-1}}\cdots T_{i_4i_3}T_{i_2i_1}\Big|\big|\psi_\text{NW}\big|(T)
\label{eqn:moments-integrability1}
}\end{align}}%
for all $X\in\S_p(\R)$. 

We now show that the right hand side (\ref{def:characteristic-function-t}) is integrable. This is not a mere formality: when $p=1$, asking if this expression is integrable is the same as asking if the $t$ distribution with $n/2$ degrees of freedom has an $l^\text{th}$ moment, and it is well-known that the $t$ distribution only possesses moments of order smaller than its degrees of freedom. So the answer is most likely to be positive, but only for $n$ large enough.

Let us see why. For any symmetric matrix $T$,
{\setlength{\mathindent}{20pt}
\setlength{\abovedisplayskip}{3pt}
\setlength{\belowdisplayskip}{3pt}\begin{align*}
|T_{ij}|
\leq \sqrt{\lambda_1(T^2)}
\leq \sqrt{\prod\limits_{i=1}^p\Big(1+\lambda_i(T^2)\Big) }
= \sqrt{\big|I_p+T^2\big|},
\end{align*}}%
where $\lambda_1(T^2)\geq \dots\geq \lambda_p(T^2)\geq0$ are the ordered eigenvalues of the positive-definite matrix $T^2$. Thus
{\setlength{\mathindent}{10pt}
\setlength{\abovedisplayskip}{3pt}
\setlength{\belowdisplayskip}{3pt}\begin{align}&
\bigintss_{\S_p(\R)}\hspace{-15pt}
n^l \Big|T_{i_{2l}i_{2l-1}}\cdots T_{i_2i_1}\Big| \big|\psi_\text{NW}\big|(T)\,dT
\notag\\[-5pt]&\quad
= \frac{n^{\frac{3l}2}}{4^l}C_{n,p} \bigintss_{\S_p(\R)}\hspace{-15pt}
	\Big|\frac{4T_{i_{2l}i_{2l-1}}}{\sqrt{n}}
		\cdots \frac{4T_{i_2i_1}}{\sqrt{n}}\Big|
	\bigg|I_p +\frac{16T^2}{n}\bigg|^{-\frac{n+p+1}4}\,dT
\notag\\[-3pt]&\quad
\leq \frac{n^{\frac{3l}2}}{4^l}C_{n,p}\bigintss_{\S_p(\R)}\hspace{-15pt}
	\bigg|I_p
		 +\frac{16T^2}{n}
	 \bigg|^{-\frac{(n-2l)+p+1}4}\,dT
\notag\\[-5pt]&\quad
	 \leq \frac{n^{\frac{3l}2}C_{n,p}}{4^lC_{n-2l, p} }
	 {\left(\frac{n}{n-2l}\right)\!\!}^{\frac{p(p+1)}4}
	 \hspace{-5pt}\bigintss_{\S_p(\R)}\hspace{-15pt}
	 	C_{n-2l, p}\bigg|I_p
	 		 +\frac{16T^2}{n-2l}
	 	 \bigg|^{-\frac{(n-2l)+p+1}4}dT.
\label{eqn:moments-integrability2}
\end{align}}%
When $n-2l\geq p-2$, the last integrand is the density of a $T_{n/2-l}(I_p/8)$ distribution, so integrates to unity. Thus, when $n\geq p+4k-2$, the right hand side of Equation \eqref{eqn:moments-integrability1} is an integrable function for all $1\leq l\leq 2k$ and $1\leq i_1, \dots, i_{2l}\leq p$. By Equation \eqref{def:characteristic-function-t}, and repeated differentiation under the integral sign justified by the integrability bounds given by Equations \eqref{eqn:moments-integrability1} and \eqref{eqn:moments-integrability2}, we find that
{\setlength{\mathindent}{10pt}
\setlength{\abovedisplayskip}{0pt}
\setlength{\belowdisplayskip}{3pt}\begin{align}
H_1(X) &= \frac{(-1)^k }{n^k}
	\hspace{-5pt}{\mathlarger{\mathlarger\sum_{i_1,\dots,i_{2k}}^p}}\hspace{-5pt}
	\frac{\partial_\text{s}}{\partial_\text{s} X_{i_1 i_{2k}}}
	\dots \frac{\partial_\text{s}}{\partial_\text{s} X_{i_3i_2}}
	\frac{\partial_\text{s}}{\partial_\text{s} X_{i_2i_1}}
	\bigintss_{\S_p(\R)}\hspace{-17pt}
	 e^{i\sqrt{n}\Tr (TX)}\big|\psi_{\text{NW}}(T)\big|dT
\notag\\&
= \bigintss_{\S_p(\R)}\hspace{-15pt}
	 \Tr T^{2k} e^{i\sqrt{n}\Tr (TX)}\big|\psi_{\text{NW}}(T)\big|dT
\label{eqn:moments-H1-prelimit-cf}
\end{align}}%
and
{\setlength{\mathindent}{10pt}
\setlength{\abovedisplayskip}{0pt}
\setlength{\belowdisplayskip}{3pt}\begin{align}
H_2(X) &= \frac{(-1)^k}{n^{k}}
	\hspace{-5pt}{\mathlarger{\mathlarger\sum_{\substack{i_1,\dots,i_{k}\\j_1,\dots,j_{k}}}^p}}\hspace{-2pt}
	\frac{\partial_\text{s}}{\partial_\text{s} X_{j_1 j_{k}}}
	\dots \frac{\partial_\text{s}}{\partial_\text{s} X_{j_2j_1}}
\notag\\[-15pt]&\hspace{62pt}
	\frac{\partial_\text{s}}{\partial_\text{s} X_{i_1 i_{k}}}
	\dots \frac{\partial_\text{s}}{\partial_\text{s} X_{i_3i_2}}
	\frac{\partial_\text{s}}{\partial_\text{s} X_{i_2i_1}}
	\bigintss_{\S_p(\R)}\hspace{-15pt}
	 e^{i\sqrt{n}\Tr (TX)}\big|\psi_{\text{NW}}(T)\big|dT
\notag\\[-5pt]&
= \bigintss_{\S_p(\R)}\hspace{-15pt}
	 \Tr^2T^{k} e^{i\sqrt{n}\Tr (TX)}\big|\psi_{\text{NW}}(T)\big|dT.
\label{eqn:moments-H2-prelimit-cf}
\end{align}}%
for any $X\in\S_p(\R)$ and any $n\geq p+4k-2$.

Now let's relate $H_1$ and $H_2$ to the definition of $f^{1/2}_{\text{NW}}\!\star\!\big(f^{1/2}_{\text{NW}}\!\circ\!R\big)$ as a convolution. This is where restricting $H_1$ and $H_2$ to small positive-definite matrices becomes useful. By Equation \eqref{def:fnw}, the expression is
{\setlength{\mathindent}{10pt}
\setlength{\abovedisplayskip}{0pt}
\setlength{\belowdisplayskip}{3pt}\begin{align*}&
f^{1/2}_{\text{NW}}\!\star\!\big(f^{1/2}_{\text{NW}}\!\circ\!R\big)(\sqrt{n} X)
= \bigintss_{\S_p(\R)}\hspace{-15pt}
f_\text{NW}^{1/2}(Z)f_\text{NW}^{1/2}(Z-\sqrt{n}X)dZ
\\[-5pt]&\pushright{
= \frac{n^{\frac{np}2}}{2^{\frac{np}2} \Gamma_p\big(\frac{n}2\big)}
	\hspace{-3pt}\bigints_{\substack{Y+X>0,\,Y>0}}\hspace{-50pt}
	\exp\Big\{\!\!-\!\frac{n}4\Tr(Y\!+\!X) \Big\}
	\big|Y\!+\!X\big|^{\frac{m}4}
	\exp\Big\{\!\!-\!\frac{n}4\Tr Y \Big\}
	\big|Y\big|^{\frac{m}4}dY
}\end{align*}}%
using the change of variables $Y=I_p+Z/\sqrt{n}-X$ with $dZ=n^{\frac{p(p+1)}4}dY$. For $X>0$, we have $\1{Y+X>0, Y>0}=\1{Y>0}$, and thus $H_1$, $H_2$ satisfy
{\setlength{\mathindent}{5pt}
\setlength{\abovedisplayskip}{0pt}
\setlength{\belowdisplayskip}{3pt}\begin{align}&
H_1(X) = 
	\frac{(-1)^k }{n^k}
	\hspace{-5pt}{\mathlarger{\mathlarger\sum_{i_1,\dots,i_{2k}}^p}}\hspace{-5pt}
	\frac{\partial_\text{s}}{\partial_\text{s} X_{i_1 i_{2k}}}
	\dots \frac{\partial_\text{s}}{\partial_\text{s} X_{i_3i_2}}
	\frac{\partial_\text{s}}{\partial_\text{s} X_{i_2i_1}}
	\hspace{-3pt}\bigintss_{\substack{Y>0}}\hspace{-15pt}
	\exp\Big\{\!\!-\!\frac{n}4\Tr(Y\!+\!X) \Big\}
\notag\\[-5pt]&\pushrightn{\cdot
	\big|Y\!+\!X\big|^{\frac{m}4}
	\frac{n^{\frac{np}2}}{2^{\frac{np}2} \Gamma_p\big(\frac{n}2\big)}
	\exp\Big\{\!\!-\!\frac{n}4\Tr Y \Big\}
	\big|Y\big|^{\frac{m}4}dY
\label{eqn:moments-H1-pre-convolution}
}\end{align}}%
and
{\setlength{\mathindent}{5pt}
\setlength{\abovedisplayskip}{0pt}
\setlength{\belowdisplayskip}{3pt}\begin{align}&
H_2(X) = 
	\frac{(-1)^k}{n^{k}}
	\hspace{-5pt}{\mathlarger{\mathlarger\sum_{\substack{i_1,\dots,i_{k}\\j_1,\dots,j_{k}}}^p}}\hspace{-2pt}
	\frac{\partial_\text{s}}{\partial_\text{s} X_{j_1 j_{k}}}
	\dots \frac{\partial_\text{s}}{\partial_\text{s} X_{j_3j_2}}
	\frac{\partial_\text{s}}{\partial_\text{s} X_{j_2j_1}}
\notag\\[-7pt]&\hspace{60pt}
	\frac{\partial_\text{s}}{\partial_\text{s} X_{i_1 i_{k}}}
	\dots \frac{\partial_\text{s}}{\partial_\text{s} X_{i_3i_2}}
	\frac{\partial_\text{s}}{\partial_\text{s} X_{i_2i_1}}
	\hspace{-3pt}\bigintss_{\substack{Y>0}}\hspace{-15pt}
	\exp\Big\{\!\!-\!\frac{n}4\Tr(Y\!+\!X) \Big\}
	\big|Y\!+\!X\big|^{\frac{m}4}
\notag\\&\pushrightn{\cdot
	\frac{n^{\frac{np}2}}{2^{\frac{np}2} \Gamma_p\big(\frac{n}2\big)}
	\exp\Big\{\!\!-\!\frac{n}4\Tr Y \Big\}
	\big|Y\big|^{\frac{m}4}dY.
\label{eqn:moments-H2-pre-convolution}
}\end{align}}%
We would now like to interchange the integral and differentiation signs. To do so, we must understand what the repeated derivatives of $\exp\{-\frac{n}4\Tr Y \Big\} |Y|^{\frac{m}4}$ look like.
Differentiating once, we see that:
{\setlength{\mathindent}{10pt}
\setlength{\abovedisplayskip}{3pt}
\setlength{\belowdisplayskip}{3pt}\begin{align*}&
\frac{\partial_\text{s}}{\partial_\text{s} X_{i_2i_1}}
	\exp\Big\{\!\!-\!\frac{n}4\Tr (Y\!\!+\!\!X)\Big\}\big|Y\!\!+\!\!X\big|^{\frac{m}4}
\\&\hspace{20pt}
= \bigg[\frac{m}4(Y\!\!+\!\!X)^{-1}_{i_2i_1}-\frac{n}4(I_p)_{i_2i_1}\bigg]
	\exp\Big\{\!\!-\!\frac{n}4\Tr (Y\!\!+\!\!X)\Big\}\big|Y\!\!+\!\!X\big|^{\frac{m}4}.
\end{align*}}%
Differentiating twice, we see that:
{\setlength{\mathindent}{5pt}
\setlength{\abovedisplayskip}{3pt}
\setlength{\belowdisplayskip}{3pt}\begin{align*}&
\frac{\partial_\text{s}}{\partial_\text{s} X_{i_4i_3}}\frac{\partial_\text{s}}{\partial_\text{s} X_{i_2i_1}}
	\exp\Big\{\!\!-\!\frac{n}4\Tr (Y\!\!+\!\!X)\Big\}\big|Y\!\!+\!\!X\big|^{\frac{m}4}
\\&\hspace{0pt}
= \bigg[
		- \frac{m}8(Y\!\!+\!\!X)^{-1}_{i_2i_4}(Y\!\!+\!\!X)^{-1}_{i_3i_1}
		- \frac{m}8(Y\!\!+\!\!X)^{-1}_{i_2i_3}(Y\!\!+\!\!X)^{-1}_{i_4i_1}
\\&\hspace{10pt}
		+ \frac{m^2}{16}(Y\!\!+\!\!X)^{-1}_{i_4i_3}(Y\!\!+\!\!X)^{-1}_{i_2i_1}
		- \frac{mn}{16}(Y\!\!+\!\!X)^{-1}_{i_4i_3}(I_p)^{-1}_{i_2i_1}
\\&\hspace{10pt}
		- \frac{mn}{16}(I_p)^{-1}_{i_4i_3}(Y\!\!+\!\!X)^{-1}_{i_2i_1}
		+ \frac{n^2}{16}(I_p)^{-1}_{i_4i_3}(I_p)^{-1}_{i_2i_1}
	\bigg]
	\exp\Big\{\!\!-\!\frac{n}4\Tr (Y\!\!+\!\!X)\Big\}\big|Y\!\!+\!\!X\big|^{\frac{m}4}\!\!.
\end{align*}}%
So in general, it is clear that the repeated derivatives are given by some polynomial in entries of $(Y+X)^{-1}$, times $\exp\{-\frac{n}4\Tr (Y+X) \Big\} |Y+X|^{\frac{m}4}$. We won't investigate further the nature of these polynomials beyond remarking that for any indices $1\leq l \leq 2k$ and $1\leq i_1,\dots, i_{2l}\leq p$, and any symmetric matrices $X, Y\in\S_p(\R)$, we must have some crude bound
{\setlength{\mathindent}{10pt}
\setlength{\abovedisplayskip}{3pt}
\setlength{\belowdisplayskip}{3pt}\begin{align*}&
\bigg|\frac{\partial_\text{s}}{\partial_\text{s} X_{i_{2l} i_{2l-1}}}
	\dots \frac{\partial_\text{s}}{\partial_\text{s} X_{i_4i_3}}\frac{\partial_\text{s}}{\partial_\text{s} X_{i_2i_1}}
	\exp\bigg\{\!\!-\!\frac{n}4\Tr (Y\!+\!X)\bigg\}\big|Y\!+\!X\big|^{\frac{m}4}
	\bigg|
\\&\hspace{30pt}
\leq\; \sum_{s=0}^l\sum_{\substack{J\in\\\{1,\dots,p\}^{2l}}} 
	\Big|a_{J,s}(n, m)\Big| 
	\prod_{t=s+1}^l \big|(I_p)_{j_{2t}j_{2t-1}}\big|
	\prod_{t=1}^s\big|(Y\!+\!X)^{-1}_{j_{2t}j_{2t-1}}\big|
\\[-10pt]&\pushright{\cdot
	\exp\bigg\{\!\!-\!\frac{n}4\Tr (Y\!+\!X)\bigg\}\big|Y\!+\!X\big|^{\frac{m}4}
}\end{align*}}%
for some polynomials $a_{J, s}$ that do not depend on $X$ or $Y$. We relegate a proof of this result as Lemma \ref{lem:first-derivatives-lemma} in Section \ref{sec:auxiliary}. This can be uniformly bounded for all $0\leq X\leq I_p$ by
{\setlength{\mathindent}{10pt}
\setlength{\abovedisplayskip}{3pt}
\setlength{\belowdisplayskip}{3pt}\begin{align}&\hspace{30pt}
\leq\; C_1(n, m, p) \sum_{s=0}^l \Tr^s (Y^{-1})\exp\Big\{-\frac{n}4\Tr Y\Big\} \big[1+\Tr Y\big]^{\frac{mp}4}
\label{eqn:moments-integrability3}
\end{align}}%
for some constant $C_1(n,m,p)$ that does not depend on $X$ or $Y$. But for any $n\geq p-2$ and $l\geq 0$,
{\setlength{\mathindent}{20pt}
\setlength{\abovedisplayskip}{3pt}
\setlength{\belowdisplayskip}{3pt}\begin{align*}&
\bigintss_{Y>0}\hspace{-10pt}
	C_1(n, m, p) \sum_{s=0}^l \Tr^s (Y^{-1})
	\exp\Big\{-\frac{n}4\Tr Y\Big\} \big[1+\Tr Y\big]^{\frac{mp}4}
\\[-5pt]&\pushright{\cdot
	\frac{n^{\frac{np}2}}{2^{\frac{np}2} \Gamma_p\big(\frac{n}2\big)}
	\exp\Big\{\!\!-\!\frac{n}4\Tr Y \Big\}
	\big|Y\big|^{\frac{m}4}dY
\hspace{20pt}}\\[-5pt]&\hspace{20pt}
=  \frac{n^{\frac{np}2} C_1(n,m,p)}{2^{\frac{np}2} \Gamma_p\big(\frac{n}2\big)}
	\sum_{s=0}^l \bigint_{Y>0}\hspace{-18pt}
	\big[1\!+\!\Tr Y\big]^{\frac{mp}4} \!\Tr^s (Y^{-1})
\\[-12pt]&\pushright{
	\exp\Big\{\!\!-\!\frac{n}2\Tr Y \Big\}  \big|Y\big|^{\frac{n+p+1}4-\frac{p+1}2}\!dY
\hspace{20pt}}\\[-3pt]&\hspace{20pt}
= \left(\frac{n}2\right)^{\frac{mp}4}
	\frac{\Gamma_p\left(\frac{n+p+1}4\right)}{\Gamma_p\left(\frac{n}2\right)}
	\E{\vphantom{\Big\vert}(1+\Tr Y)^{\frac{mp}4}\Tr^s (Y^{-1})}
\end{align*}}%
for a $Y$ with a matrix gamma distribution $\text{G}_p\left(\frac{n+p+1}4, \frac{n}2I_p\right)$. The Cauchy-Schwarz inequality then entails the bound
{\setlength{\mathindent}{40pt}
\setlength{\abovedisplayskip}{3pt}
\setlength{\belowdisplayskip}{3pt}\begin{align}&
\leq \left(\frac{n}2\right)^{\frac{mp}4}
	\frac{\Gamma_p\left(\frac{n+p+1}4\right)}{\Gamma_p\left(\frac{n}2\right)}
	\E{\vphantom{\Big\vert} (1+\Tr Y)^{\frac{mp}2}}^{\frac12} 
	\E{\vphantom{\Big\vert} \Tr^{2s} (Y^{-1})}^{\frac12}.
\label{eqn:moments-integrability4}
\end{align}}%
The first expectation is always finite when $n\geq p-2$. Since $\Tr^{2s}(Y^{-1})$ can be written as a sum of zonal polynomials indexed by partitions of the integer $2s$,  the results of \citet[Theorem 7.2.13]{muirhead82} imply that the second expectation is finite whenever $\frac{n+p+1}4>2s+\frac{p-1}2 \Leftrightarrow n\geq p+8s-2$. Thus, in Equation \eqref{eqn:moments-H1-pre-convolution} with $l\leq k$ and \eqref{eqn:moments-H2-pre-convolution} with $l\leq 2k$, whenever $n\geq p+16k-2$ we are justified in repeatedly differentiating under the integral sign by the integrability bounds given by Equations \eqref{eqn:moments-integrability3} and \eqref{eqn:moments-integrability4}, and obtain in that case
{\setlength{\mathindent}{5pt}
\setlength{\abovedisplayskip}{0pt}
\setlength{\belowdisplayskip}{3pt}\begin{align}&
H_1(X) = 
	\frac{(-1)^k }{n^k}
	\bigints_{\substack{Y>0}}
	{\mathlarger{\mathlarger\sum_{i_1,\dots,i_{2k}}^p}}
	\frac{\partial_\text{s}}{\partial_\text{s} X_{i_1 i_{2k}}}
	\dots \frac{\partial_\text{s}}{\partial_\text{s} X_{i_3i_2}}
	\frac{\partial_\text{s}}{\partial_\text{s} X_{i_2i_1}}
\\[-5pt]&\pushright{
	\exp\Big\{\!\!-\!\frac{n}4\Tr(Y\!+\!X) \Big\}
	\big|Y\!+\!X\big|^{\frac{m}4}
	\frac{n^{\frac{np}2}}{2^{\frac{np}2} \Gamma_p\big(\frac{n}2\big)}
	\exp\Big\{\!\!-\!\frac{n}4\Tr Y \Big\}
	\big|Y\big|^{\frac{m}4}dY
}\label{eqn:moments-H1-prelimit-wishart}\end{align}}%
and
{\setlength{\mathindent}{5pt}
\setlength{\abovedisplayskip}{0pt}
\setlength{\belowdisplayskip}{3pt}\begin{align}&
H_2(X) = 
	\frac{(-1)^k}{n^{k}}
	\bigints_{\substack{Y>0}}
	{\mathlarger{\mathlarger\sum_{\substack{i_1,\dots,i_{k}\\j_1,\dots,j_{k}}}^p}}
	\frac{\partial_\text{s}}{\partial_\text{s} X_{j_1 j_{k}}}
	\dots \frac{\partial_\text{s}}{\partial_\text{s} X_{j_3j_2}}
	\frac{\partial_\text{s}}{\partial_\text{s} X_{j_2j_1}}
\notag\\[-3pt]&\pushright{
	\frac{\partial_\text{s}}{\partial_\text{s} X_{i_1 i_{k}}}
	\dots \frac{\partial_\text{s}}{\partial_\text{s} X_{i_3i_2}}
	\frac{\partial_\text{s}}{\partial_\text{s} X_{i_2i_1}}
	\exp\Big\{\!\!-\!\frac{n}4\Tr(Y\!+\!X) \Big\}
	\big|Y\!+\!X\big|^{\frac{m}4}
}\notag\\&\pushright{\cdot
	\frac{n^{\frac{np}2}}{2^{\frac{np}2} \Gamma_p\big(\frac{n}2\big)}
	\exp\Big\{\!\!-\!\frac{n}4\Tr Y \Big\}
	\big|Y\big|^{\frac{m}4}dY.
} \label{eqn:moments-H2-prelimit-wishart}
\end{align}}%

Let us now look at how $H_1(X)$ and $H_2(X)$ behave as $X\rightarrow0$. On one hand, for any symmetric matrix $T$ we have $|\Tr T^k|\leq \sqrt{p\Tr T^{2k}}\leq \sqrt{p}|I_p+T^2|^{k/2}$, so we must have the bounds
{\setlength{\mathindent}{10pt}
\setlength{\abovedisplayskip}{3pt}
\setlength{\belowdisplayskip}{3pt}\begin{align*}&
\bigg|\Tr T^{2k}e^{i\sqrt{n}\Tr(TX)}\big|\psi_\text{NW}(T)\big|\bigg|
\leq \frac{n^{k}C_{n, p}}{16^kC_{n-4k, p}} C_{n-4k, p}\bigg|I_p +\frac{16T^2}{n}\bigg|^{-\frac{(n-4k)+p+1}4}
\end{align*}}%
and
{\setlength{\mathindent}{10pt}
\setlength{\abovedisplayskip}{3pt}
\setlength{\belowdisplayskip}{3pt}\begin{align*}&
\bigg|\Tr^2 T^{k}e^{i\sqrt{n}\Tr(TX)}\big|\psi_\text{NW}(T)\big|\bigg|
\leq \frac{pn^kC_{n, p}}{16^kC_{n-4k, p}} C_{n-4k, p}\bigg|I_p +\frac{16T^2}{n}\bigg|^{-\frac{(n-4k)+p+1}4}.
\end{align*}}%
holding uniformly in $X$. When $n-4k\geq p-2 \Leftrightarrow n\geq p+4k-2$, the right hand sides are proportional to the density of the G-conjugates of the normalized Wishart distributions with $n-4k$ degrees of freedom, so are integrable. Thus, by the dominated convergence theorem and Equations \eqref{eqn:moments-H1-prelimit-cf} and \eqref{eqn:moments-H2-prelimit-cf},
{\setlength{\mathindent}{10pt}
\setlength{\abovedisplayskip}{0pt}
\setlength{\belowdisplayskip}{3pt}\begin{align}
\hspace{-5pt}\text{\raisebox{5pt}{$\lim_{\substack{X\rightarrow 0\\0<X<I_p}}$}}\hspace{-2pt} 
H_1(X) 
&= \!\!\bigintss_{\S_p(\R)}\hspace{-18pt}
	 \Tr T^{2k} 
	 \hspace{-5pt}\text{\raisebox{5pt}{$\lim_{\substack{X\rightarrow 0\\0<X<I_p}}$}}\hspace{-2pt} 
	 e^{i\sqrt{n}\Tr (TX)}\big|\psi_{\text{NW}}(T)\big|dT
= \E{\Tr T^{2k}}
\label{eqn:moments-H1-postlimit-cf}
\end{align}}%
and
{\setlength{\mathindent}{10pt}
\setlength{\abovedisplayskip}{0pt}
\setlength{\belowdisplayskip}{3pt}\begin{align}
\hspace{-5pt}\text{\raisebox{5pt}{$\lim_{\substack{X\rightarrow 0\\0<X<I_p}}$}}\hspace{-2pt} 
H_2(X) 
&= \!\!\bigintss_{\S_p(\R)}\hspace{-18pt}
	 \Tr^2 T^{k} 
	 \hspace{-5pt}\text{\raisebox{5pt}{$\lim_{\substack{X\rightarrow 0\\0<X<I_p}}$}}\hspace{-2pt} 
	 e^{i\sqrt{n}\Tr (TX)}\big|\psi_{\text{NW}}(T)\big|dT
= \E{\Tr^2 T^{k}}
\label{eqn:moments-H2-postlimit-cf}
\end{align}}%
for $T\sim T_{n/2}(I_/8)$.

On the other hand, the integrands at Equations \eqref{eqn:moments-H1-prelimit-wishart} and \eqref{eqn:moments-H2-prelimit-wishart} take a particularly simple form. Lemma \ref{lem:second-derivatives-lemma} establishes by induction that there must be polynomials $b_\kappa^{(1)}$ and $b_\kappa^{(2)}$ in $n$, $m$ and $p$ with degrees $\mathrm{deg}\,b_\kappa^{(1)}\leq 2k+1-q(\kappa)$ and $\mathrm{deg}\, b_\kappa^{(2)}\leq 2k+2-q(\kappa)$ such that
{\setlength{\mathindent}{10pt}
\setlength{\abovedisplayskip}{3pt}
\setlength{\belowdisplayskip}{3pt}\begin{align}&
H_1(X) = \frac{(-1)^k }{n^k}
	\bigint_{Y>0}\hspace{-15pt} 
	\raisebox{2pt}{$\mathlarger\sum\limits_{|\kappa|\leq 2k}$}
	b^{(1)}_\kappa(n,m,p)
	r_\kappa([Y+X]^{-1})
	\exp\bigg\{\!\!-\!\frac{n}4\Tr(Y+X)\bigg\}
\notag\\[-5pt]&\pushrightn{\cdot
	 \big|Y+X\big|^{\frac{m}4}
	\frac{n^{\frac{np}2}}{2^{\frac{np}2} \Gamma_p\big(\frac{n}2\big)}
	\exp\Big\{\!\!-\!\frac{n}4\Tr Y \Big\}
	\big|Y\big|^{\frac{m}4}dY
}\label{eqn:moments-H1-prelimit-cv}
\end{align}}%
and
{\setlength{\mathindent}{10pt}
\setlength{\abovedisplayskip}{3pt}
\setlength{\belowdisplayskip}{3pt}\begin{align}&
H_2(X) = \frac{(-1)^k}{n^{k}}
	\bigint_{Y>0}\hspace{-15pt} 
	\hspace{-2pt}\raisebox{2pt}{$\mathlarger\sum\limits_{|\kappa|\leq 2k+1}$} \hspace{-7pt}
	b^{(2)}_\kappa(n,m,p)
	r_\kappa([Y+X]^{-1})
	\exp\bigg\{\!\!-\!\frac{n}4\Tr(Y+X)\bigg\}
\notag\\[-5pt]&\pushrightn{\cdot
	 \big|Y+X\big|^{\frac{m}4}
	\frac{n^{\frac{np}2}}{2^{\frac{np}2} \Gamma_p\big(\frac{n}2\big)}
	\exp\Big\{\!\!-\!\frac{n}4\Tr Y \Big\}
	\big|Y\big|^{\frac{m}4}dY
}\label{eqn:moments-H2-prelimit-cv}
\end{align}}%
for any $0<X<I_p$ and $n\geq p+16k-2$. The sums are taken over all partitions of the integers $\kappa$ satisfying $|\kappa|\leq 2k$ and $|\kappa|\leq 2k+1$ respectively, including the empty partition. But for any integer partition $\kappa$, the bound
{\setlength{\mathindent}{10pt}
\setlength{\abovedisplayskip}{5pt}
\setlength{\belowdisplayskip}{5pt}\begin{align*}
r_\kappa([Y+X]^{-1})e^{-\frac{n}4\Tr (Y+X)}\big|Y+X\big|^{\frac{m}4}
\;\leq\;
\Tr^{|\kappa|}(Y^{-1})e^{-\frac{n}4\Tr Y}\big[ 1+\Tr Y \big]^{\frac{mp}4}
\end{align*}}%
holds uniformly in $0\leq X\leq I_p$. Thus for $|\kappa| \leq 2k+1$, the right hand side is integrable for $n\geq p+16k+6$, by the same argument as for Equation \eqref{eqn:moments-integrability4}. Thus for such $n$, by the dominated convergence theorem and Equations \eqref{eqn:moments-H1-prelimit-cv} and \eqref{eqn:moments-H2-prelimit-cv}, we obtain that
{\setlength{\mathindent}{10pt}
\setlength{\abovedisplayskip}{0pt}
\setlength{\belowdisplayskip}{0pt}\begin{align}&
\hspace{-5pt}\text{\raisebox{7pt}{$\lim_{\substack{X\rightarrow 0\\0<X<I_p}}$}}
	H_1(X) 
= \frac{(-1)^k}{n^k}
	\bigints_{Y>0}\hspace{-7pt} 
	\hspace{-5pt}\text{\raisebox{7pt}{$\lim_{\substack{X\rightarrow 0\\0<X<I_p}}$}} \;
	\mathlarger\sum_{|\kappa|\leq 2k} \hspace{-5pt}
	b^{(1)}_\kappa(n,m,p)r_\kappa([Y+X]^{-1})
\notag\\&\pushright{\cdot
	\exp\bigg\{\!\!-\!\frac{n}4\Tr(Y+X)\bigg\}
	 \big|Y+X\big|^{\frac{m}4}
	\frac{n^{\frac{np}2}}{2^{\frac{np}2} \Gamma_p\big(\frac{n}2\big)}
	\exp\Big\{\!\!-\!\frac{n}4\Tr Y \Big\}
	\big|Y\big|^{\frac{m}4}dY
}\notag\\&\hphantom{\hspace{-5pt}\text{\raisebox{5pt}{$\lim_{\substack{X\rightarrow 0\\0<X<I_p}}$}}\hspace{-2pt} H_1(X) }
= \frac{(-1)^k}{n^k}\!\!\sum_{|\kappa|\leq 2k} \hspace{-3pt}
	b^{(1)}_\kappa(n,m,p)
	\E{r_\kappa(Y^{-1})}
\label{eqn:moments-H1-postlimit-cv}
\end{align}}%
and
{\setlength{\mathindent}{10pt}
\setlength{\abovedisplayskip}{0pt}
\setlength{\belowdisplayskip}{3pt}\begin{align}&
\hspace{-5pt}\text{\raisebox{7pt}{$\lim_{\substack{X\rightarrow 0\\0<X<I_p}}$}}
	H_2(X) 
= \frac{(-1)^k}{n^{k}}
	\bigints_{Y>0}\hspace{-7pt} 
	\hspace{-5pt}\text{\raisebox{7pt}{$\lim_{\substack{X\rightarrow 0\\0<X<I_p}}$}} \;
	\mathlarger\sum_{|\kappa|\leq 2k+1} \hspace{-5pt}
	b^{(2)}_\kappa(n,m,p)
	r_\kappa([Y+X]^{-1})
\notag\\&\pushright{\cdot
	\exp\bigg\{\!\!-\!\frac{n}4\Tr(Y+X)\bigg\}
	 \big|Y+X\big|^{\frac{m}4}
	\frac{n^{\frac{np}2}}{2^{\frac{np}2} \Gamma_p\big(\frac{n}2\big)}
	\exp\Big\{\!\!-\!\frac{n}4\Tr Y \Big\}
	\big|Y\big|^{\frac{m}4}dY
}\notag\\&\hphantom{\hspace{-5pt}\text{\raisebox{5pt}{$\lim_{\substack{X\rightarrow 0\\0<X<I_p}}$}}\hspace{-2pt} H_2(X) }
= \frac{(-1)^k}{n^{k}}\!\!\sum_{|\kappa|\leq 2k+1} \hspace{-8pt}
	b^{(2)}_\kappa(n,m,p)
	\E{r_\kappa(Y^{-1})},
\label{eqn:moments-H2-postlimit-cv}
\end{align}}%
where $Y$ follows a $\text{W}_p(n,I_p/n)$ distribution. Combining Equations \eqref{eqn:moments-H1-postlimit-cf}--\eqref{eqn:moments-H2-postlimit-cf} with Equations \eqref{eqn:moments-H1-postlimit-cv}--\eqref{eqn:moments-H2-postlimit-cv} and Lemma \ref{lem:second-derivatives-lemma} concludes the proof.
\end{proof}


Something remarkable about Lemma \ref{lem:moments-as-expectation} is that it provides us with an algorithm to compute the moments of a symmetric $t$ distribution in terms of the moments of an inverse Wishart matrix. For example, when $k=1$, repeated differentiation yields that
{\setlength{\mathindent}{20pt}
\setlength{\abovedisplayskip}{5pt}
\setlength{\belowdisplayskip}{5pt}\begin{align}&
\sum_{i_1, i_2}^p 
	\frac{\partial_\text{s}}{\partial_\text{s} X_{i_1i_2}}
	\frac{\partial_\text{s}}{\partial_\text{s} X_{i_2i_1}}
	\exp\Big\{\!\!-\!\frac{n}4\Tr(Y\!+\!X) \Big\}
	\big|Y\!+\!X\big|^{\frac{m}4}
\notag\\&\hspace{30pt}
=\bigg[\frac{m(m-2)}{16}\Tr(Y+X)^{-2}-\frac{mn}{8}\Tr(Y+X)^{-1}+\frac{n^2p}{16}
\notag\\&\pushrightn{
	-\frac{m}8\Tr^2(Y+X)^{-1}\bigg] \exp\Big\{\!\!-\!\frac{n}4\Tr(Y\!+\!X) \Big\}
	\big|Y\!+\!X\big|^{\frac{m}4}.
\label{eqn:moments-exampleA1}
}\end{align}}%
We can recognize $\Tr(Y+X)^{-2}$ and $\Tr^2(Y+X)$ as the power sum polynomials $r_{(2)}([Y+X]^{-1})$ and $r_{(1,1)}([Y+X]^{-1})$ in the sense of Equation \eqref{def:powersum-polynomial}, so we must have $b_{(2)}^{(1)}=m(m-2)/16$, $b_{(1,1)}^{(1)}=-m/8$, $b_{(1)}^{(1)}=mn/8$ and $b_{\varnothing}^{(1)}=n^2p/16$ in the result of Lemma \ref{lem:second-derivatives-lemma}. Hence Lemma \ref{lem:moments-as-expectation} really tells us that whenever $n \geq p+22$, $\E{\Tr T^2}$ for $T\sim T_{n/2}(I_p/8)$ can be expressed as
{\setlength{\mathindent}{12pt}
\setlength{\abovedisplayskip}{5pt}
\setlength{\belowdisplayskip}{5pt}\begin{align}
\E{\Tr T^{2}} &= 
		-\frac{m(m-2)}{16n}\E{\Tr Y^{-2}}
		+\frac{m}{8n}\E{\Tr^2 Y^{-1}}
\notag\\&\hspace{121pt}
		+\frac{m}{8}\E{\Tr Y^{-1}}
		-\frac{np}{16}
\label{eqn:moments-exampleA2}
\end{align}}%
where $Y\sim\text{W}_p(n, I_p/n)$. 

Of course, this also works with square moments and higher $k$. For example, the same strategy for, say, square moments with $k=2$ yields that whenever $n \geq p+38$, $\E{\Tr^2 T^2}$ for $T\sim T_{n/2}(I_p/8)$ can be expressed as
{\setlength{\mathindent}{0pt}
\setlength{\abovedisplayskip}{5pt}
\setlength{\belowdisplayskip}{5pt}\begin{align}&
\E{\Tr^2T^{2}} = 
-\frac{m \left(m^{2} - 5 m + 10\right)}{16 n^{2}}\E{\Tr Y^{-4}} 
 +\frac{m \left(m - 2\right)}{4 n^{2}}\E{\Tr Y^{-3}\Tr Y^{-1}} 
\notag\\&\pushright{
 +\frac{m \left(m^{3} - 4 m^{2} + 20 m - 32\right)}{256 n^{2}}\E{\Tr^{2} Y^{-2}} 
   +\frac{m^{2}}{64 n^{2}}\E{\Tr^{4} Y^{-1}} 
}\notag\\&\pushright{
 -\frac{m \left(m^{2} - 2 m + 16\right)}{64 n^{2}}\E{\Tr Y^{-2}\Tr^{2} Y^{-1}}
  +\frac{m \left(m - 2\right)}{8 n}\E{\Tr Y^{-3}} 
}\notag\\&\pushright{
 -\frac{m \left(m^{2} - 2 m + 16\right)}{64 n}\E{\Tr Y^{-2}\Tr Y^{-1}} 
 +\frac{m^{2}}{32 n}\E{\Tr^{3} Y^{-1}} 
 }\notag\\&\pushright{
 +\frac{m \left(m p - 2 p - 8\right)}{128}\E{\Tr Y^{-2}} 
 -\frac{m \left(- m + p\right)}{64}\E{\Tr^{2} Y^{-1}} 
 }\notag\\&\pushrightn{
 -\frac{m n p}{64}\E{\Tr Y^{-1}} 
 +\frac{n^{2} p^{2}}{256}
}
\label{eqn:moments-exampleB2}
\end{align}}%
again where $Y\sim\text{W}_p(n, I_p/n)$. 

Unfortunately, as we consider larger orders, the repeated differentiation of $\exp\{-\frac{n}4\Tr Z\}|Z|^{m/4}$ quickly becomes too cumbersome to perform by hand. But at least in theory, we can compute expressions like \eqref{eqn:moments-exampleA2} and \eqref{eqn:moments-exampleB2} for any $\E{\Tr T^{2k}}$ and $\E{\Tr^2T^{k}}$, and Lemma \ref{lem:moments-as-expectation} summarize that fact. That is, using the Fourier inversion theorem we have reduced the problem of computing moments of the $t$-distribution $T_{n/2}(I_p/8)$ to that of computing expected power sum polynomials of the inverse Wishart distribution $\text{W}^{-1}_p(n, I_p/n)$, for large enough $n$.

How can we compute expected power sum polynomials of an inverse Wishart? There are two approaches in the literature. \citet{letac04} found an expression in terms of a different basis, the zonal polynomials, which behave particularly nicely with respect to the inverse Wishart distribution, and whose expectations have a simple closed form. From this, they provided an algorithm for computing expected power sum polynomials to arbitrary order. \citet{matsumoto12} found expressions of coordinate-wise moments in terms of modified Wiengarten orthogonal functions, from which expectations of power sum polynomials can be computed. We follow the idea of \citet{letac04} in our asymptotic analysis.

For any integer partition $\kappa$, there exist coefficients $c_{\kappa, \lambda}$ (which depend solely on $\kappa$ and $\lambda$) such that
{\setlength{\mathindent}{30pt}
\setlength{\abovedisplayskip}{3pt}
\setlength{\belowdisplayskip}{3pt}\begin{align*}
r_\kappa(Y^{-1}) = \sum_{|\lambda|=|\kappa|} c_\lambda C_\lambda(Y^{-1})
\end{align*}}%
for $C_\lambda$ the so-called zonal polynomials. For an overview of the topic with a focus on random matrix theory, see \citet[Chapter 7]{muirhead82}. The coefficients $c_{\kappa, \lambda}$ are explicitly computable. If we follow the normalization of zonal polynomials of \cite{muirhead82}, for example, we find that
{\setlength{\mathindent}{10pt}
\setlength{\abovedisplayskip}{7pt}
\setlength{\belowdisplayskip}{7pt}\begin{align}
\begin{array}{ccc}
\begin{bmatrix}r_{\varnothing}\end{bmatrix} = \begin{bmatrix}C_{\varnothing}\end{bmatrix}\!,
\vspace{5pt}
&
\multirow{2}{*}{$\quad\text{and}\quad$\vspace{-5pt}} &
\multirow{2}{*}{$\begin{bmatrix}
r_{(2)} \\
r_{(1,1)}
\end{bmatrix}
= \begin{bmatrix}
1 & -\frac12 \\
1 & 1
\end{bmatrix}
\begin{bmatrix}
C_{(2)} \\
C_{(1,1)}
\end{bmatrix}. $\vspace{-5pt}}\\
\begin{bmatrix}r_{(1)}\end{bmatrix} = \begin{bmatrix}C_{(1)}\end{bmatrix}\!,
& &
\end{array}
\label{eqn:moments-example-RC}
\end{align}}%
As mentioned, expectations of zonal polynomials with respect to a Wishart or inverse Wishart distribution take a particularly simple form. From \citet[Theorem 7.2.13 and Equation (18) on p.237]{muirhead82}, the expected zonal polynomials for $Y^{-1}\sim\text{W}^{-1}_p(n, I_p/n)$ are
{\setlength{\mathindent}{10pt}
\setlength{\abovedisplayskip}{0pt}
\setlength{\belowdisplayskip}{7pt}\begin{align}&
\E{C_\lambda(Y^{-1})}
= \frac{n^{|\lambda|}}{2^{|\lambda|}\prod_{i=1}^{q(\lambda)}\frac{m-i+1}{2}}
	C_\lambda(I_p)
\notag\\&\hspace{10pt}
= \frac{2^{|\lambda|}|\lambda|!\prod_{i<j}^{q(\lambda)} (2\lambda_i-2\lambda_j-i+j)}{\prod_{i=1}^{q(\lambda)}(2\lambda_i+q(\lambda)-i)!}
n^{|\lambda|}\prod_{i=1}^{q(\lambda)}\prod_{l=0}^{\lambda_i-1}\frac{p+(1-i+2l)}{m-(1-i+2l)}
\label{eqn:moments-expected-zonal}
\end{align}}%
for $\lambda\neq \varnothing$, and $\E{C_{\varnothing}(Y^{-1})}=1$. For example, the first few expected zonal polynomials are
{\setlength{\mathindent}{10pt}
\setlength{\abovedisplayskip}{5pt}
\setlength{\belowdisplayskip}{7pt}\begin{align*}
\E{C_{\varnothing}(Y^{-1})} &= 1,
&
\E{C_{(1)}(Y^{-1})} &= \frac{np}{m}, \\
\E{C_{(1,1)}(Y^{-1})} &= \frac{2 n^{2} p \left(p - 1\right)}{3 m \left(m + 1\right)},
&
\E{C_{(2)}(Y^{-1})} &= \frac{n^{2} p \left(p + 2\right)}{3 m \left(m - 2\right)}.
\end{align*}}%
From this, we can exactly compute $\E{r_\kappa(Y^{-1})}$ and thus $\E{\Tr T^{2k}}$ and $\E{\Tr ^2T^{k}}$, as a function of $p$ and $n$ (or $m$). For example, by Equation \eqref{eqn:moments-example-RC} we find that
{\setlength{\mathindent}{0pt}
\setlength{\abovedisplayskip}{5pt}
\setlength{\belowdisplayskip}{7pt}\begin{align*}
\E{r_{\varnothing}(Y^{-1})} &= 1,
&
\E{r_{(1)}(Y^{-1})} &= \frac{np}{m}, \\
\E{r_{(1,1)}(Y^{-1})} &= \frac{n^{2} p \left(m p - p + 2\right)}{m \left(m - 2\right) \left(m + 1\right)},
&
\E{r_{(2)}(Y^{-1})} &= \frac{n^{2} p \left(m + p\right)}{m \left(m - 2\right) \left(m + 1\right)}.
\end{align*}}%
and thus, by Equation \eqref{eqn:moments-exampleA2}, whenever $n\geq p+38$
{\setlength{\mathindent}{10pt}
\setlength{\abovedisplayskip}{5pt}
\setlength{\belowdisplayskip}{7pt}\begin{align}
\E{\Tr T^2} = \frac{np \left(m p + m + 2\right)}{16 \left(m - 2\right) \left(m + 1\right)}.
\label{eqn:moments-exampleA3}
\end{align}}%
In a similar way, we can compute the expected zonal polynomials and hence, the expected power sum polynomials of $Y^{-1}$ for $|\kappa|=3,4$. So from Equation \eqref{eqn:moments-exampleB2}, we obtain for $n\geq p+38$ that
{\setlength{\mathindent}{10pt}
\setlength{\abovedisplayskip}{5pt}
\setlength{\belowdisplayskip}{5pt}\begin{align}
\E{\Tr^2 T^2} =& \frac{n^{2} p}
{256 \left(m - 6\right) \left(m - 2\right) \left(m - 1\right) \left(m + 1\right) \left(m + 3\right)} 
\Big(m^{3} p^{3} + 2 m^{3} p^{2}
\notag\\&
+ 5 m^{3} p + 4 m^{3} - 3 m^{2} p^{3} + 6 m^{2} p^{2} + 9 m^{2} p + 24 m^{2} - 12 m p^{3} 
\notag\\&
- 36 m p^{2} + 36 m p - 36 p\Big).
\label{eqn:moments-exampleB3}
\end{align}}%
Of course, this reasoning also works for other $k$'s. In particular, we essentially derived a (potentially inefficient) algorithm to compute the moments of a $T_{n/2}(I_p/8)$ distribution to arbitrary order on our path to proving this theorem.

At this point, it is worthwhile to realize that Equations \eqref{eqn:moments-exampleA3} and \eqref{eqn:moments-exampleB3} are already enough to prove the theorem for small moments. For example, when $n\rightarrow\infty$ such that $p/n\rightarrow c\in[0,1)$, then $m\sim (1-c)n$ and
{\setlength{\mathindent}{17pt}
\setlength{\abovedisplayskip}{5pt}
\setlength{\belowdisplayskip}{5pt}\begin{align*}&
\E{\Tr T^2}
= \frac{m^2}{\left(m - 2\right) \left(m + 1\right)}\left(
\frac{np^2}{16m}
+ \frac{np}{16m}
+ \frac{n}{8m^2}\right)
\sim \frac1{16(1-c)}p^2,
\end{align*}}%
which proves that $\E{\Tr T^2}=O(p^2)$. In fact,
{\setlength{\mathindent}{10pt}
\setlength{\abovedisplayskip}{5pt}
\setlength{\belowdisplayskip}{5pt}\begin{align}
\E{\frac1{p}\Tr \Big(\frac{4T}{\sqrt{p}}\Big)^2} - 1
&= \frac{m^2}{\left(m\!-\!2\right) \left(m\!+\!1\right)}
\bigg(\frac{1}{p} + \frac{p}{m} + \frac{2}{m} 
+ \frac{3}{m p} + \frac{2}{m^{2}} + \frac{2}{m^{2} p}\bigg)
\notag\\&
\sim \frac1{p}+\frac{p}{m}\rightarrow 0.
\label{eqn:moments-example-bias}
\end{align}}%
Moreover, when $n, p\rightarrow\infty$ such that $p/n\rightarrow0$, then $m\sim n$ and
{\setlength{\mathindent}{10pt}
\setlength{\abovedisplayskip}{5pt}
\setlength{\belowdisplayskip}{5pt}\begin{align}&
\E{\bigg(\frac1{p}\Tr \Big(\frac{4T}{\sqrt{p}}\Big)^2 - 1\bigg)^{\raisebox{-3pt}{$\scriptstyle 2$}}}
=\frac{256}{p^4}\E{\Tr^2 T^2}
-\frac{32}{p^2}\E{\Tr T^2}^2+1
\notag\\&\hspace{30pt}
= \frac{m^5}{\left(m\!-\!6\right) \left(m\!-\!2\right) \left(m\!-\!1\right) \left(m\!+\!1\right) \left(m\!+\!3\right)}
\bigg(
\frac{5}{p^{2}} 
+ \frac{4}{p^{3}} 
+ \frac{2}{m} 
+ \frac{22}{m p} 
\notag\\&\hspace{50pt}
+ \frac{27}{m p^{2}} 
+ \frac{32}{m p^{3}} 
+ \frac{p^{2}}{m^{2}} 
+ \frac{6 p}{m^{2}}
+ \frac{33}{m^{2}} 
+ \frac{60}{m^{2} p} 
+ \frac{115}{m^{2} p^{2}} 
+ \frac{52}{m^{2} p^{3}} 
\notag\\&\hspace{50pt}
- \frac{3 p^{2}}{m^{3}} 
+ \frac{6 p}{m^{3}} 
+ \frac{3}{m^{3}} 
+ \frac{118}{m^{3} p} 
+ \frac{93}{m^{3} p^{2}} 
+ \frac{24}{m^{3} p^{3}} 
- \frac{12 p^{2}}{m^{4}} 
- \frac{96 p}{m^{4}} 
\notag\\&\hspace{50pt}
- \frac{48}{m^{4}} 
- \frac{84}{m^{4} p} 
- \frac{36}{m^{4} p^{2}} 
- \frac{144}{m^{5}} 
- \frac{144}{m^{5} p} 
- \frac{36}{m^{5} p^{2}}
\bigg)
\notag\\&\hspace{30pt}
\sim \frac{5}{p^2} + \frac{2}{m} + \frac{p^2}{m^2} \rightarrow 0.
\label{eqn:moments-example-L2distance}
\end{align}}%
Thus
\begin{ceqn}
\begin{align*}
\frac1p\Tr\Big(\frac{4T}{\sqrt{p}}\Big)^{2} \overset{\text{L}^2}{\longrightarrow} 1=C_1
\end{align*}
\end{ceqn}
and the theorem is proven for the second moment. 

In theory, we could proceed in the same way for any moment of interest, but naturally we could never conclude that the theorem holds for all moments that way. Nonetheless, the calculations give us some hints about how to argue in the general case.

The idea is to express the moments of the symmetric $t$ distribution as polynomials of $p$ and $p/m$. There are two regimes where random matrix theory is well understood: the classical regime where $p$ is held fixed as $n\rightarrow\infty$, and the linear, high-dimensional regime where $p$ grows linearly with $n$. From this, we can therefore conclude a few facts regarding the behavior of symmetric $t$ moments in these regimes. But these moments are polynomials, and a polynomial is a very rigid object: results from the two extreme cases where $p$ is fixed and $p$ grows linearly will be enough to prove results for every regime in between, yielding the first part of the theorem. Proving the second part will then be the simple matter of applying the GOE approximation of \cite{jiang15} and \cite{bubeck16a} to the specific shape found for the symmetric $t$ moments while proving the first part, namely Equations \eqref{eqn:moments-trT2k-limit} and \eqref{eqn:moments-tr2Tk-limit}.

\noindent \begin{proof}[Proof of Theorem \ref{thm:moments}]
Recall the expected zonal polynomial of an inverse Wishart $\text{W}^{-1}_p(n, I_p/n)$ is given by Equation \eqref{eqn:moments-expected-zonal}. Based on the previous calculations, it is tempting to define
{\setlength{\mathindent}{0pt}
\setlength{\abovedisplayskip}{3pt}
\setlength{\belowdisplayskip}{3pt}\begin{align}&
c_\lambda' = \frac{2^{|\lambda|}|\lambda|!\prod_{i<j}^{q(\lambda)} (2\lambda_i-2\lambda_j-i+j)}{\prod_{i=1}^{q(\lambda)}(2\lambda_i+q(\lambda)-i)!},\quad
R_\lambda(m) = \prod_{i=1}^{q(\lambda)}\prod_{l=0}^{\lambda_i-1}\frac{m}{m-(1-i+2l)}
\notag\\&\text{and}\hspace{80pt}
P_\lambda(m, p) = \prod_{i=1}^{q(\lambda)}
\prod_{l=0}^{\lambda_i-1}\left(\frac{p}{m}+\frac{1-i+2l}{m}\right)
\label{def:moments-R}
\end{align}}%
so that
\begin{ceqn}
{\setlength{\mathindent}{10pt}
\setlength{\abovedisplayskip}{3pt}
\setlength{\belowdisplayskip}{3pt}\begin{align*}&
\E{C_\lambda(Y^{-1})} = c'_\lambda n^{|\lambda|} R_\lambda(m) P_\lambda(m, p).
\end{align*}}%
\end{ceqn}
With these expressions the expected power sum polynomials can be written as
{\setlength{\mathindent}{10pt}
\setlength{\abovedisplayskip}{3pt}
\setlength{\belowdisplayskip}{3pt}\begin{align*}&
\E{r_\kappa(Y^{-1})} = \sum_{|\lambda|=|\kappa|}^{p}
	c_{\kappa,\lambda}c'_\lambda n^{|\kappa|}
	\Bigg(
		\prod_{|\mu|=|\kappa|} R_\mu(m)
		\prod_{\substack{|\mu|=|\kappa|\\\mu\not=\lambda}} R_\mu^{-1}(m)
	\Bigg)
	P_\lambda(m,p)
\\&\hspace{40pt}
= \frac{n^{|\kappa|}}{m^{|\kappa|}}\cdot 
	\prod_{|\mu|=|\kappa|}R_\mu(m)
	\cdot m^{|\kappa|}
	\hspace{-5pt}\sum_{|\lambda|=|\kappa|}\hspace{-5pt} c_{\kappa,\lambda}c'_\lambda
	\prod_{\substack{|\mu|=|\kappa|\\\mu\not=\lambda}} R_\mu^{-1}(m) P_{\lambda}(m, p).
\end{align*}}%
In other words, if we define
{\setlength{\mathindent}{40pt}
\setlength{\abovedisplayskip}{3pt}
\setlength{\belowdisplayskip}{0pt}\begin{align}
R'_{|\mu|} &= \prod_{|\mu|=|\kappa|}R_\mu(m),
\notag\\
P'_{|\lambda|}(m, p) &= m^{|\kappa|}
	\hspace{-5pt}\sum_{|\lambda|=|\kappa|}\hspace{-5pt} c_{\kappa,\lambda}c'_\lambda
	\prod_{\substack{|\mu|=|\kappa|\\\mu\not=\lambda}} R_\mu^{-1}(m) P_{\lambda}(m, p)
\label{def:moments-Rprime}
\end{align}}%
then
{\setlength{\mathindent}{10pt}
\setlength{\abovedisplayskip}{3pt}
\setlength{\belowdisplayskip}{3pt}\begin{align}&\hspace{40pt}
\E{r_\kappa(Y^{-1})} = \frac{n^{|\kappa|}}{m^{|\kappa|}}
	R'_{|\kappa|}(m)
	P'_{|\kappa|}(m,p).
\label{eqn:moments-invariant-using-RPprime}
\end{align}}%
But $R^{-1}_\mu(m)=\prod_{i=1}^{q(\lambda)}\prod_{l=0}^{\lambda_i-1}\big(1-\frac{1-i+2l}{m}\big)$ is a polynomial in $1/m$, while $P_\lambda(m,p) = \prod_{i=1}^{q(\lambda)} \prod_{i=1}^{\lambda_i-1}\left(\frac{p}{m}+\frac{1-i+2l}{m}\right)$ is a polynomial in $p/m$ and $1/m$, both of degree at most $|\mu|=|\lambda|=|\kappa|$. Thus
{\setlength{\mathindent}{20pt}
\setlength{\abovedisplayskip}{7pt}
\setlength{\belowdisplayskip}{3pt}\begin{align}
P'_{|\kappa|}(m, p) &\equiv m^{|\kappa|}\sum_{|\lambda|=|\kappa|}c_{\kappa,\lambda}c'_\lambda
	\prod_{\substack{|\mu|=|\kappa|\\\mu\not=\lambda}} R_\mu^{-1}(m) P_{\lambda}(m, p)
\notag\\&
= m^{|\kappa|}\sum_{i=0}^{|\kappa|}\sum_{j=0}^{|\kappa|}b_{ij}\Big(\frac{p}{m}\Big)^i\frac1{m^j}
\label{eqn:moments-Pprime-as-poly}
\end{align}}%
for some coefficients $b_{ij}$ that don't depend on $m$, $p$ (or $n$). Define the polynomials $f_j(\alpha)=\sum_{i=0}^{|\kappa|}b_{ij}\alpha^i$, so that
{\setlength{\mathindent}{20pt}
\setlength{\abovedisplayskip}{3pt}
\setlength{\belowdisplayskip}{3pt}\begin{align}
P'_{|\kappa|}(m, p)
= m^{|\kappa|}\sum_{j=0}^{|\kappa|} f\Big(\frac{p}{m}\Big) m^{-j}.
\label{eqn:moments-Pprime-with-f}
\end{align}}%
Let us show that for all $0\leq j < |\kappa|-q(\kappa)$, the polynomial $f_j$ must be identically zero over the interval $\alpha\in\big(0, 1/\max(|\kappa|-2,0)\big)$. Indeed, say this was not the case, and let $0\leq j_0 < |\kappa|-q(\kappa)$ be the smallest $j$ with the property that $f_{j_0}(\alpha_0)\neq0$ for some $\alpha_0\in \big(0, \frac1{\max(|\kappa|-2, 0)}\big)$. As $f_{j_0}$ is a polynomial, by continuity it must be non-zero in a neighborhood of $\alpha_0$, so we may as well assume $\alpha_0$ is rational without loss of generality. Now look at what happens to $\E{r_\kappa(Y^{-1})}$ as $p$ grows to infinity at the very specific linear rate $p=\lfloor \frac{\alpha_0}{1+\alpha_0}(n-1)\rfloor$. Since $\alpha_0$ is rational, there must be a subsequence $n_l$ such that $p_l$ is exactly an integer (for example, if $\alpha_0=a/b$ with $a$, $b$ integers, we can take $n_l=(a+b)l+1$). Then for $p_l=\frac{\alpha_0}{1+\alpha_0}(n_l-1)$, we have exactly $p_l=\alpha_0m_l$.

Since $\alpha_0<\frac1{\max(|\kappa|-2, 0)}$, then $|\kappa| < 1+\big(\frac{\alpha_0}{1+\alpha_0}\big)^{-1}$.  Thus by H\"older's inequality and Lemma \ref{lem:hd-wishart},
{\setlength{\mathindent}{20pt}
\setlength{\abovedisplayskip}{5pt}
\setlength{\belowdisplayskip}{5pt}\begin{align*}
\lim_{l\rightarrow\infty}\frac1{m_l^{|\kappa|-q(\kappa)-j_0}}
\cdot\frac1{p_l^{q(\kappa)}}\E{r_\kappa(Y^{-1})}
\leq 0\cdot\lim_{l\rightarrow\infty}\frac1{p_l}\E{\Tr Y^{-|\kappa|}} = 0.
\end{align*}}%
On the other hand, by Equations \eqref{eqn:moments-invariant-using-RPprime} and \eqref{eqn:moments-Pprime-with-f}, the definition of $j_0$ and the fact that $R_{|\kappa|}(m)\rightarrow1$ as $m\rightarrow\infty$,
{\setlength{\mathindent}{20pt}
\setlength{\abovedisplayskip}{5pt}
\setlength{\belowdisplayskip}{5pt}\begin{align*}&
\lim_{l\rightarrow\infty}
	\frac1{m_l^{|\kappa|-q(\kappa)-j_0}}
\cdot\frac1{p_l^{q(\kappa)}}\E{r_\kappa(Y^{-1})}
\\&\qquad
= \lim_{l\rightarrow\infty}
	\left(\frac{n_l}{m_l}\right)^{|\kappa|}
	R'_{|\kappa|}(m_l)
	\left(\frac{m_l}{p_l}\right)^{q(\kappa)}
	\sum_{j=j_0}^{|\kappa|} f_j(\alpha_0)m_l^{j_0-j}
\\&\qquad
= (1+\alpha_0)^{|\kappa|}\alpha_0^{-q(\kappa)} f_{j_0}(\alpha_0).
\end{align*}}%
As $\alpha_0>0$, $f_{j_0}(\alpha_0)$ must therefore equal zero, a contradiction. Hence, as claimed, the polynomials $f_j(\alpha)$ for $0\leq j < |\kappa|-q(\kappa)$ all vanish over the interval $\big(0, \frac1{\max(|\kappa|-2, 0)}\big)$. 

But a polynomial can have an infinite number of zeros only if all its coefficients are zero, so we conclude that
\begin{ceqn}
{\setlength{\abovedisplayskip}{5pt}
\setlength{\belowdisplayskip}{5pt}\begin{align*}&
b_{ij} = 0
\qquad\text{for}\quad 0\leq j<|\kappa|-q(\kappa).
\end{align*}}%
\end{ceqn}
Thus, from Equations \eqref{eqn:moments-invariant-using-RPprime} and \eqref{eqn:moments-Pprime-as-poly} we have
{\setlength{\mathindent}{20pt}
\setlength{\abovedisplayskip}{3pt}
\setlength{\belowdisplayskip}{3pt}\begin{align*}
\E{r_\kappa(Y^{-1})} = \left(\frac{n}{m}\right)^{|\kappa|} 
	m^{q(\kappa)} 
	R'_{|\kappa|}(m)
	P''_{\kappa}(m, p)
\end{align*}}%
where
{\setlength{\mathindent}{20pt}
\setlength{\abovedisplayskip}{3pt}
\setlength{\belowdisplayskip}{3pt}\begin{align*}
P''_{\kappa}(m, p) =
	\sum_{i=0}^{|\kappa|}\sum_{j=|\kappa|-q(\kappa)}^{|\kappa|}\hspace{-5pt}
	b_{ij} \left(\frac{p}{m}\right)^i \frac1{m^{j-|\kappa|+q(\kappa)}}.
\end{align*}}%

Going back to equations \eqref{eqn:moments-trT2k-as-polynomial} and \eqref{eqn:moments-tr2Tk-as-polynomial} and plugging in the above yields that as long as $n\geq p+16k+6$,
{\setlength{\mathindent}{20pt}
\setlength{\abovedisplayskip}{3pt}
\setlength{\belowdisplayskip}{3pt}\begin{align}
\E{\Tr T^{2k}} &= \frac{m^{2k+1}}{n^{k}} R'_{2k}(m) Q^{(1)}_\kappa(m,p),
\label{eqn:moments-trT2k-as-Q}
\\
\E{\Tr^2 T^{k}} &= \frac{m^{2k+2}}{n^{k}} R'_{2k+1}(m) Q^{(2)}_\kappa(m,p)
\label{eqn:moments-tr2Tk-as-Q}
\end{align}}%
where
{\setlength{\mathindent}{20pt}
\setlength{\abovedisplayskip}{3pt}
\setlength{\belowdisplayskip}{3pt}\begin{align*}
Q^{(1)}_\kappa(m,p) &=
(-1)^k\hspace{-6pt}\sum_{|\kappa|\leq 2k}\hspace{-2pt} \Big(1\!+\!\frac{p}{m}\!+\!\frac{1}{m}\Big)^{|\kappa|}
\frac{R'_{|\kappa|}(m)}{R'_{2k}(m)} \frac{b^{(1)}_{\kappa}(n, m, p)}{m^{2k+1-q(\kappa)}} P''_\kappa(m,p),
\\
Q^{(2)}_\kappa(m,p) &=
(-1)^k\hspace{-9pt}\sum_{|\kappa|\leq 2k+1}\hspace{-5pt} \Big(1\!+\!\frac{p}{m}\!+\!\frac{1}{m}\Big)^{|\kappa|}
\frac{R'_{|\kappa|}(m)}{R'_{2k+1}(m)} \frac{b^{(2)}_{\kappa}(n, m, p)}{m^{2k+2-q(\kappa)}} P''_\kappa(m,p).
\end{align*}}%
Now, for any $a\leq b$, we can associate a partition $\mu$ of norm $|\mu|=a$ with the partition $\mu^*=(\mu_1+b-a, \mu_+2,\dots,\mu_{q(\mu)})$ of norm $|\mu^*|=b$, which satisfies
{\setlength{\mathindent}{10pt}
\setlength{\abovedisplayskip}{3pt}
\setlength{\belowdisplayskip}{3pt}\begin{align*}
\prod_{i=1}^{q(\mu^*)}\prod_{j=0}^{\mu^*_i-1} \bigg(1-\frac{1\!-\!i\!+\!2j}{m}\bigg)
= \prod_{i=1}^{q(\mu)}\prod_{j=0}^{\mu_i-1} \bigg(1-\frac{1\!-\!i\!+\!2j}{m}\bigg)
\prod_{j=\mu_1}^{\substack{\mu_1+b\\-a-1}} \bigg(1-\frac{2j}{m}\bigg).
\end{align*}}%
By definition for the $R_\mu(m)$'s at Equation \eqref{def:moments-R}, this means that every factor that appears in $R^{-1}_\mu(m)$ appears in $R^{-1}_{\mu^*}(m)$, so by definition of the $R_{|\mu|}(m)$'s at Equation \eqref{def:moments-Rprime}, $R_a(m)R^{-1}(m)$ is a polynomial in $\frac1{m}$. Moreover, as $b^{(1)}_\kappa$ and $b_\kappa^{(2)}$ are polynomials of degrees $d_1(\kappa)\equiv 2k+1-q(\kappa)$ and $d_2(\kappa)\equiv 2k+2-q(\kappa)$ respectively, there exists coefficients $c^{(1)}_{ijl}$ and $c^{(2)}_{ijl}$ such that
{\setlength{\mathindent}{20pt}
\setlength{\abovedisplayskip}{0pt}
\setlength{\belowdisplayskip}{3pt}\begin{align*}
\frac{b_\kappa^{(1)}(n,m,p)}{m^{2k+1-q(\kappa)}} &= \frac{1}{m^{d_1(\kappa)}}
\sum_{i=0}^{d_1(\kappa)}
\sum_{j=0}^{\substack{d_1(\kappa)\\-i}}
\sum_{l=0}^{\substack{d_1(\kappa)\\-i-j}}
c_{ijl}^{(1)}m^in^jp^l
\\&
= \sum_{i=0}^{d_1(\kappa)}
\sum_{j=0}^{\substack{d_1(\kappa)\\-i}}
\sum_{l=0}^{\substack{d_1(\kappa)\\-i-j}}
c^{(1)}_{ijl}\frac{1}{m^{d_1(\kappa)-i-j-l}}
\Big(1\!+\!\frac{p}{m}\!+\!\frac{1}{m}\Big)^j
\Big(\frac{p}{m}\Big)^l
\end{align*}}%
and
{\setlength{\mathindent}{20pt}
\setlength{\abovedisplayskip}{0pt}
\setlength{\belowdisplayskip}{3pt}\begin{align*}
\frac{b_\kappa^{(2)}(n,m,p)}{m^{2k+2-q(\kappa)}} &= \frac{1}{m^{d_2(\kappa)}}
\sum_{i=0}^{d_2(\kappa)}
\sum_{j=0}^{\substack{d_2(\kappa)\\-i}}
\sum_{l=0}^{\substack{d_2(\kappa)\\-i-j}}
c_{ijl}^{(2)}m^in^jp^l
\\&
= \sum_{i=0}^{d_2(\kappa)}
\sum_{j=0}^{\substack{d_2(\kappa)\\-i}}
\sum_{l=0}^{\substack{d_2(\kappa)\\-i-j}}
c^{(2)}_{ijl}\frac{1}{m^{d_2(\kappa)-i-j-l}}
\Big(1\!+\!\frac{p}{m}\!+\!\frac{1}{m}\Big)^j
\Big(\frac{p}{m}\Big)^l.
\end{align*}}%
As $d_1(-i-j-l\geq0), j, l\geq0$ in the first case and $d_2(\kappa)-i-j-l,j,l\geq0$ in the second case, we conclude that these two expressions are polynomials in $\frac{p}{m}$ and $\frac{1}{m}$. Therefore, looking back at \eqref{eqn:moments-trT2k-as-Q} and \eqref{eqn:moments-tr2Tk-as-Q}, we conclude that the $Q^{(1)}_\kappa(m,p)$'s and  $Q^{(2)}_\kappa(m,p)$'s are polynomials in $\frac{p}{m}$ and $\frac{1}{m}$. Therefore, if $n\geq p+16k+6$ there must be coefficients $a^{(1)}_{ij}$, $a^{(2)}_{ij}$ and large enough integers $D_1$, $D_2$ such that
{\setlength{\mathindent}{0pt}
\setlength{\abovedisplayskip}{3pt}
\setlength{\belowdisplayskip}{3pt}\begin{align}&\hspace{20pt}
\E{\Tr T^{2k}} = \frac{m^{2k+1}}{n^k}R_{2k}(m)
\sum_{i=0}^{D_1}\sum_{j=0}^i a^{(1)}_{ij}\frac{p^j}{m^i}
\notag\\[-5pt]&\pushrightn{
= \Big(\frac{m}{n}\Big)^{k}R_{2k}(m)\sum_{i=0}^{D_1}g_i^{(1)}(p)m^{k+1-i}
} \label{eqn:moments-trT2k-as-g}
\\[-5pt]&\hspace{20pt}
\E{\Tr^2 T^{k}} = \frac{m^{2k+2}}{n^{k}}R_{2k+1}(m)
\sum_{i=0}^{D_2}\sum_{j=0}^i a^{(2)}_{ij}\frac{p^j}{m^i}
\notag\\[-5pt]&\pushrightn{
= \Big(\frac{m}{n}\Big)^{k}R_{2k+1}(m)\sum_{i=0}^{D_2}g_i^{(2)}(p)m^{k+2-i}
}\label{eqn:moments-tr2Tk-as-g}
\end{align}}%
for polynomials $g_i^{(1)}(p)=\sum\limits_{j=0}^{i}a^{(1)}_{ij}p^j$ and $g_i^{(2)}(p)=\sum\limits_{j=0}^ia^{(2)}_{ij}p^j$.

We will now proceed to show that $g_i^{(1)}$ and $g_i^{(2)}$ must vanish on $\N$ for $0\leq i_0<k+1$ and $0\leq i_0 < k+2$ respectively. Our argument relies on the analysis of the asymptotic behavior of the moments of $T$ in the classical regime where $p$ is held fixed while $n$ grows to infinity. 

Observe first that $\E{\Tr T^{2k}}$ and $\E{\Tr^2 T^{k}}$ must have a finite limit as $n\rightarrow\infty$ with $p$ held fixed. Indeed, since $16T^2/n$ is positive definite, $|I_p+16T^2/n|$ is greater than one and so we have the bound
{\setlength{\mathindent}{10pt}
\setlength{\abovedisplayskip}{3pt}
\setlength{\belowdisplayskip}{3pt}\begin{align*}&
\E{\Tr T^{2k}}
= C_{n,p} \bigints_{\S_p(\R)}\hspace{-15pt}
\Tr T^{2k} \left|I_p+\frac{16T^2}{n}\right|^{-\frac{n+p+1}4}dT
\\[-7pt]&\pushright{
\leq\;
C_{n,p} \bigints_{\S_p(\R)}\hspace{-15pt}
\Tr T^{2k} \left|I_p+\frac{16T^2}{n}\right|^{-\frac{n}4}dT.
}\end{align*}}%
When $p$ is held fixed, $\lim\limits_{n\rightarrow\infty}C_{n,p} = 2^{\frac{p(3p+1)}4}/\pi^{\frac{p(p+1)}4}$ by Lemma \ref{lem:cnp}. Moreover,
{\setlength{\mathindent}{40pt}
\setlength{\abovedisplayskip}{0pt}
\setlength{\belowdisplayskip}{3pt}\begin{align*}
\left|I_p+\frac{16T^2}{n}\right|^{-\frac{n}4}
= \prod_{i=1}^p\left(1+\frac{\lambda_i(4T^2)}{n/4}\right)^{-\frac{n}4}
\end{align*}}%
for $\lambda_1(4T^2)\geq \dots \geq \lambda_p(4T^2)\geq0$ the eigenvalues of $4T^2$, and  $(1+x/n)^{-n}$ is monotone decreasing towards $\exp(x)$. Therefore, for a fixed dimension $p$ we can apply the monotone convergence theorem to obtain that
{\setlength{\mathindent}{5pt}
\setlength{\abovedisplayskip}{3pt}
\setlength{\belowdisplayskip}{3pt}\begin{align}
\lim_{n\rightarrow\infty}\E{\Tr T^{2k}}
\;\leq\;
\frac{2^{\frac{p(3p+1)}4}}{\pi^{\frac{p(p+1)}4}} \bigints_{\S_p(\R)}\hspace{-20pt}
\Tr T^{2k} e^{-4\Tr T^2}dT
= \E{\Tr Z^{2k}}<\infty
\label{eqn:moments-trT2-finite-classical}
\end{align}}%
for $Z\sim\text{GOE}(p)/4$. Repeating the argument with $\Tr^2T^{k}$ yields similarly
{\setlength{\mathindent}{5pt}
\setlength{\abovedisplayskip}{3pt}
\setlength{\belowdisplayskip}{3pt}\begin{align}
\lim_{n\rightarrow\infty}\E{\Tr^2 T^{k}}
\;\leq\;
\frac{2^{\frac{p(3p+1)}4}}{\pi^{\frac{p(p+1)}4}} \bigints_{\S_p(\R)}\hspace{-20pt}
\Tr^2 T^{k} e^{-4\Tr T^2}dT
= \E{\Tr^2 Z^{k}}<\infty.
\label{eqn:moments-tr2T2-finite-classical}
\end{align}}%
Thus indeed $\E{\Tr T^{2k}}$ and $\E{\Tr^2 T^{k}}$ have finite limits when $p$ is held fixed.

We can use this to show that $g_i^{(1)}$ and $g_i^{(2)}$ must vanish on $\N$ for $0\leq i_0<k+1$ and $0\leq i_0 < k+2$ as follows. Say the first statement wasn't true, and let $0\leq i_0 <k+1$ be the smallest $i$ such that for some $p_0\in\N$, $g_{i_0}^{(1)}(p_0)\neq0$. Then by Equation \eqref{eqn:moments-trT2k-as-g} and the definition of $i_0$, the limit of $\E{\Tr T^{2k}}$ as $n\rightarrow\infty$ with $p$ fixed at $p_0$ satisfies
{\setlength{\mathindent}{10pt}
\setlength{\abovedisplayskip}{3pt}
\setlength{\belowdisplayskip}{3pt}\begin{align*}
\lim_{n\rightarrow\infty} \frac{\E{\Tr T^{2k}}}{m^{k+1-i_0}}
= 1^{k} \cdot 1\cdot \lim_{n\rightarrow\infty}\sum_{i=i_0}^{D_1} g_i^{(1)}(p_0) m^{i_0-i}
= g^{(1)}_{i_0}(p_0).
\end{align*}}%
But $m=n-p-1$ tends to infinity as $n$ tends to infinity, and since $k+1-i_0>0$, Equation \eqref{eqn:moments-trT2-finite-classical} means that $\E{\Tr T^{2k}}/m^{k+1-i_0}$ must tend to zero. Thus $g^{(1)}_{i_0}(p_0)$ has to equal zero, which contradicts our assumption. Thus for every $0\leq i< k+1$, the polynomial $g^{(1)}_i$ must vanish on $\N$.

Similarly, for $0\leq i< k+2$, the polynomial $g^{(2)}_i$ must vanish on $\N$, because if it wasn't the case, we could take $0\leq i_0<k+2$ as the smallest $i$ with the property that for some $p_0\in\N$, $g^{(2)}_{i_0}(p_0)\neq 0$, and then by Equation \eqref{eqn:moments-tr2Tk-as-g} with $p$ fixed at $p_0$ as $n\rightarrow\infty$ we would get
{\setlength{\mathindent}{10pt}
\setlength{\abovedisplayskip}{3pt}
\setlength{\belowdisplayskip}{3pt}\begin{align*}
\lim_{n\rightarrow\infty} \frac{\E{\Tr^2 T^{k}}}{m^{k+2-i_0}}
= 1^{k} \cdot 1\cdot \lim_{n\rightarrow\infty}\sum_{i=i_0}^{D_2} g_i^{(2)}(p_0) m^{i_0-i}
= g^{(2)}_{i_0}(p_0).
\end{align*}}%
But then by Equation \eqref{eqn:moments-tr2T2-finite-classical}, as $m$ tends to infinity and $k+2-i_0\geq1$ the ratio $\E{\Tr^2 T^{k}}/m^{k+2-i_0}$ must tend to zero. Thus we must have $g^{(2)}_{i_0}(p_0)=0$, a contradiction. Hence indeed for $0\leq i< k+2$, the polynomial $g^{(2)}_i$ must vanish on $\N$.

But of course, a polynomial can only have an infinite number of zeroes if its coefficients are all zero, so we must have
\begin{ceqn}
{\setlength{\abovedisplayskip}{5pt}
\setlength{\belowdisplayskip}{5pt}\begin{align}
\begin{array}{ll}
a^{(1)}_{ij} = 0 \qquad&\text{for}\quad 0\leq i< k+1,
\\[4pt]
a^{(2)}_{ij} = 0 \qquad&\text{for}\quad 0\leq i< k+2.
\end{array}
\label{eqn:moments-aijs-are-zeros}
\end{align}}%
\end{ceqn}

Now say that $p$ varies with $n$ in such a way that $\lim_{n\rightarrow\infty}p/n=\alpha<1$. Then for large enough $n$, $n\geq p + 16k +6$ so by Equations \eqref{eqn:moments-trT2k-as-g} and \eqref{eqn:moments-aijs-are-zeros},
{\setlength{\mathindent}{0pt}
\setlength{\abovedisplayskip}{3pt}
\setlength{\belowdisplayskip}{3pt}\begin{align}&
\lim_{n\rightarrow\infty} \frac1{p^{k+1}}\E{\Tr T^{2k}} 
= \lim_{n\rightarrow\infty} \left(\frac{m}{n}\right)^k R_{2k}(m)
\sum_{i=k+1}^{D_1}\sum_{j=0}^{i} a^{(1)}_{ij}\frac{p^{j-(k+1)}}{m^{i-(k+1)}}
\notag\\[-3pt]&\hspace{10pt}
= (1-\alpha)^k\cdot 1\cdot \lim_{n\rightarrow\infty}\Bigg[
\sum_{i=k+1}^{D_1}\sum_{j=0}^{k} 
\frac{a^{(1)}_{ij}}{m^{i-(k+1)}p^{(k+1)-j}}
\notag\\[-5pt]&\pushright{
+\sum_{i=k+1}^{D_1}\sum_{j=k+1}^{i}
a^{(1)}_{ij}\left(\frac{p}{m}\right)^{j-(k+1)}\frac{1}{m^{i-j}}
\Bigg]
}\notag\\[-5pt]&\hspace{10pt}
= (1-\alpha)^k\Bigg[
\sum_{j=0}^{k} \frac{a_{k+1}^{(1)}}{\big(
\raisebox{2pt}{$\lim\limits_{p\rightarrow\infty}p$}
\big)^{(k+1)-j}}
+ \sum_{i=k+1}^{D_1} \hspace{-5pt}
a^{(1)}_{ii}\left(\frac{\alpha}{1-\alpha}\right)^{i-(k+1)}
\Bigg]
\label{eqn:moments-trT2k-limit}
\end{align}}%
and by Equations \eqref{eqn:moments-tr2Tk-as-g} and \eqref{eqn:moments-aijs-are-zeros},
{\setlength{\mathindent}{0pt}
\setlength{\abovedisplayskip}{3pt}
\setlength{\belowdisplayskip}{3pt}\begin{align}&
\lim_{n\rightarrow\infty} \frac1{p^{k+2}}\E{\Tr^2 T^{k}} 
= \lim_{n\rightarrow\infty} \left(\frac{m}{n}\right)^{k} R_{2k+1}(m)
\sum_{i=k+2}^{D_2}\sum_{j=0}^{i} a^{(2)}_{ij}\frac{p^{j-(k+2)}}{m^{i-(k+2)}}
\notag\\[-3pt]&\hspace{10pt}
= (1-\alpha)^{k}\cdot 1\cdot \lim_{n\rightarrow\infty}\Bigg[
\sum_{i=k+2}^{D_2}\sum_{j=0}^{k+1} 
\frac{a^{(2)}_{ij}}{m^{i-(k+2)}p^{(k+2)-j}}
\notag\\[-5pt]&\pushright{
+\sum_{i=k+2}^{D_1}\sum_{j=k+2}^{i}
a^{(2)}_{ij}\left(\frac{p}{m}\right)^{j-(k+2)}\frac{1}{m^{i-j}}
\Bigg]
}\notag\\[-5pt]&\hspace{10pt}
= (1-\alpha)^{k}\Bigg[
\sum_{j=0}^{k+1} \frac{a_{k+2}^{(2)}}{\big(
\raisebox{2pt}{$\lim\limits_{p\rightarrow\infty}p$}
\big)^{(k+2)-j}}
+ \sum_{i=k+2}^{D_2} \hspace{-5pt}
a^{(2)}_{ii}\left(\frac{\alpha}{1-\alpha}\right)^{i-(k+2)}
\Bigg].
\label{eqn:moments-tr2Tk-limit}
\end{align}}%
Although we might not know what $a_{ij}$ coefficients are, this shows at least that the limits are finite. In particular, from Equations \eqref{eqn:moments-trT2k-limit} and \eqref{eqn:moments-tr2Tk-limit} we can conclude that $\E{\Tr T^{2k}}= O(p^{k+1})$ and $\E{\Tr^2 T^{k}}= O(p^{k+2})$, which shows the first claim of the theorem.

For the second claim, let $n,p\rightarrow\infty$ with $p/n\rightarrow\alpha=0$. Then Equations \eqref{eqn:moments-trT2k-limit} and \eqref{eqn:moments-tr2Tk-limit} specialize to
\begin{ceqn}
{\setlength{\abovedisplayskip}{5pt}
\setlength{\belowdisplayskip}{5pt}\begin{align}&
\begin{array}{c}
\lim\limits_{n\rightarrow\infty} \dfrac1{p^{k+1}}\E{\Tr T^{2k}} = a^{(1)}_{(k+1)(k+1)},
\\[5pt]
\lim\limits_{n\rightarrow\infty} \dfrac1{p^{k+2}}\E{\Tr^2 T^{k}} = a^{(2)}_{(k+2)(k+2)}.
\end{array}
\label{eqn:moments-middle-scale-limit-trT2k-tr2Tk}
\end{align}}%
\end{ceqn}
What is interesting about this result is that these limits must be the same \textit{regardless of the way $p$ grows!} As long as $p\rightarrow\infty$ with $p/n\rightarrow0$, the limits are $a^{(1)}_{(k+1)(k+1)}$ and $a^{(2)}_{(k+2)(k+2)}$, regardless of whether $p\sim\log n$ or $p\sim \sqrt{n}$ or some other growth.

Now, \citet[Theorem 7]{bubeck16a} and \citet[Theorem 1]{jiang15} have shown that when $p\rightarrow\infty$ with $p^3/n\rightarrow0$, the total variation distance between a normalized Wishart $\sqrt{n}(\text{W}_p(n, I_p/n)-I_p)$ matrix and a Gaussian Orthogonal Ensemble $\text{GOE}(p)$ matrix tends to zero as $n\rightarrow\infty$. Therefore, the Hellinger distance satisfies also $H^2(\psi_{\text{NW}}, \psi_{\text{GOE}})=H^2(f_{\text{NW}}, f_{\text{GOE}})\rightarrow0$ as $n\rightarrow\infty$.

But convergence in Hellinger distance has strong implications for real-valued statistics. Indeed, for $T_1\sim\text{T}_{n/2}(I_p/8)=|\psi_\text{NW}|$, $T_2\sim\text{GOE}(p)/4=|\psi_\text{GOE}|$ and any function $g:\S_p(\R)\rightarrow\R$ such that $g(T_1)$, $g(T_2)$ are square-integrable,
{\setlength{\mathindent}{10pt}
\setlength{\abovedisplayskip}{0pt}
\setlength{\belowdisplayskip}{3pt}\begin{align}&
\Big|\E{g(T_1)} - \E{g(T_2)}\Big|
= \bigg|\bigintss_{\S_p(\R)} \hspace{-15pt}
g(T)\Big[|\psi_\text{NW}|(T)-|\psi_\text{GOE}|(T)\Big]dT\bigg|
\notag\\[-5pt]&\hspace{30pt}
\leq\;\; \bigg|\bigintss_{\S_p(\R)} \hspace{-15pt}
g(T)\overline{\psi^{1/2}_\text{NW}(T)}
\Big[\psi^{1/2}_\text{NW}(T)-\psi^{1/2}_\text{GOE}(T)\Big]
dT\bigg|
\notag\\[-5pt]&\hspace{70pt}
+ \bigg|\bigintss_{\S_p(\R)} \hspace{-15pt}
g(T)\psi^{1/2}_\text{GOE}(T)
\overline{\Big[\psi^{1/2}_\text{NW}(T)-\psi^{1/2}_\text{GOE}(T)\Big]}
dT\bigg|
\notag\\[-5pt]&\hspace{30pt}
\leq \;\; \bigg[
\bigintss_{\S_p(\R)} \hspace{-15pt}
g(T)^2|\psi_\text{NW}|(T)dT^{\frac12}
+
\bigintss_{\S_p(\R)} \hspace{-15pt}
g(T)^2|\psi_\text{GOE}|(T)dT^{\frac12}
\bigg]
\notag\\[-5pt]&\hspace{150pt}\cdot
\bigintss_{\S_p(\R)} \hspace{-15pt}
\Big|\psi^{1/2}_\text{NW}(T)-\psi^{1/2}_\text{GOE}(T)\Big|^2
dT^{\frac12}
\notag\\[-2pt]&\hspace{30pt}
= \bigg[
\E{g(T_1)^2}^{\frac12}+\E{g(T_2)^2}^{\frac12}
\bigg]
H(\psi_\text{NW}, \psi_\text{GOE})
\label{eqn:moments-statistics-follow-from-hellinger}
\end{align}}%
by the Cauchy-Schwarz inequality. 

Let's consider applying this result to $g(T)=\Tr T^{2k}/p^{k+1}$ and $g(T)=\Tr^2 T^{k}/p^{k+2}$. What do we know about these statistics? In the case where $T_2\sim\text{GOE}(p)/4$, results of \citet[Lemma 2.1.6 and the equation above Equation (2.1.21)]{anderson10} provide that
{\setlength{\mathindent}{30pt}
\setlength{\abovedisplayskip}{3pt}
\setlength{\belowdisplayskip}{3pt}\begin{align*}
\lim_{n\rightarrow\infty} \E{\frac{\Tr T_2^{2k}}{p^{k+1}}} 
\;\;&=\;\; \frac{C_{k}}{4^{2k}}
\\
\lim_{n\rightarrow\infty} \E{\frac{\Tr^2 T_2^{k}}{p^{k+2}}} 
\;\;&=\;\; \lim_{n\rightarrow\infty} \Big(\E{\frac{\Tr T_2^{k}}{p^{k/2+1}}}\Big)^2 
= \bigg(\frac{C_{k/2}}{4^{k}}\keven \bigg)^2
\\
\lim_{n\rightarrow\infty} \E{\Big(\frac{\Tr T_2^{2k}}{p^{k+1}}\Big)^2} 
\;\;&=\;\; \lim_{n\rightarrow\infty} \Big(\E{\frac{\Tr T_2^{2k}}{p^{k+1}}}\Big)^2 
= \bigg(\frac{C_{k}}{4^{2k}}\bigg)^2 < \infty
\\
\lim_{n\rightarrow\infty} \E{\Big(\frac{\Tr^2 T_2^{k}}{p^{k+2}}\Big)^2} 
\;\;&\leq\;\; \lim_{n\rightarrow\infty} \E{\frac{\Tr T_2^{4k}}{p^{2k+1}}} 
= \frac{C_{2k}}{4^{4k}} < \infty
\end{align*}}%
because these expressions only depend on $p$, and since $p\rightarrow\infty$ as $n\rightarrow\infty$, taking a limit as $n\rightarrow\infty$ is the same as taking a limit as $p\rightarrow\infty$. Moreover, in the $T_1\sim\text{T}_{n/2}(I_p/8)$ case, using Jensen's inequality and Equations \eqref{eqn:moments-trT2k-limit}--\eqref{eqn:moments-tr2Tk-limit} we can at least see that
{\setlength{\mathindent}{30pt}
\setlength{\abovedisplayskip}{3pt}
\setlength{\belowdisplayskip}{3pt}\begin{align*}
\lim_{n\rightarrow\infty} \E{\Big(\frac{\Tr T_1^{2k}}{p^{k+1}}\Big)^2} \;\;&\leq\;\;
\lim_{n\rightarrow\infty} \E{\frac{\Tr T_1^{4k}}{p^{2k+1}}} < \infty
\\
\lim_{n\rightarrow\infty} \E{\Big(\frac{\Tr^2 T_1^{k}}{p^{k+2}}\Big)^2} \;\;&\leq\;\;
\lim_{n\rightarrow\infty} \E{\frac{\Tr^2 T_1^{2k}}{p^{k+2}}} < \infty.
\end{align*}}%
Therefore, using Equation \eqref{eqn:moments-statistics-follow-from-hellinger} with  $g(T)=\Tr T^{2k}/p^{k+1}$ and $g(T)=\Tr^2 T^{k}/p^{k+2}$ we find that when $n,p\rightarrow\infty$ with $p^3/n\rightarrow0$,
{\setlength{\mathindent}{10pt}
\setlength{\abovedisplayskip}{0pt}
\setlength{\belowdisplayskip}{3pt}\begin{align*}&\hspace{20pt}
\bigg|\lim_{n\rightarrow\infty}\E{\frac{\Tr T_1^{2k}}{p^{k+1}}} 
- \frac{C_k}{4^{2k}}\bigg|
\;\leq\; \bigg(\lim_{n\rightarrow\infty}\E{\Big(\frac{\Tr T_1^{2k}}{p^{k+1}}\Big)^2}^{\frac12} 
+ \frac{C_{2k}}{4^{2k}}\bigg)\cdot 0 
= 0,
\\&\hspace{20pt}
\bigg|\lim_{n\rightarrow\infty}\E{\frac{\Tr^2 T_1^{k}}{p^{k+2}}} 
- \Big(\frac{C_{k/2}}{4^{k}}\keven \Big)^2\bigg|
\\&\hspace{127pt}
\;\leq\; \bigg(\lim_{n\rightarrow\infty}\E{\Big(\frac{\Tr^2 T_1^{k}}{p^{k+2}}\Big)^2}^{\frac12} 
+ \frac{C^{1/2}_{2k}}{4^{2k}}\bigg)\cdot 0 
= 0.
\end{align*}}%
Since $p^3/n\rightarrow0$ implies $p/n\rightarrow0$, we conclude from Equation \eqref{eqn:moments-middle-scale-limit-trT2k-tr2Tk} that
\begin{ceqn}
{\setlength{\abovedisplayskip}{0pt}
\setlength{\belowdisplayskip}{5pt}\begin{align*}&
a^{(1)}_{(k+1)(k+1)} = \frac{C_k}{4^{2k}},\hspace{40pt}
a^{(2)}_{(k+2)(k+2)} = \frac{C^2_{k/2}}{4^{2k}}\keven^2 .
\end{align*}}%
\end{ceqn}
But then, by that same equation, we conclude that when $n,p\rightarrow\infty$, not only when $p^3/n\rightarrow0$ but for all $p$ such that $p/n\rightarrow0$, we have
\begin{ceqn}
{\setlength{\abovedisplayskip}{5pt}
\setlength{\belowdisplayskip}{5pt}
\begin{align}&
\begin{array}{l}
\lim\limits_{n\rightarrow\infty} \dfrac1{p^{k+1}}\E{\Tr T^{2k}} = \dfrac{C_k}{4^{2k}},
\\[5pt]
\lim\limits_{n\rightarrow\infty} \dfrac1{p^{k+2}}\E{\Tr^2 T^{k}} = \dfrac{C^2_{k/2}}{4^{2k}}\keven^2
\end{array}
\label{eqn:moments-moment-limits}
\end{align}}%
\end{ceqn}
for $T\sim\text{T}_{n/2}(I_p/8)$. To finish the proof, use Equation \eqref{eqn:moments-moment-limits} with the fact that $\E{\Tr T^k}=0$ for odd $k$ to find that
{\setlength{\mathindent}{10pt}
\setlength{\abovedisplayskip}{3pt}
\setlength{\belowdisplayskip}{3pt}\begin{align*}\hspace{20pt}
\lim_{n\rightarrow\infty}\Var{\frac{\Tr T^{k}}{p^{k/2+1}}} 
&= \lim_{n\rightarrow\infty}\E{\frac{\Tr^2 T^{k}}{p^{k+2}}} 
-\E{\frac{\Tr T^{k}}{p^{k/2+1}}} ^2
\\&
= \frac{C^2_{k/2}}{4^{2k}}\keven^2 - \Big(\frac{C_{k/2}}{4^{k}}\keven \Big)^2 = 0.
\end{align*}}%
Thus $\Tr T^{k}/p^{k+1}\overset{L^2}{\longrightarrow} C_k/4^{2k}$, as desired. This proves the second claim and concludes the proof.
\end{proof}

A pleasant consequence of this result is that when $n,p\rightarrow\infty$ with $p/n\rightarrow0$, we can conclude a version of the semicircle law holds for the $\text{T}_{n/2}(2I_p)$ distribution. This is interesting because the $\text{T}_{n/2}(2I_p)$ distribution has dependent entries with heavy tails, whose distribution varies with $n, p$.

Let $4T/\sqrt{p}\sim 4\text{T}_{n/2}(I_p/8)/\sqrt{p}$, with eigenvalues $\lambda_1(4T/\sqrt{p})\geq\dots\geq\lambda_p(4T/\sqrt{p})$. Then define its empirical spectral measure to be
\begin{ceqn}
{\setlength{\abovedisplayskip}{0pt}
\setlength{\belowdisplayskip}{3pt}\begin{align*}
L_{4T/\sqrt{p}}(A) = \frac1{p}\sum_{i=1}^p\1{\lambda_i(4T/\sqrt{p}) \in A}.
\end{align*}}%
\end{ceqn}
Since $L_{4T/\sqrt{p}}$ depends on the random matrix $T$, it is a random measure on $\R$.

\begin{corollary}[Semicircle law for the $t$ distribution]\label{cor:semicircle} The empirical spectral measure $L_{4T/\sqrt{p}}$ of a $4\text{T}_{n/2}(I_p/8)/\sqrt{p}$ random matrix converges weakly, in square mean, to the semicircle distribution
\begin{ceqn}
{\setlength{\abovedisplayskip}{5pt}
\setlength{\belowdisplayskip}{5pt}\begin{align*}
L(A) = \int_A\frac{\sqrt{4-x^2}}{2\pi}\1{|x|\leq 2}dx.
\end{align*}}%
\end{ceqn}
\end{corollary}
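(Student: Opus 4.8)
The plan is to run the method of moments. The $k$th moment of the semicircle law $L$ is $\int x^k\,dL(x)=C_{k/2}\keven$, where $C_j=\frac1{j+1}\binom{2j}{j}$ is the $j$th Catalan number, whereas the $k$th moment of the random measure $L_{4T/\sqrt p}$ is $\int x^k\,dL_{4T/\sqrt p}(x)=\frac1p\Tr\big(\frac{4T}{\sqrt p}\big)^{k}$, using that $4T/\sqrt p$ is symmetric so that $\frac1p\sum_i\lambda_i(4T/\sqrt p)^k=\frac1p\Tr(4T/\sqrt p)^k$. Hence Theorem \ref{thm:moments} asserts precisely that every moment of $L_{4T/\sqrt p}$ converges in square mean to the corresponding moment of $L$ as $n,p\to\infty$ with $p/n\to0$; in particular, since $\mathrm{L}^2$ convergence implies $\mathrm{L}^1$ convergence, $\E{\frac1p\Tr(4T/\sqrt p)^{k}}\to C_{k/2}\keven$ for every $k\in\N$.

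It remains to upgrade this moment convergence to weak convergence in square mean (i.e.\ $\int f\,dL_{4T/\sqrt p}\to\int f\,dL$ in $\mathrm{L}^2$ for every bounded continuous $f$), which is standard because $L$ is compactly supported on $[-2,2]$ and hence determined by its moments. Fix a bounded continuous $f\colon\R\to\R$ with $\|f\|_\infty=B$, and fix $\epsilon>0$ together with a truncation level $M>2$. By the Weierstrass theorem there is a polynomial $P(x)=\sum_{j=0}^d c_j x^j$ with $\sup_{|x|\le M}|f(x)-P(x)|<\epsilon$, and $|P(x)|\le C(1+|x|^d)$ for some constant $C$. I would then decompose
\begin{align*}
\int f\,dL_{4T/\sqrt p}-\int f\,dL
= \int(f-P)\,dL_{4T/\sqrt p}
+\Big(\int P\,dL_{4T/\sqrt p}-\int P\,dL\Big)
+\int(P-f)\,dL.
\end{align*}
The last term has modulus at most $\epsilon$ because $L$ is supported on $[-2,2]\subseteq[-M,M]$. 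The middle term equals $\sum_{j=0}^d c_j\big(\frac1p\Tr(4T/\sqrt p)^{j}-C_{j/2}\keven\big)$, a fixed finite linear combination of quantities tending to $0$ in $\mathrm{L}^2$ by Theorem \ref{thm:moments}, so it tends to $0$ in $\mathrm{L}^2$.

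For the first term I would apply Jensen's inequality to the probability measure $L_{4T/\sqrt p}$ to obtain
\begin{align*}
\E{\Big|\int(f-P)\,dL_{4T/\sqrt p}\Big|^{2}}
\;\le\;\E{\int|f-P|^{2}\,dL_{4T/\sqrt p}},
\end{align*}
and split the integral over $\{|x|\le M\}$, where $|f-P|^{2}\le\epsilon^{2}$, and over $\{|x|>M\}$, where $|f-P|^{2}\le C'(1+|x|^{2d})$ for a constant $C'$ depending only on $B$ and $C$. On the tail, for $|x|>M>1$ one has $1+|x|^{2d}\le 2M^{-2d}|x|^{4d}$, so $\int_{|x|>M}(1+|x|^{2d})\,dL_{4T/\sqrt p}\le 2M^{-2d}\,\frac1p\Tr(4T/\sqrt p)^{4d}$, and $\E{\frac1p\Tr(4T/\sqrt p)^{4d}}\to C_{2d}$ by Theorem \ref{thm:moments}. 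Hence $\limsup_{n\to\infty}\E{\big|\int(f-P)\,dL_{4T/\sqrt p}\big|^{2}}\le\epsilon^{2}+2C'C_{2d}M^{-2d}$. Combining the three pieces and letting $n\to\infty$, then $M\to\infty$, then $\epsilon\to0$ gives $\E{\big|\int f\,dL_{4T/\sqrt p}-\int f\,dL\big|^{2}}\to0$ for every bounded continuous $f$, which is exactly the asserted weak convergence of $L_{4T/\sqrt p}$ to $L$ in square mean.

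The main obstacle, and essentially the only nontrivial point once Theorem \ref{thm:moments} is in hand, is this tail control: because polynomial approximants of a bounded $f$ are themselves unbounded, one must rule out escape of mass of the random measures $L_{4T/\sqrt p}$ to infinity, and the only available tool is the convergence of high even moments supplied by Theorem \ref{thm:moments}. The remaining ingredients — matching the semicircle moments with Catalan numbers, the Weierstrass reduction, and the treatment of the polynomial piece — are routine.
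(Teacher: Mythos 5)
Your strategy mirrors the paper's: identify the $k$th moment of $L_{4T/\sqrt p}$ with $\frac1p\Tr(4T/\sqrt p)^k$, invoke Theorem \ref{thm:moments} for $L^2$ convergence of all moments to the Catalan numbers (the moments of $L$), and upgrade to weak convergence via polynomial approximation. You differ from the paper in that you truncate at a level $M$ and work directly with bounded continuous $f$, whereas the paper works with continuous functions vanishing at infinity and then invokes Chung's upgrade; your route is actually the more careful one, since the paper's assertion that Stone--Weierstrass produces polynomials with $\sup_{x\in\R}|f(x)-f_l(x)|<\epsilon$ for an $f$ vanishing at infinity cannot literally hold for non-constant $f$.

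There is, however, a genuine gap in your tail estimate. Your bound $\limsup_n\E{\big|\int(f-P)\,dL_{4T/\sqrt p}\big|^2}\le\epsilon^2 + 2C'C_{2d}M^{-2d}$ contains $C'$ and $d$ coming from the polynomial $P = P_{M,\epsilon}$, and these depend on $M$: to keep uniform error below $\epsilon$ on the growing interval $[-M,M]$, the degree $d$ must grow with $M$, and with it both $C'$ and $C_{2d}\sim 16^d$. You then ``let $M\to\infty$, then $\epsilon\to0$'' as if $C'C_{2d}M^{-2d}$ were manifestly $o_M(1)$ for fixed $\epsilon$, but nothing in the estimate rules out this product staying bounded away from zero, or even diverging, as $M$ (hence $d$, hence $C'$) increases. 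The fix is to replace the uncontrolled bound $|P(x)|\le C(1+|x|^d)$ by one that uses only what you actually know about $P$: since $\sup_{[-M,M]}|P|\le\|f\|_\infty+\epsilon$, the Chebyshev extremal growth bound gives $|P(x)|\le(\|f\|_\infty+\epsilon)\,(2|x|/M)^d$ for $|x|\ge M$, \emph{uniformly over all such $P$}. Substituting this into your tail integral and applying Theorem \ref{thm:moments} yields a tail contribution of order $(\|f\|_\infty+\epsilon)^2\sup_{d\ge1}(16/M^2)^d\lesssim\|f\|_\infty^2/M^2$ once $M>4$, which is uniform in $d$ and $\epsilon$ and vanishes as $M\to\infty$, closing the argument. (Choosing $P$ to be a Bernstein polynomial on $[-M,M]$, which satisfies $|P(x)|\le\|f\|_\infty(|x|/M)^d$ for $|x|\ge M$ directly, works equally well.)
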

\begin{proof}
Let $f$ be any continuous function $\R\rightarrow\R$ that vanishes at infinity. By the Stone-Weierstrass theorem, there exists a sequence $f_1, f_2, \dots$ of polynomials such that for any $\epsilon>0$, $\sup_{x\in\R}|f(x)-f_l(x)|<\epsilon$. To fix some notation, write
{\setlength{\abovedisplayskip}{5pt}
\setlength{\belowdisplayskip}{5pt}\begin{align*}
f_l(x) = \sum_{k=1}^{\text{deg}f_l}a_{lk}x^{k}.
\end{align*}}%
Then since $L_{4T/\sqrt{p}}$ and $L$ are both probability measures,
{\setlength{\mathindent}{10pt}
\setlength{\abovedisplayskip}{5pt}
\setlength{\belowdisplayskip}{5pt}\begin{align*}&
\E{\bigg(
\int_\R f(x)dL_{4T/\sqrt{p}}(x)
- \int_\R f(x)dL(x)
\bigg)^2}^{\frac12}
\\&\hspace{30pt}
\leq\;\; \E{\bigg(
\int_\R \big[f(x)-f_l(x)\big]dL_{4T/\sqrt{p}}(x)
\bigg)^2}^{\frac12}
\\&\hspace{50pt}
+ \E{\bigg(
\int_\R f_l(x)dL_{4T/\sqrt{p}}(x)
- \int_\R f_l(x)dL(x)
\bigg)^2}^{\frac12}
\\&\hspace{50pt}
+ \E{\bigg(
\int_\R \big[f_l(x)- f(x)\big]dL(x)
\bigg)^2}^{\frac12}
\\&\hspace{30pt}
\leq\;\; \epsilon + \sum_{k=1}^{\text{deg}f_l}|a_{lk}|\E{\bigg(
\frac1{p}\Tr \Big(\frac{4T}{\sqrt{p}}\Big)^k
- C_{k/2}\keven 
\bigg)^2}^{\frac12}
+ \epsilon.
\end{align*}}%
By Theorem \ref{thm:moments}, the expectation tends to zero as $n,p\rightarrow\infty$ with $p/n\rightarrow0$. Thus
{\setlength{\mathindent}{30pt}
\setlength{\abovedisplayskip}{0pt}
\setlength{\belowdisplayskip}{5pt}\begin{align*}&
\lim_{n\rightarrow\infty}\E{\bigg(
\int_\R f(x)dL_{4T/\sqrt{p}}(x)
- \int_\R f(x)dL(x)
\bigg)^2}^{\frac12}
\leq\;\; 2\epsilon.
\end{align*}}%
But this is true for every $\epsilon>0$, so the limit must be zero. Hence for every continuous $f$ that vanishes at infinity, the integral $\int fdL_{4T/\sqrt{p}}$ converges in square mean to $\int fdL$. By \citet[Theorem 4.4.1 and 4.4.2]{chung01}, this implies that for every \textit{bounded} continuous $f$, the integral $\int fdL_{4T/\sqrt{p}}$ converges in square mean to $\int fdL$. Thus the empirical spectral distribution $L_{4T/\sqrt{p}}$ converges weakly, in square mean, to the semicircle distribution $L$, as desired.
\end{proof}

\section{Wishart asymptotics: the G-transform point-of-view}\label{sec:wishart-G-transform}

We now turn our attention to the main objective of this paper, namely studying the behavior of Wishart matrices in the various middle-scale regimes. To do this, we exploit the close connection between the Wishart and the symmetric $t$ distributions and make use of the results found Section \ref{sec:symmetrict}. The main result of this section, Theorem \ref{thm:existence-gtransform}, states that we can approximate for every middle-scale regime the G-transform $\psi_\text{NW}$ of a normalized Wishart by a degree-specific function $\psi_K$. This can be seen as an analogue of Theorem \ref{thm:existence-densities} in the G-transform domain.

The reasoning behind the approximations is as follows. We could imagine writing $\psi_\text{NW}$ from Proposition \ref{prop:gtransform-nw} in exponential form, and expanding the terms as a Taylor series would yield
{\setlength{\mathindent}{10pt}
\setlength{\abovedisplayskip}{5pt}
\setlength{\belowdisplayskip}{5pt}\begin{align*}
\psi_{\text{NW}}(T)
&= C_{n, p}
\exp\bigg\{2i\sqrt{n}\Tr  T
-\frac{n+p+1}2\log\bigg|I_p +i\frac{4 T}{\sqrt{n}}\bigg|
\bigg\}
\\&
= C_{n, p}
\exp\bigg\{
2i\sqrt{n}\Tr T
+\frac{n+p+1}2
\sum_{k=1}^\infty \frac{(-i)^k}{k}
\left(\frac{p}{n}\right)^{\frac{k}2}
\Tr \bigg(\frac{4 T}{\sqrt{p}}\bigg)^k
\bigg\}.
\end{align*}}%
Now imagine that the $T$'s appearing in the expression follow a $\text{T}_{n/2}(I_p/8)$ distribution. By Theorem \ref{thm:moments}, we know that $\Tr \big(\frac{4 T}{\sqrt{p}}\big)^k=\Theta(p)$ when $k$ is even, in an $L^2$ sense. When $k$ is odd, the theorem merely proves that $\Tr \big(\frac{4 T}{\sqrt{p}}\big)^k=o(p)$, but for a $\text{GOE}(p)$ matrix $Z$, we know that $\Tr \frac{Z}{\sqrt{p}}^k$ is asymptotically normal for odd $k$ by \citet[Theorem 2.1.31]{anderson10}. This would suggest that $\Tr \big(\frac{4 T}{\sqrt{p}}\big)^k=\Theta(1)$ when $k$ is odd. Thus we would have, in some sense,
{\setlength{\mathindent}{20pt}
\setlength{\abovedisplayskip}{5pt}
\setlength{\belowdisplayskip}{5pt}\begin{align*}
\psi_{\text{NW}}(T)
&= C_{n, p}
\exp\bigg\{
\sum_{\substack{k=2\\\text{even}}}^\infty 
\Theta\left(\frac{p^{k/2+1}}{n^{k/2-1}}\right)
+
\sum_{\substack{k=2\\\text{odd}}}^\infty 
\Theta\left(\frac{p^{k/2}}{n^{k/2-1}}\right)
\bigg\}
\\&
= C_{n, p}
\exp\bigg\{
\sum_{k=0}^\infty 
\Theta\left(\frac{p^{k+2}}{n^{k}}\right)
+ \sum_{k=1}^\infty
\Theta\left(\sqrt{\frac{p^{(2k-1)+2}}{n^{2k-1}}}\right)
\bigg\}.
\end{align*}}%
In other words, terms in the power series would be associated with some degree $K$, such that they would be non-negligible in any middle-scale regime of degree up to $K$, and negligible in higher degrees. In fact, a similar phenomenon occurs with $C_{n,p}$, by Lemma \ref{lem:cnp}. This suggests we should try truncating these power series to derive degree-specific approximations.

\begin{definition}[G-transform approximations]\label{def:G-transform-approximations} For any $K\in\N$, define the $K^\text{th}$ degree approximation $\psi_K: \S_p(\R)\rightarrow\C$ as
{\setlength{\mathindent}{10pt}
\setlength{\abovedisplayskip}{3pt}
\setlength{\belowdisplayskip}{0pt}\begin{align*}&\;\;
\psi_K(T) = C_{n,p}^{(K)}\exp\left\{\rule{0pt}{25pt}\right.\!
	\text{\raisebox{-5pt}{$
	\frac{n}2
	\hspace{-6pt}\mathlarger{\mathlarger{\sum}}_{k=2}^{\substack{2K+3+\\ \Kodd}}\hspace{-10pt}
	{\Big(\frac{4i}{\sqrt{n}}\Big)\!\!}^k
	\frac{\Tr T^k}{k}
	+
	\frac{p+\!1}2\hspace{-7pt}\mathlarger{\mathlarger{\sum}}_{k=1}^{\substack{2K+2-\\ \Kodd}}\hspace{-10pt}
	{\Big(\frac{4i}{\sqrt{n}}\Big)\!\!}^k
	\frac{\Tr T^k}{k}
	$}}
\!\!\left.\rule{0pt}{25pt}\right\}
\notag\\[-5pt]&\hspace{-10pt}\text{with}
\\[-7pt]&\;\;
C_{n,p}^{(K)} = \frac{2^{\frac{p(3p+1)}4}}{\pi^{\frac{p(p+1)}4}}
	\exp\bigg\{\!\!
	-\!\frac12\!\sum_{k=1}^{K+1}\!\frac{\keven}{k(k\!+\!1)(k\!+\!2)}\frac{p^{k+2}}{n^k}
	\notag\\&\hspace{180pt}
	-\!\frac14\!\sum_{k=1}^{K+1}\!\frac{1\!+\!2\keven}{k(k\!+\!1)}\frac{p^{k+1}}{n^k}
	\bigg\}.
	\notag
\end{align*}}%
\end{definition}

Just like the G-transform of a normalized Wishart matrix, these functions implicitly depend on $n$. The first three are
{\setlength{\mathindent}{10pt}
\setlength{\abovedisplayskip}{3pt}
\setlength{\belowdisplayskip}{3pt}\begin{align*}&
\psi_0(T) = \frac{2^{\frac{p(3p+1)}4}}{\pi^{\frac{p(p+1)}4}}
	\exp\bigg\{\!\!
	-\!4\Tr T^2 
	-\!i\frac{32}{3\sqrt{n}}\!\Tr T^3 
	+\!2i\frac{p+1}{\sqrt{n}}\!\Tr T 
	-\!4\frac{p+1}{n}\!\Tr T^2
	\!\bigg\},
\\&
\psi_1(T) = \frac{2^{\frac{p(3p+1)}4}}{\pi^{\frac{p(p+1)}4}}
	\exp\bigg\{\!\!
	-\!\frac{p^2}{8n}
	-\!4\Tr T^2 
	-\!i\frac{32}{3\sqrt{n}}\!\Tr T^3 
	+\!\frac{32}{n}\!\Tr T^4
	+\!i\frac{512}{5n^{3/2}}\!\Tr T^5
\\&\hspace{65pt}
-\!\frac{1024}{3n^2}\!\Tr T^6
	+\!2i\frac{p+1}{\sqrt{n}}\!\Tr T 
	-\!4\frac{p+1}{n}\!\Tr T^2
	-\!i\frac{32(p+1)}{3n^{3/2}}\!\Tr T^3
\!\bigg\}
\\[-3pt]&\hspace{-10pt}\text{and}
\\[-6pt]&
\psi_2(T) = \frac{2^{\frac{p(3p+1)}4}}{\pi^{\frac{p(p+1)}4}}
	\exp\bigg\{
	-\!\frac{p^4}{48n^2}
	-\!\frac{p^2}{8n}
	-\!4\Tr T^2 
	-\!i\frac{32}{3\sqrt{n}}\!\Tr T^3 
	+\!\frac{32}{n}\!\Tr T^4
\\&\hspace{90pt}
+\!i\frac{512}{5n^{3/2}}\!\Tr T^5
	-\!\frac{1024}{3n^2}\!\Tr T^6
	-\!i\frac{8192}{7n^{5/2}}\!\Tr T^7
	+\!2i\frac{p+1}{\sqrt{n}}\!\Tr T 
\\&\hspace{120pt}
-\!4\frac{p+1}{n}\!\Tr T^2
	-\!i\frac{32(p+1)}{3n^{3/2}}\!\Tr T^3
	+\!32\frac{p+1}{n^2}\!\Tr T^4
\\&\hspace{155pt}
+\!i\frac{512(p+1)}{5n^{5/2}}\!\Tr T^5
	-\!\frac{1024(p+1)}{3n^3}\!\Tr T^6
	\!\bigg\}.
\end{align*}}%
These functions have the pleasant property that their modulus is bounded, up to a constant, by the G-conjugate density $|\psi_K|$. Indeed, on one hand we can rewrite Definition \ref{def:G-transform-approximations} into
{\setlength{\mathindent}{10pt}
\setlength{\abovedisplayskip}{3pt}	
\setlength{\belowdisplayskip}{3pt}\begin{align}&
\psi_K(T)
	= \exp\left\{\rule{0pt}{23pt}\right.\!
	\log C_{n, p}^{(K)}
	+ \frac{n}2 \hspace{-5pt}\mathlarger{\mathlarger{\sum}}_{k=1}^{\substack{K + 1 + \\ \Kodd}}\hspace{-7pt}
		(-1)^k\frac{\Tr\left(4T/\sqrt{n}\right)^{2k}}{2k}
\notag\\[-4pt]&\hspace{10pt}
	+ \frac{p+1}2 \hspace{-5pt}\mathlarger{\mathlarger{\sum}}_{k=1}^{\substack{K + 1 - \\ \Kodd}}\hspace{-7pt}
		(-1)^k\frac{\Tr\left(4T/\sqrt{n}\right)^{2k}}{2k}
	- i\frac{n}2 \mathlarger{\mathlarger{\sum}}_{k=1}^{K + 1}
		(-1)^k\frac{\Tr\left(4T/\sqrt{n}\right)^{2k+1}}{2k+1}
\notag\\[-5pt]&\hspace{140pt}
	- i\frac{p+1}2 \mathlarger{\mathlarger{\sum}}_{k=1}^{K}
		(-1)^k\frac{\Tr\left(4T/\sqrt{n}\right)^{2k+1}}{2k+1}
\left.\rule{0pt}{23pt}\right\}\!.
\label{eq:thm-existence-psik}
\end{align}}%
On the other hand, for any $x\in\R$, we can write $1+ix = \sqrt{1+x^2} \exp\big(i \; \text{atan}(x)\big)$ with the arctangent function taking values in $(-\pi/2, \pi/2)$. Thus by Proposition \ref{prop:gtransform-nw} we can rewrite $\psi_{\text{NW}}$ as
{\setlength{\mathindent}{30pt}
\setlength{\abovedisplayskip}{3pt}	
\setlength{\belowdisplayskip}{3pt}\begin{align}&
\psi_{\text{NW}}(T)
	= \exp\bigg\{
	\log C_{n, p}
	- \frac{n+p+1}4\log\left|I_p+\frac{16T^2}{n}\right|
\notag\\&\hspace{90pt}
	- i\frac{n+p+1}2\Tr\text{atan}\left(\frac{4T}{\sqrt{n}}\right)
	+ 2i\sqrt{n}\Tr T
	\bigg\},
\label{eqn:psiNW-real-and-imaginary}
\end{align}}%
with the understanding that the matrix-variate arctangent function operates on eigenvalues by functional calculus. Now, for any $x\in\R$ and odd integer $L$, there is an elementary inequality
{\setlength{\mathindent}{5pt}
\setlength{\abovedisplayskip}{3pt}	
\setlength{\belowdisplayskip}{3pt}\begin{align*}&\hspace{30pt}
\sum_{l=1}^L (-1)^l \frac{x^{2l}}{2l} \;\leq\; -\frac12\log(1+x^2).
\end{align*}}%
Notice that $K+1\pm\Kodd$ is always an odd integer. Thus, from the above inequality and Equations \eqref{eq:thm-existence-psik} and \eqref{eqn:psiNW-real-and-imaginary}, we can derive the bound
{\setlength{\mathindent}{30pt}
	\setlength{\abovedisplayskip}{3pt}	
	\setlength{\belowdisplayskip}{3pt}\begin{align}
	\big|\psi_K\big|(T) 
	&\;\;\leq\;\; C^{(K)}_{n,p} \,\exp\Big\{
	-\Big(\frac{n}4+\frac{p+1}4\Big)\log\Big|I_p+\frac{16T^2}{n}\Big|
	\Big\}
	\notag\\&
	\;\;=\;\; \frac{C^{(K)}_{n,p}}{C_{n,p}}\,
	\big|\psi_{\text{NW}}\big|(T).
	\label{eqn:psiK-bounded-by-psi-NW}
	\end{align}}%
In particular, since $\psi_\text{NW}$ is integrable whenever $n\geq p-2$ by Proposition \ref{prop:gtransform-nw}, Equation \eqref{eqn:psiK-bounded-by-psi-NW} implies that every $\psi_K$ must also be integrable whenever $n\geq p-2$. In particular, for large enough $n$ it makes sense to talk about the asymptotic total variation or Hellinger distance between $\psi_\text{NW}$ and $\psi_K$. 

We now state the main result, which is that each function $\psi_K$ approximates the G-transform of a normalized Wishart for all middle-scale regimes of degree $K$ or lower, but no other.

\begin{theorem}\label{thm:existence-gtransform}\hspace{-10pt}
Let $\lim\limits_{n\rightarrow\infty}\frac{\log p}{\log n}<1$ as $n\rightarrow\infty$. For any $K\in\N$, the total variation distance between the G-transform of the normalized Wishart distribution $\sqrt{n}[\text{W}_p(n,I_p/n)-I_p]$ and the $K^\text{th}$ approximating function $\psi_K$ satisfies
{\setlength{\mathindent}{40pt}
\setlength{\abovedisplayskip}{3pt}
\setlength{\belowdisplayskip}{3pt}\begin{align*}
\mathrm{d}_\text{TV}\big(\psi_{\text{NW}}, \psi_K\big) =
	\bigintsss_{\S_p(\R)}\hspace{-15pt} 
	\big|\psi_{\text{NW}}(T) - \psi_K(T)\big| \,dT
	\rightarrow 0
	&&\text{ as }n \rightarrow\infty
\end{align*}}%
if and only if $p^{K+3}/n^{K+1}\rightarrow0$.
\end{theorem}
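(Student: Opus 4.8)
The plan is to prove the two implications separately. For sufficiency, I would reduce to the Hellinger distance: by the factorization $\psi_{\text{NW}}-\psi_K=(\psi_{\text{NW}}^{1/2}-\psi_K^{1/2})(\psi_{\text{NW}}^{1/2}+\psi_K^{1/2})$, Cauchy--Schwarz, and the bound $\int_{\S_p(\R)}|\psi_K|\leq C^{(K)}_{n,p}/C_{n,p}$ from \eqref{eqn:psiK-bounded-by-psi-NW} (which is bounded in this regime by Lemma \ref{lem:cnp}), one gets $\mathrm{d}_\text{TV}(\psi_{\text{NW}},\psi_K)\leq(1+(\int_{\S_p(\R)}|\psi_K|)^{1/2})\,\mathrm{H}(\psi_{\text{NW}},\psi_K)=O(\mathrm{H}(\psi_{\text{NW}},\psi_K))$, so it suffices to show $\mathrm{H}(\psi_{\text{NW}},\psi_K)\to0$. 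For necessity, I would instead use the reverse triangle inequality $\mathrm{d}_\text{TV}(\psi_{\text{NW}},\psi_K)\geq\big|\int_{\S_p(\R)}|\psi_{\text{NW}}|-\int_{\S_p(\R)}|\psi_K|\big|=\big|1-\int_{\S_p(\R)}|\psi_K|\big|$, so that it suffices to show $\int_{\S_p(\R)}|\psi_K|$ does not converge to $1$.

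For the sufficiency, assume $p^{K+3}/n^{K+1}\to0$. Since $\psi_{\text{NW}}$ is the G-transform of an absolutely continuous distribution and $\psi_K$ is integrable for $n\geq p-2$ by \eqref{eqn:psiK-bounded-by-psi-NW}, Proposition \ref{prop:generalizedkl} with $\psi_1=\psi_{\text{NW}}$, $\psi_2=\psi_K$ and the expectations taken over $T\sim F_{\text{NW}}^{*}=\text{T}_{n/2}(I_p/8)$ (density \eqref{eqn:gconjugate-nw2}) bounds $\mathrm{H}^2(\psi_{\text{NW}},\psi_K)$ by $[\int_{\S_p(\R)}|\psi_K|-1]+\E{\Re\Log(\psi_{\text{NW}}/\psi_K)}+2(\int_{\S_p(\R)}|\psi_K|)^{1/2}\,\E{|\Im\Log(\psi_{\text{NW}}/\psi_K)|}^{1/2}$. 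The first bracket is $\leq C^{(K)}_{n,p}/C_{n,p}-1=\exp\{o(p^{K+3}/n^{K+1})\}-1\to0$ by Lemma \ref{lem:cnp} and Definition \ref{def:G-transform-approximations}. For the two log-terms, I would insert the exponential representations \eqref{eq:thm-existence-psik} of $\psi_K$ and \eqref{eqn:psiNW-real-and-imaginary} of $\psi_{\text{NW}}$: every Taylor coefficient of $\Tr\log(I_p+16T^2/n)$ and of $\Tr\,\text{atan}(4T/\sqrt n)$ that is retained in $\psi_K$ cancels, leaving $\log(C_{n,p}/C^{(K)}_{n,p})$ together with exact integral Taylor remainders --- of the shape $\tfrac n4\sum_j\int_0^{x_j}\tfrac{(-t)^{K+1+\Kodd}}{1+t}\,dt$ and $\tfrac{p+1}4\sum_j\int_0^{x_j}\tfrac{(-t)^{K+1-\Kodd}}{1+t}\,dt$ (with $x_1,\dots,x_p$ the eigenvalues of $16T^2/n$) for the real part, and $\tfrac{n+p+1}2\sum_j\int_0^{y_j}\tfrac{(-t)^{2M+2}}{1+t^2}\,dt$ (with $y_1,\dots,y_p$ the eigenvalues of $4T/\sqrt n$ and $2M+1\in\{2K+1,2K+3\}$ the last odd order kept) plus a residual term linear in $\Tr T$ for the imaginary part. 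These remainders I would control with the elementary inequalities $0\leq\int_0^x\tfrac{t^N}{1+t}\,dt\leq\tfrac{x^{N+1}}{N+1}$ and $\big|\int_0^y\tfrac{t^{2M+2}}{1+t^2}\,dt\big|\leq\tfrac{|y|^{2M+3}}{2M+3}$ (valid for $x\geq0$, $y\in\R$), the identities $\sum_j x_j^{m}=(16/n)^m\Tr T^{2m}$ and $\sum_j|y_j|^{2M+3}\leq(4/\sqrt n)^{2M+3}(\Tr T^{2M+2}\Tr T^{2M+4})^{1/2}$, Cauchy--Schwarz, and the moment asymptotics of Theorem \ref{thm:moments}, namely $\E{\Tr T^{2m}}=O(p^{m+1})$, $\E{\Tr^2 T^{m}}=O(p^{m+2})$, and $\E{\Tr T^{m}}=0$ for odd $m$. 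This gives expectation $O(p^{K+3}/n^{K+1})$ for each remainder --- the parity-dependent truncation order $K+1\pm\Kodd$ of Definition \ref{def:G-transform-approximations} being chosen exactly so that every leftover sits at the threshold $p^{K+3}/n^{K+1}$ or below, the $n$-part dominating when $K$ is even and the $(p+1)$-part when $K$ is odd --- and combined with $|\log(C_{n,p}/C^{(K)}_{n,p})|=o(p^{K+3}/n^{K+1})$ from Lemma \ref{lem:cnp} yields $\mathrm{H}^2(\psi_{\text{NW}},\psi_K)=O(p^{K+3}/n^{K+1})+o(1)\to0$.

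For the necessity, suppose $p^{K+3}/n^{K+1}\not\to0$. Writing $|\psi_K|=(C^{(K)}_{n,p}/C_{n,p})\,e^{-\mathcal R(T)}|\psi_{\text{NW}}|$, where $\mathcal R\geq0$ is exactly the real Taylor tail isolated above (nonnegativity being \eqref{eqn:psiK-bounded-by-psi-NW}), one has $\int_{\S_p(\R)}|\psi_K|=(C^{(K)}_{n,p}/C_{n,p})\,\E{e^{-\mathcal R(T)}}$ for $T\sim\text{T}_{n/2}(I_p/8)$. The dominant term of $\mathcal R$ is a positive multiple of $n^{-(K+1)}\Tr T^{2(K+2)}$ when $K$ is even and of $p\,n^{-(K+1)}\Tr T^{2(K+1)}$ when $K$ is odd; since $\lim\log p/\log n<1$ forces $p/n\to0$, the $\mathrm{L}^2$ concentration $\tfrac1p\Tr(4T/\sqrt p)^{2m}\to C_m$ of Theorem \ref{thm:moments} gives $\mathcal R/r_n\to1$ in $\mathrm{L}^2$ with $r_n:=\E{\mathcal R}=\Theta(p^{K+3}/n^{K+1})>0$. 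Passing to a subsequence along which $r_n$ tends to a limit in $(0,\infty]$ and along which the constant ratio $C^{(K)}_{n,p}/C_{n,p}=\exp\{o(r_n)\}$ is controlled by Lemma \ref{lem:cnp}, and using that $e^{-\mathcal R}\leq1$ with $\mathcal R\to\infty$ in probability when $r_n\to\infty$, dominated convergence and a Chebyshev estimate show that $\int_{\S_p(\R)}|\psi_K|$ cannot converge to $1$; hence $\mathrm{d}_\text{TV}(\psi_{\text{NW}},\psi_K)\not\to0$.

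The hard part will be the sufficiency estimate: one must execute the cancellation in $\Log(\psi_{\text{NW}}/\psi_K)$ while correctly tracking the parity-dependent truncation orders, must use the \emph{integral} form of the Taylor remainder rather than term-by-term summation (the series for $\log$ and $\text{atan}$ converge only locally, whereas $\text{T}_{n/2}(I_p/8)$ has heavy tails whose decay comes from the determinantal factor in \eqref{eqn:gconjugate-nw2} itself), and must combine precisely the right mix of the first- and second-moment asymptotics of Theorem \ref{thm:moments} so that every leftover contributes at order exactly $p^{K+3}/n^{K+1}$ and none more slowly. That the limiting moments are the Catalan numbers $C_m>0$ is also what keeps $r_n$ bounded away from $0$ in the necessity direction, and the remaining subtlety there is a short case analysis of the constant ratio $C^{(K)}_{n,p}/C_{n,p}$ along subsequences.
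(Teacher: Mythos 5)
Your sufficiency argument tracks the paper's own proof very closely: reduce to the Hellinger distance, invoke Proposition \ref{prop:generalizedkl} with the G-conjugate $T\sim\text{T}_{n/2}(I_p/8)$ as reference measure, bound $\Re\Log(\psi_{\text{NW}}/\psi_K)$ and $\Im\Log(\psi_{\text{NW}}/\psi_K)$ by Taylor remainders of $\log(1+x^2)$ and $\text{atan}(x)$, control those remainders by $\E{\Tr T^{2m}}=O(p^{m+1})$ from Theorem \ref{thm:moments}, and absorb the normalizing constants via Lemma \ref{lem:cnp}. Your use of the explicit integral remainder $\int_0^x\frac{t^N}{1+t}\,dt$ (rather than the power-series tail) is the right way to get global inequalities valid on the heavy-tailed domain, and is essentially what the paper does with its inequalities \eqref{eq:thm-existence-logineq}--\eqref{eq:thm-existence-atanineq}. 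This half is sound.

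Your necessity argument, however, departs from the paper's and contains a real gap. The paper never attempts to estimate $\int_{\S_p(\R)}|\psi_K|$ directly. Instead it introduces a second approximation $\psi_L$ with $L>K$ chosen so large that $p^{L+3}/n^{L+1}\to0$ (available because $\lim\log p/\log n<1$), and works with the \emph{finite polynomial} $R(T)=\tfrac12\Re\Log(\psi_L/\psi_K)$. The paper shows $\tfrac{n^{K+1}}{p^{K+3}}R(T)\to \tfrac{C_{K+1+\Keven}}{8(K+1+\Keven)}>0$ in $L^2$ via Theorem \ref{thm:moments}; that the assumed $\mathrm{H}(\psi_{\text{NW}},\psi_K)\to0$ together with the already-proved $\mathrm{H}(\psi_{\text{NW}},\psi_L)\to0$ force $R(T)\to0$ in probability; and then Slutsky's lemma, applied to the deterministic ratio $p^{K+3}/n^{K+1}=(\tfrac{n^{K+1}}{p^{K+3}}R(T))^{-1}\cdot R(T)$, yields the contradiction. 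Notice this bypasses any need to control the constants $C^{(K)}_{n,p}/C_{n,p}$ beyond what Lemma \ref{lem:cnp} gives.

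Your route via the reverse triangle inequality needs $\int_{\S_p(\R)}|\psi_K|\not\to1$, and you factor this as $(C^{(K)}_{n,p}/C_{n,p})\,\E{e^{-\mathcal R}}$. The problem is the regime $p^{K+3}/n^{K+1}\to\infty$, which is perfectly consistent with $\lim\log p/\log n<1$ (take $p\sim n^{(K+2)/(K+3)}$). In that regime you correctly get $\E{e^{-\mathcal R}}\to0$, but Lemma \ref{lem:cnp} only says $C^{(K)}_{n,p}/C_{n,p}=\exp\{o(p^{K+3}/n^{K+1})\}$, and ``$o$ of a quantity tending to infinity'' places no bound at all on the exponent; indeed, from Definition \ref{def:G-transform-approximations} one sees $C^{(K)}_{n,p}\geq C^{(L)}_{n,p}$ for $L>K$ (the dropped exponent terms are all non-positive), so the ratio can genuinely blow up, and it is a priori possible that it compensates $\E{e^{-\mathcal R}}\to0$ so that $\int|\psi_K|\to1$, in which case the reverse-triangle lower bound is vacuous. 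You flag that the ratio needs to be ``controlled'' but do not actually control it --- and doing so requires precisely the more delicate asymptotics that the paper's intermediate-$\psi_L$ device is designed to avoid. A secondary but related issue: you work with the exact infinite-tail remainder $\mathcal R$, for which a lower bound $\E{\mathcal R}\gtrsim p^{K+3}/n^{K+1}$ is less immediate than it looks, since the integral representation $\int_0^x\frac{t^L}{1+t}\,dt$ carries a $(1+x)^{-1}$ factor whose expectation against the polynomial is not obviously of the same order; the paper's $R(T)$ is an explicit finite linear combination of traced powers of $T$, to which the $L^2$ moment asymptotics of Theorem \ref{thm:moments} apply directly and give the exact positive limit needed.
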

\begin{proof}
\textit{If statement.} For the first part of the theorem, remark that by Equation \eqref{eqn:HvsTV-Gtransforms} it is equivalent to show that the Hellinger distance tends to zero, i.e. that
{\setlength{\mathindent}{40pt}
\setlength{\abovedisplayskip}{3pt}
\setlength{\belowdisplayskip}{3pt}\begin{align*}
\mathrm{H}^2\big(\psi_{\text{NW}}, \psi_K\big) =
	\bigintss_{\S_p(\R)}\hspace{-15pt} 
	\Big|\psi_{\text{NW}}^{1/2}(T) - \psi_K^{1/2}(T)\Big|^2 \,dT
	\rightarrow 0
	&&\text{ as }n \rightarrow\infty
\end{align*}}%
when $p^{K+3}/n^{K+1}\rightarrow0$. To control this quantity, we use the Kullback-Leibler inequality for G-transforms. Notice that for any $x\in\R$ and $L\in\N$,
{\setlength{\mathindent}{10pt}
\begin{align}\qquad
\left|-\frac12\log(1+x^2) - \sum_{l=1}^{L-1} (-1)^l \frac{x^{2l}}{2l}\right| 
&\;\leq\; \frac{x^{2L}}{2L},
\label{eq:thm-existence-logineq}
\\\qquad
\left|\text{atan}(x) - \sum_{l=1}^{L-1} (-1)^l \frac{x^{2l+1}}{2l+1}\right| 
&\;\leq\; \frac{x^{2L}}{2L}.
\label{eq:thm-existence-atanineq}
\end{align}}%
Let $\Log$ stand for the principal branch of the complex logarithm, and let us study $\Log\psi_{\text{NW}}/\psi_K$.
Its real part can be bounded by
{\setlength{\mathindent}{5pt}
\setlength{\abovedisplayskip}{3pt}	
\setlength{\belowdisplayskip}{3pt}\begin{align*}&
\bigg|\Re\Log\frac{\psi_{\text{NW}}(T)}{\psi_K(T)} \bigg|
	= \left|\rule{0pt}{23pt}\right.
	\log C_{n, p}
	- \frac{n+p+1}4\log\left|I_p+\frac{16T^2}{n}\right|
	- \log C_{n, p}^{(K)}
\\&\hspace{54pt}
	- \frac{n}2 \hspace{-5pt}\mathlarger{\mathlarger{\sum}}_{k=1}^{\substack{K + 1 + \\ \Kodd}}\hspace{-7pt}
	(-1)^k\frac{\Tr\left(4T/\sqrt{n}\right)^{2k}}{2k}
	- \frac{p+1}2 \hspace{-5pt}\mathlarger{\mathlarger{\sum}}_{k=1}^{\substack{K + 1 - \\ \Kodd}}\hspace{-7pt}
	(-1)^k\frac{\Tr\left(4T/\sqrt{n}\right)^{2k}}{2k}
	\left.\rule{0pt}{23pt}\right|
\\&
\leq \bigg|\log C_{n, p}\!-\!\log C_{n, p}^{(K)}\bigg|
	+\frac{n}2\left|\rule{0pt}{25pt}\right.\!\!
		-\!\frac12\!\log\left|I_p+\frac{16T^2}{n}\right| -
		\hspace{-7pt}\mathlarger{\mathlarger{\sum}}_{k=1}^{\substack{K + 1 + \\ \Kodd}}\hspace{-7pt}
		(-1)^k\frac{\Tr\left(4T/\sqrt{n}\right)^{2k}}{2k}
		\!\!\left.\rule{0pt}{25pt}\right|
\\&\hspace{90pt}
	+ \frac{p+1}2\left|\rule{0pt}{25pt}\right.\!\!
		-\!\frac12\!\log\left|I_p+\frac{16T^2}{n}\right| -
		\hspace{-7pt}\mathlarger{\mathlarger{\sum}}_{k=1}^{\substack{K + 1 - \\ \Kodd}}\hspace{-7pt}
		(-1)^k\frac{\Tr\left(4T/\sqrt{n}\right)^{2k}}{2k}
		\!\!\left.\rule{0pt}{25pt}\right|\!.
\end{align*}}%
By Equation \eqref{eq:thm-existence-logineq}, this can be bounded by
{\setlength{\mathindent}{5pt}
\setlength{\abovedisplayskip}{3pt}	
\setlength{\belowdisplayskip}{3pt}\begin{align}&
\leq \bigg|\log C_{n, p}\!-\!\log C_{n, p}^{(K)}\bigg|
	+ \frac{n}2\frac{\Tr (4T/\sqrt{n})^{2K+4+2\Kodd}}{2K+4+2\Kodd}
\notag\\&\hspace{140pt}
	+ \frac{p+1}2\frac{\Tr (4T/\sqrt{n})^{2K+4-2\Kodd}}{2K+4-2\Kodd}
\notag\\&
= \bigg|\log C_{n, p}\!-\!\log C_{n, p}^{(K)}\bigg| 
	+ \frac{4^{2K+3+2\Kodd}}{K+2+\Kodd} \frac{\Tr T^{2K+4+2\Kodd}}{n^{K+1+\Kodd}}
\notag\\&\hspace{80pt}
	+ \frac{4^{2K+3-2\Kodd}}{K+2-\Kodd} \frac{(p+1)\Tr T^{2K+4-2\Kodd}}{n^{K+2-\Kodd}}.
\label{eq:thm-existence-boundre}
\end{align}}%
We can bound the imaginary part of $\Log\psi_{\text{NW}}/\psi_K$ in a similar way. Define $P_{(-\pi, \pi]}:\R\rightarrow(-\pi, \pi]$ to be the projection mapping $P_{(-\pi, \pi]}x = x-2\pi\lceil\frac{x}{2\pi}-\frac12\rceil$. A plot is given as Figure \ref{fig:modproj}.

\begin{figure}[t]
\centering
\begin{tikzpicture}[scale=0.25]
\def\numpi{3.14};
\draw [help lines,->] (-6*\numpi, 0) -- (6*\numpi, 0);
\node[below] at (6*\numpi, 0){$x$};
\foreach \offset/\ticktext in {{-5*\numpi}/{-$5\pi$}, {-3*\numpi}/{-$3\pi$}, 
												{-\numpi}/{-$\pi$}, {\numpi}/{$\pi$},
												{3*\numpi}/{$3\pi$}, {5*\numpi}/{$5\pi$} 
											   }
	\draw[shift={(\offset, 0)}] (0pt, 4pt) -- (0pt, -4pt) node[below] {\ticktext};
\draw [help lines,->] (-0, -4) -- (-0, 4);
\node[left] at (0, 4) {$y$};
\draw[shift={(0,\numpi)}] (-4pt, 0pt) -- (4pt, 0pt) node[below left] {$\pi$};
\draw[shift={(0,-\numpi)}] (-4pt, 0pt) -- (4pt, 0pt) node[below left] {-$\pi$};

\foreach \offset in {-2, ..., 2}
{
	\draw[line width=0.5pt] (-\numpi+\offset*2*\numpi, -\numpi)--(\numpi+\offset*2*\numpi, \numpi);
	\draw [fill=white] (-\numpi+\offset*2*\numpi, -\numpi) circle[radius= 6pt];
}
\end{tikzpicture}
\caption{Plot of $P_{(-\pi, \pi)}$ on $(-5\pi, 5\pi]$.}
\label{fig:modproj}
\end{figure}
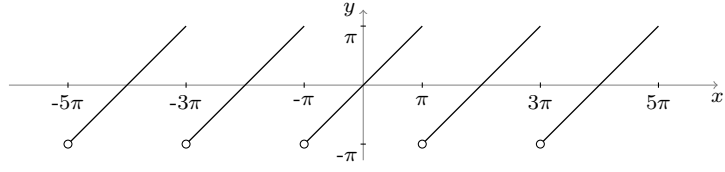

It satisfies $\Im\Log z = P_{(-\pi, \pi]}\Im\log z$ for all branches of $\log z$, as well as the inequality $|P_{(-\pi, \pi]}x|\leq |x|$. Using this mapping, we can see that the imaginary part of $\Log\psi_{\text{NW}}/\psi_K$ can be bounded as
{\setlength{\mathindent}{5pt}
\setlength{\abovedisplayskip}{3pt}	
\setlength{\belowdisplayskip}{3pt}\begin{align*}&
\bigg|\Im\Log\frac{\psi_{\text{NW}}(T)}{\psi_K(T)} \bigg|
	= \left|\rule{0pt}{23pt}\right.\! P_{(-\pi, \pi]}  \!\!\left[\rule{0pt}{23pt}\right.
	- \frac{n+p+1}2\Tr\text{atan}\left(\frac{4T}{\sqrt{n}}\right)
	+ 2\sqrt{n}\Tr T
\\&\hspace{35pt}
	+ \frac{n}2 \mathlarger{\mathlarger{\sum}}_{k=1}^{K + 1}
		(-1)^k\frac{\Tr\left(4T/\sqrt{n}\right)^{2k+1}}{2k+1}
	+ \frac{p+1}2 \mathlarger{\mathlarger{\sum}}_{k=1}^{K}
		(-1)^k\frac{\Tr\left(4T/\sqrt{n}\right)^{2k+1}}{2k+1}
	\left.\rule{0pt}{23pt}\right] \!\!\left.\rule{0pt}{23pt}\right|
\\&\hspace{15pt}
\leq \frac{n}2 \left|\rule{0pt}{23pt}\right. 
	\Tr\text{atan}\left(\frac{4T}{\sqrt{n}}\right)
	- \frac{2\sqrt{n}\Tr T}{n/2}
	- \mathlarger{\mathlarger{\sum}}_{k=1}^{K + 1}
		(-1)^k\frac{\Tr\left(4T/\sqrt{n}\right)^{2k+1}}{2k+1}
	\left.\rule{0pt}{23pt}\right|
\\&\hspace{95pt}
	+ \frac{p+1}2 \left|\rule{0pt}{23pt}\right. 
		\Tr\text{atan}\left(\frac{4T}{\sqrt{n}}\right)
	- \mathlarger{\mathlarger{\sum}}_{k=1}^{K}
		(-1)^k\frac{\Tr\left(4T/\sqrt{n}\right)^{2k+1}}{2k+1}
	\left.\rule{0pt}{23pt}\right|.
\end{align*}}%
By Equation \eqref{eq:thm-existence-atanineq}, this can be bounded by
{\setlength{\mathindent}{5pt}
\setlength{\abovedisplayskip}{3pt}	
\setlength{\belowdisplayskip}{5pt}\begin{align}&\hspace{15pt}
\leq\; \frac{n}2 \frac{\Tr (4T/\sqrt{n})^{2K+4}}{2K+4}
	+ \frac{p+1}2 \frac{\Tr (4T/\sqrt{n})^{2K+2}}{2K+2}
\notag\\&\hspace{15pt}
\leq\; \frac{4^{2K+3}}{K+2}\frac{\Tr T^{2K+4}}{n^{K+1}}
	+ \frac{4^{2K+1}}{K+1}\frac{(p+1)\Tr T^{2K+4}}{n^{K+1}}.
\label{eq:thm-existence-boundim}
\end{align}}%
Recall that the G-conjugate of the normalized Wishart distribution is the $t$ distribution with $n/2$ degrees of freedom and scale matrix $I_p/8$, denoted $T_{n/2}(I_p/8)$ -- see Equation \eqref{eqn:gconjugate-nw} and Section \ref{sec:symmetrict} for details. Let us bound the expectations of these absolute real and imaginary parts under this distribution. By Equations \eqref{eq:thm-existence-cnp}, \eqref{eq:thm-existence-boundre}, \eqref{eq:thm-existence-boundim} and Theorem \ref{thm:moments}, we find that for $T\sim|\psi_{\text{NW}}| = T_{n/2}(I_p/8)$,
{\setlength{\mathindent}{5pt}
\setlength{\abovedisplayskip}{3pt}	
\setlength{\belowdisplayskip}{3pt}\begin{align}&
\E{\bigg|\Re\Log\frac{\psi_{\text{NW}}(T)}{\psi_K(T)} \bigg|}
	\leq\;\; \frac{4^{2K+3+2\Kodd}}{K+2+\Kodd} \frac{\E{\Tr T^{2(K+2+\Kodd)}}}{n^{K+1+\Kodd}}
\notag\\&\hspace{40pt}
	+ \frac{4^{2K+3-2\Kodd}}{K+2-\Kodd} \frac{(p+1)\E{\Tr T^{2(K+2-\Kodd)}}}{n^{K+2-\Kodd}}
	+ o\Big(\frac{p^{K+3}}{n^{K+1}}\Big)
\notag\\&\hspace{20pt}
\leq\;\; O\bigg(\frac{p^{K+2+\Kodd+1}}{n^{K+1+\Kodd}}\bigg) 
	+ O\bigg(\frac{p\cdot p^{K+2-\Kodd+1}}{n^{K+2-\Kodd}}\bigg) 
	+ o\Big(\frac{p^{K+3}}{n^{K+1}}\Big)
\notag\\&\hspace{20pt}
=\;\; O\Big(\frac{p^{K+3}}{n^{K+1}}\Big)
\label{eqn:existence-bound-on-reals}
\end{align}}%
and
{\setlength{\mathindent}{5pt}
\setlength{\abovedisplayskip}{3pt}	
\setlength{\belowdisplayskip}{3pt}\begin{align}&
\E{\bigg|\Im\Log\frac{\psi_{\text{NW}}(T)}{\psi_K(T)} \bigg|}
\leq \frac{4^{2K+3}}{K+2}\frac{\E{\Tr T^{2(K+2)}}}{n^{K+1}}
	+ \frac{4^{2K+1}}{K+1}\frac{(p\!+\!1)\!\E{\Tr T^{2(K+2)}}}{n^{K+1}}
\notag\\&\hspace{20pt}
\leq\;\; O\bigg(\frac{p^{K+2+1}}{n^{K+1}}\bigg) 
+ O\bigg(\frac{p\cdot p^{K+2+1}}{n^{K+2}}\bigg)
\hspace{60pt}
=\;\; O\Big(\frac{p^{K+3}}{n^{K+1}}\Big)
\label{eqn:existence-bound-on-imaginary}
\end{align}}%
as $n\rightarrow\infty$ with $p/n\rightarrow0$.

Moreover, from Lemma \ref{lem:cnp}, we see that
{\setlength{\mathindent}{5pt}
\setlength{\abovedisplayskip}{3pt}	
\setlength{\belowdisplayskip}{3pt}\begin{align}&\hspace{30pt}
C_{n,p} 
= C_{n,p}^{(K)}\exp\left\{o\left(\frac{p^{K+3}}{n^{K+1}}\right)\right\}
\qquad\text{ as }n\rightarrow\infty 
\text{ with } \frac{p}{n}\rightarrow0.
\label{eq:thm-existence-cnp}
\end{align}}%
Thus, from Equations \eqref{eqn:psiK-bounded-by-psi-NW} and \eqref{eq:thm-existence-cnp}, we see that when $p^{K+3}/n^{K+1}\rightarrow0$, the asymptotic $L^1$ norm of $\psi_K$ is bounded by
{\setlength{\mathindent}{5pt}
\setlength{\abovedisplayskip}{3pt}	
\setlength{\belowdisplayskip}{3pt}\begin{align}&
\lim_{n\rightarrow\infty} \bigintss_{\S_p(\R)}\hspace{-20pt}
\big|\psi_K(T)\big| \,dT
\;\leq\; \lim_{n\rightarrow\infty} 
\exp\Big\{ \!-\!o\Big(\frac{p^{K+3}}{n^{K+1}}\Big) \Big\}
\bigintss_{\S_p(\R)} \hspace{-20pt} 
\big|\psi_{\text{NW}}(T)\big| \,dT
= 1.
\label{eq:thm-existence-psikispseudodensity}
\end{align}}%
In fact, at the end of this proof we will see that this bound is sharp and the limit must be exactly 1. 

Using Equations \eqref{eqn:existence-bound-on-reals}, \eqref{eqn:existence-bound-on-imaginary} and  \eqref{eq:thm-existence-psikispseudodensity} with Proposition \eqref{prop:generalizedkl} implies that when $p^{K+3}/n^{K+1}\rightarrow0$,
{\setlength{\mathindent}{5pt}
\setlength{\abovedisplayskip}{3pt}	
\setlength{\belowdisplayskip}{3pt}\begin{align*}
0 
	&\;\;\leq\;\; \lim_{n\rightarrow\infty} \mathrm{H}^2\Big(\psi_{\text{NW}}, \psi_K\Big) 
\\&
	\;\;\leq\;\; \lim_{n\rightarrow\infty} \bigg[
	\bigintss_{\S_{p}(\R)}\hspace{-20pt}
	|\psi_K|(T)\,dT - 1
	\bigg]
	+ 0
	+ 2 \lim_{n\rightarrow\infty}\bigintss_{\S_{p}(\R)}\hspace{-20pt}
	|\psi_K|(T)\,dT^{\frac12} \cdot 0^{\frac12}
\\&
\;\;\leq\;\; [1-1] + 0 + 2\cdot 1^{\frac12}\cdot 0^{\frac12} \; = 0.
\end{align*}}%
Thus $\mathrm{H}^2\big(\psi_{\text{NW}}, \psi_K\big)\rightarrow0$, hence by Equation \eqref{eqn:HvsTV-Gtransforms} we must have the limit $\mathrm{d}_\text{TV}\big(\psi_{\text{NW}}, \psi_K\big)\rightarrow 0$, as desired.

\textit{Only if statement.} For the second part of the theorem, assume that the total variation distance satisfies $\mathrm{d}_\text{TV}(\psi_{\text{NW}}, \psi_K)\rightarrow0$, hence $\mathrm{H}\big(\psi_{\text{NW}}, \psi_K\big)\rightarrow 0$ by Equation \eqref{eqn:HvsTV-Gtransforms}, as $n\rightarrow\infty$. We will show by contradiction this implies that $p^{K+3}/n^{K+1}\rightarrow0$. 

Assume this wasn't the case. Since $\lim_{n\rightarrow\infty}\frac{\log p}{\log n}<1$, there must be an $L\in\N$ such that $p^{L+3}/n^{L+1}\rightarrow0$, and since $p^{K+3}/n^{K+1}\nrightarrow0$, we must have $K<L$. By Equation \eqref{eqn:existence-bound-on-reals}, we must have for $T\sim |\psi_\text{NW}|=\text{T}_{n/2}(I_p/8)$ that
{\setlength{\mathindent}{40pt}
\setlength{\abovedisplayskip}{5pt}	
\setlength{\belowdisplayskip}{5pt}\begin{align*}
\lim_{n\rightarrow\infty} \E{\bigg|\Re\Log\frac{\psi_{\text{NW}}(T)}{\psi_L(T)} \bigg|} \leq \lim_{n\rightarrow\infty} O\Big(\frac{p^{L+3}}{n^{L+1}}\Big) =0,
\end{align*}}%
so
{\setlength{\mathindent}{40pt}
\setlength{\abovedisplayskip}{3pt}	
\setlength{\belowdisplayskip}{3pt}\begin{align}
\frac12\Re\Log\frac{\psi_{\text{NW}}(T)}{\psi_L(T)} \overset{L^1}{\longrightarrow}0.
\label{eqn:existence-RL-tends-to-zero}
\end{align}}%
Now write, by Equation \eqref{eq:thm-existence-psik} and Definition \ref{def:G-transform-approximations},
{\setlength{\mathindent}{10pt}
\setlength{\abovedisplayskip}{3pt}	
\setlength{\belowdisplayskip}{3pt}\begin{align*}&
R(T) \equiv\; \frac12\Re\Log\frac{\psi_L(T)}{\psi_K(T)} 
=\; \frac12\log\frac{C_{n,p}^{(L)}}{C_{n,p}^{(K)}}
+ \frac{n}4 \hspace{-7pt}\mathlarger{\mathlarger{\sum}}_{k=\substack{K + 2 + \\ \Kodd}}^{\substack{L + 1 + \\ \Lodd}}\hspace{-10pt}
	\frac{(-1)^k}{2k}\Tr\left(\frac{4T}{\sqrt{n}}\right)^{2k}
\\&\pushright{
+ \frac{p+1}4 \hspace{-7pt}\mathlarger{\mathlarger{\sum}}_{k=\substack{K + 2 - \\ \Kodd}}^{\substack{L + 1 - \\ \Lodd}}\hspace{-10pt}
	\frac{(-1)^k}{2k}\Tr\left(\frac{4T}{\sqrt{n}}\right)^{2k}
}
\\&
=\; -\frac14\!\sum_{k=K+2}^{L+1}\!\frac{\keven}{k(k\!+\!1)(k\!+\!2)}\frac{p^{k+2}}{n^k}
	-\!\frac18\!\sum_{k=K+2}^{L+1}\!\frac{1\!+\!2\keven}{k(k\!+\!1)}\frac{p^{k+1}}{n^k}
\\&\pushright{
+ \frac{n}4 \hspace{-5pt}\mathlarger{\mathlarger{\sum}}_{k=\substack{K + 2 + \\ \Kodd}}^{\substack{L + 1 + \\ \Lodd}}\hspace{-7pt}
	\frac{(-1)^k}{2k}\left(\frac{p}{n}\right)^{k}\Tr\left(\frac{4T}{\sqrt{p}}\right)^{2k}
+ \frac{p+1}4 \hspace{-5pt}\mathlarger{\mathlarger{\sum}}_{k=\substack{K + 2 - \\ \Kodd}}^{\substack{L + 1 - \\ \Lodd}}\hspace{-7pt}
	\frac{(-1)^k}{2k}\left(\frac{p}{n}\right)^{k}\Tr\left(\frac{4T}{\sqrt{p}}\right)^{2k}.
}
\end{align*}}%
But as $p^{L+3}/n^{L+1}$, we must have $p/n\rightarrow0$, so by Theorem \ref{thm:moments}, we have $\frac1{p}\Tr(\frac{4T}{\sqrt{p}})^{2k}\overset{L^2}{\rightarrow}C_{k}$. Moreover, as we assumed that $p^{K+3}/n^{K+1}\nrightarrow0$, we must have $p\rightarrow\infty$. Thus
{\setlength{\mathindent}{10pt}
\setlength{\abovedisplayskip}{3pt}	
\setlength{\belowdisplayskip}{5pt}\begin{align}&
\frac{n^{K+1}}{p^{K+3}}R(T)
= -\frac14 \sum_{k=K+2}^{L+1}\!\frac{\keven}{k(k\!+\!1)(k\!+\!2)}
\frac{p^{k-K-1}}{n^{k-K-1}}
\notag\\[-5pt]&\pushright{
	- \frac18\!\sum_{k=K+2}^{L+1}\!\frac{1\!+\!2\keven}{k(k\!+\!1)}\frac1{p}\frac{p^{k-K-1}}{n^{k-K-1}}
+ \frac{1}4 \hspace{-5pt}\mathlarger{\mathlarger{\sum}}_{k=\substack{K + 2 + \\ \Kodd}}^{\substack{L + 1 + \\ \Lodd}}\hspace{-7pt}
	\frac{(-1)^k}{2k}\frac{p^{k-K-2}}{n^{k-K-2}}
	\frac1{p}\Tr\left(\frac{4T}{\sqrt{p}}\right)^{2k}
}\notag\\[-5pt]&\pushright{
+ \frac{1}{4}\left(1+\frac1{p}\right) \hspace{-5pt}\mathlarger{\mathlarger{\sum}}_{k=\substack{K + 2 - \\ \Kodd}}^{\substack{L + 1 - \\ \Lodd}}\hspace{-7pt}
	\frac{(-1)^k}{2k}\frac{p^{k-K-1}}{n^{k-K-1}}
	\frac1{p}\Tr\left(\frac{4T}{\sqrt{p}}\right)^{2k}
}
\notag\\&\hspace{30pt}
\overset{L^2}{\longrightarrow}\;\;
0 + 0 + \frac{\Keven}{8(K+2)} C_{K+2} +\frac{\Kodd}{8(K+1)}C_{K+1}
\notag\\&\hspace{30pt}
\;\;=\;\; \frac{C_{K+1+\Keven}}{8(K+1+\Keven)}>0.
\label{eqn:existence-R-tends-to-positive}
\end{align}}%
Then by the reverse triangle inequality,
{\setlength{\mathindent}{30pt}
\setlength{\abovedisplayskip}{3pt}	
\setlength{\belowdisplayskip}{3pt}\begin{align*}&
0 = \lim_{n\rightarrow\infty}\mathrm{H}^2\big(\psi_{\text{NW}}, \psi_K\big) 
=\lim_{n\rightarrow\infty} \bigintss_{\S_p(\R)}\hspace{-15pt} 
	\Big|\psi_{\text{NW}}^{1/2}(T) - \psi_K^{1/2}(T)\Big|^2 \,dT
\\[-5pt]&\hspace{40pt}
\leq\;\; \lim_{n\rightarrow\infty} \bigintss_{\S_p(\R)}\hspace{-15pt} 
	\Big||\psi_{\text{NW}}|^{1/2}(T) - |\psi_K|^{1/2}(T)\Big|^2 \,dT
\\&\hspace{40pt}
=\;\; \lim_{n\rightarrow\infty} \E{\bigg|
\exp\bigg\{
\frac12\Re\Log\frac{\psi_K(T)}{\psi_\text{NW}(T)}\bigg\}-1
\bigg|^2}
\end{align*}}%
for a  $T\sim |\psi_\text{NW}|=\text{T}_{n/2}(I_p/8)$, that is
{\setlength{\mathindent}{30pt}
\setlength{\abovedisplayskip}{5pt}
\setlength{\belowdisplayskip}{5pt}\begin{align*}
\exp\bigg\{\frac12\Re\Log\frac{\psi_K(T)}{\psi_\text{NW}(T)}\bigg\} \overset{L^2}{\longrightarrow}1.
\end{align*}}%
Since $L^p$ convergence implies convergence in probability, by the continuous mapping theorem we must have
{\setlength{\mathindent}{30pt}
\setlength{\abovedisplayskip}{5pt}
\setlength{\belowdisplayskip}{5pt}\begin{align*}
\frac12\Re\Log\frac{\psi_K(T)}{\psi_\text{NW}(T)} \overset{P}{\longrightarrow}0
\end{align*}}%
as $n\rightarrow\infty$, so by Equation \eqref{eqn:existence-RL-tends-to-zero}
{\setlength{\mathindent}{30pt}
\setlength{\abovedisplayskip}{5pt}
\setlength{\belowdisplayskip}{5pt}\begin{align*}
R(T) = -\frac12\Re\Log\frac{\psi_K(T)}{\psi_\text{NW}(T)} 
-\frac12\Re\Log\frac{\psi_{\text{NW}}(T)}{\psi_L(T)}
\overset{P}{\longrightarrow}-0-0=0.
\end{align*}}%
But then, from Equation \eqref{eqn:existence-R-tends-to-positive} and Slutsky's lemma \citep[Lemma 2.8 (iii)]{vandervaart00},
{\setlength{\mathindent}{30pt}
\setlength{\abovedisplayskip}{5pt}
\setlength{\belowdisplayskip}{5pt}\begin{align*}
\frac{p^{K+3}}{n^{K+1}} = \left(\frac{n^{K+1}}{p^{K+3}}R(T)\right)^{-1}R(T) \overset{P}{\longrightarrow} \frac{8(K+1+\Keven)}{C_{K+1+\Keven}}\cdot0 = 0.
\end{align*}}%
as $n\rightarrow\infty$. As $p^{K+3}/n^{K+1}$ is deterministic, this implies that $p^{K+3}/n^{K+1}\rightarrow0$ as $n\rightarrow\infty$, a contradiction. Thus whenever $\mathrm{H}^2\big(\psi_{\text{NW}}, \psi_K\big)\rightarrow0$ as $n\rightarrow\infty$ with $\lim\limits_{n\rightarrow\infty}\frac{\log p}{\log n}<1$, we must have $p^{K+3}/n^{K+1}\rightarrow0$, as desired. This concludes the proof.
\end{proof}

Although Theorem \ref{thm:existence-gtransform} states that the functions $\psi_K$ approximate $\psi_\text{NW}$, there is no guarantee that they are G-transforms of a probability density. In other words, nothing guarantees that their inverse G-transforms $\tilde{f}_K=\mathcal{G}^{-1}\{\psi_K\}$ are real-valued, non-negative and integrate to unity. However, the reverse triangle inequality applied to the $L^2$-norm provides that
{\setlength{\mathindent}{30pt}
\setlength{\abovedisplayskip}{3pt}
\setlength{\belowdisplayskip}{3pt}\begin{align*}
\Big|
	\bigintss_{\S_p(\R)}\hspace{-20pt}     |\psi_{\text{NW}}(T)|\,dT^{\frac12}
	-
	\bigintss_{\S_p(\R)}\hspace{-20pt}     |\psi_K(T)|\,dT^{\frac12}
	\Big|
	\;\;\leq\;\; \mathrm{H}\big(\psi_{\text{NW}}, \psi_K\big),
\end{align*}}%
so Theorem \ref{thm:existence-gtransform} and the Plancherel theorem implies that
{\setlength{\mathindent}{30pt}
\setlength{\abovedisplayskip}{3pt}
\setlength{\belowdisplayskip}{3pt}\begin{align}
\lim_{n\rightarrow\infty}\bigintss_{\S_p(\R)}\hspace{-20pt}     |\tilde{f}_K(T)|\,dT
	= \lim_{n\rightarrow\infty} \bigintss_{\S_p(\R)}\hspace{-20pt}     |\psi_K(X)|\,dX
	= 1
\label{eqn:fK-is-asymptotic-density}
\end{align}}%
when $p^{K+3}/n^{K+1}\rightarrow0$. That is, the theorem at least guarantees that $|\tilde{f}_K|$ is asymptotically a density in its associated regime. We discuss this further in Section \ref{sec:wishart-density}.

We independently know, by the results of \citet{jiang15} and \citet{bubeck16a}, that a Gaussian orthogonal ensemble approximation holds in the classical regime. Although $\psi_0$ is not the G-transform of a $\text{GOE}(p)$, a simple Kullback-Leibler argument is sufficient to prove that it approximates $\psi_\text{GOE}$ for $0^\text{th}$ degree regimes.

\begin{proposition}\label{prop:goe-gtransforms} The total variation distance between the $0^\text{th}$ degree G-transform approximation $\psi_0$ and the Gaussian orthogonal ensemble G-transform $\psi_\text{GOE}$ satisfies $\mathrm{d}_\text{TV}(\psi_{0}, \psi_\text{GOE})\rightarrow0$ as $n\rightarrow\infty$ with $p^3/n\rightarrow0$.
\end{proposition}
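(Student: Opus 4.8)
The plan is to apply the Kullback--Leibler inequality for G-transforms, Proposition \ref{prop:generalizedkl}, with $\psi_1 = \psi_{\text{GOE}}$ taken as the G-transform of $F_1 = \text{GOE}(p)$ and $\psi_2 = \psi_0$; this is legitimate because $\psi_0$ is integrable whenever $n\geq p-2$ by Equation \eqref{eqn:psiK-bounded-by-psi-NW}. Since $\text{GOE}(p)^* = \text{GOE}(p)/4$ by the remark following Proposition \ref{prop:gtransform-goe}, all the expectations appearing in that inequality will be over $T\sim\text{GOE}(p)/4$. First I would write out $\psi_0$ from Definition \ref{def:G-transform-approximations} and $\psi_{\text{GOE}}$ from Proposition \ref{prop:gtransform-goe} and observe that $\Log\big(\psi_{\text{GOE}}(T)/\psi_0(T)\big)$ equals a deterministic constant of size $O(p^2/n)$ plus $4\frac{p+1}{n}\Tr T^2 + i\big(\frac{32}{3\sqrt n}\Tr T^3 - 2\frac{p+1}{\sqrt n}\Tr T\big)$, the imaginary part to be read modulo the branch projection $P_{(-\pi,\pi]}$ used in the proof of Theorem \ref{thm:existence-gtransform}, which satisfies $|P_{(-\pi,\pi]}x|\le|x|$. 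Hence $\Re\Log\big(\psi_{\text{GOE}}/\psi_0\big) = 4\frac{p+1}{n}\Tr T^2 + O(p^2/n)$, while $\big|\Im\Log\big(\psi_{\text{GOE}}/\psi_0\big)\big| \le \frac{32}{3\sqrt n}|\Tr T^3| + 2\frac{p+1}{\sqrt n}|\Tr T|$.

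Next I would bound the three quantities of Proposition \ref{prop:generalizedkl}. For $Z\sim\text{GOE}(p)$ elementary Gaussian computations give $\E{\Tr Z^2} = p(p+1)$ and $\E{(\Tr Z)^2} = 2p$, so with $T = Z/4$ one obtains $\E{\Re\Log(\psi_{\text{GOE}}(T)/\psi_0(T))} = \frac{p(p+1)^2}{4n} + O(p^2/n) = O(p^3/n) \to 0$. For the imaginary part, Cauchy--Schwarz gives $\E{|\Tr T|}\le(p/8)^{1/2}$ and $\E{|\Tr T^3|}\le\E{(\Tr T^3)^2}^{1/2}$; invoking the order-one fluctuations of cubic linear eigenvalue statistics of the GOE (\citet[Theorem 2.1.31]{anderson10}), equivalently a direct Wick-formula computation, gives $\E{(\Tr Z^3)^2} = O(p^3)$, hence $\E{|\Tr T^3|} = O(p^{3/2})$, so $\E{\big|\Im\Log(\psi_{\text{GOE}}/\psi_0)\big|} = O(p^{3/2}/n^{1/2}) = O\big((p^3/n)^{1/2}\big) \to 0$. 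For the remaining term, Equation \eqref{eqn:psiK-bounded-by-psi-NW} gives $|\psi_0|\le(C^{(0)}_{n,p}/C_{n,p})|\psi_{\text{NW}}|$ and Equation \eqref{eq:thm-existence-cnp} with $K=0$ gives $C_{n,p} = C^{(0)}_{n,p}\exp\{o(p^3/n)\}$; since $\int|\psi_{\text{NW}}| = 1$, this forces $\limsup_n\int_{\S_p(\R)}|\psi_0|(T)\,dT \le 1$.

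Putting the three estimates into Proposition \ref{prop:generalizedkl} then gives $\limsup_n\mathrm{H}^2(\psi_{\text{GOE}},\psi_0)\le[1-1]+0+2\cdot1\cdot0=0$, whence $\mathrm{H}(\psi_{\text{GOE}},\psi_0)\to0$; this in turn yields $\int|\psi_0|\to1$ by the reverse triangle inequality, and finally $\mathrm{d}_\text{TV}(\psi_0,\psi_{\text{GOE}})\to0$ by Equation \eqref{eqn:HvsTV-Gtransforms}, as claimed. The only step needing more than bookkeeping together with elementary Gaussian moments is the bound $\E{(\Tr Z^3)^2}=O(p^3)$: the crude estimate $|\Tr Z^3|^2\le p\,\Tr Z^6$ only yields $\E{|\Tr Z^3|}=O(p^{5/2})$, which would require the stronger assumption $p^5/n\to0$, so one genuinely has to exploit the cancellation built into the odd trace; everything else follows from facts already recorded in Sections \ref{sec:gtransforms}--\ref{sec:symmetrict} and standard GOE moment identities.
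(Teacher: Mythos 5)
Your proof is correct and follows essentially the same route as the paper's: apply the Kullback--Leibler inequality for G-transforms (Proposition~\ref{prop:generalizedkl}) with the GOE as the reference measure, bound the real part by $O(p^3/n)$ via $\E{\Tr T^2}=O(p^2)$, bound the imaginary part by $O(\sqrt{p^3/n})$ via $\E{|\Tr T|}=O(p^{1/2})$ and $\E{|\Tr T^3|}=O(p^{3/2})$, and control $\int|\psi_0|$ via the $C_{n,p}/C_{n,p}^{(0)}$ estimate before passing through Equation~\eqref{eqn:HvsTV-Gtransforms}. The one place you go beyond the paper is a genuinely helpful clarification: the paper cites Anderson--Guionnet--Zeitouni for $\E{|\Tr T^3|}=O(p^{3/2})$ without comment, whereas you correctly point out that the na\"ive bound $|\Tr Z^3|^2\leq p\Tr Z^6$ only gives $O(p^{5/2})$ (which would need $p^5/n\rightarrow0$), so the $O(p^3)$ variance bound for the cubic trace really does rely on the oscillation of the odd moment and is the only nontrivial input.
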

\begin{proof}
We use a similar strategy to Theorem \ref{thm:existence-gtransform}: namely, by Equation \eqref{eqn:HvsTV-Gtransforms} is it equivalent to prove that $\mathrm{H}(\psi_{0}, \psi_\text{GOE})\rightarrow0$ as $n\rightarrow\infty$ with $p^3/n\rightarrow0$, and to control that quantity we can use the Kullback-Leibler inequality for G-transforms.

Let $T\sim|\psi_\text{GOE}|=\text{GOE}(p)/4$. Since the Gaussian orthogonal ensemble has been extensively studied, we understand well its empirical moments. For example, according to \citet[Lemma 2.2.2]{anderson10}, we have $\E{\Tr T^2}=O(p^2)$, while from Equation (2.1.45) of the same book we have $\E{|\Tr T|}=O(p^{1/2})$ and $\E{|\Tr T^3|}=O(p^{3/2})$. Then from Definition \ref{def:G-transform-approximations} and Proposition \ref{prop:gtransform-goe}, and using the projection map $P_{(-\pi, \pi)}$ as in the proof of Theorem \ref{thm:existence-gtransform}, we find that
{\setlength{\mathindent}{40pt}
\setlength{\abovedisplayskip}{5pt}	
\setlength{\belowdisplayskip}{5pt}\begin{align*}&
\E{\Re\Log\frac{\psi_\text{GOE}(T)}{\psi_0(T)}}
= -4\frac{p+1}{n}\!\E{\Tr T^2}
= O\bigg(\frac{p^3}{n}\bigg)
\end{align*}}%
and
{\setlength{\mathindent}{40pt}
\setlength{\abovedisplayskip}{5pt}	
\setlength{\belowdisplayskip}{5pt}\begin{align*}&
\E{\bigg|\Im\Log\frac{\psi_\text{GOE}(T)}{\psi_0(T)} \bigg|}
= \E{\bigg|P_{(-\pi, \pi)}\bigg[
	-\frac{32}{3\sqrt{n}}\!\Tr T^3 
	+\frac{p+1}{\sqrt{n}}\!\Tr T \bigg]\bigg|}
\notag\\&\hspace{20pt}
\leq\;\;
\frac{32}{3\sqrt{n}}\!\E{|\Tr T^3|} 
+\!2i\frac{p+1}{\sqrt{n}}\!\E{|\Tr T|}
%
\;\;=\;\; O\bigg(\sqrt{\frac{p^3}{n}}\bigg).
\end{align*}}%
Since $\int_{\S_p(\R)}|\psi_0|(T)dT\rightarrow 0$ when $n\rightarrow\infty$ with $p^3/n\rightarrow 0$ by Equation \eqref{eqn:fK-is-asymptotic-density}, if we apply Proposition \ref{prop:generalizedkl} we find thatt
{\setlength{\mathindent}{40pt}
\setlength{\abovedisplayskip}{5pt}	
\setlength{\belowdisplayskip}{5pt}\begin{align*}&
\lim_{n\rightarrow \infty}\mathrm{H}^2(\psi_0, \psi_\text{GOE}) 
\;\;\leq\;\; 
	0 + 0 + 2\cdot 1^{\frac12}\cdot 0^{\frac12} = 0
\end{align*}}%
for $p^3/n\rightarrow 0$. By Equation \eqref{eqn:HvsTV-Gtransforms}, this concludes the proof.
\end{proof}

As a consequence, $H(f_\text{NW}, f_\text{GOE}) = H(\psi_\text{NW}, \psi_\text{GOE}) \leq H(\psi_\text{NW}, \psi_0) + H(\psi_0, \psi_\text{GOE}) \rightarrow 0$ when $n\rightarrow\infty$ with $p^3/n\rightarrow 0$ by Theorem \ref{thm:existence-gtransform} and Proposition \ref{prop:goe-gtransforms}. Hence $\mathrm{d}_\text{TV}(f_\text{NW}, f_\text{GOE})\rightarrow 0$ by Equation \eqref{eqn:HvsTV-densities} in the classical setting. This provides an alternative proof of the results of \citet{jiang15} and \citet{bubeck16a}.
\section{Wishart asymptotics: the density point-of-view}\label{sec:wishart-density}

In Section \ref{sec:wishart-G-transform}, we studied the asymptotic behavior of the normalized Wishart distribution $\sqrt{n}[\text{W}_p(n, I_p/n)-I_p]$ using its G-transform $\psi_\text{NW}$. In particular, we derived an approximation to $\psi_\text{NW}$ for every middle-scale regime of a given degree. But although it is equivalent to study a probability distribution from a density or a G-transform point of view, it is still natural to wonder if we can find approximations to the \textit{density} of a normalized Wishart for every middle-scale regime of a given degree.

Recall from Theorem \ref{thm:existence-gtransform} that $\mathrm{d}_\text{TV}(\psi_\text{NW}, \psi_K)\rightarrow0$ when $p^{K+3}/n^{K+1}\rightarrow0$. Define $\tilde{f}_K = \mathcal{G}^{-1}\{\psi_K\}$. In general, there is no guarantee that these should be real-valued. On the other hand, we know from Equation \eqref{eqn:psiK-bounded-by-psi-NW} that whenever $n\geq p-2$, $\psi_K$ must be integrable, and since the G-transform maps integrable functions to integrable functions, $\tilde{f}_K$ must also be integrable. In fact, according to Equation \eqref{eqn:fK-is-asymptotic-density}, we know $|\tilde{f}_K|$ must be asymptotically a density when $p^{K+1}/n^{K+1}\rightarrow0$. This suggests we define the following densities.

\begin{definition}[Density approximations]\label{def:density-approximations} For any $K\in\N$ and $n\geq p-2$, define the $K^\text{th}$ degree density approximation as
\begin{ceqn}
{\setlength{\abovedisplayskip}{5pt}	
\setlength{\belowdisplayskip}{5pt}\begin{align*}
f_K(X) = \frac{|\tilde{f}_K|(X)}{\int_{\S_p(\R)} |\tilde{f}_K(Y)|dY},
\end{align*}}%
\end{ceqn}
where $\tilde{f}_K = \mathcal{G}^{-1}\{\psi_K\}$ and $\psi_K$ is as in Definition \ref{def:G-transform-approximations}. The distribution on the real symmetric matrices with density $f_K$ will be denoted $F_K$.
\end{definition}

The main interest is that we can asymptotically approximate the \textit{density} $f_\text{NW}$ of a normalized Wishart by the bona fide densities $f_K$. This was the content of Theorem \ref{thm:existence-densities} from Section \ref{sec:introduction}, which we now prove as a simple corollary of its G-transform analogue Theorem \ref{thm:existence-gtransform} from Section \ref{sec:wishart-G-transform}.

\noindent \begin{proof}[Proof of Theorem \ref{thm:existence-densities}]
As in the rest of this paper, we write the density of the normalized Wishart distribution $\sqrt{n}[W_p(n, I_p/n) - I_p]$ by $f_\text{NW}$, and by Definition \ref{def:density-approximations} the density of $F_K$ is $f_K$.
Notice that by Equation \eqref{eqn:HvsTV-densities}, to prove $\mathrm{d}_\text{TV}(f_\text{NW}, f_K)\rightarrow0$ it is equivalent to prove that $\mathrm{H}(f_\text{NW}, f_K)\rightarrow0$. From the triangle inequality, the reverse triangle inequality, Theorem \ref{thm:existence-gtransform} and Equation \eqref{eqn:fK-is-asymptotic-density},
{\setlength{\mathindent}{20pt}
\setlength{\abovedisplayskip}{5pt}	
\setlength{\belowdisplayskip}{5pt}\begin{align*}&
\lim_{n\rightarrow\infty}H\Big(f_\text{NW}, f_K\Big) 
\leq \lim_{n\rightarrow\infty}H\Big(f_\text{NW}, |\tilde{f}_K|\Big) 
+ \lim_{n\rightarrow\infty}H\Big(|\tilde{f}_K|, f_K\Big)
\\&\hspace{40pt}
=\; \lim_{n\rightarrow\infty}\bigintss_{\S_p(\R)}\hspace{-15pt} \Big|f^{1/2}_\text{NW}(X)-|\tilde{f}^{1/2}_K|(X)\Big|^2\,dX^{\frac12}
\\&\hspace{60pt}
+ \lim_{n\rightarrow\infty}\bigintss_{\S_p(\R)}\hspace{-17pt} \bigg||\tilde{f}^{1/2}_K|(X) - \frac{|\tilde{f}^{1/2}_K|(X)}{\int_{\S_p(\R)} |\tilde{f}_K|(Y)dY^{1/2}}\bigg|^2\,dX^{\frac12}
\\&\hspace{40pt}
\leq\; \lim_{n\rightarrow\infty}\bigintss_{\S_p(\R)}\hspace{-15pt} \Big|f^{1/2}_\text{NW}(X)-\tilde{f}^{1/2}_K(X)\Big|^2\,dX^{\frac12}
\\[-5pt]&\hspace{60pt}
+ \lim_{n\rightarrow\infty}\bigg|1 - \frac{1}{\int_{\S_p(\R)} |\tilde{f}_K|(Y)dY^{1/2}}\bigg|\bigintss_{\S_p(\R)}\hspace{-17pt} |\tilde{f}_K|(X)dX^{\frac12}
\\&\hspace{40pt}
=\; 0^{1/2} + \Big|1-\frac{1}{1^{1/2}}\Big|\cdot 1^{1/2} = 0.
\end{align*}}%
when $p^{K+3}/n^{K+1}\rightarrow0$. Thus $\mathrm{H}(f_\text{NW}, f_K)$, hence $\mathrm{d}_\text{TV}(f_\text{NW}, f_K)$, tends to zero when $n\rightarrow\infty$ with $p^{K+3}/n^{K+1}\rightarrow0$, as desired.
\end{proof}

We defined $f_K$ in terms of the inverse G-transform of the $\psi_K$ functions given by Definition \ref{def:G-transform-approximations}. How can we express this explicitely? By Equation \eqref{eqn:psiK-bounded-by-psi-NW}, we see that $|\psi_K|(T)$ is asymptotically bounded by the $\text{T}_{n/2}(I_p/8)$ density $|\psi_\text{NW}|(T)$, which is integrable whenever $n-p+2\geq0$. But $|\psi_\text{NW}|^{1/2}(T)$ is proportional to a $\text{T}_{m/4}(\frac{n}{4m}I_p)$ density in the sense of Definition \ref{def:tdistribution}, which is integrable for $m/4\geq p/2-1$, that is whenever $n-3p+3\geq0$. Thus  $|\psi_\text{NW}|^{1/2}(T)$ and therefore $|\psi_K|^{1/2}(T)$ is integrable whenever $n-3p+3\geq0$. Hence we can use the Fourer inversion theorem to conclude that $f_K$ is proportional to the integral
{\setlength{\mathindent}{20pt}
\setlength{\abovedisplayskip}{5pt}	
\setlength{\belowdisplayskip}{5pt}\begin{align}
f_K(X) \;\;&\propto\;\; \Big|\mathcal{G}^{-1}\{\psi_K\}\Big|(X)
\notag\\[-7pt]&
\propto\;\; \Bigg|
\bigintss_{\S_p(\R)} \hspace{-18pt}
\exp\Bigg\{
	i\Tr (XT)
	+ \frac{n}4
	\hspace{-6pt}\mathlarger{\mathlarger{\sum}}_{k=2}^{\substack{2K+3+\\ \Kodd}}\hspace{-10pt}
	{\Big(\frac{4i}{\sqrt{n}}\Big)\!\!}^k
	\frac{\Tr T^k}{k}
\notag\\[-5pt]&\hspace{120pt}
	+ \frac{p+\!1}4\hspace{-7pt}\mathlarger{\mathlarger{\sum}}_{k=1}^{\substack{2K+2-\\ \Kodd}}\hspace{-10pt}
	{\Big(\frac{4i}{\sqrt{n}}\Big)\!\!}^k
	\frac{\Tr T^k}{k}
	\Bigg\}
\,dT\Bigg|^2
\label{eqn:fK-as-integral}
\end{align}}%
whenever $n-3p+3\geq0$. In particular, if we do a change of variables $Z = \sqrt{8}T$, we obtain Equation \eqref{eqn:fK-as-expectation}  from Section \ref{sec:introduction} whenever $n\geq 3p-3$, from which we can derive Equations \eqref{eqn:f1-as-expectation}
 and \eqref{eqn:f2-as-expectation}.
 
It would be quite pleasant if there was a way to solve the integral in Equation \eqref{eqn:fK-as-integral} or \eqref{eqn:fK-as-expectation} and obtain a (potentially quite complicated) closed form expression for $f_K$ up to its normalization constant. So far, our efforts have been unfruitful.

We close our discussion with a final remark. At the end of Section \ref{sec:wishart-G-transform}, we showed that $\psi_0$ approximates $\psi_\text{GOE}$ in $0^\text{th}$ degree middle-scale regimes, from which the classical asymptotic normality follows. It is natural to wonder if $f_0$ approximates $f_\text{GOE}$ in the same context. An argument similar to that of Theorem \ref{thm:existence-densities} shows this is the case.

\begin{proposition}\label{prop:goe-density} The total variation distance between the $0^\text{th}$ degree density approximation $f_0$ and the Gaussian orthogonal ensemble G-transform $f_\text{GOE}$ satisfies $\mathrm{d}_\text{TV}(f_{0}, f_\text{GOE})\rightarrow0$ as $n\rightarrow\infty$ with $p^3/n\rightarrow0$.
\end{proposition}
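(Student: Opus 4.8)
The plan is to follow the proof of Theorem \ref{thm:existence-densities} essentially verbatim, with the normalized Wishart quantities replaced by their Gaussian orthogonal ensemble analogues and Proposition \ref{prop:goe-gtransforms} used in place of Theorem \ref{thm:existence-gtransform}, specialized to $K=0$ (so that the regime hypothesis $p^{K+3}/n^{K+1}\to0$ becomes $p^3/n\to0$). By the equivalence \eqref{eqn:HvsTV-densities} between total variation and Hellinger distance, it suffices to show $\mathrm{H}(f_\text{GOE}, f_0)\to0$. Writing $f_0 = |\tilde f_0|/\int_{\S_p(\R)}|\tilde f_0|$ with $\tilde f_0 = \mathcal{G}^{-1}\{\psi_0\}$ as in Definition \ref{def:density-approximations}, I would split by the triangle inequality
\[
\mathrm{H}(f_\text{GOE}, f_0) \;\le\; \mathrm{H}\big(f_\text{GOE}, |\tilde f_0|\big) + \mathrm{H}\big(|\tilde f_0|, f_0\big).
\]

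For the first summand, apply the elementary bound $\big|a-|z|\big|\le|a-z|$ (valid for real $a\ge0$ and complex $z$) pointwise with $a=f_\text{GOE}^{1/2}(X)$ and $z=\tilde f_0^{1/2}(X)=\mathcal{F}^{-1}\{\psi_0^{1/2}\}(X)$, obtaining $\mathrm{H}^2(f_\text{GOE},|\tilde f_0|)\le\int_{\S_p(\R)}\big|f_\text{GOE}^{1/2}-\tilde f_0^{1/2}\big|^2$. Since $\mathcal{F}\{f_\text{GOE}^{1/2}\}=\psi_\text{GOE}^{1/2}$ (from the proof of Proposition \ref{prop:gtransform-goe}) and $\mathcal{F}\{\tilde f_0^{1/2}\}=\psi_0^{1/2}$, Plancherel's theorem (cf. \eqref{eqn:hellinger-equivalence}) turns the right-hand side into $\mathrm{H}^2(\psi_\text{GOE},\psi_0)$, which tends to $0$ by Proposition \ref{prop:goe-gtransforms} together with \eqref{eqn:HvsTV-Gtransforms}. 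For the second summand, a direct computation gives $\mathrm{H}\big(|\tilde f_0|, f_0\big) = \big|1-(\int_{\S_p(\R)}|\tilde f_0|)^{-1/2}\big|\,(\int_{\S_p(\R)}|\tilde f_0|)^{1/2}$, which tends to $0$ because $\int_{\S_p(\R)}|\tilde f_0|\to1$ by Equation \eqref{eqn:fK-is-asymptotic-density} with $K=0$, whose hypothesis is precisely $p^3/n\to0$. Combining the two bounds yields $\mathrm{H}(f_\text{GOE}, f_0)\to0$, hence $\mathrm{d}_\text{TV}(f_0, f_\text{GOE})\to0$, as desired.

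I do not expect a genuine obstacle: everything beyond the two elementary inequalities above is bookkeeping already carried out in the proof of Theorem \ref{thm:existence-densities}. The only points deserving a line of justification are that $f_\text{GOE}^{1/2}$ and $\tilde f_0^{1/2}$ lie in $L^2(\S_p(\R))$, so that the Plancherel step is legitimate — $f_\text{GOE}^{1/2}$ is a constant times a Gaussian density, and $\psi_0^{1/2}$ is bounded and integrable whenever $n\ge 3p-3$ by the bound \eqref{eqn:psiK-bounded-by-psi-NW}, so $\tilde f_0^{1/2}=\mathcal{F}^{-1}\{\psi_0^{1/2}\}$ is square-integrable — and that the pointwise inequality $\big|a-|z|\big|\le|a-z|$ is invoked in the correct direction, exactly as in the Wishart case.
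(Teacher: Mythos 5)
Your proof is correct and follows essentially the same route as the paper's: reduce to Hellinger distance via \eqref{eqn:HvsTV-densities}, split by the triangle inequality through $|\tilde f_0|$, bound $\mathrm{H}(f_\text{GOE},|\tilde f_0|)\le\mathrm{H}(\psi_\text{GOE},\psi_0)$ via the reverse-triangle/Plancherel step and invoke Proposition \ref{prop:goe-gtransforms}, and handle the normalization term via Equation \eqref{eqn:fK-is-asymptotic-density}. The only difference is that you spell out the pointwise inequality $|a-|z||\le|a-z|$ and the $L^2$-membership needed for Plancherel, which the paper leaves implicit.
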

\begin{proof}
The Hellinger distance between $f_0$ and $f_\text{GOE}$ satisfies
{\setlength{\mathindent}{20pt}
\setlength{\abovedisplayskip}{5pt}	
\setlength{\belowdisplayskip}{5pt}\begin{align*}&
\lim_{n\rightarrow\infty} \mathrm{H}\Big(f_0, f_\text{GOE}\Big) 
\leq \lim_{n\rightarrow\infty} \mathrm{H}\Big(f_0, |\tilde{f}_0|\Big) 
+ \lim_{n\rightarrow\infty} \mathrm{H}\Big(|\tilde{f}_0|, f_\text{GOE}\Big)
\\&\hspace{10pt}
\leq \lim_{n\rightarrow\infty} \bigg|1 - \frac{1}{\int_{\S_p(\R)} |\tilde{f}_0|(Y)dY^{1/2}}\bigg|\bigintss_{\S_p(\R)}\hspace{-17pt} |\tilde{f}_0|(X)dX^{\frac12}
+ \lim_{n\rightarrow\infty} \mathrm{H}\Big(\tilde{f}_0, f_\text{GOE}\Big)
\\&\hspace{10pt}
= \Big|1-\frac{1}{1^{1/2}}\Big|\cdot 1^{1/2} + \lim_{n\rightarrow\infty} \mathrm{H}\Big(\psi_0, \psi_\text{GOE}\Big)
=0.
\end{align*}}%
By Equation \eqref{eqn:HvsTV-densities}, the result follows.
\end{proof}

Of course, we could conclude from this that $H(f_\text{NW}, f_\text{GOE}) \leq H(f_\text{NW}, f_0) + H(f_0, f_\text{GOE}) \rightarrow 0$ when $n\rightarrow\infty$ with $p^3/n\rightarrow 0$, offering yet again another proof that a Gaussian orthogonal ensemble approximation holds in the classical setting.
\section{The effect of phase transitions}\label{sec:explanation}

Although we have established the existence of phase transitions, it does not shed much light on \textit{how} the behavior of a normalized Wishart distribution might differ across phase transitions. To do this, it can be very illuminating to study the asymptotics of some of its statistics. For example, we could study its empirical moments. 

For a normalized Wishart matrix $X\sim \sqrt{n}[\text{W}_p(n, I_p/8)-I_p]$, a direct computation yields
{\setlength{\mathindent}{15pt}
\setlength{\abovedisplayskip}{7pt}
\setlength{\belowdisplayskip}{7pt}\begin{align*}
\E{\bigg(\frac1{p}\Tr \Big(\frac{X}{\sqrt{p}}\Big)^2 - 1\bigg)^{\raisebox{-3pt}{$\scriptstyle 2$}}}
&= \frac{5}{p^2}+\frac{4}{p^3}+\frac{8}{np}+\frac{20}{np^2}+\frac{20}{np^3}
= \frac{5}{p^2}+ o\left(\frac1{p^2}\right)
\end{align*}}%
so in every middle-scale regime, that is whenever $n,p\rightarrow\infty$ with $p/n\rightarrow0$,
{\setlength{\mathindent}{20pt}
\setlength{\abovedisplayskip}{7pt}
\setlength{\belowdisplayskip}{7pt}\begin{align*}&
\bigg\|\frac1{p}\Tr \Big(\frac{X}{\sqrt{p}}\Big)^2 - 1\bigg\|_{L^2}
\sim 
\;\dfrac{\sqrt{5}}{p}.
\hspace{110pt}
\Big(\text{\raisebox{0pt}{\parbox{6.2em}{\small all middle-scale regimes}}}\,\Big)
\end{align*}}%
Thus we have $L^2$ convergence of the second empirical moment to $1$, but otherwise nothing very interesting. There doesn't seem to be any change of behavior across the different middle-scale regimes. In contrast, the situation with the symmetric $t$ distribution is striking, and  illustrates yet again that middle-scale regime behavior becomes clearer under a G-transform. Indeed, we know from Theorem \ref{thm:moments} that for a $T\sim \text{T}_{n/2}(I_p/8)$, the quantity $\frac1{p}\Tr(\frac{4T}{\sqrt{p}})^2$ also converges to 1, but we know more. At Equation \eqref{eqn:moments-example-L2distance}, we computed the exact $L^2$ distance between $\frac1{p}\Tr(\frac{4T}{\sqrt{p}})^2$ and $1$, and found that
{\setlength{\mathindent}{0pt}
\setlength{\abovedisplayskip}{7pt}
\setlength{\belowdisplayskip}{7pt}\begin{align*}&\hspace{15pt}
\E{\bigg(\frac1{p}\Tr \Big(\frac{4T}{\sqrt{p}}\Big)^2 - 1\bigg)^{\raisebox{-3pt}{$\scriptstyle 2$}}}
= \frac{m^5}{\left(m\!-\!6\right) \left(m\!-\!2\right) \left(m\!-\!1\right) \left(m\!+\!1\right) \left(m\!+\!3\right)}
\notag\\&\pushright{
\cdot
\bigg[
\frac{5}{p^{2}} + \frac{2}{m} + \frac{p^2}{m^2}
+o\left(\frac1{p^2} + \frac{2}{m}+\frac{p^2}{m^2}\right)
\bigg]}.
\end{align*}}%
Thus the $L^2$ distance must have middle-scale asymptotics
{\setlength{\mathindent}{20pt}
\setlength{\abovedisplayskip}{7pt}
\setlength{\belowdisplayskip}{7pt}\begin{align}&
\bigg\|\frac1{p}\Tr \Big(\frac{4T}{\sqrt{p}}\Big)^2 - 1\bigg\|_{L^2}
\notag\\&\hspace{0pt}
\sim \begin{cases}
\hspace{45pt}\dfrac{\sqrt{5}}{p}
&\text{for}\;\; \dfrac{p^2}{n}\rightarrow 0 \hspace{12pt}
\Big(\,\text{\raisebox{0pt}{\parbox{6.5em}{\small classical or first \\degree}}}\Big)
\\[15pt]
\;\;
\parbox{12em}{$\sqrt{5+2\alpha+\alpha^2}\dfrac{1}{p} 
\\= \sqrt{5\alpha^{-1}+2+\alpha}\dfrac{1}{\sqrt{n}}
\\= \sqrt{5\alpha^{-2}+2\alpha^{-1}+1}\dfrac{p}{n}$}
&\text{for}\;\; \dfrac{p^2}{n}\rightarrow\alpha \hspace{10pt}
\big(\,\text{\parbox{5.7em}{\small second degree}}\big)
\\[25pt]
\hspace{50pt}\dfrac{p}{n}
&\text{for}\;\; \dfrac{p^2}{n}\rightarrow\infty \hspace{8pt}
\Big(\,\text{\raisebox{0pt}{\parbox{6.5em}{\small second or higher \\degree}}}\,\Big)
\end{cases}
\label{eqn:second-moment-t-limit}
\end{align}}%
as $n,p\rightarrow\infty$ with $p/n\rightarrow0$. Thus there is a sharp change in behavior of $\frac1{p}\Tr(\frac{4T}{\sqrt{p}})^2$ when $p$ grows like $\sqrt{n}$, and despite the symmetric $t$ distribution satisfying a semicircle law according to Corollary \ref{cor:semicircle}, it must ultimately behave differently than a Gaussian orthogonal ensemble matrix. The first-order asymptotics look the same: it is rather in the rate of this convergence that they differ.

This matters for both the symmetric $t$ and the Wishart distribution because rates of convergence can be distinguished in the strong topology. As a simple example, consider the sequence of one-dimensional distributions
\begin{ceqn}
{\setlength{\abovedisplayskip}{5pt}
\setlength{\belowdisplayskip}{5pt}
\begin{align*}
F_p = \text{N}\big(0, 1/p\big),
\qquad
\text{and} \; \; \; G_p = \text{N}\big(0, 1/p^2\big).
\end{align*}}%
\end{ceqn}
In the weak topology, these are asymptotically the same, since they converge to the same distribution -- namely $F_p, G_p\Rightarrow \delta_0$ as $p\rightarrow\infty$, for $\delta_0$ the Dirac measure at $0$. In other words, in a metric that induces the weak topology such as the L\'evy metric, 
{\setlength{\mathindent}{30pt}
\setlength{\abovedisplayskip}{5pt}
\setlength{\belowdisplayskip}{5pt}\begin{align*}
d_\text{L\'evy}(F_p, G_p)\rightarrow 0.
\end{align*}}%
Yet, by a direct computation of the Hellinger distance, which induces the strong topology,
{\setlength{\mathindent}{30pt}
\setlength{\abovedisplayskip}{0pt}
\setlength{\belowdisplayskip}{5pt}\begin{align*}
d_\text{Hellinger}(F_p, G_p) &
= H(F_p, G_p) 
= \sqrt{2}\sqrt{1-\Big(\frac{4p}{p^2+2p+1}\Big)^{1/4}}
\\&
\rightarrow \sqrt{2} >0
\end{align*}}%
as $p\rightarrow\infty$. Thus it is clear that the strong topology captures rates of convergence in a way that the weak topology can't. But then, we should expect a phase transition when $p$ grows like $\sqrt{n}$ for the $\text{T}_{n/2}(I_p/8)$ distribution. And since the symmetric $t$ is the G-conjugate of the Wishart, this should imply a phase transition when $p$ grows like $\sqrt{n}$ for the Wishart distribution as well. This is consistent with Theorem \ref{thm:existence-gtransform}, and provides an alternative explanation for the existence of the second phase transition.

A natural question then is to ask whether we can find symmetric $t$ statistics that exemplify all the middle-scale regime phase transitions. It is tempting to look at the $L^2$ error of the other empirical moments of the symmetric $t$ distribution, because we can use the methodology developed in Section \ref{sec:symmetrict} to compute their asymptotics to arbitrary order. As a reference, we compiled a table of the few first few moments as Table \ref{tab:asymptotics}.

\begin{table}
{\renewcommand{\arraystretch}{2}
\begin{tabular}{|c|c|c|}
\hline
Normalized empirical moment & Limit & Asymptotics of its squared $L^2$ error
\\ \hline
$\dfrac1{p}\Tr\Big(\dfrac{4T}{\sqrt{p}}\Big)$ & 0 
& $\dfrac{2}{p^2}$
\\[5pt] \hline
$\dfrac1{p}\Tr\Big(\dfrac{4T}{\sqrt{p}}\Big)^2$ & $C_1=1$ 
& $\dfrac{5}{p^{2}} + \dfrac{2}{m} + \dfrac{p^2}{m^2}$
\\[5pt] \hline
$\dfrac1{p}\Tr\Big(\dfrac{4T}{\sqrt{p}}\Big)^3$ & 0 
& $\dfrac{24}{p^{2}}$
\\[5pt] \hline
$\dfrac1{p}\Tr\Big(\dfrac{4T}{\sqrt{p}}\Big)^4$ & $C_2=2$ 
& $\dfrac{97}{p^{2}} + \dfrac{50}{m} + \dfrac{25p^2}{m^2}$
\\[5pt] \hline
\noalign{\smallskip}
\end{tabular}}%
\caption{Asymptotics of small normalized empirical moments of $T\sim\text{T}_{n/2}(I_p/8)$.}
\label{tab:asymptotics}
\end{table}

As can be seen from the table, the odd moments seem to have uniform behavior across all middle-scale regimes. In contrast, the even moments seem to all change asymptotics at the second phase transition $p=\Theta(\sqrt{n})$, but nowhere else. Hence finding statistics that ``flag'' the other phase transitions remain an open question.
\section{Auxiliary results}\label{sec:auxiliary}

This section compiles several lemmas used elsewhere in the article.

\begin{lemma}[First derivatives lemma]\label{lem:first-derivatives-lemma}\hspace{-10pt}
For any indices $1\leq i_1,\dots, i_{2l}\leq p$ and real symmetric matrix $Z$, there exist polynomials $a_{J,s}(n,m)$ in $n$ and $m=n-p-1$, indexed by $0\leq s\leq l$ and $J=(j_1, \dots, j_{2l})$, such that
{\setlength{\mathindent}{10pt}\begin{align*}&
\frac{\partial_\text{s}}{\partial_\text{s} Z_{i_{2l} i_{2l-1}}}
	\dots \frac{\partial_\text{s}}{\partial_\text{s} Z_{i_4i_3}}\frac{\partial_\text{s}}{\partial_\text{s} Z_{i_2i_1}}
	\exp\bigg\{\!\!-\!\frac{n}4\Tr Z\bigg\}\big|Z\big|^{\frac{m}4}
\\&\hspace{20pt}
=\sum_{s=0}^l\sum_{\substack{J\in\\\{1,\dots,p\}^{2l}}} \hspace{-7pt}
	a_{J,s}(n, m)
	\!\!\prod_{t=s+1}^l \!\!(I_p)_{j_{2t}j_{2t-1}}
	\prod_{t=1}^sZ^{-1}_{j_{2t}j_{2t-1}}
	\exp\bigg\{\!\!-\!\frac{n}4\Tr Z\bigg\}\big|Z\big|^{\frac{m}4}.
\end{align*}}%
\end{lemma}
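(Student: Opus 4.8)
The plan is to prove this by induction on $l$, the number of symmetric derivatives applied. The key computational facts are the two standard matrix-calculus identities
\begin{align*}
\frac{\partial_\text{s}}{\partial_\text{s}Z_{ij}}\Tr Z = (I_p)_{ij},
\qquad
\frac{\partial_\text{s}}{\partial_\text{s}Z_{ij}}\log|Z| = Z^{-1}_{ij},
\qquad
\frac{\partial_\text{s}}{\partial_\text{s}Z_{ij}}Z^{-1}_{ab} = -\tfrac12\big(Z^{-1}_{ai}Z^{-1}_{jb} + Z^{-1}_{aj}Z^{-1}_{ib}\big),
\end{align*}
the last of which follows from differentiating $Z Z^{-1} = I_p$ and symmetrizing. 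From the first two, differentiating the scalar factor once gives
\begin{align*}
\frac{\partial_\text{s}}{\partial_\text{s}Z_{i_2 i_1}}\exp\big\{-\tfrac{n}4\Tr Z\big\}|Z|^{\frac{m}4}
= \Big(\tfrac{m}4 Z^{-1}_{i_2 i_1} - \tfrac{n}4 (I_p)_{i_2 i_1}\Big)\exp\big\{-\tfrac{n}4\Tr Z\big\}|Z|^{\frac{m}4},
\end{align*}
which is exactly the claimed form for $l=1$, with $a_{(j_1,j_2),1} = \tfrac{m}4\,\1{(j_1,j_2)=(i_1,i_2)}$ and $a_{(j_1,j_2),0} = -\tfrac{n}4\,\1{(j_1,j_2)=(i_1,i_2)}$ (polynomials in $n,m$, indeed degree-$1$).

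For the inductive step, I would assume the formula holds after $l$ derivatives and apply one more operator $\frac{\partial_\text{s}}{\partial_\text{s}Z_{i_{2l+2}i_{2l+1}}}$ to the right-hand side. By the product rule this operator acts on three kinds of factors: (i) the $\exp\{-\tfrac{n}4\Tr Z\}|Z|^{\frac{m}4}$ tail, producing a new $\big(\tfrac{m}4 Z^{-1}_{i_{2l+2}i_{2l+1}} - \tfrac{n}4(I_p)_{i_{2l+2}i_{2l+1}}\big)$ factor, which either appends a new inverse-entry factor (increasing $s$) or a new identity-entry factor (leaving $s$ unchanged), each carrying an extra factor of $n$ or $m$; (ii) one of the existing $Z^{-1}_{j_{2t}j_{2t-1}}$ factors, which by the third identity splits into a sum of two products of inverse-entries — this raises the number of $Z^{-1}$ factors by one and multiplies by $-\tfrac12$; (iii) the identity-entry factors $(I_p)_{j_{2t}j_{2t-1}}$, which are annihilated. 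In every case the result is again a $\mathbb{Z}$-linear (in fact $\tfrac14\mathbb{Z}[n,m]$-linear) combination, over indices $J\in\{1,\dots,p\}^{2(l+1)}$ and $0\le s\le l+1$, of products $\prod_{t=s+1}^{l+1}(I_p)_{j_{2t}j_{2t-1}}\prod_{t=1}^{s}Z^{-1}_{j_{2t}j_{2t-1}}$ times the tail. Collecting terms defines the new coefficients $a_{J,s}(n,m)$; they are polynomials in $n$ and $m$ because each step multiplies by a polynomial ($n$, $m$, or a constant) and sums finitely many such terms. One subtlety to handle cleanly is bookkeeping of the index tuple $J$: since the operator-index pair $(i_{2l+1},i_{2l+2})$ and the fresh contractions introduced by case (ii) permute which entries of $J$ are "free," I would simply let $a_{J,s}$ absorb all the resulting Kronecker deltas, so that the statement is about the \emph{existence} of such polynomials rather than an explicit formula — which is all the downstream application (the bound at Equation \eqref{eqn:moments-integrability3}) requires.

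The main obstacle is not conceptual but notational: keeping the product-rule expansion organized so that, after the derivative hits an existing $Z^{-1}$ factor and splits it, the two resulting terms are correctly re-indexed into the canonical form with the inverse-entry factors listed first and the identity factors last. I would address this by phrasing the induction loosely — tracking only (a) that every summand is a scalar polynomial in $n,m$ times a product of finitely many factors each of the form $Z^{-1}_{ab}$ or $(I_p)_{ab}$ with indices drawn from $\{1,\dots,p\}$, and (b) that the total count of such factors is exactly $l$ at stage $l$ (each derivative removes at most nothing and the $|Z|^{\frac{m}4}$-branch adds exactly one while the $Z^{-1}$-branch also adds exactly one) — and only at the end regrouping into the stated normal form, noting the count of $Z^{-1}$ factors can be anything from $0$ to $l$, which is the index $s$. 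Since the paper only needs the crude consequence that these expansions exist with polynomial coefficients, this level of rigor suffices.
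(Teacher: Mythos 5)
Your proposal is correct and follows essentially the same route as the paper's proof: induction on the number of derivatives, using the identities for $\partial_\text{s}(\Tr Z)$, $\partial_\text{s}\log|Z|$, and $\partial_\text{s}Z^{-1}_{ab}$, and observing that one application of $\partial_\text{s}/\partial_\text{s}Z_{ij}$ maps any linear combination (with polynomial coefficients) of products of $l$ factors from $\{Z^{-1}_{ab},(I_p)_{ab}\}$ times $e^{-\frac{n}{4}\Tr Z}|Z|^{m/4}$ to one with $l+1$ such factors. The paper packages exactly this bookkeeping by introducing the linear spans $\langle M_l\rangle$ and showing the derivative operator maps $\langle M_{l-1}\rangle$ into $\langle M_l\rangle$; your informal factor-counting argument is the same induction phrased without that container, and it suffices for the statement at hand.
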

\begin{proof}
To simplify notation, let
{
\setlength{\abovedisplayskip}{3pt}
\setlength{\belowdisplayskip}{3pt}\begin{align*}
M_{J, s}(Z) = \prod_{t=s+1}^l \!\!(I_p)_{j_{2t}j_{2t-1}}
	\prod_{t=1}^sZ^{-1}_{j_{2t}j_{2t-1}}
	\exp\bigg\{\!\!-\!\frac{n}4\Tr Z\bigg\}\big|Z\big|^{\frac{m}4},
\end{align*}}%
and let $M_l=\{M_{J,s} \,\vert\, J\in\{1,\dots,p\}^{2l}, s\leq l\}$ be the set of all such terms ``on $2l$ indices''. Let $\langle M_l\rangle$ denote the linear span of $M_l$, that is, the space of all linear combinations of elements of $M_l$, with as coefficients real polynomials in $n$ and $m$. Then we are really claiming that
{\setlength{\mathindent}{10pt}\begin{align}&
\frac{\partial_\text{s}}{\partial_\text{s} Z_{i_{2l} i_{2l-1}}}
	\dots \frac{\partial_\text{s}}{\partial_\text{s} Z_{i_4i_3}}\frac{\partial_\text{s}}{\partial_\text{s} Z_{i_2i_1}}
	\exp\bigg\{\!\!-\!\frac{n}4\Tr Z\bigg\}\big|Z\big|^{\frac{m}4}
	\quad\in \langle M_l\rangle.
	\label{eqn:fdl-goal}
\end{align}}%
To see this, let $J=(j_1,\dots, j_{2l-2})\in\{1,\dots,p\}^{2l-2}$ and define the extension $J_{a,b}^q=(j_1,\dots,j_{q-1},a,b,j_{q+1},\dots, j_{2l-2})\in\{1,\dots,p\}^{2l}$ to be $J$ with indices $a$, $b$ inserted (in this order) at the $q^\text{th}$ position. Then using that
{\setlength{\mathindent}{10pt}
\setlength{\abovedisplayskip}{5pt}
\setlength{\belowdisplayskip}{5pt}\begin{align*}&
\frac{\partial_\text{s}}{\partial_\text{s} Z_{i_{2l}i_{2l-1}}}Z_{ab}^{-1}
	= -\frac12\Big[Z_{ai_{2l}}^{-1}Z_{i_{2l-1}b}^{-1}+Z_{ai_{2l-1}}^{-1}Z_{i_{2l}b}^{-1}\Big]
\end{align*}}%
and
{\setlength{\mathindent}{10pt}
\setlength{\abovedisplayskip}{3pt}
\setlength{\belowdisplayskip}{3pt}\begin{align*}&
\frac{\partial_\text{s}}{\partial_\text{s} Z_{i_{2l}i_{2l-1}}} 
	\exp\bigg\{\!\!-\!\frac{n}4\Tr Z\bigg\}\big|Z\big|^{\frac{m}4}
\\&\hspace{100pt}
= \Big[\frac{m}4Z_{i_{2l}i_{2l-1}} - \frac{n}4(I_p)_{i_{2l}i_{2l-1}}\Big]
	\exp\bigg\{\!\!-\!\frac{n}4\Tr Z\bigg\}\big|Z\big|^{\frac{m}4},
\end{align*}}%
we conclude that
{\setlength{\mathindent}{10pt}
\setlength{\abovedisplayskip}{3pt}
\setlength{\belowdisplayskip}{3pt}\begin{align*}&
\frac{\partial_\text{s}}{\partial_\text{s} Z_{i_{2l}i_{2l-1}}} M_{J,s}(Z)
=-\frac12\sum_{r=1}^s M_{J_{i_{2l}i_{2l-1}}^{2r}, s+1}
	-\frac12\sum_{r=1}^s M_{J_{i_{2l-1}i_{2l}}^{2r}, s+1}
\\&\hspace{100pt}
+\frac{m}4 M_{J_{i_{2l}i_{2l-1}}^{2s+1}, s+1}
-\frac{n}4 M_{J_{i_{2l}i_{2l-1}}^{2s+1}, s}
\hspace{50pt}
\in\langle M_l\rangle.
\end{align*}}%
Thus, by linearity, $\partial_\text{s}/\partial_\text{s}Z_{i_{2l}i_{2l-1}}$ maps $\langle M_{l-1}\rangle$ to $\langle M_l\rangle$. But naturally we have $\exp\{-\frac{n}4\Tr Z\}|Z|^{m/4}\in \langle M_0\rangle$, so by induction Equation \eqref{eqn:fdl-goal} must then hold, as desired.
\end{proof}

\begin{lemma}[Second derivatives lemma]\label{lem:second-derivatives-lemma}\hspace{-12pt}
For any $k\in\N$ and any $Z\in\S_p(\R)$,
{\setlength{\mathindent}{10pt}
\setlength{\abovedisplayskip}{0pt}
\setlength{\belowdisplayskip}{-5pt}\begin{align*}&
{\mathlarger{\mathlarger\sum_{i_1,\dots,i_{2k}}^p}}
	\frac{\partial_\text{s}}{\partial_\text{s} Z_{i_1 i_{2k}}}
	\dots \frac{\partial_\text{s}}{\partial_\text{s} Z_{i_3i_2}}
	\frac{\partial_\text{s}}{\partial_\text{s} Z_{i_2i_1}}
e^{-\frac{n}4\Tr Z} \big|Z\big|^{\frac{m}4}
\\[-5pt]&\pushright{
= e^{-\frac{n}4\Tr Z} \big|Z\big|^{\frac{m}4}
	\hspace{-5pt}\sum_{|\kappa|\leq 2k}\hspace{-3pt}
	b_\kappa^{(1)}(n, m, p) r_\kappa(Z^{-1})
} \end{align*}}%
and
{\setlength{\mathindent}{10pt}
\setlength{\abovedisplayskip}{3pt}
\setlength{\belowdisplayskip}{0pt}\begin{align*}&
{\mathlarger{\mathlarger\sum_{\substack{i_1,\dots,i_{k}\\j_1,\dots,j_{k}}}^p}}
	\frac{\partial_\text{s}}{\partial_\text{s} Z_{j_1 j_{k}}}
	\dots \frac{\partial_\text{s}}{\partial_\text{s} Z_{j_3j_2}}
	\frac{\partial_\text{s}}{\partial_\text{s} Z_{j_2j_1}}
	\frac{\partial_\text{s}}{\partial_\text{s} Z_{i_1 i_{2k}}}
	\dots \frac{\partial_\text{s}}{\partial_\text{s} Z_{i_3i_2}}
	\frac{\partial_\text{s}}{\partial_\text{s} Z_{i_2i_1}}
e^{-\frac{n}4\Tr Z} \big|Z\big|^{\frac{m}4}
\\[-5pt]&\pushright{
= e^{-\frac{n}4\Tr Z} \big|Z\big|^{\frac{m}4}
	\hspace{-7pt}\sum_{|\kappa|\leq 2k+1}\hspace{-7pt}
	b_\kappa^{(2)}(n, m, p) r_\kappa(Z^{-1})
} \end{align*}}%
for some polynomials $b_\kappa^{(1)}(n,m,p)$ and $b_\kappa^{(2)}(n,m,p)$ with degrees $\mathrm{deg}\,b_\kappa^{(1)} \leq 2k+1-q(\kappa)$ and $\mathrm{deg}\,b_\kappa^{(2)} \leq 2k+2-q(\kappa)$. The sums at the right hand sides are taken over all integer partitions $\kappa$ of norm at most $2k$ and $2k+1$, including the empty partition.
\end{lemma}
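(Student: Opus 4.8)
The plan is to prove both identities simultaneously by an induction on the number of differentiations, tracking not just the structure of the terms but also the degrees in $(n,m,p)$. I would set up the same kind of ``stable span'' argument used in Lemma \ref{lem:first-derivatives-lemma}, but now refined to keep account of the coefficient polynomials. Specifically, for a fixed number $r$ of differentiations define the set of formal monomials
\[
N_{r}(Z) = \Big\{\, r_\kappa(Z^{-1})\,(I_p)_{j_1 j_2}\cdots (I_p)_{j_{2t-1}j_{2t}}\, e^{-\frac{n}4\Tr Z}\,|Z|^{\frac{m}4} \;:\; |\kappa| + t \le r \,\Big\},
\]
and let $\langle N_r\rangle_d$ denote the space of linear combinations of elements of $N_r$ whose coefficient attached to a monomial with ``index weight'' $w$ (the number of inverse-matrix factors $q(\kappa)$ plus the number of $I_p$ factors $t$) is a polynomial in $(n,m,p)$ of degree at most $r - w$. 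The claim to prove inductively is that applying one more contracted symmetric derivative $\sum_{i,j}\frac{\partial_\text{s}}{\partial_\text{s}Z_{ij}}$ — or rather, applying the individual operators $\frac{\partial_\text{s}}{\partial_\text{s}Z_{i_{r+1}i_r}}$ and then summing the free indices in the prescribed cyclic pattern — sends $\langle N_r\rangle_d$ into $\langle N_{r+1}\rangle_{d'}$ with the degree bound incremented by exactly one. The base case $r=0$ is $e^{-\frac{n}4\Tr Z}|Z|^{\frac{m}4}\in\langle N_0\rangle$, with coefficient $1$ (degree $0$).

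The inductive step rests on the two elementary derivative formulas already recorded in the proof of Lemma \ref{lem:first-derivatives-lemma}:
\[
\frac{\partial_\text{s}}{\partial_\text{s}Z_{ab}}Z^{-1}_{cd} = -\tfrac12\big(Z^{-1}_{ca}Z^{-1}_{bd}+Z^{-1}_{cb}Z^{-1}_{ad}\big),
\qquad
\frac{\partial_\text{s}}{\partial_\text{s}Z_{ab}}\Big[e^{-\frac{n}4\Tr Z}|Z|^{\frac{m}4}\Big] = \Big[\tfrac{m}4 Z^{-1}_{ab} - \tfrac{n}4 (I_p)_{ab}\Big]e^{-\frac{n}4\Tr Z}|Z|^{\frac{m}4}.
\]
The first has no $(n,m,p)$ in its coefficient, so it raises the index weight by one while leaving the coefficient degree unchanged: consistent with the bound $r-w$ since $r$ went up by one and $w$ went up by one. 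The second either produces a new $Z^{-1}$ factor with coefficient $\tfrac{m}4$ (weight $+1$, coefficient degree $+1$, net bound unchanged) or a new $(I_p)$ factor with coefficient $-\tfrac{n}4$ (same bookkeeping). This is where I expect to spend the most care: after differentiating, one must \emph{contract} the dummy indices $i_1,\dots,i_{r}$ (resp. the two cyclic blocks $i_1,\dots,i_k$ and $j_1,\dots,j_k$) so that the resulting sums of products of $Z^{-1}$ entries and $I_p$ entries collapse to genuine power-sum polynomials $r_\kappa(Z^{-1})$ and plain numerical factors of $p$ (each closed $I_p$-contraction $\sum_j (I_p)_{jj}=p$ contributes one factor of $p$, which is degree $1$ and must be charged against the weight budget). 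Verifying that every contraction pattern arising from the prescribed cyclic index structure reduces to a $r_\kappa(Z^{-1})$ with $|\kappa|$ not exceeding the current number of derivatives, and that the accumulated powers of $p$ fit the degree bound, is the combinatorial heart of the argument.

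Once the stable-span statement is established for all $r$, specializing to $r=2k$ with the single-cycle index pattern gives the first identity, and to $r=2k$ with the double-cycle pattern (note: $k$ derivatives in the $i$-block and $k$ in the $j$-block, total $2k$) gives the second. For the first identity, a term surviving at $r=2k$ has $|\kappa|+t\le 2k$; after contracting the $2k$ free indices the $t$ identity-factors turn into $t$ contractions but also the indices of $r_\kappa$ must close up, and one checks $|\kappa|\le 2k$ as claimed, with coefficient degree $\le 2k - q(\kappa) - (\text{powers of }p\text{ already extracted})$, which after absorbing those powers of $p$ into the coefficient gives $\deg b^{(1)}_\kappa \le 2k+1-q(\kappa)$ — the extra $+1$ coming from the fact that the final overall contraction of the last cyclic index produces one more summation, contributing at most one additional factor of $p$ beyond what the weight budget of $2k$ alone allows. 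For the second identity the analogous count yields $|\kappa|\le 2k+1$ and $\deg b^{(2)}_\kappa\le 2k+2-q(\kappa)$, the shift being one larger because the double-cycle structure closes with two separate index sums rather than one. I would present the induction cleanly first and then do this final degree reconciliation carefully, since getting the exact constants $2k+1-q(\kappa)$ and $2k+2-q(\kappa)$ (rather than a looser bound) is the only delicate point and is exactly what Lemma \ref{lem:moments-as-expectation} and the proof of Theorem \ref{thm:moments} rely on.
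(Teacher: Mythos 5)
Your proposal takes a genuinely different route from the paper. The paper diagonalizes $Z = OLO^t$ and introduces an operator $D_L$ acting on diagonal matrix-valued functions of the eigenvalues $L$, with the elegant observation that the single-cycle contraction is exactly $\Tr D_L^{2k}\{e^{-\frac{n}{4}\Tr L}|L|^{m/4}I_p\}$ and the double-cycle one is $\Tr D_L^k\{\Tr D_L^k\{\cdot\}\}$. Because $D_L$ sends diagonal matrix functions to diagonal matrix functions (and the relevant spaces $\langle M_l\rangle$ are built from $r_\kappa(L^{-1})L^{-s}$ rather than from individual matrix entries), the collapse of index sums into trace products is automatic, and the degree bookkeeping is a one-step computation of what $D_L$ does to a typical term of $M_l$. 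You instead try to refine the coordinate-level induction of Lemma~\ref{lem:first-derivatives-lemma}.

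The gap is in the definition of your space $N_r$. You write its elements as products of $r_\kappa(Z^{-1})$ (already contracted into traces) times uncontracted $(I_p)_{j_a j_b}$ factors. But the intermediate states of the differentiation are sums over terms involving \emph{individual entries} $Z^{-1}_{ab}$ with dangling dummy indices, not traces; the $r_\kappa(Z^{-1})$ only appear after the cyclic contraction $\sum_{i_1,\dots,i_{2k}}$ is fully carried out. So the induction $\langle N_r\rangle \to \langle N_{r+1}\rangle$ cannot literally be run under the operator $\frac{\partial_{\mathrm{s}}}{\partial_{\mathrm{s}}Z_{i_{r+1}i_r}}$ — it operates on terms that are not yet of the form your $N_r$ asserts. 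You acknowledge this (``the combinatorial heart of the argument'') but defer it, and it is precisely this step that requires a real argument: one must show that the cyclic wiring of the dummy indices, combined with the index-splitting rule $\frac{\partial_{\mathrm{s}}}{\partial_{\mathrm{s}}Z_{ab}}Z^{-1}_{cd}=-\tfrac12(Z^{-1}_{ca}Z^{-1}_{bd}+Z^{-1}_{cb}Z^{-1}_{ad})$, decomposes into disjoint closed loops that yield products of traces, and that the loop lengths obey the stated budget. That is doable in principle (the final quantity is orthogonally invariant, so it must be expressible in power sums), but the degree bounds $2k+1-q(\kappa)$ and $2k+2-q(\kappa)$ depend on a careful loop count you have not supplied; your handwave ``the final overall contraction produces one more summation'' does not identify which loop closures contribute a $p$ and which do not. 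The paper's $D_L$ formulation sidesteps all of this because a diagonal operator carries no dangling indices to track; if you want to run a purely coordinate-level argument you would need to first establish the loop-decomposition lemma and then build $N_r$ out of partially contracted loop structures rather than out of completed traces.
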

\begin{proof}
We give a spectral proof. Let $OLO^t$ be the spectral decomposition of $Z$, with eigenvalues $\lambda_1\geq \dots\geq\lambda_p$, and notice that
{\setlength{\mathindent}{10pt}
\setlength{\abovedisplayskip}{5pt}
\setlength{\belowdisplayskip}{5pt}\begin{align*}
\frac{\partial_\text{s}O_{hl}}{\partial_\text{s}Z_{ij}} = 
	\frac12\sum_{a\neq l}^p\frac{O_{ha}O_{ai}^t}{\lambda_l-\lambda_a}O_{jl}
	+ \frac12\sum_{a\neq l}\frac{O_{ha}O_{aj}^t}{\lambda_l-\lambda_a}O_{il},
\hspace{40pt}
\frac{\partial_\text{s}\lambda_h}{\partial_\text{s}Z_{ij}} = O_{ih}O_{jh}
\end{align*}}%
for any $1\leq i,j,h,l\leq p$. As a consequence, for any differentiable real-valued functions $F_1(L),\dots,F_p(L)$, we have
{\setlength{\mathindent}{10pt}
\setlength{\abovedisplayskip}{3pt}
\setlength{\belowdisplayskip}{3pt}\begin{align*}
\sum_{j=1}^p\frac{\partial_\text{s}}{\partial_\text{s}Z_{hj}}
	\bigg(\sum_{a=1}^p O_{ja}F_aO_{ai}^t\bigg)
	= \frac12\sum_{\substack{a,b\\b\neq a}}^p O_{ha}\frac{F_b-F_a}{\lambda_b - \lambda_a}O_{ai}^t
	+ \sum_{a=1}^pO_{ha}\frac{\partial F_a}{\partial\lambda_a}O_{ai}^t.
\end{align*}}%
This suggests we define a new operator $D_L$ that would map the space of diagonal matrices $F(L)=\mathrm{diag}(F_1(L),\dots, F_p(L))$ that differentially depends on $L$, to itself, by
{\setlength{\mathindent}{10pt}
\setlength{\abovedisplayskip}{3pt}
\setlength{\belowdisplayskip}{3pt}\begin{align*}
D_L\{F\}_a = \frac12\sum_{b\neq a}^p \frac{F_b-F_a}{\lambda_b - \lambda_a}
	+ \frac{\partial F_a}{\partial\lambda_a}
\quad\text{so that}\quad
\sum_{j=1}^p\frac{\partial_\text{s}}{\partial_\text{s}Z_{hj}} OFO^t_{ji} = OFO^t_{ki}.
\end{align*}}%
In particular,
{\setlength{\mathindent}{10pt}
\setlength{\abovedisplayskip}{3pt}
\setlength{\belowdisplayskip}{3pt}\begin{align}&
{\mathlarger{\mathlarger\sum_{i_1,\dots,i_{2k}}^p}}
	\frac{\partial_\text{s}}{\partial_\text{s} Z_{i_1 i_{2k}}}
	\dots \frac{\partial_\text{s}}{\partial_\text{s} Z_{i_3i_2}}
	\frac{\partial_\text{s}}{\partial_\text{s} Z_{i_2i_1}}
	e^{-\frac{n}4\Tr Z} \big|Z\big|^{\frac{m}4}
\notag\\&\qquad
= {\mathlarger{\mathlarger\sum_{i_1,\dots,i_{2k}}^p}}
	\frac{\partial_\text{s}}{\partial_\text{s} Z_{i_0 i_{2k}}}
	\dots \frac{\partial_\text{s}}{\partial_\text{s} Z_{i_3i_2}}
	\frac{\partial_\text{s}}{\partial_\text{s} Z_{i_2i_1}}
	\Big[e^{-\frac{n}4\Tr Z} \big|Z\big|^{\frac{m}4}I_p\Big]_{i_1i_0}
\notag\\&\qquad
= \Tr D_L^{2k}\Big\{e^{-\frac{n}4\Tr Z} \big|Z\big|^{\frac{m}4}I_p\Big\},
\label{eqn:sdl-DL1}
\end{align}}%
and similarly
{\setlength{\mathindent}{10pt}
\setlength{\abovedisplayskip}{3pt}
\setlength{\belowdisplayskip}{3pt}\begin{align}&
{\mathlarger{\mathlarger\sum_{\substack{i_1,\dots,i_{k}\\j_1,\dots,j_{k}}}^p}}
	\frac{\partial_\text{s}}{\partial_\text{s} Z_{j_1 j_{k}}}
	\dots \frac{\partial_\text{s}}{\partial_\text{s} Z_{j_3j_2}}
	\frac{\partial_\text{s}}{\partial_\text{s} Z_{j_2j_1}}
	\frac{\partial_\text{s}}{\partial_\text{s} Z_{i_1 i_{k}}}
	\dots \frac{\partial_\text{s}}{\partial_\text{s} Z_{i_3i_2}}
	\frac{\partial_\text{s}}{\partial_\text{s} Z_{i_2i_1}}
	e^{-\frac{n}4\Tr Z} \big|Z\big|^{\frac{m}4}
\notag\\&\qquad
= \Tr D_L^{k}\Big\{\Tr D_L^{k}\Big\{e^{-\frac{n}4\Tr Z} \big|Z\big|^{\frac{m}4}I_p\Big\}\Big\}.
\label{eqn:sdl-DL2}
\end{align}}%
Let us look more closely at this operator $D_L$. It satisfies the following.
\begin{enumerate}[label=(\roman*)]
\item $D_L$ is linear, in the sense that for diagonals $F(L)$, $G(L)$ and constants $a$, $b$ with respect to $L$,
{\setlength{\abovedisplayskip}{3pt}
\setlength{\belowdisplayskip}{3pt}\begin{align*}
D_L\{aF+bG\} = aD_L\{F\}+bD_L\{G\}.
\end{align*}}%
\item $D_L$ satisfies a restricted product rule, in the sense that for a diagonal $F(L)$ of the form $F(L)=f(L)I_p$ for some function $f(L)$, and any diagonal $G(L)$,
{\setlength{\abovedisplayskip}{3pt}
\setlength{\belowdisplayskip}{3pt}\begin{align*}
D_L\{FG\} = D_L\{F\}G+FD_L\{G\}.
\end{align*}}%
\end{enumerate}
Moreover, from the definition of $D_L$,
{\setlength{\mathindent}{0pt}
\setlength{\abovedisplayskip}{3pt}
\setlength{\belowdisplayskip}{3pt}\begin{align*}
D_L\big\{e^{-\frac{n}4\Tr L}I_p\big\} &= -\frac{n}4e^{-\frac{n}4\Tr L}I_p, \qquad
D_L\big\{|L|^{\frac{m}4}I_p\big\} = \frac{m}4|L|^{\frac{m}4}I_p, \\
D_L\big\{\Tr(L^{-s})I_p\big\} &= -s L^{-(s+1)}, \\
\text{and}\hspace{40pt} D_L\big\{L^{-s}\big\} &= -\frac{s}2L^{-(s+1)}-\frac12\sum_{t=1}^s\Tr(L^{-[s+1-t]}) L^{-t}.
\end{align*}}%
Now define the spaces
{\setlength{\mathindent}{0pt}
\setlength{\abovedisplayskip}{3pt}
\setlength{\belowdisplayskip}{5pt}\begin{align*}
M_l=\left\{
	b(n,m,p) e^{-\frac{n}4\Tr L} \big|L\big|^{\frac{m}4} r_\kappa(L^{-1}) L^{-s}
	\;\Bigg\vert\;
	\text{\raisebox{0pt}{\parbox{13.5em}{$b(n,m,p)$ is a polynomial with degree at most $l-q(\kappa)$, and $\kappa$ and $s$ satisfy $|\kappa|\leq l-s$.}}}
\right\}
\end{align*}}%
for $l=1,\dots,2k$, and let $\langle M_l\rangle$ denote the linear span of $M_l$, i.e. the space of all real linear combinations of elements of $M_l$. Moreover, for a partition $\kappa$, let $\kappa\pm i$ denote $\kappa$ with the integer $i$ added or removed, respectively. For example, $(3,1,1,1)+2=(3,2,1,1,1)$ and $(3,2,1,1,1)-1=(3,2,1,1)$. Note that $|\kappa\pm i|=|\kappa|\pm i$. Then, for any $F\in M_l$,
{\setlength{\mathindent}{10pt}
\setlength{\abovedisplayskip}{3pt}
\setlength{\belowdisplayskip}{3pt}\begin{align*}
D_L\{F\} &= D_L\Big\{ b(n,m,p)
						e^{-\frac{n}4\Tr L}\big|L\big|^{\frac{m}4} r_\kappa(L^{-1}) L^{-s} 
						\Big\} 
\\&
= b(n,m,p) D_L\Big\{ e^{-\frac{n}4\Tr L}I_p\Big\} \big|L\big|^{\frac{m}4} r_\kappa(L^{-1}) L^{-s}
\\&\quad
	+ b(n,m,p) e^{-\frac{n}4\Tr L} D_L\Big\{ \big|L\big|^{\frac{m}4}I_p\Big\} r_\kappa(L^{-1}) L^{-s}
\\&\quad
	+ b(n,m,p) e^{-\frac{n}4\Tr L} \big|L\big|^{\frac{m}4} D_L\Big\{ r_\kappa(L^{-1})I_p\Big\} L^{-s}
\\&\quad
	+ b(n,m,p) e^{-\frac{n}4\Tr L} \big|L\big|^{\frac{m}4} r_\kappa(L^{-1}) D_L\Big\{ L^{-s}\Big\}
\\&
= \Big[-\frac{n}4b(n,m,p)\Big]e^{-\frac{n}4\Tr L}\big|L\big|^{\frac{m}4} r_\kappa(L^{-1}) L^{-s}
\\&\quad
	+ \Big[\frac{m}4b(n,m,p)\Big]e^{-\frac{n}4\Tr L}\big|L\big|^{\frac{m}4} r_\kappa(L^{-1}) L^{-s}
\\&\quad
	+\sum_{i=1}^{q(\kappa)} \Big[-\kappa_i b(n,m,p)\Big]
		e^{-\frac{n}4\Tr L}\big|L\big|^{\frac{m}4} r_{\kappa-\kappa_i}(L^{-1}) L^{-s}
\\&\quad
	+\Big[-\frac{s}2b(n,m,p)\Big]
		e^{-\frac{n}4\Tr L}\big|L\big|^{\frac{m}4} r_\kappa(L^{-1}) L^{-(s+1)}
\\&\quad
	+\sum_{t=1}^{s} \Big[-\frac12 b(n,m,p)\Big]
		e^{-\frac{n}4\Tr L}\big|L\big|^{\frac{m}4} r_{\kappa+(s+1-t)}(L^{-1}) L^{-t}
\end{align*}}%
Thus $D_L\{F\}\in\langle M_{l+1}\rangle$. It follows by linearity that $D_L$ maps $\langle M_l\rangle$ to $\langle M_{l+1}\rangle$.

Now, $e^{-\frac{n}4\Tr L}\big|L\big|^{\frac{m}4}I_p\in M_0$, so by induction $D^{2k}_L\{e^{-\frac{n}4\Tr L}\big|L\big|^{\frac{m}4}I_p\}\in\langle M_{2k}\rangle$. Hence, for some polynomials $b_{\kappa, s}^{(1)}(n,m,p)$ of degree at most $2k-q(\kappa)$,
{\setlength{\mathindent}{10pt}
\setlength{\abovedisplayskip}{3pt}
\setlength{\belowdisplayskip}{5pt}\begin{align}
\Tr D_L^{2k}\Big\{ e^{-\frac{n}4\Tr L} \big|L\big|^{\frac{m}4} I_p \Big\}
&= \hspace{-5pt}\sum_{|\kappa|+s\leq 2k} \hspace{-5pt}
	b_{\kappa, s}^{(1)}(n,m,p) 
	e^{-\frac{n}4\Tr L} \big|L\big|^{\frac{m}4} r_\kappa(L^{-1}) \Tr(L^{-s})
\notag\\&
= \hspace{-5pt}\sum_{|\kappa'|\leq 2k} \hspace{-5pt}
	b_{\kappa'}^{(1)}(n,m,p) e^{-\frac{n}4\Tr L} \big|L\big|^{\frac{m}4} r_{\kappa'}(L^{-1})
\label{eqn:sdl-first}
\end{align}}%
for $\kappa'=\kappa+s$, $b_{\kappa'}^{(1)} = b_{\kappa, s}^{(1)}$ when $s\neq 0$, while $\kappa'=\kappa$, $b_{\kappa'}^{(1)} = p b_{\kappa, s}^{(1)}$ when $s=0$. Notice that when $s\neq 0$, the degree of the $b_{\kappa'}$'s is at most $2k-q(\kappa)=2k-(q(\kappa')-1)$, while when $s=0$ it is at most $2k-q(\kappa)+1=2k-q(\kappa')+1$. Thus in both cases, $\mathrm{deg}\, b_{\kappa'}^{(1)}\leq 2k-q(\kappa')+1$, which by Equation \eqref{eqn:sdl-DL1} shows the first statement of the lemma.

For the second statement of the lemma, by an argument analoguous to Equation \eqref{eqn:sdl-first} we find that $\Tr D_L^{k}\{e^{-\frac{n}4\Tr L} \big|L\big|^{\frac{m}4}  I_p\}\in\langle M_{k+1}\rangle$. Thus by induction again, we must have $D_L^{k}\{\Tr D_L^{k}\{e^{-\frac{n}4\Tr L} \big|L\big|^{\frac{m}4}  I_p\}\}\in\langle M_{2k+1}\rangle$. Hence for some polynomials $b^{(2)}_{\kappa, s}(n,m,p)$ of degree at most $2k+1-q(\kappa)$,
{\setlength{\mathindent}{10pt}
\setlength{\abovedisplayskip}{3pt}
\setlength{\belowdisplayskip}{5pt}\begin{align*}&
D_L^{k}\Big\{ \Tr D_L^{k}\Big\{ e^{-\frac{n}4\Tr L} \big|L\big|^{\frac{m}4} I_p \Big\}\Big\}
\\&\qquad
= \hspace{-5pt}\sum_{|\kappa|+s\leq 2k+1} \hspace{-15pt}
	b_{\kappa, s}^{(2)}(n,m,p) 
	e^{-\frac{n}4\Tr L} \big|L\big|^{\frac{m}4} r_\kappa(L^{-1}) \Tr(L^{-s})
\\&\qquad
	= \hspace{-5pt}\sum_{|\kappa'|\leq 2k+1} \hspace{-5pt}
		b_{\kappa'}^{(2)}(n,m,p) e^{-\frac{n}4\Tr L} \big|L\big|^{\frac{m}4} r_{\kappa'}(L^{-1})
\end{align*}}%
for again $\kappa'=\kappa+s$, $b^{(2)}_{\kappa'}=b^{(2)}_{\kappa, s}$ when $s\neq0$, while $\kappa'=\kappa$, $b_{\kappa'}^{(2)} = p b_{\kappa, s}^{(2)}$ when $s=0$. By the same argument as before, $\mathrm{deg}\, b_{\kappa'}^{(2)}\leq 2k-q(\kappa')+2$, which by Equation \eqref{eqn:sdl-DL2} shows the second statement of the lemma. This concludes the proof.
\end{proof}

We will also need in our proof a result about the asymptotics of inverse moments of the Wishart distribution. Because we couldn't find anything like it in the literature, we think it is worthwhile to provide some context.

Let $f: (0,4)\rightarrow\R$ be the restriction to the positive reals of a complex function analytic in a neighborhood of $(0,4)$. We are often interested in the linear spectral statistic $\frac1p\Tr f(Y)$ for $Y\sim\text{W}_p(n, I_p/n)$. Much is known about its distributional properties in the high-dimensional regime where $p\rightarrow\infty$ such that $\lim\limits_{n\rightarrow\infty}\frac{p}{n} = \alpha <1$. For example, if $0<\alpha<1$, there must be an $\epsilon>0$ such that $p/n\in[\epsilon, 1-\epsilon]$ for all $n$ large enough, so \citet[Theorem 9.10]{bai2010} and the dominated convergence theorem yield that
{\setlength{\mathindent}{20pt}
\setlength{\abovedisplayskip}{3pt}
\setlength{\belowdisplayskip}{0pt}\begin{align}
\frac1{p}\Tr f(Y) 
\overset{\mathcal{P}}{\longrightarrow} &
	\lim_{n\rightarrow\infty} 
	\text{\raisebox{2pt}{$\bigints_{\;[p/n]_-}^{[p/n]_+}$}}\hspace{-25pt}
		f(t) \frac{\sqrt{([p/n]_+-t)(t-[p/n]_-)}}{2\pi[p/n] t}dt
\notag\\&
= \text{\raisebox{2pt}{$\bigintss_{\;\alpha_-}^{\alpha_+}$}}\hspace{-10pt}
		f(t) \frac{\sqrt{(\alpha_+-t)(t-\alpha_-)}}{2\pi\alpha t}dt
\label{eqn:lss-probability}
\end{align}}%
as $n\rightarrow\infty$. Here, $\overset{\mathcal{P}}{\rightarrow}$ stands for convergence in probability and $x_\pm$ for $(1\pm \sqrt{x})^2$. In fact, the theorem states more, namely a central limit theorem, but what we want to draw to attention is the class of functions for which this result was proven.

This is sometimes enough, but often we would like to understand the expectation of this linear spectral statistic. If $f$ is bounded, then Equation \eqref{eqn:lss-probability} implies that
{\setlength{\mathindent}{20pt}
\setlength{\abovedisplayskip}{0pt}
\setlength{\belowdisplayskip}{3pt}\begin{align}
\lim_{n\rightarrow\infty}\frac1{p}\E{\vphantom{\Big\vert}\Tr f(Y)} 
= \text{\raisebox{2pt}{$\bigintss_{\;\alpha_-}^{\alpha_+}$}}\hspace{-10pt}
		f(t) \frac{\sqrt{(\alpha_+-t)(t-\alpha_-)}}{2\pi\alpha t}dt.
\label{eqn:lss-for-bounded}
\end{align}}%
This is nice for a function $f(z)$ like $e^z$ or $\sin z$ that happens to be bounded on a neighborhood of $(0, 4)$, but it unfortunately excludes many interesting unbounded functions, such as $\log z$ or $1/z$. In fact, for unbounded $f$, it is in general not even clear if $\lim\limits_{n\rightarrow\infty} \frac1{p}\E{\Tr f(Y)}$ will be finite!

The following result shows that, at least in the case $f(z)=1/z^s$ for $s\in\N$, we can use Stein's lemma to obtain Equation \eqref{eqn:lss-for-bounded} and its $\alpha=0$ analogue.

\begin{lemma}\label{lem:hd-wishart}
Let for $Y\sim\text{W}_p(n, I_p/n)$ and $s$ be any integer $s\geq1$. Then as long as $n\geq p+4s+2$, the $s^\text{th}$ inverse moment satisfies the recursive bound
{\setlength{\mathindent}{20pt}
\setlength{\abovedisplayskip}{7pt}
\setlength{\belowdisplayskip}{3pt}\begin{align*}&
\left(1-\frac{(p+1)s}{n}\right) \E{\vphantom{\Big\vert} \Tr Y^{-s}}
\;\;\leq\;\;
\E{\Tr Y^{-(s-1)}}.
\end{align*}}%
In particular, as $p\rightarrow\infty$ such that $\lim\limits_{n\rightarrow\infty}\frac{p}{n} = \alpha <1$, if $s < \alpha^{-1}-1$ then
{\setlength{\mathindent}{20pt}
\setlength{\abovedisplayskip}{2pt}
\setlength{\belowdisplayskip}{2pt}\begin{align*}&
\lim_{n\rightarrow\infty}\frac1p\E{\vphantom{\Big\vert} \Tr Y^{-s}} = \begin{dcases}
\text{\raisebox{2pt}{$\bigintss_{\;\alpha_-}^{\alpha_+}$}}\hspace{-10pt}
\frac{\sqrt{(\alpha_+-t)(t-\alpha_-)}}{2\pi\alpha t^{s+1}}dt
& \quad\text{if}\quad 0<\alpha<1, \\
\hspace{55pt}1 & \quad\text{if}\quad\alpha=0.
\end{dcases}
\end{align*}}%
for $\alpha_\pm=(1\pm\sqrt{\alpha})^2$.
\end{lemma}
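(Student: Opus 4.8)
The plan is to derive the recursive bound from a Gaussian integration-by-parts (Stein) identity for the underlying Gaussian matrix, and then to read off the two asymptotic statements by an induction on $s$ that combines this bound with the Marchenko--Pastur convergence of Equation \eqref{eqn:lss-probability}. First I would realize $Y$ as $X^tX/n$ with $X$ an $n\times p$ matrix of i.i.d.\ $\mathrm{N}(0,1)$ entries, set $W = X^tX\sim\mathrm{W}_p(n,I_p)$, and note $\Tr Y^{-k} = n^k\Tr W^{-k}$. Applying Stein's lemma $\E{X_{ai}\,g(X)} = \E{\partial g/\partial X_{ai}}$ to $g_{ai}(X) = [X W^{-s}]_{ai}$ and summing over $a$ and $i$, the left-hand side collapses to $\E{\Tr(W^{-s}X^tX)} = \E{\Tr W^{-(s-1)}}$, while the right-hand side is computed by differentiating $[XW^{-s}]_{ai}$ using $\partial W_{kl}/\partial X_{ai} = \delta_{ik}X_{al}+\delta_{il}X_{ak}$ together with the telescoping formula $\partial W^{-s}/\partial W_{kl} = -\sum_{t=1}^{s}W^{-t}E_{kl}W^{-(s-t+1)}$. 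Collapsing every resulting product of $X$-blocks back into powers of $W$ via $X^tX=W$, I expect the exact identity
\begin{align*}
\E{\Tr W^{-(s-1)}}\;=\;(n-s-p)\,\E{\Tr W^{-s}}\;-\;\sum_{t=2}^{s}\E{\Tr W^{-(t-1)}\,\Tr W^{-(s-t+1)}},
\end{align*}
valid once $n$ is large enough that the integration by parts is legitimate and all the inverse moments appearing are finite; this is the role of the hypothesis $n\ge p+4s+2$.

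Rescaling to $Y$ gives $\E{\Tr Y^{-(s-1)}} = \tfrac{n-s-p}{n}\E{\Tr Y^{-s}} - \tfrac1n\sum_{t=2}^{s}\E{\Tr Y^{-(t-1)}\Tr Y^{-(s-t+1)}}$. Writing $\nu_i = \lambda_i(Y^{-1})>0$ and using the weighted AM--GM inequality $\nu_i^{a}\nu_j^{b}\le \tfrac{a}{s}\nu_i^{s}+\tfrac{b}{s}\nu_j^{s}$ for $a+b=s$, each cross term obeys $\Tr Y^{-a}\Tr Y^{-b} = \sum_{i,j}\nu_i^{a}\nu_j^{b}\le p\,\Tr Y^{-s}$, so the $s-1$ cross terms contribute at most $\tfrac{(s-1)p}{n}\E{\Tr Y^{-s}}$ and we obtain exactly $\big(1-\tfrac{(p+1)s}{n}\big)\E{\Tr Y^{-s}}\le\E{\Tr Y^{-(s-1)}}$. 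Because the cross terms are nonnegative, the same identity also yields the reverse estimate $\E{\Tr Y^{-(s-1)}}\le\big(1-\tfrac{s+p}{n}\big)\E{\Tr Y^{-s}}$.

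For the limit, when $\alpha=0$ the recursive bound and its reverse sandwich $\tfrac1p\E{\Tr Y^{-s}}$ between $\tfrac{n}{n-(p+1)s}\cdot\tfrac1p\E{\Tr Y^{-(s-1)}}$ and $\tfrac{n}{n-s-p}\cdot\tfrac1p\E{\Tr Y^{-(s-1)}}$, whose prefactors both tend to $1$, so an induction started from $\tfrac1p\E{\Tr Y^{0}}=1$ gives the limit $1$. When $0<\alpha<1$, the hypothesis $s<\alpha^{-1}-1$ is precisely $\alpha(s+1)<1$, so iterating the recursive bound through level $s+1$ yields $\tfrac1p\E{\Tr Y^{-(s+1)}}\le\prod_{j=1}^{s+1}\big(1-\tfrac{(p+1)j}{n}\big)^{-1}$, a bounded quantity as $n\to\infty$. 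Since $\tfrac1p\Tr Y^{-s}\le\big(\tfrac1p\Tr Y^{-(s+1)}\big)^{s/(s+1)}$ by Jensen's inequality, the family $\{\tfrac1p\Tr Y^{-s}\}_n$ is bounded in $L^{(s+1)/s}$, hence uniformly integrable; and since $t\mapsto t^{-s}$ is analytic on a neighbourhood of $(0,4)$, Equation \eqref{eqn:lss-probability} gives $\tfrac1p\Tr Y^{-s}\overset{\mathcal{P}}{\longrightarrow}\int_{\alpha_-}^{\alpha_+}\tfrac{\sqrt{(\alpha_+-t)(t-\alpha_-)}}{2\pi\alpha\,t^{s+1}}\,dt$. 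Uniform integrability then upgrades convergence in probability to convergence of the means, which is the asserted formula.

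The hard part will be the Stein computation: carrying both pieces of $\partial W_{kl}/\partial X_{ai}$ through the telescoping sum for $\partial W^{-s}/\partial W_{kl}$, summing over $a$ and $i$, and verifying that every surviving product of $X$-blocks recombines, using only $X^tX=W$, into either $\Tr W^{-s}$ or a product $\Tr W^{-(t-1)}\Tr W^{-(s-t+1)}$, and with the coefficients $(n-s-p)$ and $-1$ exactly as above. A secondary, routine point is justifying the integration by parts and the finiteness of every expectation under $n\ge p+4s+2$, for which it should suffice to dominate the differentiated integrand by a fixed polynomial in the entries of $X$ times traces of inverse powers of $W$ of bounded order.
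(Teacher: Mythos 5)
Your proposal is correct and follows essentially the same path as the paper: apply Gaussian integration by parts (Stein's lemma) to the componentwise function $(XW^{-s})_{ai}$, sum over indices, recombine the $X$-blocks into powers of $W$ to obtain the exact identity, bound the cross terms $\Tr W^{-(t-1)}\Tr W^{-(s-t+1)}\le p\Tr W^{-s}$, and then combine the resulting recursive bound with the Marchenko--Pastur convergence and a Jensen-based uniform-integrability estimate. Your anticipated Stein identity, rescaled from $W=nY$ back to $Y$, is exactly the one the paper derives (its term $\frac{p}{n}\E{\Tr Y^{-s}}$ is just the $l=s$ cross term separated out); the hypothesis $n\ge p+4s+2$ plays precisely the role you describe, guaranteeing integrability of every term appearing. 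The one place your route diverges, mildly, is the $\alpha=0$ subcase: you supplement the recursive bound with the ``reverse estimate'' $\E{\Tr Y^{-(s-1)}}\le(1-\tfrac{s+p}{n})\E{\Tr Y^{-s}}$, which follows from the exact identity because the dropped cross terms are nonnegative, and sandwich $\tfrac1p\E{\Tr Y^{-s}}$ between two prefactors tending to $1$; the paper instead produces the lower bound $1\le\tfrac1p\E{\Tr Y^{-s}}$ by applying Jensen's inequality twice. Both are one-line arguments and both are valid. A small caution for the write-up: when carrying out the Stein computation you must treat the symmetry of $W$ with care (either using a symmetric derivative as the paper does, or differentiating $(X^tX)^{-s}$ directly in $X$ via $\partial W^{-s}/\partial X_{ai}=-\sum_{t}W^{-t}(\partial W/\partial X_{ai})W^{-(s-t+1)}$ where $(\partial W/\partial X_{ai})_{kl}=\delta_{ik}X_{al}+\delta_{il}X_{ak}$), otherwise the naive chain rule over pairs $(k,l)$ double-counts off-diagonal contributions.
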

\begin{proof}
The classical Stein's lemma states that for any differentiable function $f:\R\rightarrow \R$ such that $\E{\big|\big(\frac{\partial}{\partial Z}-Z\big)f(Z)\big|}<\infty$ for $Z\sim\text{N}(0,1)$ and  $\lim\limits_{z\rightarrow\pm\infty}f(z)e^{-z^2/2}=0$, we must have
{\setlength{\abovedisplayskip}{2pt}
\setlength{\belowdisplayskip}{2pt}\begin{align*}&
\E{\Big(\frac{\partial}{\partial Z}-Z\Big)f(Z)}=0.
\end{align*}}%
Let $Z\sim\text{N}_{n\times p}(0, I_n\otimes I_p)$ be an $n\times p$ matrix of i.i.d. standard normal random variables, and let $Y=\frac1nZ^tZ\sim\text{W}_p(n, I_p/n)$. For any $1\leq \alpha\leq n$ and $1\leq \beta, i, j \leq p$,
{\setlength{\mathindent}{0pt}
\setlength{\abovedisplayskip}{3pt}
\setlength{\belowdisplayskip}{3pt}\begin{align*}&
\frac{\partial}{\partial Z_{\alpha\beta}}
\!=\! \frac{2}{n}\sum_{i=1}^p Z_{\alpha i}\frac{\partial_\text{s}}{\partial_\text{s} Y_{i\beta}}
\;\;\;\text{and}\;\;\;
\frac{\partial_\text{s} Y_{\beta j}^{-s} }{\partial_\text{s}Y_{i\beta}}
\!=\! -\frac12\sum_{l=1}^s\Big[Y_{\beta i}^{-l}Y_{\beta j}^{-(s-l+1)} \!\!+ Y_{\beta \beta}^{-l}Y_{ij}^{-(s-l+1)} \Big],
\end{align*}}%
so for $\delta$ the Kronecker delta,
{\setlength{\mathindent}{0pt}
\setlength{\abovedisplayskip}{3pt}
\setlength{\belowdisplayskip}{3pt}\begin{align}&
\bigg( \frac{\partial}{\partial Z_{\alpha\beta}} - Z_{\alpha\beta}\bigg)\big(ZY^{-s}\big)_{\alpha\beta}
\notag\\&\qquad
= \sum_{j=1}^p\bigg[
		\delta_{\beta j}Y^{-s}_{\beta j} + \frac{2}{n}\sum_{i=1}^p Z_{\alpha j}Z_{\alpha i} \frac{\partial_\text{s}}{\partial_\text{s} Y_{i\beta}} Y^{-s}_{\beta j} - Z_{\alpha\beta}Z_{\alpha j}Y^{-s}_{\beta j}
	\bigg]
\notag\\&\qquad
= Y^{-s}_{\beta\beta} 
	- \!\frac1{n}\sum_{l=1}^s \big(ZY^l\big)_{\alpha\beta}\big(ZY^{-(s-l+1)}\big)_{\alpha\beta}
	- \!\frac1{n}\sum_{l=1}^s Y_{\beta\beta}^{-l} \big(ZY^{-(s-l+1)} Z^t\big)_{\alpha\alpha}
\notag\\&\pushrightn{
	-Z_{\alpha\beta}\big(ZY^{-s}\big)_{\alpha\beta}
\label{eqn:hdwishart-componentwise}
} \end{align}}%
Let us first show that this expression is integrable. For any matrix $X$, $|X_{ij}|\leq \|X\|_2=\|X^tX\|_2^{1/2}$. Thus by Equation \eqref{eqn:hdwishart-componentwise},
{\setlength{\mathindent}{10pt}
\setlength{\abovedisplayskip}{3pt}
\setlength{\belowdisplayskip}{3pt}\begin{align*}&
\E{\bigg|\bigg( \frac{\partial}{\partial Z_{\alpha\beta}} - 	
	Z_{\alpha\beta}\bigg)\big(ZY^{-s}\big)_{\alpha\beta}
	\bigg|}
\\&\qquad
\leq \E{\vphantom{\bigg\vert} 
	\|Y^{-s}\|_2
	+ \sum_{l=1}^s\|Y^{2l+1}\|^{\frac12}_2\|Y^{2s+2l-1}\|^{\frac12}_2
\right.\\&\left.\hspace{120pt}\vphantom{\bigg\vert} 
	+ \sum_{l=1}^s\|Y^{-2l}\|^{\frac12}_2\|Y^{-2s+2l}\|_2^{\frac12}
	+ n\|Y\|^{\frac12}_2\|Y^{-2s+1}\|_2^{\frac12}
	}
\end{align*}}%
As $Y$ is positive definite, $\|Y^{\pm a}\|_2\leq \Tr Y^{\pm a}$ for any $a\in \N$, so by the Cauchy-Schwarz inequality,
{\setlength{\mathindent}{10pt}
\setlength{\abovedisplayskip}{3pt}
\setlength{\belowdisplayskip}{3pt}\begin{align*}&\qquad
\leq \E{\vphantom{\bigg\vert} \Tr Y^{-s}}
	+ \sum_{l=1}^s\E{\vphantom{\Big\vert} \Tr Y^{-2l+1}}^{\frac12}  
		\E{\vphantom{\Big\vert} \Tr Y^{-2s+2l-1}}^{\frac12}
\\&\hspace{40pt}
	+ \sum_{l=1}^s\E{\vphantom{\Big\vert} \Tr Y^{-2l}}^{\frac12}  
		\E{\vphantom{\Big\vert} \Tr Y^{-2s+2l}}^{\frac12}
	+ n\E{\vphantom{\Big\vert} \Tr Y}^{\frac12}
			\E{\vphantom{\Big\vert} \Tr Y^{-2s+1}}^{\frac12},
\end{align*}}%
which is finite for $n\geq p+4s+2$.

Moreover, $(ZY^{-s})_{\alpha\beta}$ can be expressed using minors and determinants as a rational function of the entries of $Z$, so
{
\setlength{\abovedisplayskip}{3pt}
\setlength{\belowdisplayskip}{3pt}\begin{align*}
\lim_{Z_{\alpha\beta}\rightarrow\pm\infty}(ZY^{-s})_{\alpha\beta}e^{-Z_{\alpha\beta}^2/2}
=0.
\end{align*}}
So all conditions are fulfilled to apply Stein's lemma to Equation \ref{eqn:hdwishart-componentwise} and obtain
{\setlength{\mathindent}{10pt}
\setlength{\abovedisplayskip}{3pt}
\setlength{\belowdisplayskip}{3pt}\begin{align*}
0 &= \E{\frac1{n}\sum_{\alpha=1}^n\sum_{\beta=1}^p 
	\bigg( \frac{\partial}{\partial Z_{\alpha\beta}} - 	Z_{\alpha\beta}\bigg)
	\big(ZY^{-s}\big)_{\alpha\beta}
	}
\\&
= \E{\Tr Y^{-s} 
	-\frac{s}{n}\Tr Y^{-s} 
	-\frac1n\sum_{l=1}^s\Tr(Y^{-l})\Tr(Y^{-(s-l)})
	-\Tr Y^{-(s-1)}
	}
\end{align*}}
As $\Tr(Y^{-l})\Tr(Y^{-(s-l)}) \leq p\Tr Y^{-s}$ for any $1\leq l \leq s$, and every term is integrable as $n\geq p+4s+2$, this means that
{\setlength{\mathindent}{20pt}
\setlength{\abovedisplayskip}{3pt}
\setlength{\belowdisplayskip}{3pt}\begin{align}
\left(1-\frac{(p+1)s}{n}\right) \E{\vphantom{\Big\vert} \Tr Y^{-s}}
	\leq \E{\vphantom{\Big\vert} \Tr Y^{-(s-1)}}.
\label{eqn:hdwishart-tracial}
\end{align}}
This shows the first part of the proof.

For the second part, let $S\in\N$. If we let $n\rightarrow\infty$ such that $\lim\limits_{n\rightarrow\infty} \frac{p}{n}=\alpha <1$, then any $S<\alpha^{-1}$ we will have $n\geq p + 4S+2$ and $n\geq (p+1)S$ for $n$ large enough. So by repeatedly applying Equation \eqref{eqn:hdwishart-tracial} for $s=S,\dots,1$ and dividing by $p$, we obtain
{\setlength{\mathindent}{20pt}
\setlength{\abovedisplayskip}{3pt}
\setlength{\belowdisplayskip}{3pt}\begin{align*}
\prod_{l=1}^S \left(1-\frac{(p+1)l}{n}\right) \cdot 
	\frac1{p}\E{\vphantom{\Big\vert} \Tr Y^{-S}}
\;\;\leq\;\; \frac1{p}\E{\vphantom{\Big\vert} \Tr Y^{-0}} = 1.
\end{align*}}%
Taking a limit in the above yields
{\setlength{\mathindent}{20pt}
\setlength{\abovedisplayskip}{3pt}
\setlength{\belowdisplayskip}{3pt}\begin{align*}
\prod_{l=1}^S \left(1-\alpha l\right) 
	\lim_{n\rightarrow\infty}\frac1{p}\E{\vphantom{\Big\vert} \Tr Y^{-S}}
\;\;\leq\;\; 1.
\end{align*}}%
Thus for any $S < \alpha^{-1}$, we have
{\setlength{\mathindent}{20pt}
\setlength{\abovedisplayskip}{3pt}
\setlength{\belowdisplayskip}{3pt}\begin{align}
\lim_{n\rightarrow\infty}\frac1{p}\E{\vphantom{\Big\vert} \Tr Y^{-S}}
\;\;\leq\;\; \prod_{l=1}^S \frac1{1-\alpha l} \;\;<\;\; \infty.
\label{eqn:hdwishart-finitelimit}
\end{align}}%

In the case $0<\alpha<1$, if $s+1<\alpha^{-1}$ then by Jensen's inequality and Equation \eqref{eqn:hdwishart-finitelimit} applied to $S=s+1$, we have
{\setlength{\mathindent}{20pt}
\setlength{\abovedisplayskip}{3pt}
\setlength{\belowdisplayskip}{3pt}\begin{align*}
\lim_{n\rightarrow\infty}\frac1{p}\E{\vphantom{\Big\vert} \Big(\Tr Y^{-s}\big)^{1+\frac1{s}}}
\;\;\leq\;\; \lim_{n\rightarrow\infty}\frac1{p}\E{\vphantom{\Big\vert} \Tr Y^{-(s+1)}} \;\;<\;\; \infty.
\end{align*}}%
Thus $\frac1p\Tr Y^{-s}$ is uniformly integrable, and by Equation \eqref{eqn:lss-probability},
{\setlength{\mathindent}{20pt}
\setlength{\abovedisplayskip}{0pt}
\setlength{\belowdisplayskip}{3pt}\begin{align*}
\lim_{n\rightarrow\infty}\frac1{p}\E{\vphantom{\Big\vert}\Tr T^{-s}} 
= \text{\raisebox{2pt}{$\bigintss_{\;\alpha_-}^{\alpha_+}$}}\hspace{-10pt}
		\frac{\sqrt{(\alpha_+-t)(t-\alpha_-)}}{2\pi\alpha t^{s+1}}dt
\end{align*}}%
for $\alpha_\pm=(1\pm\sqrt{\alpha})^2$.

In contrast, by applying Jensen's inequality twice,
{\setlength{\mathindent}{20pt}
\setlength{\abovedisplayskip}{5pt}
\setlength{\belowdisplayskip}{5pt}\begin{align*}
1 = \E{\frac1{p}\Tr Y}^{-s} \leq \E{\Big(\frac1{p}\Tr Y\Big)^{-s}} \leq \E{\frac1{p}\Tr Y^{-s}}
\end{align*}}%
so when $\alpha=0$, by applying Equation \eqref{eqn:hdwishart-finitelimit} with $S=s$, we obtain that $\lim\limits_{n\rightarrow\infty}\frac1{p}\E{\Tr Y^{-s}}=1$, as desired.
\end{proof}
\section{Conclusion}\label{sec:conclusion}

The results of this paper raise more questions than they answer. We enumerate some that we found particularly interesting.

\begin{enumerate}[label=(\arabic*)]
\item The univariate $t$ distribution with $\nu$ degrees of freedom is often defined as the distribution of $Z/\sqrt{s}$, for $Z\sim\text{N}(0,1)$ and $s\sim\chi^2_\nu/\nu$ independent. In the real symmetric matrix case, we could imagine studying the distribution of $S^{1/4}ZS^{1/4}$, for $Z\sim\text{GOE}(p)$ and $S\sim\text{W}_p(\nu, I_p/\nu)$ independent. Is this the $T_{\nu}(2I_p)$ distribution in the sense of Section \ref{sec:symmetrict}?
\item By Theorem \ref{thm:moments} and Corollary \ref{cor:semicircle}, it is clear the empirical moments of a symmetric $t$ distribution are quite similar to those of a Gaussian orthogonal ensemble matrix, except perhaps in their rates of convergence. From \citet[Theorem 2.1.31]{anderson10}, we know the empirical moments of a Gaussian orthogonal ensemble are asymptotically normal. Are the empirical moments of the symmetric $t$ distribution also asymptotically normal?
\item In Section \ref{sec:symmetrict}, we showed that the rate of convergence of the even normalized empirical moments of a symmetric $t$ distribution change when $p$ grows like $\sqrt{n}$. 
Can we find analogue symmetric $t$ statistics that change their rates of convergence when $p$ grows like $n^{(K+1)/(K+3)}$ for every $K\in\N$? This would establish phase transitions for the symmetric $t$ distribution. If so, can we find approximating densities between every two transitions, just like in the Wishart case?
\item As a counterpart of Theorem \ref{thm:existence-densities}, could we prove that $\mathrm{d}_\text{TV}(f_\text{NW}, f_K)\nrightarrow0$ whenever $p^{K+3}/n^{K+1}\nrightarrow0$ as $n\rightarrow\infty$? This is delicate because we have no guarantee that the $L^1$ norm of $\psi_K$ is asymptotically bounded for regimes of degree $K+1$ or higher.
\item Can we find the normalization constant or, better, solve the expectation of Equation \eqref{eqn:fK-as-expectation} in closed form?
\item What asymptotics hold for the symmetric $t$ or the Wishart distribution in a middle-scale regime of infinite degree? How do these asymptotics differ from the other middle-scale regimes, or the high-dimensional regime?
\item The symmetric $t$ distribution was discovered as the G-conjugate of the Wishart distribution. What other distributions can be realized as the G-conjugate of some well-known distribution?
\item In Lemma \ref{lem:moments-as-expectation}, we expressed the characteristic function of the G-conjugate $F^*$ of a distribution $F$ as $f^{1/2}\star (f^{1/2}\circ R)$, for $f$ the density of $F$ and $R$ the flip operator. To obtain the moments, we then repeatedly differentiated under the convolution integral at zero, and obtained an expression of the moments as an expectation with respect to $f$. The argument worked when $F^*$ was the symmetric $t$ distribution. Can this argument be generalized to other $F^*$? If $F^*$ is a well-known distribution, does this give rise to novel and nontrivial expressions for its moments?
\item The G-transform of a distribution encodes all the information relative to that distribution. However, taking a modulus removes some information, and so in some sense the G-conjugate distribution is ``less informative'' than the original distribution. What happens when we repeatedly apply the G-conjugation operator, destroying information every time? For example, is there an attractor distribution $G$ that is the limit of this process regardless of the initial distribution?
\item Can we find distinct random operators which can be regarded, in some sense, as the total variation limit of a normalized Wishart matrix between every two phase transitions?
\end{enumerate}

It appears to us that some of these questions might be very difficult to answer. We would be pleased if future work were able to shed light on any of them.

\bibliographystyle{plainnat}
\bibliography{bibliography}

\end{document}